\documentclass[10pt]{article}
\RequirePackage[left=28mm,right=28mm,top=20mm,bottom=20mm]{geometry}
\usepackage[utf8]{inputenc}
\usepackage[english]{babel}
\usepackage[]{amsmath}
\usepackage{amssymb}
\usepackage{amsthm}
\usepackage[shortlabels]{enumitem} 
\usepackage[square,sort,comma,numbers]{natbib}
\usepackage{afterpage}
\usepackage{graphicx} 
\usepackage{tikz-cd} 
\usepackage{stmaryrd} 
\usepackage{url}
\usepackage{hyperref}
\usepackage{multicol}
\usepackage{xcolor}

\usepackage{setspace}


\newcommand{\End}{\operatorname{End}}
\newcommand{\Hom}{\operatorname{Hom}}

\newcommand{\Ext}{\operatorname{Ext}}
\newcommand{\Tor}{\operatorname{Tor}}
\newcommand{\add}{\!\operatorname{add}}
\newcommand{\pdim}{\operatorname{pdim}}
\newcommand{\m}{\!\operatorname{-mod}} 
\newcommand{\M}{\!\operatorname{-Mod}} 
\newcommand{\proj}{\!\operatorname{-proj}}
\newcommand{\St}{\Delta}
\newcommand{\Cs}{\nabla}
\renewcommand{\L}{\Lambda}
\renewcommand{\l}{\lambda}
\newcommand{\pri}{\mathfrak{p}} 
\newcommand{\id}{\operatorname{id}}
\newcommand{\mi}{\mathfrak{m}} 
\newcommand{\Spec}{\operatorname{Spec}}
\newcommand{\rank}{\operatorname{rank}} 
\renewcommand{\top}{\operatorname{top}}
\newcommand{\Proj}{\!\operatorname{-Proj}}
\newcommand{\MaxSpec}{\operatorname{MaxSpec}} 
\newcommand{\Stsim}{\tilde{\St}}
\newcommand{\im}{\!\operatorname{im}} 
\newcommand{\coker}{\operatorname{coker}}
\newcommand{\sumSt}{\underset{\l\in\L}{\bigoplus}}

\newcommand{\Cssim}{\tilde{\Cs}}
\newcommand{\injdim}{\operatorname{idim}}
\newcommand{\domdim}{\operatorname{domdim}}
\newcommand{\rdomdim}{\!\operatorname{-domdim}}
\newcommand{\R}{\operatorname{R}}

\newcommand{\HN}{\operatorname{HNdim}}
\newcommand{\codomdim}{\operatorname{codomdim}}
\newcommand{\rcodomdim}{\!\operatorname{-codomdim}}
\newcommand{\inj}{\!\operatorname{-inj}} 
\newcommand{\cograde}{\operatorname{rcograde}} 
\newcommand{\sgn}{\operatorname{sgn}}

\newtheorem{numberingthm}{Theorem}[subsection] 
\theoremstyle{definition}
\newtheorem{Def}[numberingthm]{Definition}

\theoremstyle{plain}
\newtheorem{Prop}[numberingthm]{Proposition}
\newtheorem{Theorem}[numberingthm]{Theorem}

\newtheorem{Cor}[numberingthm]{Corollary}
\newtheorem{Lemma}[numberingthm]{Lemma}
\newtheorem{Remark}[numberingthm]{Remark}
\newtheorem*{thmintroduction}{Theorem}

\theoremstyle{remark}

\providecommand{\keywords}[1]
{\scriptsize
	\textbf{\textit{Keywords:}} #1 \normalsize \hfill
}
\providecommand{\msc}[1]
{\scriptsize
	\textbf{\textit{2020 Mathematics Subject Classification:}} #1 \normalsize \hfill
}

\title{On split quasi-hereditary covers and Ringel duality} 
\author{Tiago Cruz} 
\date{}

\newcommand{\Address}{{
		\bigskip
		\footnotesize
		
		TIAGO CRUZ,\par \textsc{Institute of Algebra and Number Theory}\par \textsc{University of Stuttgart,}\par \textsc{Pfaffenwaldring 57, 70569 Stuttgart, Germany,}\par\nopagebreak
		\textit{E-mail address}, T.~Cruz: \texttt{tiago.cruz@mathematik.uni-stuttgart.de}
		}}
		
		\allowdisplaybreaks
		
		\tikzcdset{scale cd/.style={every label/.append style={scale=#1},
				cells={nodes={scale=#1}}}}

		\setcounter{subsection}{1}
		\setcounter{secnumdepth}{4} 

\begin{document}

\maketitle

\begin{abstract}
In this paper, we develop two new homological invariants called relative dominant dimension with respect to a module and relative codominant dimension with respect to a module. These are used to establish precise connections between Ringel duality, split quasi-hereditary covers and double centraliser properties. 

These homological invariants are studied over Noetherian algebras which are finitely generated and projective as a module over the ground ring and they are shown to behave nicely under change of rings techniques. It turns out that relative codominant dimension with respect to a summand of a characteristic tilting module is a useful tool to construct quasi-hereditary covers of Noetherian algebras and measure their quality.
In particular, this homological invariant is used to construct split quasi-hereditary covers of quotients of Iwahori-Hecke algebras using Ringel duality of $q$-Schur algebras. Combining techniques of cover theory with relative dominant dimension theory we obtain a new proof for Ringel self-duality of the blocks of the Bernstein-Gelfand-Gelfand  category $\mathcal{O}$.
\end{abstract}
 \keywords{Relative dominant dimension, Ringel duality, split quasi-hereditary covers, reduced grade, $q$-Schur algebras, BGG category $\mathcal{O}$\\
 	\msc{16E30, 16G30, 13E10, 20G43, 17B10}}
 

\section{Introduction}\label{Introduction}

Quasi-hereditary algebras are an important class of finite-dimensional associative algebras of finite global dimension occurring in areas like algebraic Lie theory, homological algebra and algebraic geometry.
Furthermore, all finite-dimensional algebras are centralizer subalgebras of quasi-hereditary algebras (see \cite{Dlab1989, zbMATH02070262}).
Hence, given a finite-dimensional algebra of infinite global dimension we can ask to resolve it by a quasi-hereditary algebra so that their representation theories are connected by a Schur functor with nice properties.
Such resolutions appear quite frequently in representation theory as split quasi-hereditary covers in the sense of Rouquier \cite{Rouquier2008}.

Well-known examples are Soergel’s Struktursatz (see \cite{zbMATH00005018}) and Schur--Weyl duality between Schur algebras $S_k(d, d)$ and symmetric groups $S_d$ (see for example \cite{zbMATH03708660}). The former says that certain subalgebras of coinvariant algebras are resolved by blocks of the BGG category $\mathcal{O}$ of a complex semi-simple Lie algebra together with Soergel’s combinatorial functor and the latter implies that the group algebra of the symmetric group on $d$ letters, $S_d$, is resolved by the Schur algebra $S_k(d, d)$ together with a Schur functor. In both cases, we are approximating a self-injective algebra with a (split) quasi-hereditary algebra. The quality of these resolutions was determined in \cite{zbMATH05278765, zbMATH05871076} and recently in \cite{p2paper} for the integral case using dominant dimension and relative dominant dimension as developed in \cite{CRUZ2022410}, respectively.

An important property of quasi-hereditary algebras is the existence of an important tilting module known as characteristic tilting module. The endomorphism algebra of such tilting module is again quasi-hereditary (see \cite{MR1128706}) and it is known as the Ringel dual.

In recent years, much interest has been devoted to develop methods and techniques to find if for a given quasi-hereditary algebra its module category is equivalent as highest weight category to the module category of its Ringel dual. Schur algebras $S_k(d, d)$ and blocks of the BGG category $\mathcal{O}$ do possess this sort of symmetry, that is, they are Ringel self-dual \cite{zbMATH00549737, zbMATH01267662}. Therefore, one might wonder if this phenomenon is somehow related with Schur--Weyl duality and Soergel's Struktursatz. In particular, \emph{what role does Ringel duality have in these two dualities?}

A trivial common feature in all these three dualities is the existence of a double centralizer property on a summand of a characteristic tilting module. In \cite{Koenig2001}, Schur--Weyl duality was proved using dominant dimension exploiting the previous fact and the group algebra of the symmetric group being self-injective.

On the other hand, Schur algebras $S(n, d)$ with $n<d$, in general, are not Ringel self-dual. Nonetheless, a version of Schur--Weyl duality still holds between $S(n, d)$ and $S_d$ on a module $M$. The object $M$ is a summand of the characteristic tilting module of $S(n, d)$ and a right $S_d$-module, although it is not necessarily faithful as $S_d$-module.  
This means that the double centralizer property occurs between $S(n, d)$ and a quotient of the group algebra of the symmetric group. In \cite{Koenig2001}, this double centralizer property was proved using the quasi-hereditary structure of $S(n, d)$ and a generalisation of dominant dimension, that we shall discuss below.
In addition, the quasi-hereditary structure of $S_k(n, d)$ still has connections with the representation theory of $S_d$. More precisely, Erdmann in  \cite{zbMATH00681964} showed that standard filtration multiplicities of summands of the characteristic tilting module are related with decomposition numbers of the symmetric group.

This situation raises the following question: \emph{Is Schur--Weyl duality arising from the existence of a quasi-hereditary cover that extends the connection between $S_d$ and $S_k(d, d)$ when $n\geq d$? If so, how is Ringel duality related with such a cover?}

The aim of the present paper is to give precise answers to these questions and develop new techniques on quasi-hereditary covers and Ringel self-duality continuing the approach developed in earlier papers \cite{CRUZ2022410, p2paper}.
This pursuit culminates in the creation of a general theory of relative dominant dimension, new techniques to construct split quasi-hereditary covers and a new perspective to Ringel self-duality of the blocks of the BGG category $\mathcal{O}$.

As discussed in \cite{CRUZ2022410}, the covers mentioned above admit integral versions and the developments of \cite{CRUZ2022410} motivate us to continue using an integral setup. So our focus lies in split quasi-hereditary covers instead of just quasi-hereditary covers, mainly because split quasi-hereditary covers are good objects in the integral setup being quite flexible under change of rings. In particular, every quasi-hereditary algebra over an algebraically closed field is a split quasi-hereditary algebra.

\paragraph*{Motivation to generalise dominant dimension}

A module $M$ (over a finite-dimensional algebra $A$) has dominant dimension at least $n$ if there exists an exact sequence $0\rightarrow M\rightarrow I_1\rightarrow \cdots\rightarrow I_n$ with $I_1, \ldots, I_n$ projective and injective modules over $A$. In \cite{zbMATH05871076}, it was observed that dominant dimension is a crucial tool to understand the quality of quasi-hereditary covers of self-injective algebras, and more importantly of quasi-hereditary covers of symmetric algebras having a simple preserving duality. The former fact is a consequence of Morita-Tachikawa correspondence, a theorem by Mueller \cite{zbMATH03248955}, and the fact that any generator is also a cogenerator over a self-injective algebra. Generator (resp. Cogenerator) means a module whose additive closure contains all finitely generated projective (resp. injective) modules.

Recall that $(A, P)$ is said to be a split quasi-hereditary cover of $B$ if $A$ is a split quasi-hereditary algebra, $P$ is a finitely generated projective $A$-module, $B$ is isomorphic to $\End_A(P)^{op}$ and the canonical map $A\rightarrow \End_B(\Hom_A(P, A))^{op}$ induced by the $A$-module structure on $\Hom_A(P, A)$ is an isomorphism of algebras. But, in general, $\Hom_A(P, A)$ is only a generator. So, $A$ might have dominant dimension equal to zero. So, \emph{how to evaluate the quality of split quasi-hereditary covers, in general?}
Furthermore, quotients of group algebras of the symmetric group are not in general self-injective, thus, a new generalisation of dominant dimension is required for our problem.
\begin{enumerate}[(1)]
	\item \emph{In this paper, we identify a generalisation of dominant dimension to be used as a tool to control the connection between the module category of an algebra $B$ with the module category of the endomorphism algebra of a generator (not necessarily cogenerator) over $B$. 
In particular, it is a tool to determine the quality of (split) quasi-hereditary covers.	
}
\end{enumerate}

Generalisations of dominant dimension have appeared several times in the literature.  In \cite{zbMATH03282730}, the concept of $U$-dominant dimension was introduced, where the additive closure of $U$ replaces the projective-injective modules.  When $U$ is a certain injective module, this invariant was used in \cite{zbMATH07398279} to characterise a generalisation of Auslander-Gorenstein algebras in terms of the existence of tilting-cotilting modules (see also \cite{zbMATH07241704}). 

In \cite{Koenig2001}, this concept was applied to study Schur-Weyl duality between $S_k(n, d)$ and $kS_d$ for a field $k$. That is, in \cite{Koenig2001} it was proved without using invariant theory that the canonical homomorphism, induced by the right action given by place permutation of the symmetric group on $d$-letters $S_d$ on $(k^n)^{\otimes d}$,
$kS_d\rightarrow \End_{S_k(n, d)}((k^n)^{\otimes d})^{op}$ is surjective for every natural numbers $n, d$. Here $S_k(n, d)$ denotes the Schur algebra $\End_{kS_d}((k^n)^{\otimes d}).$ The case $n\geq d$ follows from $(k^n)^{\otimes d}$ being faithful over the self-injective algebra $kS_d$ and so it is also a generator-cogenerator. Therefore, this case can be seen as a consequence of Morita-Tachikawa correspondence and the pair $(S_k(n, d), (k^n)^{\otimes d})$ being a (split) quasi-hereditary cover. In the case $n<d$, $(k^n)^{\otimes d}$ is no longer projective, in general, over the Schur algebra but still belongs to the additive closure of the characteristic tilting module. To obtain the assertion then they used the $(k^n)^{\otimes d}$-dominant dimension together with the quasi-hereditary structure of the Schur algebras to transfer the double centralizer property from the easier case $n\geq d$ to the case $n<d$.  This technique also works in the quantum case replacing the Schur algebras by $q$-Schur algebras. The problem in this case is the absence of a characterisation in terms of homology or cohomology like in the classical dominant dimension. Moreover, it is not strong enough to give us information if some cover is lurking around.  

In \citep{CRUZ2022410}, an integral version of dominant dimension was proposed to extend the theory of dominant dimension of finite-dimensional algebras to Noetherian algebras which are projective over the ground ring. These developments raise the following question:
\begin{enumerate}[(1)] \setcounter{enumi}{1}
	\item \emph{Is the above case $n<d$ a particular case of some quasi-hereditary cover? If so, does such cover admit an integral version?}
\end{enumerate}

\paragraph*{Contributions and main results}

We propose and investigate a generalisation of the relative dominant dimension introduced in \cite{CRUZ2022410} that we call relative dominant dimension of a module $M$ with respect to a module $Q$ over a projective Noetherian algebra (Noetherian algebra whose regular module is also projective over the ground ring). We denote it by $Q\rdomdim_{(A, R)} M$. The projective relative injective modules are replaced by the modules in the additive closure of $Q$ using again exact sequences which split over the ground ring and imposing an extra condition: the exact sequences considered should also remain exact under $\Hom_A(-, Q)$. Such condition trivially holds for the relative dominant dimension studied in \cite{CRUZ2022410}. This extra condition is also trivial when $U$ is injective, and so the concept discussed here coincides with $U$-dominant dimension in the cases that $U$ is an injective module. This invariant also generalises the concept of faithful dimension studied in \cite{zbMATH01218841}. 
This relative dominant dimension with respect to a module admits a relative version of a theorem by Mueller (see \citep[Lemma 3]{zbMATH03248955}) which characterises dominant dimension in terms of cohomology:

\begin{thmintroduction}[Theorem ~\ref{moduleMuellerpartone} and Theorem ~\ref{dualmoduleMuellerpartone}]
	Let $R$ be a commutative Noetherian ring. Let $A$ be a projective Noetherian $R$-algebra. Assume that $Q$ is a finitely generated left $A$-module which is projective over $R$  so that $DQ\otimes_A Q$ is projective over $R$. Denote by $B$ the endomorphism algebra $ \End_A(Q)^{op}$. For any finitely generated $A$-module $M$ which is projective over $R$, the following assertions are equivalent.
	\begin{enumerate}[(i)]
		\item $Q\rdomdim_{(A, R)} M\geq n\geq 2$;
		\item The map $\Hom_A(DQ, DM)\otimes_B DQ\rightarrow DM$, given by $f\otimes h\mapsto f(h)$, is an $A$-isomorphism and $\Tor_i^B(\Hom_A(DQ, DM), DQ)=0$ for all $1\leq i\leq n-2$.
	\end{enumerate}
\end{thmintroduction}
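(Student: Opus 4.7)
The plan is to use the $R$-duality $D=\Hom_R(-,R)$ to transfer the $\add Q$-coresolution defining $Q-\domdim_{(A,R)} M\geq n$ into a partial projective resolution of $X:=\Hom_A(DQ,DM)$ over $B$, and vice versa. The key ingredients are: the contravariant equivalence induced by $D$ between $A$-modules and right $A$-modules which are finitely generated projective over $R$, sending $\add Q$ to $\add DQ$; the identification $\End_A(DQ)\cong B$ that makes $DQ$ into a $(B,A)$-bimodule and $X$ into a right $B$-module whose structure morphism $X\otimes_B DQ\to DM$ coincides with the counit of the adjunction $(\Hom_A(DQ,-),\, -\otimes_B DQ)$; the fact that this unit and counit are isomorphisms on $\add B$ and on $\add DQ$ respectively (directly checked on $B$ and $DQ$ and extended by additivity); and the natural isomorphism $\Hom_A(N,Q)\cong\Hom_A(DQ,DN)$ for $R$-projective $N$, which translates the condition \emph{remains exact under} $\Hom_A(-,Q)$ on a coresolution into \emph{remains exact under} $\Hom_A(DQ,-)$ on its $D$-dual resolution.

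For (i)$\Rightarrow$(ii), I would dualize the $R$-split exact coresolution $0\to M\to Q^1\to\cdots\to Q^n$ to obtain an $R$-split exact resolution $DQ^n\to\cdots\to DQ^1\to DM\to 0$ in $\add DQ$ remaining exact under $\Hom_A(DQ,-)$. Applying $\Hom_A(DQ,-)$ would then yield a partial projective resolution $P^{n-1}\to\cdots\to P^0\to X\to 0$ of length $n$ in right $B$-modules with $P^i=\Hom_A(DQ,DQ^{i+1})\in\add B$. Tensoring this partial resolution back with ${}_BDQ$ recovers, via the counit isomorphism, the original resolution $DQ^\bullet\to DM\to 0$; reading off the homology then computes $\Tor^B_i(X,DQ)$ for $0\le i\le n-2$ and produces both the isomorphism $X\otimes_B DQ\cong DM$ and the vanishings $\Tor^B_i(X,DQ)=0$ for $1\le i\le n-2$.

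For (ii)$\Rightarrow$(i), I would take any projective resolution $\cdots\to P^1\to P^0\to X\to 0$ in right $B$-modules with $P^i\in\add B$ and set $\tilde P^i:=P^i\otimes_B DQ\in\add DQ$. The isomorphism $X\otimes_B DQ\cong DM$ together with the vanishing of $\Tor_i^B(X,DQ)$ for $1\le i\le n-2$ forces the complex $\tilde P^{n-1}\to\cdots\to\tilde P^0\to DM\to 0$ to be exact at $DM$ and at every $\tilde P^k$ with $0\le k\le n-2$ (exactness at the leftmost term is not required). Since the $\tilde P^i$ and $DM$ are $R$-projective, this long exact sequence decomposes into $R$-split short exact sequences with $R$-projective kernels, so dualizing with $D$ produces the $R$-split exact coresolution $0\to M\to D\tilde P^0\to\cdots\to D\tilde P^{n-1}$ with $D\tilde P^i\in\add Q$. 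Applying $\Hom_A(DQ,-)$ to $\tilde P^\bullet\to DM\to 0$ returns the truncated projective resolution $P^\bullet\to X\to 0$ via the unit isomorphism $\Hom_A(DQ,P\otimes_B DQ)\cong P$ for $P\in\add B$, so by the natural iso $\Hom_A(-,Q)\cong\Hom_A(DQ,D-)$ the coresolution of $M$ remains exact under $\Hom_A(-,Q)$.

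The main technical delicacy, and the place where the hypothesis that $DQ\otimes_A Q$ is $R$-projective enters, lies in ensuring that $B$ is itself $R$-projective, so that every projective $B$-module is $R$-projective. This $R$-projectivity propagates through the tensors $P^i\otimes_B DQ$ and is essential for the $R$-splitness demanded by the definition of the relative $Q$-dominant dimension; once it is in place, the correspondence between coresolutions of $M$ and projective presentations of $X$ is entirely reversible, and both implications follow from the bookkeeping sketched above.
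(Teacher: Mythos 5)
Your proposal is correct and follows essentially the same route as the paper: dualize the $\add Q$-coresolution, use projectivization (the unit/counit isomorphisms on $\add B$ and $\add DQ$) to pass back and forth between that coresolution and a partial projective $B$-resolution of $\Hom_A(DQ,DM)$, and read off the counit isomorphism and the $\Tor$-vanishing from exactness of the tensored complex. The paper proves the codominant-dimension version via $\chi_M$ and declares the dominant-dimension version analogous, whereas you argue directly on the dominant side with $\chi^r_{DM}$; this is only a cosmetic difference.
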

This result extends \citep[Proposition 2.2]{zbMATH01218841}, \citep[Corollary 2.16(1),(2)]{zbMATH06409569}   and  \citep[Theorem 5.2]{CRUZ2022410}.

In the same vein as in the generalisation studied in \citep[Section 6]{CRUZ2022410} for nice enough modules $Q, M$ computations of relative dominant dimension can be reduced to computations over finite-dimensional algebras over algebraically closed fields (see Theorem \ref{changeofringrelativedomdimrelativetomodule}). 

This dimension should not be studied alone and in fact it is useful to study also its dual version: the relative codominant dimension of a module with respect to another module. To understand (1), we study relative codominant dimension of a characteristic tilting module with respect to a summand of a characteristic tilting module. This leads us to one of our main results:

\begin{thmintroduction}[Theorem ~\ref{ringeldualitycovers} and Corollary ~\ref{codominantdimensioncontrollfinitedimensionalcase}]
	Let $R$ be a commutative Noetherian ring. Let $A$ be a split quasi-hereditary $R$-algebra with standard modules $\St(\l)$, $\l\in \L$, and with a characteristic tilting module $T$. Denote by $R(A)$ the Ringel dual of $A$: $\End_A(T)^{op}$. Assume that $Q$ is in the additive closure of $T$ and write $B$ to denote the endomorphism algebra $\End_A(Q)^{op}$.
	If $Q\rcodomdim_{(A, R)} T \geq 2$, then $(R(A), \Hom_A(T, Q))$ is a split quasi-hereditary cover of $B$ and the Schur functor \linebreak$F=\Hom_{R(A)}(\Hom_A(T, Q), -)\colon R(A)\m\rightarrow B\m$ induces isomorphisms 
	\begin{align*}
		\Ext_{R(A)}^j(M, N)\rightarrow \Ext_B^j(FM, FN), \quad \forall \  0\leq j\leq Q\rcodomdim_{(A, R)} T  -2,
	\end{align*}
	for all modules $M, N$ admitting a filtration by standard modules over the Ringel dual of $A$. The converse holds if $R$ is a field.
\end{thmintroduction}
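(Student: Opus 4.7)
The plan is to transfer the hypothesis $Q-\codomdim_{(A,R)} T \geq 2$ across the Ringel duality equivalence $\Hom_A(T, -)\colon \mathcal{F}(\Cs) \to \mathcal{F}(\St_{R(A)})$ to the candidate cover pair $(R(A), P)$ with $P := \Hom_A(T, Q)$, and to read off both the cover property and the Ext quality from the dual relative Mueller theorem.

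\textbf{Structure of the cover.} The easy part: Ringel's theorem in the Noetherian setting (established earlier in the paper) yields that $R(A)$ is split quasi-hereditary over $R$; since $Q \in \add T$, we have $P \in \add \Hom_A(T, T) = \add R(A)$, so $P$ is finitely generated projective over $R(A)$; and full faithfulness of $\Hom_A(T, -)$ on $\add T$ gives the natural algebra isomorphism $B = \End_A(Q)^{op} \cong \End_{R(A)}(P)^{op}$.

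\textbf{Double centralizer.} This is the main technical step. Invoking Theorem~\ref{dualmoduleMuellerpartone} with $M = T$, the hypothesis produces an $A$-module isomorphism $\mu\colon \Hom_A(Q, T) \otimes_B Q \xrightarrow{\sim} T$. Using $\mu$ to rewrite $T$ as a tensor product in the source of $\End_A(T)$ and applying the tensor-hom adjunction,
\[
R(A) = \End_A(T)^{op} \cong \Hom_A(\Hom_A(Q, T) \otimes_B Q, T)^{op} \cong \End_B(\Hom_A(Q, T))^{op}.
\]
Since $Q, T \in \mathcal{F}(\Cs)$, Ringel duality gives a natural $(B, R(A))$-bimodule identification $\Hom_A(Q, T) \cong \Hom_{R(A)}(P, R(A))$, so this reads $R(A) \cong \End_B(\Hom_{R(A)}(P, R(A)))^{op}$; naturality of the adjunctions identifies it with the canonical algebra map in the cover definition. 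Thus $(R(A), P)$ is a split quasi-hereditary cover of $B$.

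\textbf{Ext comparison and converse.} Writing $n := Q-\codomdim_{(A,R)} T$, an $R$-split codominant resolution $Q_n \to \cdots \to Q_1 \to T \to 0$ with each $Q_i \in \add Q$ and exact under $\Hom_A(Q, -)$ can be pushed through $\Hom_A(T, -)$, which is exact on $\mathcal{F}(\Cs)$ since $T$ is tilting, to produce a length-$n$ $\add P$-resolution of $R(A)$ that remains exact under $F = \Hom_{R(A)}(P, -)$. This supplies the data needed to invoke the general cover-quality machinery established earlier in the paper (in the spirit of \cite{CRUZ2022410, p2paper}), yielding the claimed isomorphisms on $\Ext^j$ for $0 \leq j \leq n - 2$ on standardly filtered $R(A)$-modules. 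For the converse when $R$ is a field, Corollary~\ref{codominantdimensioncontrollfinitedimensionalcase} supplies the reverse characterization. The principal obstacle is verifying that the double centralizer isomorphism obtained from the tensor-hom adjunction coincides with the canonical cover map appearing in the definition of a cover; this requires careful naturality bookkeeping through the Ringel equivalence and the hom-tensor adjunction, with a secondary technical point being to ensure that the Ext comparison argument carries through uniformly over an arbitrary Noetherian base ring $R$.
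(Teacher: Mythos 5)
Your treatment of the cover itself is sound and matches the paper's in substance: the hypothesis $Q-\codomdim_{(A,R)}T\geq 2$ is, by Theorem \ref{moduleMuellerpartone}, equivalent to the counit $\chi_T\colon Q\otimes_B\Hom_A(Q,T)\to T$ being an isomorphism (note it is Theorem \ref{moduleMuellerpartone}, not Theorem \ref{dualmoduleMuellerpartone}, that governs \emph{co}dominant dimension, and the tensor product sits on the other side from what you wrote), and Tensor-Hom adjunction then gives $R(A)\cong\End_B(\Hom_A(Q,T))^{op}$; this is identified with the canonical double-centralizer map via the $(B,R(A))$-bimodule isomorphism $\Hom_A(Q,T)\cong\Hom_{R(A)}(\Hom_A(T,Q),R(A))$ coming from full faithfulness of $\Hom_A(T,-)$ on $\mathcal{F}(\Cssim)$. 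This is essentially the paper's route (Lemma \ref{fullyfaithfulnesstwo} together with equation (\ref{eq5503})).

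The Ext-comparison step, however, has a genuine gap. You claim that applying $\Hom_A(T,-)$ to an $(A,R)$-exact sequence $Q_n\to\cdots\to Q_1\to T\to 0$ with $Q_i\in\add Q$ produces a length-$n$ $\add P$-resolution of $R(A)$ that stays exact under $F$. It does not: exactness of $\Hom_A(T,Q_1)\to\Hom_A(T,T)\to 0$ would force $\id_T$ to factor through the surjection $Q_1\to T$, i.e. $T\in\add Q$, so the pushed-through complex already fails to be exact at $R(A)$ whenever $Q$ is a proper summand of $T$. (The functor $\Hom_A(T,-)$ is exact on short exact sequences all of whose terms lie in $\mathcal{F}(\Cssim)$, but the syzygies of the codominant resolution lie in $\mathcal{F}(\Stsim)$, not in $\mathcal{F}(\Cssim)$, so "exact on $\mathcal{F}(\Cs)$ since $T$ is tilting" does not apply.) There is also no ready-made "cover-quality machinery" accepting such a resolution: Proposition \ref{domdimtoolcover} requires the relevant module to be projective and relatively injective, which $\Hom_A(T,Q)$ is not. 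The paper's actual mechanism is different and stays on the $A$-side: part (a) of Theorem \ref{ringeldualitycovers} upgrades the hypothesis to $Q-\codomdim_{(A,R)}M\geq n$ for all $M\in\mathcal{F}(\Cssim)$, Theorem \ref{moduleMuellerparttwo} converts this into $\Tor_i^B(Q,F_QM)=0$ for $1\leq i\leq n-2$, the Grothendieck spectral sequence of Lemma \ref{sesforcocovers} then yields $\Ext^j_A(M,N)\cong\Ext^j_B(F_QM,F_QN)$ on $\mathcal{F}(\Cssim)$ for $j\leq n-2$, and only afterwards is everything transported through the Ringel equivalence and Proposition \ref{zeroAcover}. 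This $\Tor$-vanishing and spectral-sequence argument is the quantitative heart of the theorem and is absent from your proposal. Finally, citing Corollary \ref{codominantdimensioncontrollfinitedimensionalcase} for the converse is circular, since that corollary is part of the statement being proved; the paper derives it from Theorem \ref{moduleMuellerparttwo} together with a comparison of the relevant counit maps.
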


Theorem \ref{ringeldualitycovers} clarifies that the relative dominant (as well as codominant) dimension of a characteristic tilting module with respect to some summand $Q$ of a characteristic tilting module measures how far $Q$ is from being a characteristic tilting module. Equivalently, it implies that the faithful dimension of $Q$ measures how far $Q$ is from being a characteristic tilting module.

Recall that in Rouquier's terminology if the Schur functor associated to some split quasi-hereditary cover $(A, P)$ is fully faithful on standard modules, then $(A, P)$ is called a 0-faithful (split quasi-hereditary) cover of the endomorphism algebra of $P$. Theorem ~\ref{ringeldualitycovers} and Corollary ~\ref{codominantdimensioncontrollfinitedimensionalcase} then imply that any $0$-faithful split quasi-hereditary cover of a finite-dimensional algebra can be detected using relative codominant dimension with respect to a summand of a characteristic tilting module and furthermore the quality of such a cover is completely controlled by this generalisation of codominant dimension. In the cases, that quasi-hereditary finite-dimensional algebra that constitutes the cover admits a simple preserving duality, the relative dominant dimension of a characteristic tilting module with respect to $Q$ coincides with the relative codominant dimension of a characteristic tilting module with respect to $Q$, where $Q$ is a summand of a characteristic tilting module.

We answer Question (2) by picking $A$ to be the $q$-Schur algebra $S_{R, q}(n, d)$ on Theorem \ref{ringeldualitycovers} and fixing $Q$ to be $(R^n)^{\otimes d}$. 

Technically, the relative dominant dimension with respect to $(R^n)^{\otimes d}$ is different from the one used in \cite{Koenig2001} but the approach taken in \cite{Koenig2001} also works and perhaps even better with the setup that we investigate here. In fact, using the Schur functor from the module category over a bigger $q$-Schur algebra $S_{k, q}(d, d)$ to the module category over $S_{k, q}(n, d)$, we can deduce that $(R^n)^{\otimes d}\rdomdim_{(S_{R, q}(n, d), R)} T$ is at least half of the relative dominant dimension $\domdim {(S_{R, q}(d, d), R)}$. The computation of the relative dominant dimension of $S_{R, q}(d, d)$ is due to \cite{zbMATH05871076, zbMATH07050778, CRUZ2022410}. In particular, the relative dominant dimension of $S_{R, q}(d, d)$ is at least two independently of $R$ and $d$.  Combining these developments with deformation techniques,  we obtain:

\begin{thmintroduction}[Theorem \ref{maintheoremforRingeldualSchur}]
	Let $R$ be a commutative Noetherian regular ring with invertible element $u\in R$ and $n, d$ be natural numbers. Put $q=u^{-2}$.  Let $T$ be a characteristic tilting module of $S_{R, q}(n,d)$. Denote by $R(S_{R, q}(n, d))$ the Ringel dual $\End_{S_{R, q}(n,d)}(T)^{op}$ of the $q$-Schur algebra $S_{R, q}(n, d)$ (there are no restrictions on the natural numbers $n$ and $d$).
	Then, 
	\begin{itemize}
		\item $(R(S_{R, q}(n, d)), \Hom_{S_{R, q}(n, d)}(T, (R^n)^{\otimes d}))$ is a split quasi-hereditary cover of $\End_{S_{R, q}(n, d)}(V^{\otimes d})^{op}$;
		\item $(R(S_{R, q}(n, d)), \Hom_{S_{R, q}(n, d)}(T, (R^n)^{\otimes d}))$ is an $((R^n)^{\otimes d}\rdomdim_{(S_{R, q}(n, d), R)} T-2)$-faithful (split \linebreak quasi-hereditary) cover of $\End_{S_{R, q}(n, d)}(V^{\otimes d})^{op}$ in the sense of Rouquier.
	\end{itemize}
\end{thmintroduction}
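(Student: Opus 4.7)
The plan is to apply Theorem~\ref{ringeldualitycovers} with $A = S_{R,q}(n,d)$ and $Q = (R^n)^{\otimes d}$; both conclusions then follow immediately once the two hypotheses are verified, namely that $Q \in \add T$ and that $Q-\codomdim_{(A, R)} T \geq 2$. The first is classical: in the Donkin--Dipper--James picture, $(R^n)^{\otimes d}$ decomposes as a direct sum of $q$-analogues of Young permutation modules, each of which is a summand of the characteristic tilting module $T$.

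The crux of the argument is the codominant dimension bound. I would first invoke Theorem~\ref{changeofringrelativedomdimrelativetomodule} to reduce to the case when $R$ is a field, using that $q$-Schur algebras, tensor-space modules, and characteristic tilting modules all admit integral forms that remain projective over the ground ring under base change, so the invariant passes to residue fields. Over a field $k$, the simple-preserving contravariant duality on $S_{k,q}(n,d)$, which fixes summands of the characteristic tilting module, identifies $Q-\codomdim T$ with $Q-\domdim T$, so the task reduces to bounding the latter below by $2$.

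To obtain this bound, I would exploit the Schur functor $F = e(-) \colon S_{k,q}(d,d)\m \to S_{k,q}(n,d)\m$, where the idempotent $e \in S_{k,q}(d,d)$ is chosen so that $F$ sends the regular representation of $S_{k,q}(d,d)$ to a module isomorphic to $(k^n)^{\otimes d}$ over $S_{k,q}(n,d)$ and sends a characteristic tilting module $T(d,d)$ of $S_{k,q}(d,d)$ to a module whose additive closure contains $T$. The relative dominant dimension of $S_{k,q}(d,d)$, shown to be at least two in \cite{zbMATH05871076, zbMATH07050778, CRUZ2022410}, furnishes a projective-injective coresolution of $T(d,d)$ of the appropriate length; applying $F$ transports this to a coresolution of its image in $S_{k,q}(n,d)\m$ by modules in $\add((k^n)^{\otimes d})$, yielding the required lower bound on $Q-\domdim T$.

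The main technical obstacle will be ensuring that the Schur-functor transport preserves the extra exactness condition built into the definition of relative dominant dimension with respect to a module: the resulting coresolution must remain exact after applying $\Hom_{S_{k,q}(n,d)}(-, (k^n)^{\otimes d})$. Using standard idempotent adjunction, this $\Hom$-functor should translate into a $\Hom$-functor against $S_{k,q}(d,d)e$ over $S_{k,q}(d,d)$, whose exactness on the original projective-injective coresolution is automatic. Carrying out this translation cleanly---and matching summands of $F(T(d,d))$ with those of the characteristic tilting module of $S_{k,q}(n,d)$ via the multiplicity formulas for $q$-Schur tilting modules---is the combinatorial heart of the proof, and is where the specific structure of $q$-Schur algebras enters essentially.
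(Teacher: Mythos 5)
There are two genuine gaps, and the paper's own proof is organised precisely around avoiding them.

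First, the transport step fails as stated. You propose to apply the Schur functor $e(-)\colon S_{k,q}(d,d)\m\to S_{k,q}(n,d)\m$ to a projective--injective coresolution of the characteristic tilting module of $S_{k,q}(d,d)$ and to check the extra exactness condition (exactness under $\Hom_{S_{k,q}(n,d)}(-,(k^n)^{\otimes d})$) by ``standard idempotent adjunction.'' The paper proves that this cannot work in general: the proposition following Theorem~\ref{lowervaluesrelawirthrespmodule} shows that the truncated coresolution $e\delta$ remains exact under $\Hom_{eAe}(-,eP)$ \emph{if and only if} $P\in\add D(eA)$, i.e.\ if and only if $eP$ is projective--injective over $eAe$ --- which for $(k^n)^{\otimes d}$ with $n<d$ is generally false (it is only a partial tilting module). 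Relatedly, Remark~\ref{homologicalepipsi} shows the map $KS_d\twoheadrightarrow\End_{S_K(n,d)}(V^{\otimes d})$ need not be a homological epimorphism, so the $\Hom$/$\Tor$ translation you envisage is not automatic. The paper circumvents this by an entirely different mechanism (Theorem~\ref{lowerboundrelativedominantdimensionV}): it converts $\codomdim_A T$ into the statement that $(R(A),\Hom_A(T,P))$ is a faithful cover, truncates that cover by the split heredity ideals of $R(A)$ corresponding to the tilting summands killed by $e$ (Theorem~\ref{truncationcover}), and then translates back via Lemma~\ref{sesforcocovers} and Theorem~\ref{moduleMuellerparttwo}. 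You would need this, or some substitute for it, to make the lower bound legitimate.

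Second, even granting the transport, the bound you obtain is not $\geq 2$. You conflate $\domdim(S_{k,q}(d,d))\geq 2$ (the regular module) with $\domdim_{S_{k,q}(d,d)}T(d,d)$ (the characteristic tilting module); by Theorem~\ref{dominantdimensionquantumschur} the latter equals $\inf\{s : 1+q+\cdots+q^s\notin R^\times,\ s<d\}$, which is only $\geq 1$ and equals $1$ when $1+q$ is not invertible (e.g.\ $q=-1$). In that case Theorem~\ref{ringeldualitycovers} is not applicable, and your argument gives nothing. The paper devotes the second half of its proof to exactly this case: it lifts to the Laurent polynomial ring $L[X,X^{-1}]$, where the codominant dimension is $\geq 1$ but becomes $\geq 2$ over the quotient field, applies the deformation result Corollary~\ref{deformationringeldualfunctor} to get a $0$-faithful cover integrally, and then specialises back to the residue field to obtain the $(-1)$-faithful cover, finishing with Proposition~\ref{faithfulcoverresiduefield}. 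Without this (or an equivalent) your proof does not cover all $q$, and the first bullet of the statement (the existence of the cover for all parameters) remains unproved. A minor further slip: the idempotent Schur functor sends the projective--injective module $(k^d)^{\otimes d}$, not the regular representation of $S_{k,q}(d,d)$, to $(k^n)^{\otimes d}$.
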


The existence of this split quasi-hereditary cover  clarifies why the quasi-hereditary structure of the Ringel dual of the Schur algebra can be used to study the decomposition numbers of the symmetric group \cite{zbMATH00681964}. 
This result also explains why \cite{Koenig2001} were successful in using tilting theory to establish Schur-Weyl duality (see Remark \ref{RemarkKSX}).

If $n\geq d$, the $q$-Schur algebra is Ringel self-dual (see \citep[Proposition 3.7]{zbMATH00549737}, and \citep[Proposition 4.1.4, Proposition 4.1.5]{MR1707336}) and so this split quasi-hereditary cover constructed in Theorem \ref{maintheoremforRingeldualSchur} is equivalent to the split quasi-hereditary cover $(S_{R, q}(n, d), (R^n)^{\otimes d})$ whose quality was completely determined in \citep[Subsections 7.1, 7.2]{p2paper}. 
In general, the usual strategy for $q=-1$ is not sufficient. In such a case, going integrally is crucial and we make use of deformation techniques (see Corollary \ref{deformationringeldualfunctor}). 

The tools that we study here on relative dominant dimension can be used to reprove the Ringel self-duality of the BGG category $\mathcal{O}$ (Theorem \ref{RingelselfdualityofO}).

In \cite{zbMATH01267662}, Soergel proved that the blocks of the BGG category $\mathcal{O}$ are Ringel self-dual by constructing the explicit functor giving Ringel self-duality. Unfortunately, such proof does not offer much information on which structural properties of $\mathcal{O}$ force its blocks to be Ringel self-dual. Later in \cite{MR1785327} a different proof of Ringel self-duality of the blocks of the BGG category $\mathcal{O}$ was presented using as main tool the Enright completion functor. 

In \citep[Subsection 7.3]{p2paper}, the author studied projective Noetherian algebras $A_{\mathcal{D}}$ that encode the representation theory of any block of the BGG category $\mathcal{O}$. The Ringel self-duality of blocks of the BGG category $\mathcal{O}$ is reproved by applying 
Theorem \ref{ringeldualitycovers} to the algebras $A_{\mathcal{D}}$ making use of integral versions of Soergel's Struktursatz.
 In particular, we can now regard Ringel self-duality of the blocks of the BGG category $\mathcal{O}$ as an instance of uniqueness of covers from Rouquier's cover theory.

\paragraph*{Organisation} This paper is structured as follows: Section \ref{Preeliminaries} sets up the notation, properties and results on covers, split quasi-hereditary algebras, relative dominant dimension over Noetherian algebras and approximation theory to use later. In Section \ref{Relative (co-)dominant dimension with respect to a module}, we give the definition of relative dominant (resp. codominant)  dimension with respect to a module over a projective Noetherian algebra (Definition \ref{relativedomdimrelativedef}). In Subsection \ref{Relative Mueller's characterization of relative dominant dimension with respect to a module}, we explore a characterisation of relative dominant (resp. codominant) dimension with respect to an $A$-module $Q$ in terms of homology over $\End_A(Q)^{op}$ (Theorems \ref{moduleMuellerpartone}-\ref{moduleMuellerparttwo}). As an application of this characterisation, we see how relative dominant dimension with respect to a module varies on exact sequences and long exact sequences in general.
In Subsection \ref{Change of rings on relative dominant dimension with respect to a module}, we study how computations of the relative dominant dimension $Q\rdomdim_{(A, R)} M$ can be reduced to computations over finite-dimensional algebras over algebraically closed fields under mild assumptions on $Q$ and $M$ (Theorem \ref{changeofringrelativedomdimrelativetomodule} and Lemma \ref{relativedominantdimensionalgebraicclosuremodule}). 
In Section \ref{The reduced grade with respect to a module}, we discuss the relation between relative dominant dimension with respect to a module with the concept of reduced cograde with respect to a module. In Section \ref{Fully faithfulness of }, we investigate relative codominant dimension with respect to a module as a tool to establish double centralizer properties (Lemmas \ref{fullyfaithfulnesstwo} and \ref{cocoversdef}), to discover (Theorem \ref{ringeldualitycovers} and Corollary \ref{deformationringeldualfunctor}) and to control (Corollary \ref{codominantdimensioncontrollfinitedimensionalcase}) the quality of split quasi-hereditary covers. 
In Subsection \ref{Ringel self-duality as an instance of uniqueness of covers}, we discuss how under special conditions dominant dimension can be used as a tool to study Ringel self-duality. In Section \ref{Wakamatsu tilting conjecture for quasi-hereditary algebras}, we clarify what the relative dominant dimension with respect to a summand of a characteristic tilting module measures. In Section \ref{Going from bigger covers to a smaller covers}, we explore how to obtain lower bounds to the quality of a split quasi-hereditary cover of a quotient algebra $B/J$ using the quality of split quasi-hereditary covers of $B$.
In Subsection \ref{Generalized Schur algebras in the sense of Donkin}, we construct a split quasi-hereditary cover of the quotient of the Iwahori-Hecke algebra involved in classical Schur-Weyl duality constituted by a Ringel dual of a $q$-Schur algebra using relative dominant dimension with respect to $(R^n)^{\otimes d}$ (Theorem \ref{maintheoremforRingeldualSchur}). In Subsubsection \ref{BGG category O}, we use cover theory and relative dominant dimension to reprove Ringel self-duality of the blocks of the BGG category $\mathcal{O}$ (Theorem \ref{RingelselfdualityofO}). In Subsubsection \ref{RdSchur algebras}, we see how these techniques can be used for Schur algebras in characteristic distinct from two.

\section{Preliminaries}\label{Preeliminaries}

Throughout this paper, we assume that $R$ is a Noetherian commutative ring with identity and $A$ is a projective Noetherian $R$-algebra, unless stated otherwise. Here, $A$ is called a \textbf{projective Noetherian} $R$-algebra if  $A$ is an $R$-algebra so that $A$ is finitely generated projective as $R$-module.
We call $A$ a \textbf{free Noetherian} $R$-algebra if $A$ is a Noetherian $R$-algebra so that $A$ is free of finite rank as $R$-module. 
The module category of left $A$-modules is denoted by $A\M$. We denote by $A\m$ the full subcategory of $A\M$ whose modules are finitely generated and by $A\Proj$ the subcategory of $A\M$ of projective modules. Given $M\in A\m$ we denote by $\add_A M$ (or just $\add M$) the full subcategory of $A\m$ whose modules are direct summands of a finite direct sum of copies of $M$. We write $A\proj$ to denote $\add A$. 
By $\End_A(M)$ we mean the endomorphism algebra of an $A$-module $M$. By $A^{op}$ we mean the opposite algebra of $A$. We denote by $D_R$ (or just $D$) the standard duality functor ${\Hom_R(-, R)\colon A\m\rightarrow A^{op}\m}$.  We say that $M\in A\m\cap R\proj$ is \textbf{$(A, R)$-injective} if $M\in \add DA$. By $(A, R)\inj\cap R\proj$ we mean the full subcategory of $A\m\cap R\proj$ whose modules are $(A, R)$-injective. By an \textbf{$(A, R)$-exact sequence} we mean an exact sequence of $A$-modules which splits as sequence of $R$-modules. By an \textbf{$(A, R)$-monomorphism} we mean an homomorphism $f\in \Hom_A(M, N)$ that fits into an $(A, R)$-exact sequence of the form $0\rightarrow M\xrightarrow{f} N$. 
By a \textbf{generator}  we mean a module $M\in A\m$ satisfying $A\in \add M$. 
Given $M\in A\m$ we denote by $\pdim_A M$ (resp. $\injdim_A M$) the projective (resp. injective) dimension of $M$. 

\paragraph{Change of rings}
We  denote by $\MaxSpec(R)$ the set of maximal ideals of $R$ and by $\Spec R$ the set of prime ideals of $R$. By $\dim R$ we mean the Krull dimension of $R$. 
We denote by $R_\pri$ the localisation of $R$ at the prime ideal $\pri$, and by $M_\pri$ the localisation of $M$ at $\pri$ for every $M\in A\m$. In particular, $M_\pri\in A_\pri\m$. 
We say that a local commutative Noetherian ring is \textbf{regular} if it has finite global dimension. In such a case, the global dimension coincides with the Krull dimension. We say that a commutative Noetherian ring is \textbf{regular} if for every $\pri\in \Spec R$, $R_\pri$ is regular.
We denote by $R(\mi)$ the residue field $R/\mi\simeq R_\mi/\mi_\mi$. For each $M\in A\m$, by $M(\mi)$ we mean the finite-dimensional module $R(\mi)\otimes_R M$ over $A(\mi)=R(\mi)\otimes_R A$. 
By $R^\times$ we denote the set of invertible elements of $R$. We write $D_{(\mi)}$ to abbreviate $D_{R(\mi)}$ for every $\mi\in \MaxSpec R$.

\paragraph{The functor $F_Q$} Let $Q\in A\m\cap R\proj$ satisfying $DQ\otimes_A Q\in R\proj$. By $F_Q$ (or just $F$ when no confusion arises) we mean the functor $\Hom_A(Q, -)\colon A\m\rightarrow B\m$, where $B$ is the endomorphism algebra $\End_A(Q)^{op}$. In particular,  $B\in R\proj$ and $B\m$ is an abelian category. Hence, $B$ is a projective Noetherian $R$-algebra.
Given two maps $\alpha\in \Hom_A(X, Y)$, $\beta\in \Hom_A(Z, W)$, we say that $f$ and $g$ are \textbf{equivalent} if there are $A$-isomorphisms $f\colon X\rightarrow Z$, $g\colon Y\rightarrow  W$ satisfying $g\circ \alpha=\beta\circ f$.
 By $\mathbb{I}_Q$ (or just $\mathbb{I}$ when no confusion arises) we mean the left adjoint of $F$, $Q\otimes_B -\colon B\m\rightarrow A\m$.
We denote by $\upsilon$ the unit $\id_{B\m}\rightarrow F\mathbb{I}$ and $\chi$ the counit $\mathbb{I}F\rightarrow \id_{A\m}$. Thus, for any $N\in B\m$, $\upsilon_N$ is the $B$-homomorphism $\upsilon_N\colon N\rightarrow \Hom_A(Q, Q\otimes_B N)$, given by $\upsilon_N(n)(q)=q\otimes n$, $n\in N, q\in Q$. For any $M\in A\m$, $\chi_M$ is the $A$-homomorphism $Q\otimes_B \Hom_A(Q, M)\rightarrow M$, given by $\chi_M(q\otimes g)=g(q)$, $g\in FM,  q\in Q$. By projectivization, the restriction of $F$ to $\add Q$ gives an equivalence between $\add Q$ and $B\proj$. Further, for every $X, Y\in A\m$ and every $M, N\in B\m$, $\upsilon_{M\oplus N}$ is equivalent to $\upsilon_M\oplus \upsilon_N$ and $\chi_{X\oplus Y}$ is equivalent to $\chi_X\oplus \chi_Y$, respectively. 	We shall write $\chi^r$ and $\upsilon^r$ for the counit and unit, respectively, of the adjunction $-\otimes_B DQ \dashv \Hom_A(DQ, -)$. Given a left (resp. right) exact functor $H$ between two module categories, we denote by $\R^iH$ (resp. $\operatorname{L}_i H$) the $i$-th right (resp. left) derived functor of $H$ for $i\in \mathbb{N}$.

\paragraph{Filtrations}	Recall that for a given set (possibly infinite) of modules $\Theta$ in $A\m\cap R\proj$, $\mathcal{F}(\Theta)$ denotes the full subcategory of $A\m\cap R\proj$ whose modules admit a finite filtration by the modules in $\Theta$. Given a set of modules $\Theta$ in $A\m\cap R\proj$, we denote by $\tilde{\Theta}$ the set of modules \mbox{$\{\theta\otimes_R X_\theta\colon \theta\in \Theta, X_\theta\in R\proj \}$.} The following lemma allows us to identify the set \mbox{$F_Q\tilde{\Theta}:=\{F_QX\colon X\in \tilde{\Theta} \}$}	 with the set $\widetilde{F_Q\Theta}$, where $F_Q\Theta:=\{F_Q\theta\colon \theta\in \Theta \}$.
	
\begin{Lemma}\label{tensorprojcommutingonHom}
	Let $M, N\in A\m$ and $U\in R\proj$. Then, the $R$-homomorphism \linebreak\mbox{$\varsigma_{M, N, U}\colon \Hom_A(M, N)\otimes_R U\rightarrow \Hom_A(M, N\otimes_R U)$,} given by $ \ g\otimes u\mapsto g(-)\otimes u$ is an $R$-isomorphism.
\end{Lemma}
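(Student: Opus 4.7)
The plan is the standard "free-then-summand" dévissage, exploiting that $U \in R\proj$ means $U$ is finitely generated projective, and that both the source and target of $\varsigma_{M,N,-}$ are additive functors in $U$. First I would verify naturality of $\varsigma_{M,N,-}$ in the variable $U$: for any $R$-linear map $\varphi\colon U \to U'$, the diagram relating $\varsigma_{M,N,U}$ and $\varsigma_{M,N,U'}$ via $\mathrm{id}\otimes \varphi$ on the left and $\Hom_A(M, N\otimes \varphi)$ on the right commutes, which is immediate from the formula $g\otimes u \mapsto g(-)\otimes u$.

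Next I would dispose of the base case $U=R$. Under the canonical identifications $\Hom_A(M,N)\otimes_R R \cong \Hom_A(M,N)$, given by $g\otimes r \mapsto rg$, and $N\otimes_R R \cong N$, given by $n\otimes r \mapsto rn$, the map $\varsigma_{M,N,R}$ is identified with the identity of $\Hom_A(M,N)$. Since both $-\otimes_R U$ and $\Hom_A(M,N\otimes_R U)$ (the latter because $\Hom_A(M,-)$ preserves finite products and $N\otimes_R-$ preserves finite direct sums) commute with finite direct sums in $U$, and $\varsigma$ is additive in $U$ by inspection of the formula, naturality plus the base case yield that $\varsigma_{M,N,R^n}$ is an $R$-isomorphism for every $n\in\mathbb{N}$.

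Finally, because $U\in R\proj$ is a finitely generated projective $R$-module, pick $V\in R\proj$ and $n\in\mathbb{N}$ with $U\oplus V\cong R^n$. By naturality of $\varsigma$ with respect to the split idempotent corresponding to the projection $R^n \twoheadrightarrow U \hookrightarrow R^n$, the map $\varsigma_{M,N,U}$ is a direct summand of $\varsigma_{M,N,R^n}$, hence is itself an $R$-isomorphism. I do not foresee any genuine obstacle: the only mild subtlety is keeping track of the naturality square that lets one pass to direct summands, but this is routine.
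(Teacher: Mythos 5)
Your proof is correct and follows essentially the same route as the paper's: reduce via additivity of $\varsigma_{M,N,-}$ in $U$ to the free case, check the base case $U=R$ against the canonical multiplication isomorphisms, and pass to direct summands using that $U\in R\proj$ is a summand of some $R^n$. The paper is merely terser about the final summand step, which you spell out explicitly.
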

\begin{proof}
	Given that $M\in A\m$,  $\Hom_A(M, -)$ preserves finite direct sums. Hence, $\varsigma_{M, N, R^t}$ is an isomorphism for every $t\in \mathbb{N}$. 
	Since for all modules $U_1, U_2\in R\m$, $\varsigma_{M, N, U_1}\oplus \varsigma_{M, N, U_2}$ is an isomorphism if and only if $\varsigma_{M, N, U_1\oplus U_2}$ is, the result follows.
\end{proof}

\subsection{Basics on approximations}

	\begin{Def}\label{aproximations}
	Let $T\in A\m$. An $A$-homomorphism $M\rightarrow N$ is called a \textbf{left $\add T$-approximation} of $M$ provided that $N$ belongs to $\add T$ and the induced homomorphism $\Hom_A(N, X)\rightarrow \Hom_A(M, X)$ is surjective for every $X\in \add T$.
	A map $f\in \Hom_A(Y, M)$ is called a \textbf{right $\add T$-approximation} of $M$ if $Y\in \add T$ and $\Hom_A(X, f)$ is surjective for every $X\in \add T$.
\end{Def}

\begin{Lemma}\label{rightapprbyonemodule}
	Let $N, T\in A\m\cap R\proj$ and $M\in \add T$. Then, $f\in \Hom_A(M, N)$ is a right $\add T$-approximation of $N$ if and only if the map $\Hom_A(T, f)\colon \Hom_A(T, M)\rightarrow \Hom_A(T, N)$ is surjective.
\end{Lemma}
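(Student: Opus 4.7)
The plan is to prove the two implications separately, with the forward direction being immediate from the definition and the reverse direction reducing to the additivity of $\Hom_A(-, -)$ in its first argument.

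For the forward direction, if $f\colon M\rightarrow N$ is a right $\add T$-approximation of $N$, then by definition the induced map $\Hom_A(X, M)\rightarrow \Hom_A(X, N)$ is surjective for every $X\in\add T$. Specialising to $X=T$ yields at once the surjectivity of $\Hom_A(T,f)$. The hypotheses $N,T\in R\proj$ and $M\in \add T$ are not needed here.

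For the reverse direction, assume $\Hom_A(T,f)$ is surjective and fix an arbitrary $X\in\add T$. First I would observe that the contravariant functor $\Hom_A(-, M)$ sends finite direct sums to finite direct products, and likewise for $\Hom_A(-, N)$, and that these isomorphisms are natural in the first argument. Consequently, for any positive integer $n$, the map $\Hom_A(T^n, f)$ is equivalent to the direct sum of $n$ copies of $\Hom_A(T, f)$, and therefore is surjective. Next, choose $n$ and a module $Y\in A\m$ such that $T^n\simeq X\oplus Y$. Then $\Hom_A(T^n, f)$ is equivalent to $\Hom_A(X,f)\oplus \Hom_A(Y,f)$, and a direct summand of a surjective map of $R$-modules is surjective; hence $\Hom_A(X,f)$ is surjective, as required.

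No step here looks like a genuine obstacle: the argument is essentially a routine manipulation exploiting that $\Hom_A(-,?)$ is additive in the first variable and that surjectivity is preserved by passage to direct summands. The only point worth flagging is the implicit use of the natural isomorphism of split decompositions compatible with $f$, which is standard but should be phrased carefully so that the ``equivalence'' of maps (in the sense recalled in the paragraph on $F_Q$) is made explicit.
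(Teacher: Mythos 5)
Your proof is correct and follows essentially the same route as the paper: both directions come down to the additivity of $\Hom_A(-,?)$ in the first argument, which is exactly the content of the commutative diagram the paper uses. Your write-up is just a more explicit spelling-out of that observation.
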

\begin{proof}
	Since $\Hom_A(-, N)$ commutes with finite direct sums for every $N\in A\m$, $\Hom_A(T_1\oplus T_2, f)$ is surjective if and only if $\Hom_A(T_1\oplus T_2, f)$ is surjective.
\end{proof}

\begin{Lemma}\label{gofromrighttoleftappro}
	Let $M, T\in A\m\cap R\proj$ and $N\in \add T$. An $A$-homomorphism $f\colon M\rightarrow N$ is a left $\add T$-approximation of $M$ if and only if $Df\colon DN\rightarrow DM$ is a right $\add DT$-approximation of $DM$.
\end{Lemma}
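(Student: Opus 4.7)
The plan is to exploit the standard duality $D = \Hom_R(-, R)$ restricted to the subcategory $A\m\cap R\proj$. On this subcategory, the biduality map is an isomorphism, and $D$ is a contravariant equivalence onto $A^{op}\m\cap R\proj$. Consequently, one has a natural isomorphism
\[
\Hom_A(Y, X) \xrightarrow{\;\sim\;} \Hom_{A^{op}}(DX, DY), \qquad g \longmapsto Dg,
\]
for any $X, Y \in A\m\cap R\proj$, natural in both variables.

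First I would observe that $X \in \add T$ if and only if $DX \in \add DT$, which handles the membership requirement: $N \in \add T$ yields $DN \in \add DT$, so this piece of being a right $\add DT$-approximation is automatic.

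Next I would fix an arbitrary $X \in \add T$, set $Y = DX \in \add DT$, and combine naturality in the first and second arguments of the isomorphism above to obtain the commutative square
\[
\begin{tikzcd}
\Hom_A(N, X) \arrow[r, "\Hom_A(f{,}X)"] \arrow[d, "\simeq"'] & \Hom_A(M, X) \arrow[d, "\simeq"'] \\
\Hom_{A^{op}}(DX, DN) \arrow[r, "\Hom_{A^{op}}(DX{,}Df)"'] & \Hom_{A^{op}}(DX, DM)
\end{tikzcd}
\]
with both vertical arrows isomorphisms. Surjectivity of the top row is therefore equivalent to surjectivity of the bottom row. Since $X$ ranges over $\add T$ exactly when $DX$ ranges over $\add DT$, the condition characterising $f$ as a left $\add T$-approximation of $M$ (Definition \ref{aproximations}) translates term-by-term into the condition characterising $Df$ as a right $\add DT$-approximation of $DM$.

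No step is a real obstacle: the whole argument is a naturality check. The only thing requiring a line of justification is the claimed natural isomorphism $\Hom_A(Y, X) \cong \Hom_{A^{op}}(DX, DY)$, which follows from the fact that for $X, Y \in A\m\cap R\proj$ the biduality morphism $X \to DDX$ is an $A$-isomorphism, so applying $D$ twice recovers $\Hom_A(Y, X)$ from $\Hom_{A^{op}}(DX, DY)$ canonically.
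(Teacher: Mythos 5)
Your argument is correct and is essentially the paper's proof: both rest on the natural isomorphism $\Hom_A(Y,X)\simeq\Hom_{A^{op}}(DX,DY)$, $g\mapsto Dg$, valid on $A\m\cap R\proj$ because biduality is an isomorphism there (the paper cites \citep[Proposition 2.2]{CRUZ2022410} for this), together with the observation that $X\in\add T$ exactly when $DX\in\add DT$. The only cosmetic difference is that you run the commutative square over all $X\in\add T$ while the paper displays it for a single test object, so nothing further is needed.
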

\begin{proof}
	The maps $\Hom_A(T{,} f)$ and $\Hom_A(Df {,} DT)$ are equivalent (see for example \citep[Proposition 2.2]{CRUZ2022410}), and so, the claim follows.
\end{proof}


Let $M, T\in A\m\cap R\proj$.
It is easy to check that an $(A, R)$-exact sequence $X_t\xrightarrow{\alpha_t}\cdots \rightarrow X_1\xrightarrow{\alpha_1}X_0\xrightarrow{\alpha_0}M \rightarrow 0$ remains exact under $\Hom_A(T, -)$ with $X_i\in \add T$ if and only if for every $i=1, \ldots, t$, the induced maps $X_i\twoheadrightarrow \im \alpha_i$ and $\alpha_0$ are right $\add T$-approximations.
Dually, an $(A, R)$-exact sequence $0\rightarrow M\xrightarrow{\alpha_0}X_0\xrightarrow{\alpha_1}X_1\rightarrow\cdots\rightarrow X_t$ remains exact under $\Hom_A(-, T)$ with $X_i\in \add T$ if and only if the $(A, R)$-monomorphisms $\im \alpha_{i+1}\hookrightarrow X_{i+1}$ and $\alpha_0$ are left $\add T$-approximations with $i=0, \ldots, t-1$.

\subsection{Split quasi-hereditary algebras}

Quasi-hereditary algebras were introduced by Cline, Parshall and Scott in \citep{MR961165}, and were defined in terms of the existence of a certain idempotent ideal chain reflecting structural properties of the algebra like the finiteness of the global dimension. In \cite{CLINE1990126}, the concept of quasi-hereditary algebra was generalised to Noetherian algebras. Among them, are the split quasi-hereditary algebras which possess nicer properties with respect to change of ground rings. In particular, every quasi-hereditary algebra over an algebraically closed field is split quasi-hereditary. A module theoretical approach to split quasi-hereditary algebras over commutative Noetherian rings was considered in \cite{Rouquier2008}. In \cite{zbMATH01527053}, a comodule theoretical approach was developed to split quasi-hereditary algebras over commutative Noetherian rings.

\begin{Def}\label{splithwc}
	Given a projective Noetherian $R$-algebra $A$ and a collection of finitely generated left $A$-modules $\{\St(\l)\colon \l\in \L\}$ indexed by a  poset $\L$, we say that $(A, \{\Delta(\lambda)_{\lambda\in \Lambda}\})$ is a \textbf{split quasi-hereditary $R$-algebra} if the following conditions hold:
	\begin{enumerate}[(i)]
				\item The modules $\St(\l)\in A\m$ are projective over $R$.
		\item Given $\l, \mu\in \L$, if $\Hom_A(\St(\l), \St(\mu))\neq 0$, then $\l\leq\mu$.
		\item $\End_A(\St(\l))\simeq R$, for all $\l\in\L$.
		\item Given $\l\in\L$, there is $P(\l)\in A\proj$ and an exact sequence $0\rightarrow C(\l)\rightarrow P(\l)\rightarrow \St(\l)\rightarrow 0$ such that $C(\l)$ has a finite filtration by modules of the form $\St(\mu)\otimes_R U_\mu$ with $U_\mu\in R\proj$ and $\mu>\l$. 
		\item $P=\sumSt P(\l)$ is a progenerator for $A\m$. 
	\end{enumerate}
\end{Def}
Under these conditions, we also say that $(A\m, \{\Delta(\lambda)_{\lambda\in \Lambda}\})$ is a \textbf{split highest weight category}. We use the terms split quasi-hereditary algebra and split highest weight category interchangeably. The modules $\St(\l)$ are known as \textbf{standard modules}. Much of the structure of a split quasi-hereditary algebra is controlled by the subcategory $\mathcal{F}(\Stsim)$, where $\Stsim_A$ or just $\Stsim$ (when there is no confusion on the ambient algebra) denotes the set $\{\St(\l)\otimes_R U_\l\colon \l\in \L, U_\l\in R\proj \}$. This subcategory contains all projective finitely generated $A$-modules, it is closed under extensions, closed under kernels of epimorphisms, and closed under direct summands. Hence, $\mathcal{F}(\Stsim)$ can be viewed as the full subcategory of $A\m\cap R\proj$ whose modules admit a finite filtration by direct summands of direct sum of copies of standard modules.

Given a split quasi-hereditary algebra $(C, \{\Delta_C(\lambda)_{\lambda\in \Omega}\})$, a functor $G\colon A\m\rightarrow C\m$ is called a \textbf{split highest weight equivalence of categories}  if $F$ is an equivalence of categories and there exists a bijection of posets $\Phi\colon \L\rightarrow \Omega$ such that for all $\l\in \L$,  $G\St_A(\l)\simeq \St_C(\Phi(\l))\otimes_R U_\l$ for some invertible $R$-module $U_\l$. In such a case, we say that $A$ and $C$ are \textbf{Morita equivalent as split quasi-hereditary algebras} and that $A\m$ and $C\m$ are \textbf{equivalent as split highest weight categories}.

\paragraph{Split heredity chains}

An alternative way to define split quasi-hereditary algebras is via split heredity chains.
An ideal $J$ is called \textbf{split heredity} of $A$ if $A/J\in R\proj$, $J\in A\proj$, $J^2=J$ and the $R$-algebra $\End_A(J)^{op}$ is Morita equivalent to $R$. A chain of ideals $0=J_{t+1}\subset J_t\subset \cdots\subset J_1=A$ is called \textbf{split heredity} if $J_i/J_{i+1}$ is a split heredity ideal in $A/J_{i+1}$ for $1\leq i\leq t$. The algebra $A$ is split quasi-hereditary if it admits a split heredity chain. Given an increasing bijection $\L\rightarrow \{1, \ldots, t\}$, $\l\mapsto i_\l$, the standard modules and the split heredity chain are related by the following identification: $\im (\tau_i) \simeq J_i/J_{i+1}$, where $\tau_i$ is the map $\St_i\otimes_R \Hom_{A/J_{i+1}}(\St_i, A/J_{i+1})\rightarrow A/J_{i+1}$ given by $\tau_i(l\otimes f)= f(l)$. For more details on this equivalence, we refer to \citep[3.3]{cruz2021cellular} and \citep[Theorem 4.16]{Rouquier2008}.

\subsubsection{Costandard modules and characteristic tilting modules}

A split quasi-hereditary algebra $(A, \{\Delta(\lambda)_{\lambda\in \Lambda}\})$ also comes equipped with a set of modules ${\{\Cs(\l)\colon \l\in \L\}}$ known as \textbf{costandard modules}. These modules satisfy the following properties.
\begin{Prop}	Let $(A, \{\Delta(\lambda)_{\lambda\in \Lambda}\})$ be a split quasi-hereditary algebra. \label{qhproperties}
	The following assertions hold.
	\begin{enumerate}[(i)]
		\item $(A^{op}, \{D\Cs(\lambda)_{\lambda\in \Lambda}\})$ is a split quasi-hereditary algebra;
		\item $\mathcal{F}(\Cssim)=\{X\in A\m\cap R\proj\colon \Ext_A^1(M, X)=0, \ \forall M\in \mathcal{F}(\Stsim) \}$, where $\Cssim $ denotes the set \linebreak${\{\Cs(\l)\otimes_R U_\l\colon \l\in \L, U_\l\in R\proj \}}$;
		\item For any $\mu\neq \l\in \L$, $\Hom_A(\St(\l), \Cs(\l))\simeq R$ and $\Hom_A(\St(\mu), \Cs(\l))=0$;
		\item The choice of costandard modules satisfying the previous assertions is unique up to isomorphism;
		\item $\mathcal{F}(\Stsim)=\{X\in A\m\cap R\proj\colon \Ext_A^1(X, N)=0, \ \forall N\in \mathcal{F}(\Cssim) \}$;
		\item For any $M\in \mathcal{F}(\Stsim)$, the functor $-\otimes_A M\colon \mathcal{F}(D\Cssim)\rightarrow R\proj$ is well defined and exact;
		\item For any $N\in \mathcal{F}(\Cssim)$, the functor $DN\otimes_A -\colon \mathcal{F}(\Stsim)\rightarrow R\proj$ is well defined and exact;
		\item For any $M\in \mathcal{F}(\Stsim)$ and $N\in \mathcal{F}(\Cssim)$, it holds $\Hom_A(M, N)\in R\proj$.
	\end{enumerate}
\end{Prop}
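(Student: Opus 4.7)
The strategy is to prove (i) first via the split heredity chain, then to \emph{define} $\Cs(\l):=D_R \St_{A^{op}}(\l)$, and to derive the remaining assertions as essentially formal consequences of (i) together with standard Hom--Tor--Ext identities that continue to hold over a Noetherian base ring because every module in sight is projective over $R$. For (i), I would take a split heredity chain $0=J_{t+1}\subset J_t\subset\cdots\subset J_1=A$ of $A$ and verify that the same ideal chain becomes a split heredity chain for $A^{op}$. The conditions $A/J\in R\proj$ and $J^2=J$ are symmetric in $A$ and $A^{op}$; the Morita condition $\End_A(J)^{op} \Morita R$ combined with $J\in A\proj$ places the heredity ideal in the form $J\simeq AeA$ for an idempotent $e\in A$ with $eAe\simeq R$ and $Ae,\,eA\in R\proj$, from which the symmetric description $J\simeq A^{op}eA^{op}$ yields right projectivity of $J$ and the Morita condition for $A^{op}$; this propagates through the chain and produces the standard modules $\St_{A^{op}}(\l)$ in $A^{op}\m$.

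Having (i), set $\Cs(\l):=D_R\St_{A^{op}}(\l)\in A\m\cap R\proj$. Assertion (iii) then follows from tensor--hom adjunction combined with an inductive computation along the heredity filtration:
\begin{equation*}
\Hom_A(\St(\l),\Cs(\mu))=\Hom_A(\St(\l),D\St_{A^{op}}(\mu))\simeq D\bigl(\St_{A^{op}}(\mu)\otimes_A \St(\l)\bigr),
\end{equation*}
and the right-hand tensor product evaluates to $R$ when $\l=\mu$ and to $0$ otherwise, because both $\St(\l)$ and $\St_{A^{op}}(\l)$ arise from the $\l$-layer of the chain whose endomorphism ring is $R$. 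A parallel induction along the chain, using the defining short exact sequence $0\to C(\l)\to P(\l)\to \St(\l)\to 0$ from Definition~\ref{splithwc}(iv) together with Hom--Ext long exact sequences, establishes the Ext vanishing $\Ext_A^i(\St(\l),\Cs(\mu))=0$ for all $i\geq 1$.

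For (ii), one inclusion is immediate from the Ext vanishing above together with the closure of the right-hand side under extensions (seen already from the long exact sequence in $\Ext$). For the converse, given $X\in A\m\cap R\proj$ with $\Ext_A^1(M,X)=0$ for every $M\in\mathcal{F}(\Stsim)$, I would proceed by downward induction on the poset: pick a maximal $\l_0$ such that the trace of $\St(\l_0)$ in $X$ is nonzero, and show that this trace is a direct summand of $X$ of the form $\Cs(\l_0)\otimes_R U_{\l_0}$ via a universal-extension argument that uses (iii) and the Ext-vanishing hypothesis. The quotient $X'$ then inherits $R$-projectivity and the Ext-vanishing condition over the truncated poset, so iteration produces the desired filtration. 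Uniqueness (iv) is immediate from the characterization (ii), and (v) follows by applying (ii) to the split quasi-hereditary algebra $A^{op}$ and dualising via $D_R$, which swaps $\Ext$ directions for finitely generated $R$-projective modules.

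The tensor-exactness statements (vi) and (vii) reduce via the Hom--Tor duality
\begin{equation*}
\Tor_i^A(DN,M)\simeq D\Ext_A^i(M,N),\qquad M,N\in A\m\cap R\proj,
\end{equation*}
to the Ext-vanishing supplied by (ii) and (v); that the tensor products lie in $R\proj$ is proved by induction on the $\Stsim$- or $\Cssim$-filtration length, the base case being $R$ or $0$ by (iii). Finally (viii) is handled by the same type of induction, the base case $\Hom_A(\St(\l),\Cs(\mu))\in R\proj$ coming directly from (iii), and the induction step using the Ext vanishing of (ii) and (v) to ensure that each connecting short exact sequence of Hom-modules stays inside $R\proj$. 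The main obstacle is the existence direction of (ii): this is where one genuinely relies on the split quasi-hereditary structure integrally, and the universal-extension argument must be executed so as to preserve $R$-projectivity at every step. Once (ii) is in hand, the remaining assertions are essentially a systematic application of long exact sequences together with the tensor--hom duality displayed above.
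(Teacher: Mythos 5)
The paper itself establishes this proposition only by citation, so your argument has to stand on its own; its overall architecture (prove the left--right symmetry of split heredity chains to get (i), set $\Cs(\l)=D\St_{A^{op}}(\l)$, deduce the rest) is the standard one and most of the formal reductions — (iv) from (ii), (v) from (ii) applied to $A^{op}$, (iii) from adjunction $\Hom_A(\St(\l),D\St_{A^{op}}(\mu))\simeq D(\St_{A^{op}}(\mu)\otimes_A\St(\l))$ — are sound. However, two steps as written would fail. First, in the converse inclusion of (ii) you claim that for a maximal $\l_0$ the trace of $\St(\l_0)$ in $X$ is a direct summand isomorphic to $\Cs(\l_0)\otimes_R U_{\l_0}$. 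Already for $X=\Cs(\l_0)$ this is false: the image of the canonical nonzero map $\St(\l_0)\to\Cs(\l_0)$ from (iii) is a proper submodule (over a residue field it is the simple head of $\St(\l_0)$), so the trace cannot recover a full costandard layer. Moreover, since $\Ext^1_A(\Cs(\mu),\Cs(\l_0))\simeq\Ext^1_{A^{op}}(\St_{A^{op}}(\l_0),\St_{A^{op}}(\mu))=0$ for all $\mu$ when $\l_0$ is maximal, the layer $\Cs(\l_0)\otimes_R U_{\l_0}$ sits at the \emph{top} of a costandard filtration, i.e.\ as a quotient of $X$, not as a submodule generated by images of $\St(\l_0)$. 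The workable induction instead goes through the split heredity ideal $J=AeA$ attached to $\l_0$ and the adjunction with $eAe\m$, and it must include a separate argument that the multiplicity module $U_{\l_0}$ is $R$-projective — this is precisely the integral subtlety that the cited references are devoted to.

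Second, the displayed duality $\Tor_i^A(DN,M)\simeq D\Ext_A^i(M,N)$ is false over a general commutative Noetherian ring. Taking a finite projective resolution $P_\bullet\to M$ one does get a termwise isomorphism $DN\otimes_A P_\bullet\simeq D\Hom_A(P_\bullet,N)$, but $D=\Hom_R(-,R)$ is not exact, so it does not commute with taking (co)homology; the obstruction lives in $\Ext^{>0}_R$ of the $\Ext_A$-groups, which are typically $R$-torsion even though every \emph{module} in sight is $R$-projective. Consequently (vi) and (vii), and the $R$-projectivity assertions in (vi)--(viii), do not follow formally from the $\Ext$-vanishing of (ii) and (v). To close this gap you must either compute $\Tor^A_{>0}(\St_{A^{op}}(\mu),\St(\l))=0$ and $\St_{A^{op}}(\mu)\otimes_A\St(\l)\in R\proj$ directly along the heredity chain and then induct on filtrations, or pass to the residue fields $R(\mi)$ — where your duality is valid — and lift the vanishing and the projectivity of the degree-zero term back to $R$ via the K\"unneth spectral sequence and Nakayama's lemma, which is the mechanism the paper uses in the analogous arguments of Section \ref{Change of rings on relative dominant dimension with respect to a module}.
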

\begin{proof}
	For (i), (ii), (iii), (iv), (v) we refer to \citep[Proposition 3.1, Theorem 4.1]{appendix} and \citep[Proposition 4.19, Lemma 4.21]{Rouquier2008}.  For (vi), (vii), (viii) we refer to \citep[Proposition 4.3., Corollary 4.4.]{appendix}.
\end{proof}

The subcategories $\mathcal{F}(\Stsim)$ and $\mathcal{F}(\Cssim)$ are determined and determine a characteristic tilting module. We call $T$ a \textbf{characteristic tilting module} if it is a tilting module satisfying $\add T=\mathcal{F}(\Stsim)\cap \mathcal{F}(\Cssim)$. By a tilting $A$-module we mean a module of finite projective dimension, $\Ext_A^{i>0}(T, T)=0$ and the regular module has a finite coresolution by modules in the additive closure of $T$. Let $T$ be a characteristic tilting module of $A$. So, we have $\add T=\add \bigoplus_{\l\in \L} T(\l)$ so that each $T(\l)$ with $\l\in \L$ fits into exact sequences of the form
\begin{align}
	0\rightarrow \St(\l)\rightarrow T(\l)\rightarrow X(\l)\rightarrow 0, \quad X(\l)\in \mathcal{F}(\Stsim_{\mu<\l}) \label{eq120b}\\
	0\rightarrow Y(\l)\rightarrow T(\l)\rightarrow \Cs(\l)\rightarrow 0, \quad Y(\l)\in \mathcal{F}(\Cssim_{\mu<\l}).\label{eq121b}
\end{align} 
By $\Stsim_{\mu<\l}$ we mean the set $\{\St(\mu)\otimes_R U_\mu\colon \mu\in \L, \ \mu<\l, U_\mu\in R\proj \}$. Analogously, the set $\Cssim_{\mu<\l}$ is defined.
A fundamental difference between the classical case is that, in general, we cannot choose $T(\l)$ to be indecomposable modules. But two distinct characteristic tilting modules have the same additive closure. These can be chosen to be indecomposable when the ground ring is a local commutative Noetherian ring. The philosophical reason is that any split quasi-hereditary algebra over a local commutative Noetherian ring is semi-perfect (\citep[Theorem 3.4.1]{cruz2021cellular}). Let $T=\bigoplus_{\l\in \L} T(\l)$ be a characteristic tilting module. It follows, by construction, that a module $M\in A\m\cap R\proj$ belongs precisely to $\mathcal{F}(\Stsim)$ if and only if there exists a finite coresolution of $M$ by modules in $\add T=\mathcal{F}(\Stsim)\cap \mathcal{F}(\Cssim)$. In the same way, 
a module $M\in A\m\cap R\proj$ belongs precisely to $\mathcal{F}(\Cssim)$ if and only if there exists a finite resolution of $M$ by modules in $\add T$. For details on these statements, we refer to \citep{appendix} and \citep[Section 3, Appendix A and B]{cruz2021cellular}.

\paragraph{Change of rings} An important feature of split quasi-hereditary algebras is that they behave quite well under change of rings. This manifests itself in the subcategories $\mathcal{F}(\Stsim)$ and $\mathcal{F}(\Cssim)$ as follows:

\begin{Prop}\label{standardscotiltingsreductiontofields}
 Let $(A, \{\Delta(\lambda)_{\lambda\in \Lambda}\})$ be a split quasi-hereditary $R$-algebra. Let \mbox{$M\in A\m$.}  Let $Q$ be a commutative $R$-algebra and Noetherian ring. Then, the following assertions hold.\begin{enumerate}[(a)]
 	\item $(Q\otimes_R A, \{Q\otimes_R\Delta(\lambda)_{\lambda\in \Lambda}\})$  is a split quasi-hereditary algebra over $Q$. The costandard modules of $Q\otimes_R A$ are the form $Q\otimes_R \Cs(\l)$, $\l\in \L$.
		\item $M\in \mathcal{F}(\Stsim)$ if and only if $M(\mi)\in \mathcal{F}(\St(\mi))$ for all maximal ideals $\mi$ of $R$ and $M\in R\proj$.
		\item $M\in \mathcal{F}(\Cssim)$ if and only if $M(\mi)\in \mathcal{F}(\Cs(\mi))$ for all maximal ideals $\mi$ of $R$ and $M\in R\proj$.
		\item Let $T$ be a characteristic tilting module. $M\in \add T$ if and only if $M(\mi)\in \add T(\mi)$ for all maximal ideals $\mi$ of $R$ and $M\in R\proj$.
		\item Let $M\in \mathcal{F}(\Stsim)$ and let $N\in \mathcal{F}(\Cssim)$. Then, $Q\otimes_R \Hom_A(M, N)\simeq \Hom_{Q\otimes_RA}(Q\otimes_R M, Q\otimes_R N).$
	\end{enumerate}  
\end{Prop}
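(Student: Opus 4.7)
The five claims call for distinct but related arguments; I would attack them in the order (a), (e), (b), (c), (d).

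For (a), the plan is to verify the five axioms of Definition \ref{splithwc} directly. The $R$-projectivity of $\St(\l)$ yields the $Q$-projectivity of $Q\otimes_R\St(\l)$, and the vanishing/endomorphism conditions on $\Hom$ reduce to identities in $A$ once we know that the natural base-change map behaves well on the modules in question; for the distinguished short exact sequence $0\to C(\l)\to P(\l)\to \St(\l)\to 0$, the $R$-projectivity of $\St(\l)$ makes it $(A,R)$-exact, so tensoring with $Q$ preserves exactness and turns the filtration of $C(\l)$ by $\St(\mu)\otimes_R U_\mu$ (with $\mu>\l$) into a filtration by $Q\otimes_R\St(\mu)\otimes_Q (Q\otimes_R U_\mu)$; the progenerator property is preserved because tensoring with $Q$ commutes with finite direct sums and surjections out of $P$ onto simple-like generators. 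For the costandards, I would invoke Proposition \ref{qhproperties}(i) to apply the same argument to $(A^{op},\{D\Cs(\l)\})$, then dualize; uniqueness (Proposition \ref{qhproperties}(iv)) implies that any two valid choices of costandards differ by twisting with invertible $Q$-modules, so triviality of the Picard group of $Q$ forces the canonical choice $Q\otimes_R\Cs(\l)$.

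For (e), I would do induction on the length of a filtration of $M$ by elements of $\tilde{\St}$. In the base case $M=\St(\l)\otimes_R U_\l$, the tensor–hom adjunction rewrites $\Hom_A(\St(\l)\otimes_R U_\l,N)$ as $\Hom_R(U_\l,\Hom_A(\St(\l),N))$, and Lemma \ref{tensorprojcommutingonHom} together with the $R$-projectivity of $\Hom_A(\St(\l),N)$ (from Proposition \ref{qhproperties}(viii)) reduce the base case to the single identity $Q\otimes_R\Hom_A(\St(\l),N)\simeq \Hom_{Q\otimes_RA}(Q\otimes_R\St(\l),Q\otimes_R N)$, which one deduces from applying $\Hom_A(-,N)$ to $0\to C(\l)\to P(\l)\to\St(\l)\to 0$ and base-changing each term. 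For the inductive step, apply $\Hom_A(-,N)$ to a short exact sequence $0\to M'\to M\to \St(\l)\otimes_R U_\l\to 0$ in $\mathcal{F}(\tilde{\St})$; this sequence is $(A,R)$-exact, its image under $\Hom_A(-,N)$ remains short exact in $R\proj$ (by Proposition \ref{qhproperties}(v) and (viii)), so tensoring with $Q$ preserves exactness and the inductive hypothesis together with the five lemma yield the claim.

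For (b), (c), (d), the easy direction follows by tensoring a filtration by $\tilde{\St}$, $\tilde{\Cs}$, or by $T$, respectively, with $R(\mi)$: all modules involved are $R$-projective, hence the filtration remains exact after base change, and $R$-projectivity is a local condition. The converse is the main obstacle. My plan is to use the $\Ext$-characterisations of $\mathcal{F}(\tilde{\St})$ and $\mathcal{F}(\tilde{\Cs})$ in Proposition \ref{qhproperties}(ii) and (v). For (b), given $M\in R\proj$ with $M(\mi)\in \mathcal{F}(\St(\mi))$ for every $\mi$, I would show $\Ext^1_A(M,N)=0$ for all $N\in\mathcal{F}(\tilde{\Cs})$. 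Since $\Ext^1_A(M,N)$ is finitely generated over $R$, vanishing can be checked by localising and then reducing modulo each $\mi$; at the residue field level, $\Ext^1_{A(\mi)}(M(\mi),N(\mi))=0$ because $M(\mi)\in\mathcal{F}(\St(\mi))$ and $N(\mi)\in\mathcal{F}(\Cs(\mi))$ (the latter follows from (c), applied first in the easy direction). The comparison between $\Ext^1_A(M,N)_\mi$, $\Ext^1_{A_\mi}(M_\mi,N_\mi)$, and $\Ext^1_{A(\mi)}(M(\mi),N(\mi))$ is handled by standard flat base change over the localisation $R\to R_\mi$ and by the $R$-projectivity of $M$, which allows one to compute $\Ext$ from an $A$-projective resolution that remains $R$-split. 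Statement (c) is proved dually by applying (b) to the opposite algebra using Proposition \ref{qhproperties}(i). Finally (d) is deduced by combining (b) and (c), since $\add T=\mathcal{F}(\tilde{\St})\cap \mathcal{F}(\tilde{\Cs})$ and because the characterisation of $\add T$ among modules in $\mathcal{F}(\tilde{\St})\cap \mathcal{F}(\tilde{\Cs})$ passes through the local-to-global argument already established. The hardest step throughout is controlling $\Ext$ under reduction modulo a maximal ideal for modules that are merely $R$-projective; once this is in place, the rest is bookkeeping.
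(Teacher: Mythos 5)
The paper offers no argument of its own for this proposition — it delegates all five parts to \cite{Rouquier2008} and to the appendix of \cite{p2paper} — and your reconstruction follows essentially the route those references take: verifying the axioms of Definition~\ref{splithwc} after base change for (a), inducting on $\Delta$-filtrations using the $(A,R)$-split $\Hom$-sequences guaranteed by Proposition~\ref{qhproperties}(viii) for (e), and combining the $\Ext^1$-orthogonality characterisations of Proposition~\ref{qhproperties}(ii),(v) with a Nakayama argument for the converse directions of (b)--(d). The step you correctly single out as the delicate one — comparing $\Ext^1_A(M,N)$ with $\Ext^1_{A(\mi)}(M(\mi),N(\mi))$ — is resolved exactly as you indicate: an $R$-split $A$-projective resolution of the $R$-projective module $M$ yields a complex of $R$-projectives computing both sides, and the universal-coefficient sequence together with finite generation of $\Ext^1_A(M,N)$ over $R$ lets Nakayama finish.
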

\begin{proof}
	For (a), we refer to \citep[Proposition 4.14]{Rouquier2008}, \citep[Theorem 3.1.1]{cruz2021cellular}, and \citep[Proposition 5.9]{appendix}.
	For (b), (c) and (d), we refer to 	\citep[Proposition 4.30]{Rouquier2008} and \citep[Proposition 5.7]{appendix}. For (e), we refer to \citep[Corollary 5.6]{appendix}.
\end{proof}

\subsubsection{Ringel duality}

Applying the methods of \cite{zbMATH00010169} to Artinian quasi-hereditary algebras, Ringel in \cite{MR1128706}, discovered that endomorphism algebras of characteristic tilting modules admit a quasi-hereditary structure. As seen in \citep[Section 7]{appendix}, split quasi-hereditary algebras over commutative Noetherian rings also come in pairs. The \textbf{Ringel dual of a split quasi-hereditary $R$-algebra} $(A, \{\Delta(\lambda)_{\lambda\in \Lambda}\})$, is, up to Morita equivalence, the endomorphism algebra $R(A):=\End_{A}(T)^{op}$ of a characteristic tilting module $T$ of $A$. 
The standard modules of $R(A)$ are $\St_{R(A)}(\l)=\Hom_A(T, \Cs(\l))$ with $\l\in \L^{op}$, where $\L^{op}$ is the opposite poset of $\L$. The \textbf{Ringel dual functor} $\Hom_A(T, -)\colon A\m\rightarrow R(A)\m$ restricts to an exact equivalence $\mathcal{F}(\Cssim)\rightarrow \mathcal{F}(\Stsim_{R(A)})$, it sends costandard modules to standard modules, (partial) tilting modules to projective modules and modules in $\add DA$ to tilting modules.

A split quasi-hereditary $R$-algebra is called \textbf{Ringel self-dual} if there exists an exact equivalence between $\mathcal{F}(\Stsim)$ and $\mathcal{F}(\Cssim)$, that is, if $A$ and $R(A)$ are Morita equivalent as split quasi-hereditary algebras. For split quasi-hereditary algebras over local commutative Noetherian rings, it is enough to test Ringel self-duality after applying extension of scalars from the local ground ring to its residue field.
 For more details, we refer to \cite{appendix}.

\subsubsection{The quasi-hereditary structure of $eAe$} It is also possible to construct new split quasi-hereditary algebras from bigger ones without using necessarily quotients by split heredity ideals. 

\begin{Theorem}\label{eAeqh}
	Let $(A, \{\Delta(\lambda)_{\lambda\in \Lambda}\})$ be a split quasi-hereditary algebra over a field $k$. Let $S(\l)$ be the top of the standard module $\St(\l)$ for $\l\in \L$. Then the following assertions hold.
	\begin{enumerate}[(i)]
		\item Let $e$ be an idempotent of $A$ and define ${\Lambda'=\{\lambda\in\Lambda\colon eS(\lambda)\neq 0 \}.}$ Then, $\{eS(\l)\colon \lambda\in \Lambda' \}$ is a full set of simple modules in $eAe\m$. 
		\item  Assume that there exists an idempotent $e$ satisfying the following 
		\begin{align}
			eS(\lambda)=0 \Longleftrightarrow \lambda\leq \mu \text{ for some } \mu\in \Gamma, \ \text{for some fixed subset }  \Gamma \subset \L. \label{eqfc5}
		\end{align}	Set ${\Lambda'=\{\lambda\in\Lambda\colon eS(\lambda)\neq 0 \}.}$  Then, the following holds true.
	\begin{enumerate}[(I)]
		\item 	$(eAe, \{e\St(\l)_{\l\in \L'} \})$ is a split quasi-hereditary algebra. The costandard modules of $eAe$ are of the form $\{e\Cs(\l)\colon \l\in \L'\}$. Moreover, $e\St(\l)=e\Cs(\l)=0$ for $\l\in \L\setminus \L'$.
		\item The (Schur) functor $\Hom_A(Ae, -)\colon A\m\rightarrow eAe\m$ preserves (partial) tilting modules. Moreover, the partial tilting indecomposable modules of $eAe$ are exactly $\{eT(\l)\colon \l\in  \L' \}$ and $eT(\l)=0$ for any $\l\in \L\setminus \L'$.
		\item Let $M\in \mathcal{F}(\St)$ and $N\in \mathcal{F}(\Cs)$. Then, the Schur functor $\Hom_A(Ae, -)\colon A\m\rightarrow eAe\m$ induces a surjective map $\Hom_A(M, N)\rightarrow \Hom_{eAe}(eM, eN)$.
	\end{enumerate}
	\end{enumerate}
\end{Theorem}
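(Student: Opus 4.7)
The plan is to handle the four parts in order, with the main work in (ii) done by induction on $|\L'|$; parts (iii) and (iv) then follow by standard filtration arguments. Throughout, set $\Omega := \L \setminus \L'$, which by condition \eqref{eqfc5} is a downward-closed subset (order ideal) of $\L$, so $\L'$ is upward-closed. The Schur functor $F_e := \Hom_A(Ae, -)$ coincides with $e \cdot -$ and is exact, with left adjoint $Ae \otimes_{eAe} -$.

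For (i), I would use the classical idempotent-Schur-functor argument: if $N$ is a nonzero $eAe$-submodule of $eS(\l)$, then the image $AN \subseteq S(\l)$ under the counit $Ae \otimes_{eAe} eS(\l) \to S(\l)$ is nonzero, hence equal to $S(\l)$ by simplicity, and applying $e$ back gives $N = eAN = eS(\l)$; thus $eS(\l)$ is simple whenever nonzero. Conversely, every simple $eAe$-module $T$ equals $eS(\l)$ for the simple top $S(\l)$ of the $A$-module $Ae \otimes_{eAe} T$, giving the bijection. The vanishing $e\St(\l) = 0 = e\Cs(\l)$ for $\l \in \Omega$ in (ii) is then immediate: every composition factor of $\St(\l)$ (resp.\ $\Cs(\l)$) is of the form $S(\mu)$ with $\mu \leq \l \in \Omega$, so $\mu \in \Omega$ by downward-closedness and $eS(\mu) = 0$; exactness of $F_e$ propagates the claim.

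For the split quasi-hereditary structure in (ii), I would induct on $|\L'|$. The base case $|\L'| = 0$ forces $e = 0$ by (i), and the statement is vacuous. For the inductive step, fix $\l \in \L$ maximal; since $\L'$ is upward-closed and nonempty, $\l \in \L'$. Then $\St(\l) = P(\l) = Ae_\l$ for a primitive idempotent $e_\l$, and the top split heredity ideal of $A$ is $J = Ae_\l A \simeq \St(\l) \otimes_k \Hom_A(\St(\l), A)$; the quotient $A/J$ is split quasi-hereditary with poset $\L \setminus \{\l\}$. The image $\bar e$ of $e$ in $A/J$ inherits the hypothesis with the same $\Gamma$ (noting $\Gamma \subseteq \Omega \subseteq \L \setminus \{\l\}$), so by induction $eAe/eJe = \bar e (A/J) \bar e$ is split quasi-hereditary with standards $e\St(\mu)$, $\mu \in \L' \setminus \{\l\}$. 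It remains to show $eJe$ is a split heredity ideal in $eAe$ with standard $e\St(\l)$: computing $eJe \simeq e\St(\l) \otimes_k \Hom_A(\St(\l), Ae)$ as an $eAe$-bimodule, the condition $\dim eS(\l) \geq 1$ gives $Ae_\l \in \add_A(Ae)$, so the projectivization theorem makes $e\St(\l) = eAe_\l$ projective in $eAe\m$ with $\End_{eAe}(e\St(\l)) = \End_A(\St(\l)) = k$, and identifies $\Hom_A(\St(\l), Ae)$ with $\Hom_{eAe}(e\St(\l), eAe)$; the heredity ideal form of $eJe$ follows. The costandard identification arises by running the same induction on $(A^{op}, \{D\Cs(\l)_{\l \in \L}\})$, which is split quasi-hereditary by Proposition \ref{qhproperties}(i), and applying the duality $D$.

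For (iii), exactness of $F_e$ applied to the sequences \eqref{eq120b} and \eqref{eq121b} yields an $e\St$-filtration of $eT(\l)$ with top $e\St(\l)$ and an $e\Cs$-filtration with socle $e\Cs(\l)$, both collapsing to zero when $\l \in \Omega$; hence $eT(\l)$ with $\l \in \L'$ lies in the intersection of the $e\St$-filtered and $e\Cs$-filtered modules over $eAe$ and is indecomposable as its top $e\St(\l)$ has simple head $eS(\l)$, so $\bigoplus_{\l \in \L'} eT(\l)$ is a characteristic tilting module of the algebra from (ii). For (iv), I would induct on the lengths of the $\St$-filtration of $M$ and the $\Cs$-filtration of $N$. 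The base $M = \St(\l)$, $N = \Cs(\mu)$ is immediate since $F_e(\id_{\St(\l)}) = \id_{e\St(\l)} \neq 0$ whenever $\l \in \L'$ and both $\Hom$-spaces have dimension at most $1$. The inductive step uses the $\Ext^1$-vanishings $\Ext^1_A(\St(\l), \Cs(\mu)) = 0$ and $\Ext^1_{eAe}(e\St(\l), e\Cs(\mu)) = 0$ (now available from (ii)) to split the $\Hom$-long exact sequences and transfer surjectivity along the filtrations. The main obstacle will be the bimodule identification $\Hom_A(\St(\l), Ae) \simeq \Hom_{eAe}(e\St(\l), eAe)$ at the inductive step of (ii), which is where the Schur functor must faithfully transport the heredity datum --- essentially projectivization applied to $Ae_\l \in \add_A(Ae)$, but care is needed to match the $eAe$-bimodule structure so that the heredity ideal formula aligns precisely.
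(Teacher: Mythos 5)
The paper itself does not prove this theorem: it delegates (i) to Green, (ii)--(iv) to Donkin and Erdmann. Your proposal is therefore supplying an argument where the paper gives only citations, and your overall strategy --- peeling off split heredity ideals $Ae_\l A$ for maximal $\l$ and transporting them through the Schur functor via projectivization, then dualizing for costandards and chasing filtrations for (iii) and (iv) --- is the standard route and is essentially sound. Two points, however, are genuine gaps rather than omitted routine detail.

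First, in the inductive step of (ii) you assert that a maximal element $\l$ of $\L$ automatically lies in $\L'$ because $\L'$ is upward-closed and nonempty. This is false when $\L$ has several maximal elements: take $\L=\{a,b\}$ incomparable with $\L'=\{a\}$ and $\l=b$. You need either to \emph{choose} $\l$ maximal in $\L$ with $\l\in\L'$ (always possible: pick $\nu\in\L'$ and any maximal $\l\geq\nu$, then upward-closedness gives $\l\in\L'$), or to run the induction on $|\L|$ and treat the case $\l\in\Omega$ separately --- there $e\St(\l)=0$ forces $eJe=0$, so $eAe=\bar e(A/J)\bar e$ and nothing is peeled off on the $eAe$ side. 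As written, your induction on $|\L'|$ neither terminates correctly nor justifies that $\St(\l)=P(\l)$ (which needs $\l$ maximal in $\L$, not in $\L'$). Second, in (iii) the indecomposability of $eT(\l)$ for $\l\in\L'$ does not follow from ``its top $e\St(\l)$ has simple head'' --- besides $e\St(\l)$ being the submodule rather than the top of the filtration coming from \eqref{eq120b}, a module containing a submodule with simple head can perfectly well decompose. The standard fix is to prove (iv) first (your argument for (iv) is fine once the base case is restated as: the generator $\St(\l)\twoheadrightarrow S(\l)\hookrightarrow\Cs(\l)$ of $\Hom_A(\St(\l),\Cs(\l))$ stays nonzero under $e$ because $eS(\l)\neq 0$), deduce that $\End_A(T(\l))\rightarrow\End_{eAe}(eT(\l))$ is surjective, and conclude that $\End_{eAe}(eT(\l))$ is a nonzero quotient of a local ring, hence local. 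Without some such argument the claim that the $eT(\l)$ exhaust the \emph{indecomposable} partial tilting modules of $eAe$ is unproved.
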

\begin{proof}
	Statement (i) actually holds for finite-dimensional algebras, in general. We refer to \citep[Theorem 6.2g]{zbMATH05080041}. For (I), see \citep[Proposition A3.11]{MR1707336}.  For (III), see for example \citep[1.7]{zbMATH00681964} or \citep[Lemma A3.12]{MR1707336}. For (II), see \citep[Lemma A4.5]{MR1707336}.
\end{proof}

Given a split quasi-hereditary algebra  $(A, \{\Delta(\lambda)_{\lambda\in \Lambda}\})$, we call an idempotent $e$ of $A$ \textbf{cosaturated} if it satisfies the property (\ref{eqfc5}).

\subsection{Covers}
A module $M\in A\m$ is said to have a \textbf{double centralizer property} if the canonical homomorphism of $R$-algebras $A\rightarrow \End_{\End_A(M)^{op}}(M)$ is an isomorphism.

The concept of cover was introduced in \cite{Rouquier2008} to evaluate the quality  of a module category with an approximation, in some sense, by a highest weight category. Let $P\in A\proj$. We say that the pair $(A, P)$ is a \textbf{cover} of $\End_A(P)^{op}$  if there is a double centralizer property on $\Hom_A(P, A)$ (as a right $A$-module).
 Equivalently, $(A, P)$ is a cover of $\End_A(P)^{op}$ if and only if, the \textbf{Schur functor},  ${F_P=\Hom_A(P, -)\colon A\m\rightarrow \End_A(P)^{op}\m}$ is fully faithful on $A\proj$.
Fix $B=\End_A(P)^{op}$. Since ${\Hom_A(P, -)\colon A\m\rightarrow B\m}$ is isomorphic to the functor ${\Hom_A(P, A)\otimes_A -\colon A\m\rightarrow B\m}$, it follows by Tensor-Hom adjunction that ${\Hom_{B}(\Hom_A(P, A), -)\colon B\m\rightarrow A\m}$ is right adjoint to the Schur functor ${\Hom_A(P, -)\colon A\m\rightarrow B\m}$. 
We denote by $\eta\colon \id_{A\m}\rightarrow G\circ F$ the unit of this adjunction. Observe that $\Hom_A(P, A)$ is a generator as a $B$-module and since $P\in A\proj$ the functor $\Hom_{B}(\Hom_A(P, A), -)\colon B\m\rightarrow A\m$ is fully faithful. 

If $(A, \{\Delta(\lambda)_{\lambda\in \Lambda}\})$ is a split quasi-hereditary algebra and $(A, P)$ is a cover of $B$, then we say that $(A, P)$ is a \textbf{split quasi-hereditary cover} of $B$.
Let $\mathcal{A}$ be a resolving subcategory of $A\m$ and let $i$ be a non-negative integer. Denote by $F_P$ the functor $\Hom_A(P, -)\colon A\m\rightarrow B\m$ and by $G_P$ its right adjoint.
We say that the pair $(A, P)$ is an \textbf{$i$-$\mathcal{A}$ cover} of $B$ if the Schur functor $F_P$ induces isomorphisms 
	\begin{align*}
	\Ext_A^j(M, N)\rightarrow \Ext_B^j(F_PM, F_PN), \quad \forall M, N\in \mathcal{A}, \ 0\leq j\leq i.
\end{align*} 	We say that $(A, P)$ is an \textbf{$(-1)$-$\mathcal{A}$ cover} of $B$ if $(A, P)$ is a cover of $B$ and the restriction of $F_P$ to $\mathcal{A}$ is faithful.
In Rouquier's terminology, when $(A, \{\Delta(\lambda)_{\lambda\in \Lambda}\})$ is a split quasi-hereditary algebra an $i$-$\mathcal{F}(\Stsim)$ cover is known as an $i$-faithful cover. 
The optimal value of $i$ making $(A, P)$ an $i$-$\mathcal{A}$ of $B$ is called the \textbf{Hemmer-Nakano dimension} of $\mathcal{A}$ (with respect to $P$). We refer to \cite{p2paper} for details.
We recall some useful facts about these objects.

\begin{Prop}\label{zeroAcover}
	The following assertions hold.
	\begin{enumerate}[(a)]
		\item $(A, P)$ is a $(-1)$-$\mathcal{A}$  cover of $B$ if and only if $(A, P)$ is a cover of $B$ and $\eta_M$ is a monomorphism for all $M\in \mathcal{A}$;
		\item $(A, P)$ is a $0$-$\mathcal{A}$ cover of $B$ if and only if  $\eta_M$ is an isomorphism for all $M\in \mathcal{A}$;
		\item $(A, P)$ is an $i$-$\mathcal{A}$ cover of $B$ for some $i\in \mathbb{N}$ if and only if $\R^jG_P(F_PM)=0$ and $\eta_M$ is an isomorphism   for all $M\in \mathcal{A}$ and all $1\leq j\leq i$.
	\end{enumerate}
\end{Prop}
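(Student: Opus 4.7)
The plan is to route everything through the adjunction $F_P \dashv G_P$, so that all three parts reduce to (i) a natural identification of $F_P$ on $\Hom$ with postcomposition by the unit $\eta$, and (ii) a Grothendieck spectral sequence on $\Ext$. First I would note that, since $\mathcal{A}$ is a resolving subcategory of $A\m$, the regular module $A$ itself lies in $\mathcal{A}$, which will let me plug in $M=A$ to extract pointwise information about $\eta$ from statements about $\mathcal{A}$.

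For parts (a) and (b) I would start by recalling that under the adjunction isomorphism $\Hom_B(F_PM,F_PN) \simeq \Hom_A(M, G_PF_PN)$, the map $F_P$ on morphisms corresponds to $\Hom_A(M,\eta_N)$. Left exactness of $\Hom_A(M,-)$ then shows: if $\eta_N$ is mono (resp.\ iso), then $F_P$ is injective (resp.\ bijective) on $\Hom_A(M,N)$ for every $M$, and in particular on $\mathcal{A}$. Conversely, taking $M=A \in \mathcal{A}$ and identifying $\Hom_A(A,-)$ with the identity shows that faithfulness (resp.\ full faithfulness) of $F_P|_{\mathcal{A}}$ forces $\eta_N$ to be mono (resp.\ iso). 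In case (b) the case $M=N=A$ simultaneously gives the double-centralizer property, so the cover condition comes for free.

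For (c) the plan is to invoke the Grothendieck spectral sequence
\[
E_2^{p,q} = \Ext_A^p(M, \R^qG_P(X)) \Rightarrow \Ext_B^{p+q}(F_PM, X),
\]
arising from $\Hom_A(M,-)\circ G_P = \Hom_B(F_PM,-)$ together with the fact that $G_P$ sends injectives to injectives (since $F_P$ is exact, as $P \in A\proj$). Setting $X = F_PN$, the ``$\Leftarrow$'' direction is immediate: vanishing of $\R^qG_PF_PN$ for $1 \leq q \leq i$ combined with $\eta_N$ iso kills every $E_2^{p,q}$ with $0<q\leq i$ and identifies $E_2^{p,0}$ with $\Ext_A^p(M,N)$, so the sequence degenerates in total degrees $\leq i$ and the resulting edge isomorphism matches the $F_P$-induced map. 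For the ``$\Rightarrow$'' direction I would plug in $M=A \in \mathcal{A}$: then $\Ext_A^p(A,-)$ vanishes for $p\geq 1$, so the spectral sequence collapses onto the column $p=0$ and yields $\Ext_B^j(F_PA,F_PN) \cong \R^jG_PF_PN$; comparing with $\Ext_A^j(A,N)$ (which is $N$ for $j=0$ and $0$ otherwise) forces $\eta_N$ iso and $\R^jG_PF_PN=0$ for $1 \leq j \leq i$.

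The hard part will be verifying that the $q=0$ edge morphism of the spectral sequence is exactly the natural $F_P$-induced map (after the identification $N \xrightarrow{\eta_N} G_PF_PN$). This is a standard but somewhat fiddly check: I would compare the edge morphism, obtained from an injective resolution of $F_PN$ in $B\m$, with the explicit map induced by applying $F_P$ to an injective resolution of $N$ in $A\m$, exploiting that $G_P$ preserves injectives so both resolutions can be related by a quasi-isomorphism. Once this naturality is in place, all three equivalences of the proposition follow formally from the two observations above.
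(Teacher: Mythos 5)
Your argument is correct, and it is essentially the standard one: the paper itself only cites \citep{p2paper} for this proposition, but your route — identifying $F_{M,N}$ with $\Hom_A(M,\eta_N)$ under the adjunction for (a) and (b), and the Grothendieck spectral sequence $\Ext_A^p(M,\R^qG_P(F_PN))\Rightarrow \Ext_B^{p+q}(F_PM,F_PN)$ for (c) — is exactly the toolkit the paper deploys in the dual situation in Lemma \ref{sesforcocovers}. Two small remarks. First, the forward direction of (c) needs no spectral sequence at all: since $G_P=\Hom_B(\Hom_A(P,A),-)=\Hom_B(F_PA,-)$, one has $\R^jG_P(X)=\Ext_B^j(F_PA,X)$ by definition, so plugging $M=A\in\mathcal{A}$ into the cover condition immediately gives $\R^jG_P(F_PN)\simeq\Ext_A^j(A,N)=0$ for $1\leq j\leq i$, with $\eta_N$ an isomorphism by part (b). Second, to run the spectral sequence you should pass to the full module categories $A\M$ and $B\M$ (where enough injectives exist) and note that the derived functors restrict correctly to $A\m$ and $B\m$; this is routine but worth saying, as is the edge-morphism compatibility you already flag.
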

\begin{proof}
	See \citep[Proposition 3.0.3, 3.0.4, Lemma 2.3.6.]{p2paper}.
\end{proof}
\begin{Remark}\label{casezerodeltacover}
	Since any module in $\mathcal{F}(\Stsim)$ admits a finite coresolution by summands of a characteristic tilting module, it is sufficient to check Proposition \ref{zeroAcover} (b) only for a characteristic tilting module to evaluate whether a split quasi-hereditary cover is a $0$-$\mathcal{F}(\Stsim)$ cover (see for example \citep[Proposition 4.40]{Rouquier2008}).
\end{Remark}

Among the deformation results studied in \citep{p2paper} we recall the following two.

\begin{Prop}\label{faithfulcoverresiduefield}
	Let $R$ be a regular (commutative Noetherian) ring.    Let $(A, \{\Delta(\lambda)_{\lambda\in \Lambda}\})$ be a split quasi-hereditary algebra, $P\in A\m\cap R\proj$ and $i\in \mathbb{N}\cup \{-1, 0\}$. If  $(A(\mi), P(\mi))$ is an $i$-$\mathcal{F}(\St(\mi))$ cover of $B(\mi)$ for every maximal ideal $\mi$ of $R$, then $(A, P)$ is an $i$-$\mathcal{F}(\Stsim)$ cover of $B$.
\end{Prop}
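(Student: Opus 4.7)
The plan is to reduce each defining condition of an $i$-$\mathcal{F}(\Stsim)$ cover to the corresponding condition at the residue field $A(\mi)$ via base-change compatibility of $\Hom$ and its derived functors, and then to patch using Nakayama's lemma. Since being an isomorphism, a monomorphism, or zero can be checked after localizing at every maximal ideal of $R$, and since $R_\mi$ is regular local, I may (and do) assume throughout that $R$ itself is a Noetherian regular local ring with maximal ideal $\mi$. Under this reduction, $P(\mi) \in A(\mi)\proj$ lifts to $P \in A\proj$, so all the usual $\Hom$ constructions below have the expected projectivity over $R$.

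First I would handle the double centralizer property. Both $A$ and $\End_B(\Hom_A(P, A))$ are finitely generated over $R$, with $\Hom_A(P, A)$ a direct summand of $A^n$ viewed as an $R$-module (since $P \in A\proj$), hence in $R\proj$. Lemma \ref{tensorprojcommutingonHom} and the standard compatibility of $\Hom$ with base change between $R$-projective modules give that $\eta_A \colon A \to \End_B(\Hom_A(P, A))$ reduces modulo $\mi$ to the analogous map $\eta_{A(\mi)}$, which is an isomorphism by hypothesis. Nakayama's lemma applied to the (finitely generated) kernel and cokernel of $\eta_A$ then yields that $\eta_A$ is itself an isomorphism, i.e.\ $(A, P)$ is a cover of $B$.

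For $i \geq 0$, the task is to show $\eta_M$ is an isomorphism for every $M \in \mathcal{F}(\Stsim)$. By Proposition \ref{standardscotiltingsreductiontofields}(b), $M \in R\proj$ and $M(\mi) \in \mathcal{F}(\St(\mi))$; moreover $F_P M \in R\proj$ (being a summand of $M^n$), so $G_P F_P M$ is finitely generated over $R$. The same base-change argument identifies $\eta_M(\mi)$ with $\eta_{M(\mi)}$, which is an isomorphism by Proposition \ref{zeroAcover}(b) applied to the cover at $\mi$, and Nakayama on $\ker \eta_M$ and $\coker \eta_M$ concludes. The case $i = -1$ is handled identically by applying Nakayama only to $\ker \eta_M$ to obtain injectivity.

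For $i \geq 1$, it remains to verify $\R^j G_P F_P M = 0$ for $1 \leq j \leq i$ and all $M \in \mathcal{F}(\Stsim)$. Take a resolution of $F_P M$ by finitely generated projective $B$-modules; since $F_P M$, $\Hom_A(P, A)$ and $B$ all lie in $R\proj$, reducing modulo $\mi$ preserves projectivity over $B(\mi)$ and yields a projective $B(\mi)$-resolution of $F_{P(\mi)} M(\mi)$. Standard base-change commutativity of derived $\Hom$ then gives $(\R^j G_P F_P M)(\mi) \simeq \R^j G_{P(\mi)} F_{P(\mi)} M(\mi) = 0$ by Proposition \ref{zeroAcover}(c) and the hypothesis, and Nakayama on the finitely generated $R$-module $\R^j G_P F_P M$ forces it to vanish. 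The principal technical obstacle is precisely this compatibility of $\R^j G_P$ with residue-field reduction: derived $\Hom$ does not commute with arbitrary base change, and the argument rests on the $R$-projectivity of $F_P M$, of $\Hom_A(P, A)$, and of the projective $B$-modules resolving $F_P M$, which trace back to $P \in A\proj$, $M \in \mathcal{F}(\Stsim) \subset R\proj$, and the regularity of $R$ used in the localization step.
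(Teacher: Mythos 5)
There is a genuine gap, and it sits exactly where you locate "the principal technical obstacle" before waving it away. Your argument rests on identifying $\eta_M(\mi)$ with $\eta_{M(\mi)}$ and $(\R^jG_PF_PM)(\mi)$ with $\R^jG_{P(\mi)}F_{P(\mi)}M(\mi)$. But $G_P=\Hom_B(\Hom_A(P,A),-)$ is a $\Hom$ over $B$ out of a module that is a generator but in general \emph{not} projective over $B$, and $R\to R(\mi)$ is not flat; so the canonical map $\Hom_B(X,Y)\otimes_RR(\mi)\to\Hom_{B(\mi)}(X(\mi),Y(\mi))$ need be neither injective nor surjective, even when $X$, $Y$ and $B$ are all $R$-projective. ($R$-projectivity of the terms of the complex $\Hom_B(Q^\bullet,F_PM)$ does not make its cohomology commute with $-\otimes_RR(\mi)$; and Lemma \ref{tensorprojcommutingonHom}, which you invoke, is about tensoring the \emph{target} by a projective $R$-module, not about non-flat base change.) What one actually has is only the factorisation $\eta_{M(\mi)}=\mathrm{can}\circ\eta_M(\mi)$ through the base-change map, plus a K\"unneth/universal-coefficient spectral sequence relating $\Ext^{\ast}_B(F_PA,F_PM)\otimes_RR(\mi)$ and its higher $\Tor^R$'s to $\Ext^{\ast}_{B(\mi)}(F_{P(\mi)}A(\mi),F_{P(\mi)}M(\mi))$. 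The proof has to run an induction on cohomological degree through that spectral sequence, applying Nakayama at each stage, to obtain the vanishing of $\R^jG_PF_PM$ first and the bijectivity of the degree-zero base-change map only afterwards. This is also where regularity of $R$ (finite global dimension, hence finite Tor-dimension) genuinely enters; your proof never uses it, which is a symptom of the missing step.

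The low cases are not repaired by Nakayama either. For $i=-1$: from $0\to\ker\eta_M\to M\to\im\eta_M\to 0$, injectivity of $\eta_M(\mi)$ only yields $\ker\eta_M\subseteq\mi M$, not $(\ker\eta_M)(\mi)=0$, so Nakayama does not apply to $\ker\eta_M$. For $i=0$: surjectivity of $\eta_{M(\mi)}=\mathrm{can}\circ\eta_M(\mi)$ forces $\mathrm{can}$ to be surjective but says nothing about surjectivity of $\eta_M(\mi)$ unless $\mathrm{can}$ is already known to be injective, which is again the unproved base-change statement. These cases require separate arguments (for instance torsion-freeness of $\ker\eta_M$ inside $M\in R\proj$ together with control at primes of small height, in the spirit of Corollary \ref{deformationringeldualfunctor}). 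Your skeleton --- localize, compare with residue fields, conclude by Nakayama --- is the right one, but as written the proof assumes precisely the base-change compatibilities whose verification is the content of Propositions 5.0.5 and 5.0.6 of \cite{p2paper}, to which the paper defers.
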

\begin{proof}
	See \citep[Propositions 5.0.5, 5.0.6]{p2paper}.
\end{proof}

\begin{Theorem}\label{improvingcoverwithspectrum}
	Let $R$ be a local commutative regular Noetherian ring with quotient field $K$. Let $(A, \{\Delta(\lambda)_{\lambda\in \Lambda}\})$ be a split quasi-hereditary algebra and let $(A, P)$ be a $0$-$\mathcal{F}(\Stsim)$ cover of $B$. Let $i\geq 0$.
	Assume that the following conditions hold:
	\begin{enumerate}[(i)]
	\item $(K\otimes_R A, K\otimes_R P)$ is an $(i+1)$-$\mathcal{F}(K\otimes_R \St)$ cover of $K\otimes_R B$;
	\item For each prime ideal $\mathfrak{p}$ of height one, $(R/\mathfrak{p}\otimes_R A, R/\mathfrak{p}\otimes_R P)$ is an $i$-$\mathcal{F}(R/\pri\otimes_R \Stsim)$ cover of $R/\mathfrak{p}\otimes_R B$.
	\end{enumerate}
	Then, $(A, P)$ is an $i+1$-$\mathcal{F}(\Stsim)$ cover of $B$.
\end{Theorem}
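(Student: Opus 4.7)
The plan is to apply Proposition \ref{zeroAcover}(c). Since $(A,P)$ is already a $0$-$\mathcal{F}(\Stsim)$ cover, the unit $\eta_M$ is an isomorphism for every $M\in \mathcal{F}(\Stsim)$, so it suffices to prove that $N_j := \R^j G_P(F_P M) = \Ext_B^j(F_P A, F_P M)$ vanishes for $1\le j\le i+1$ and every $M\in \mathcal{F}(\Stsim)$. The strategy is to show that each such $N_j$ is simultaneously $R$-torsion and $R$-torsion-free, forcing it to vanish. First I would note that each $N_j$ is finitely generated over $R$ (as $B$ is Noetherian and $F_P A$ admits a resolution by finitely generated projective $B$-modules, which are $R$-projective). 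Since $F_P A$ and $F_P M$ are $R$-projective, flat base change along the localisation $R\to K$ gives $K\otimes_R N_j \simeq \Ext_{B_K}^j(F_{P_K}(A_K), F_{P_K}(M_K))$, which vanishes for $1\le j\le i+1$ by hypothesis (i); thus every $N_j$ is $R$-torsion.

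When $\dim R = 0$ one has $R=K$ and the conclusion is already hypothesis (i), so I assume $\dim R \ge 1$ and pick an irreducible element $f\in R$ (for instance any $f\in \mi\setminus \mi^2$, which is irreducible in the regular local ring $R$ and generates a height-one prime $\pri=(f)$). Applying $F_P$ to $0\to M\xrightarrow{f} M\to M/fM\to 0$ (exact because $M\in R\proj$ and $P\in A\proj$) yields an exact sequence $0\to F_P M\xrightarrow{f} F_P M\to F_P(M/fM)\to 0$ of $B$-modules. Because $F_P A$ is $R$-projective and $f$ is $R$-regular, any resolution of $F_P A$ by finitely generated projective $B$-modules will remain exact modulo $f$, descending to a $B/fB$-projective resolution of $F_PA/fF_PA$; together with the identification $F_P(M/fM) = F_{P/f}(M/fM)$ (annihilated by $f$), this produces natural isomorphisms
\begin{equation*}
\Ext_B^j(F_P A, F_P(M/fM)) \simeq \Ext_{B/fB}^j\bigl(F_PA/fF_PA,\, F_{P/f}(M/fM)\bigr) = \R^j G_{P/f}\bigl(F_{P/f}(M/fM)\bigr)
\end{equation*}
for every $j\ge 0$. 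Since $M\in \mathcal{F}(\Stsim)$ has a filtration by $R$-flat successive quotients, $M/fM\in \mathcal{F}(\Stsim_{A/fA})$, so hypothesis (ii) applied to $\pri=(f)$ makes the right-hand side vanish for $1\le j\le i$ and guarantees that the adjunction unit at $M/fM$ over $R/f$ is an isomorphism.

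I would then split the long exact sequence for $\Ext_B^*(F_P A,-)$ applied to the short exact sequence above into short exact pieces $0\to N_{j}/fN_{j} \to \R^{j}G_P(F_P(M/fM)) \to N_{j+1}[f]\to 0$ for $j\ge 1$, together with the initial piece $0\to (G_PF_PM)/f \to G_PF_P(M/fM) \to N_1[f]\to 0$, where $[f]$ denotes the $f$-torsion. The vanishing just established gives $N_j[f]=0$ for $2\le j\le i+1$ directly. For $j=1$, the plan is to use naturality of $\eta$ along the quotient $M\twoheadrightarrow M/fM$, together with the isomorphism $\eta_M$ and the corresponding unit isomorphism for $(A/fA, P/fP)$ at $M/fM$, to identify both $(G_PF_PM)/f$ and $G_PF_P(M/fM)$ with $M/fM$ and the connecting map between them with the identity, which forces $N_1[f]=0$ as well.

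At this point every irreducible $f\in R$ is a non-zero-divisor on $N_j$ for $1\le j\le i+1$. By the Auslander-Buchsbaum theorem the regular local ring $R$ is a unique factorisation domain, so every nonzero element of $R$ is a product of irreducibles and therefore also a non-zero-divisor on each $N_j$; hence $N_j$ is $R$-torsion-free. Combined with the torsion conclusion from the first paragraph, this forces $N_j=0$ for $1\le j\le i+1$, completing the proof. I expect the change of rings identification for $\Ext_B^j(F_P A, F_P(M/fM))$ to be the main technical step (it relies crucially on $F_P A$ being $R$-projective), and the bookkeeping for $j=1$ via naturality of units to be the only delicate point.
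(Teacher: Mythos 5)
Your proposal is correct and matches the approach of the proof the paper defers to (\citep[Theorem 5.1.1]{p2paper}, in the tradition of Rouquier's Proposition 4.42): identify $\R^jG_P(F_PM)$ with $\Ext_B^j(F_PA,F_PM)$, kill it generically via hypothesis (i), and kill its torsion via the change-of-rings and long-exact-sequence comparison with the height-one quotients from hypothesis (ii), using that the regular local ring $R$ is a UFD so height-one primes are exactly the $(f)$ for $f$ irreducible. The two points you leave implicit — the compatibility of the adjunction units under reduction modulo $f$ and the descent of a projective $B$-resolution of the $R$-projective module $F_PA$ to $B/fB$ — are indeed routine, so I see no gap.
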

\begin{proof}
	See \citep[Theorem 5.1.1]{p2paper}.
\end{proof}

\subsubsection{Uniqueness of covers}

We recall the concept of equivalent covers discussed in \citep[Subsection 4.3]{p2paper} generalising \citep[4.2.3]{Rouquier2008}.

	Let $A, A', B, B'$ be projective Noetherian $R$-algebras and $\mathcal{A}$ and $\mathcal{A}'$ be resolving subcategories of $A\m\cap R\proj$ and $A'\m\cap R\proj$, respectively.
	Assume that $(A, P)$ is a $0$-$\mathcal{A}$ cover of $B$ and $(A', P')$ is a $0$-$\mathcal{A}'$ cover of $B'$. We say that the $\mathcal{A}$-cover $(A, P)$ is \textbf{equivalent} to the $\mathcal{A}'$-cover $(A', P')$   if there is an equivalence of categories $H\colon A\m\rightarrow A'\m$, which restricts to an exact equivalence $\mathcal{A}\rightarrow \mathcal{A}'$, and an equivalence of categories $L\colon B\m\rightarrow B'\m$ satisfying $L\circ \Hom_A(P, -)=\Hom_{A'}(P', -)\circ H$.

In \citep[Section 4]{p2paper}, the author proved for example that the number of simple $B$-modules is an upper bound to the quality of a split quasi-hereditary cover, if the Schur functor associated to the cover is not fully faithful. 
The importance of $1$-$\mathcal{F}(\Stsim)$ covers stems from the following characterization.

\begin{Prop}
	Let $(A, \{\Delta(\lambda)_{\lambda\in \Lambda}\})$ be a split quasi-hereditary algebra and let $(A, P)$ be a $0$-$\mathcal{F}(\Stsim)$ cover of $B$. \label{onefaithfulcovers} Then, the following assertions are equivalent.
	\begin{enumerate}[(a)]
		\item $(A, P)$ is a $1$-$\mathcal{F}(\Stsim)$ cover of $B$;
		\item The functor $F_P$ restricts to an exact equivalence of categories $\mathcal{F}(\Stsim)\rightarrow\mathcal{F}(F_P\Stsim)$ with inverse the exact functor $\Hom_B(F_PA, -)_{|_{\mathcal{F}(F_P\Stsim)}}\colon \mathcal{F}(F_P\Stsim)\rightarrow \mathcal{F}(\Stsim)$.
	\end{enumerate}
\end{Prop}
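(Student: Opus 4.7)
The plan is to play the two directions off against the homological characterisation of $i$-$\mathcal{F}(\Stsim)$ covers from Proposition~\ref{zeroAcover}(c): under the standing assumption that $(A,P)$ is a $0$-$\mathcal{F}(\Stsim)$ cover, $(a)$ is equivalent to the additional vanishing $\R^1 G_P(F_PN)=0$ for all $N\in\mathcal{F}(\Stsim)$, where $G_P=\Hom_B(F_PA,-)$ is the right adjoint of the Schur functor $F_P$. The guiding observation is that the inclusion $F_P\mathcal{F}(\Stsim)\subseteq\mathcal{F}(F_P\Stsim)$ is cheap (push a $\Stsim$-filtration through the exact $F_P$ and apply Lemma~\ref{tensorprojcommutingonHom}), whereas the reverse inclusion is controlled by $\Ext^1$, hence by $1$-faithfulness.

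For $(a)\Rightarrow(b)$ I would first observe that $F_P|_{\mathcal{F}(\Stsim)}$ is exact (as $P\in A\proj$) and fully faithful (since the $0$-$\mathcal{F}(\Stsim)$ hypothesis makes $\eta$ iso there), and that $F_P\mathcal{F}(\Stsim)\subseteq\mathcal{F}(F_P\Stsim)$ by transport of filtrations. The core step is essential surjectivity: given $X\in\mathcal{F}(F_P\Stsim)$, I would induct on the length of a $F_P\Stsim$-filtration; in the induction step one has an extension $0\to F_PN_1\to X\to F_P\St(\l)\otimes_RU_\l\to 0$ and uses the surjectivity of $F_P\colon\Ext_A^1(\St(\l)\otimes_RU_\l,N_1)\to\Ext_B^1$ provided by $(a)$ to produce an extension in $\mathcal{F}(\Stsim)$ whose $F_P$-image is $X$. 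That $G_P|_{\mathcal{F}(F_P\Stsim)}$ is the quasi-inverse then follows from $\eta$ being iso on $\mathcal{F}(\Stsim)$ together with the triangular identity (which forces $\epsilon_{F_PM}$ to be iso). Exactness of $G_P|_{\mathcal{F}(F_P\Stsim)}$ is the combination of automatic left exactness with $\R^1G_P(F_PN_1)=0$, which is the other half of $(a)$.

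For $(b)\Rightarrow(a)$ I would verify the $\Ext^1$-isomorphism directly. Injectivity of $F_P\colon\Ext_A^1(M,N)\to\Ext_B^1(F_PM,F_PN)$ comes from full-faithfulness of $F_P|_{\mathcal{F}(\Stsim)}$, once one uses that $\mathcal{F}(\Stsim)$ is closed under extensions in $A\m$ so that the middle term of any lift lies there. For surjectivity, start with an extension $\xi\colon 0\to F_PN\to X\to F_PM\to 0$ in $B\m$; the sequence splits over $R$ because $F_PM\in R\proj$, so $X\in R\proj$ and inherits a filtration by $F_P\Stsim$, placing it in $\mathcal{F}(F_P\Stsim)$. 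The equivalence in $(b)$ then writes $X\simeq F_PY$ with $Y\in\mathcal{F}(\Stsim)$, the structural maps lift via full-faithfulness, and exactness of the lifted sequence in $A\m$ is recovered by applying the exact quasi-inverse $G_P|_{\mathcal{F}(F_P\Stsim)}$ to $\xi$ and invoking $G_PF_P\simeq\id$ on $\mathcal{F}(\Stsim)$.

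The main obstacle I expect is the essential surjectivity step in $(a)\Rightarrow(b)$: it is precisely where the $1$-faithfulness is consumed, and the induction demands careful use of Lemma~\ref{tensorprojcommutingonHom} to identify $F_P\St(\l)\otimes_R U_\l$ with $F_P(\St(\l)\otimes_R U_\l)$, so that the pulled-back extension is built from modules genuinely in $\mathcal{F}(\Stsim)$. In $(b)\Rightarrow(a)$ the subtlety is the mirror one, namely that extensions of $R$-projective modules in $B\m$ are not a priori $R$-projective; one must spend the $R$-splitting provided by $F_PM\in R\proj$ before applying the equivalence. Everything else reduces to formal manipulations with the adjunction $F_P\dashv G_P$ and the triangular identities.
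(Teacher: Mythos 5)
Your argument is correct: both directions work as outlined, and the places you flag as delicate (the induction on $\Stsim$-filtrations consuming the $\Ext^1$-surjectivity for essential surjectivity, the identification $F_P(\St(\l)\otimes_R U_\l)\simeq F_P\St(\l)\otimes_R U_\l$ via Lemma~\ref{tensorprojcommutingonHom}, and the $R$-splitting of extensions with $R$-projective cokernel before invoking the equivalence) are exactly the points that need the hypotheses, while the rest follows from Proposition~\ref{zeroAcover} and the adjunction $F_P\dashv G_P$. The paper itself gives no proof but defers to Rouquier's Proposition 4.41, and your reconstruction is the standard argument behind that citation.
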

\begin{proof}
	See for example \citep[Proposition 4.41]{Rouquier2008}.
\end{proof}
The following result based on \citep[Corollary 4.46]{Rouquier2008} states that split quasi-hereditary covers with large enough quality are essentially unique, in some sense.

\begin{Cor}\label{equivalenceofoneuniqueness}
	Let $(A, \{\Delta(\lambda)_{\lambda\in \Lambda}\})$, $(A', \{\Delta'(\lambda)_{\lambda\in \Lambda'}\})$ be two split quasi-hereditary algebras and let $(A, P)$, $(A', P')$ be $1$-$\mathcal{F}(\Stsim)$, $1$-$\mathcal{F}(\Stsim')$ covers of $B$, respectively.
	Assume that there exists an exact equivalence $L\colon B\m\rightarrow B\m$ which restricts to an exact equivalence $
		\mathcal{F}(F_P\Stsim)\rightarrow  \mathcal{F}(F'_{P'}\Stsim'). \label{fceq148}
$ Then, the $\mathcal{A}$-cover $(A, P)$ is equivalent to the $\mathcal{A}'$-cover $(A', P')$ of $B$.
\end{Cor}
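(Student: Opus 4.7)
The plan is to use Proposition~\ref{onefaithfulcovers} to convert the hypothesis into an equivalence $\mathcal{F}(\Stsim) \simeq \mathcal{F}(\Stsim')$ at the level of the standardly filtered subcategories, and then to lift this by Morita theory to a full equivalence $A\m \simeq A'\m$ fitting into the square required by the definition of equivalent covers.

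First, by Proposition~\ref{onefaithfulcovers} both $F_P$ and $F'_{P'}$ restrict to exact equivalences $\mathcal{F}(\Stsim)\to\mathcal{F}(F_P\Stsim)$ and $\mathcal{F}(\Stsim')\to\mathcal{F}(F'_{P'}\Stsim')$ with quasi-inverses $G_P:=\Hom_B(F_PA,-)$ and $G'_{P'}:=\Hom_B(F'_{P'}A',-)$ restricted to the respective subcategories. Composing with the given $L$ yields an exact equivalence
\begin{equation*}
H_0 := G'_{P'} \circ L \circ F_P \colon \mathcal{F}(\Stsim) \to \mathcal{F}(\Stsim').
\end{equation*}

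Next, I would identify the projective objects of the exact category $\mathcal{F}(\Stsim)$ with $A\proj$: $A\proj$ lies in $\mathcal{F}(\Stsim)$, and since $\mathcal{F}(\Stsim)$ is closed under kernels of epimorphisms, any projective object admits an $A$-projective cover whose defining sequence splits inside $\mathcal{F}(\Stsim)$, so the object lies in $A\proj$. The analogous statement holds for $A'$. Being an exact equivalence, $H_0$ must biject $A\proj$ with $A'\proj$, and in particular $H_0 A$ is a progenerator of $A'\m$ with
\begin{equation*}
\End_{A'}(H_0 A)^{op} \simeq \End_A(A)^{op} \simeq A,
\end{equation*}
so classical Morita theory yields an equivalence $\bar{H} := H_0 A \otimes_A - \colon A\m \to A'\m$. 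Because $\bar{H}$ is right exact and naturally agrees with $H_0$ on $A\proj$, a standard projective presentation argument identifies $\bar{H}|_{\mathcal{F}(\Stsim)}\simeq H_0$ naturally, so $\bar{H}$ restricts to an exact equivalence $\mathcal{F}(\Stsim)\to\mathcal{F}(\Stsim')$.

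For the commutativity $F'_{P'}\circ\bar{H}\simeq L\circ F_P$, I would evaluate on the progenerator $A$:
\begin{equation*}
F'_{P'}\bar{H}(A) \simeq F'_{P'}(H_0 A) = F'_{P'}G'_{P'}LF_P(A) \simeq LF_P(A),
\end{equation*}
the last step using that $LF_P(A)\in\mathcal{F}(F'_{P'}\Stsim')$ (since $F_PA\in\mathcal{F}(F_P\Stsim)$ and $L$ restricts to the assumed equivalence), so that $F'_{P'}G'_{P'}$ is naturally the identity there by Proposition~\ref{onefaithfulcovers}. As both $F'_{P'}\circ\bar{H}$ and $L\circ F_P$ are exact, $R$-linear, and agree on $A$, the isomorphism propagates by naturality to all of $A\m$, producing the commutative square required.

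The most delicate step is checking that the isomorphism $F'_{P'}\bar{H}(A)\simeq LF_P(A)$ is truly natural, i.e. compatible with the right $A$-action coming from the identification $A\simeq\End_A(A)^{op}\simeq\End_{A'}(H_0 A)^{op}$. This amounts to matching the $(B,A)$-bimodule structures on both sides and requires careful bookkeeping of the bimodule coming from $L$ against the one coming from Morita theory, though no further conceptual input.
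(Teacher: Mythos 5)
Your argument is correct and is essentially the standard proof behind the result the paper cites (the paper itself only refers to \citep[Corollary 4.3.6]{p2paper}, which in turn follows Rouquier's Corollary 4.46): use $1$-faithfulness to get the exact equivalence $G'_{P'}\circ L\circ F_P\colon \mathcal{F}(\Stsim)\to\mathcal{F}(\Stsim')$, identify the Ext-projective objects of these exact categories with $A\proj$ and $A'\proj$, and extend by Morita theory. Your final worry about the right $A$-module structure is resolved by the naturality of the counit of $F'_{P'}\dashv G'_{P'}$ applied to right multiplications $\rho_a\colon A\to A$, so the $(B,A)$-bimodule identification $F'_{P'}\bar H(A)\simeq LF_P(A)$ and hence the commutativity of the square via Eilenberg--Watts go through as you expect.
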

\begin{proof}
We refer to \cite[Corollary 4.3.6]{p2paper}.
\end{proof}

\subsection{Relative dominant dimension over Noetherian algebras}

In \cite{zbMATH05871076}, it was established that dominant dimension of certain modules over finite-dimensional algebras control the quality of  many $A\proj$-covers and $\mathcal{F}(\St)$-covers of a finite-dimensional algebra $B$. These covers can could be controlled in such a way are formed by endomorphism algebras of generator-cogenerators over $B$.

In \cite{CRUZ2022410}, the author generalised the concept of dominant dimension to projective Noetherian algebras. In \citep{p2paper}, the author showed that this invariant does not fully control the quality of a cover in the integral setup, since there are more factors that influence the cover. But, this invariant can be exploited together with change of rings to compute the quality of a cover which is formed by an integral version of the endomorphism algebra of a generator-cogenerator. Such integral versions first appear in \citep[Theorem 4.1]{CRUZ2022410} which we call relative Morita-Tachikawa correspondence.

	\begin{Def}\label{relativedominantdef}
	Let $M\in A\m$. We say that $M$ has \textbf{relative dominant dimension at least $t\in \mathbb{N}$}  if there exists an $(A, R)$-exact sequence of 
	finitely generated left $A$-modules
	$
		0\rightarrow M\rightarrow I_1\rightarrow\cdots \rightarrow I_t$
	 where all $I_i$ are projective and $(A, R)$-injective left $A$-modules. If $M$ admits no such $(A, R)$-exact sequence, then we say that $M$ has relative dominant dimension zero. Otherwise, the relative dominant dimension of $M$ is $\sup\{ t\in \mathbb{N}\colon 0\rightarrow M\rightarrow I_1\rightarrow \cdots \rightarrow I_t \text{ is $(A, R)$-exact}, \ I_1, \ldots, I_t\in A\proj\cap \add DA \}\subset \mathbb{N}\cup \{0, +\infty\}$.
	We denote by $\domdim_{(A, R)} M$ the relative dominant dimension of $M$. We just write $\domdim_A M$ when $R$ is a field.
\end{Def} We denote by $\domdim{(A, R)}$ the relative dominant dimension of the regular module $A$. 
This concept behaves well under extension of scalars. In particular, if $\domdim_{(A, R)} A\geq 1$, then $\domdim_{(A, R)} M=\inf\{\domdim_{A(\mi)}M(\mi)\colon \mi\in \MaxSpec R \}$ for any $M\in A\m\cap R\proj$ (see \citep[Theorem 6.13]{CRUZ2022410}).

Given $M\in A\m$ we say that $M$ is an \textbf{$(A, R)$-injective-strongly faithful  module} if $M$ is $(A, R)$-injective and there exists an $(A, R)$-monomorphism $A\hookrightarrow X$ for some $X\in \add M$.
By a \textbf{RQF3-algebra} we mean a triple $(A, P, V)$ formed by a projective Noetherian $R$-algebra $A$, a projective $(A, R)$-injective-strongly faithful left $A$-module $P$ and a projective $(A, R)$-injective-strongly faithful right $A$-module $V$.

We recall the following properties relating relative dominant dimension with cover theory.

\begin{Prop}
	Let $(A, P, V)$ be a RQF3-algebra over a commutative Noetherian ring $R$. \label{domdimtoolcover}
	The following assertions hold.
	\begin{enumerate}[(a)]
		\item If $\domdim{(A, R)} \geq 2$, then $(A, \Hom_{A^{op}}(V, A))$ is a cover of $\End_A(V)$.
		\item Let $(A, \{\Delta(\lambda)_{\lambda\in \Lambda}\})$ be a split quasi-hereditary algebra. Assume that $T$ is a characteristic tilting module of $A$. Then, $(A, \Hom_{A^{op}}(V, A))$ is an $(\domdim_{(A, R)} T-2)$-$\mathcal{F}(\Stsim)$ cover of $\End_A(V)$. Moreover, the Hemmer--Nakano dimension of $\mathcal{F}(\Stsim)$ (with respect to $\Hom_{A^{op}}(V, A)$) is less than or equal to $\domdim_{(A, R)} T+\dim R -2$.
	\end{enumerate}
\end{Prop}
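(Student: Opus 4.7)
The strategy follows the classical Müller argument adapted to the integral setting, leveraging the adjunction between $F := \Hom_A(P_0, -)$ and $G := \Hom_B(\Hom_A(P_0, A), -)$, where $P_0 := \Hom_{A^{op}}(V, A)$ and $B := \End_A(P_0)^{op}$. Projectivization over $A^{op}$ (using $V \in A^{op}\proj$) yields the canonical identifications $B \simeq \End_{A^{op}}(V)$ and $\Hom_A(P_0, A) \simeq V$, and restricts $F$ to an equivalence $\add P_0 \simeq B\proj$ whose quasi-inverse is induced by $G$; in particular $\eta_I \colon I \to GFI$ is an isomorphism for every $I \in \add P_0$.

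For part (a), the hypothesis $\domdim_{(A, R)} A \geq 2$ provides an $(A, R)$-exact sequence $0 \to A \to I_1 \to I_2$ with $I_j \in A\proj \cap \add DA$. Strong faithfulness of $V$, combined with the duality $\Hom_{A^{op}}(-, A)$ applied to the $(A, R)$-split embedding $A \hookrightarrow X \in \add V$, forces these $I_j$ to lie in $\add P_0$. Applying $F$ preserves exactness (the sequence is $R$-split and $P_0 \in R\proj$), producing a left-exact sequence $0 \to FA \to FI_1 \to FI_2$ of $B$-modules. Applying $G$ and comparing with the original sequence via $\eta$, the left exactness of $GF$ together with $\eta_{I_j}$ being isomorphisms forces $\eta_A \colon A \to GFA$ to be an isomorphism by a 5-lemma style diagram chase. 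This is precisely the double centralizer property defining a cover.

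For part (b), the same machinery is run with the stronger bound $\domdim_{(A, R)} T \geq n$. Every $M \in \mathcal{F}(\Stsim)$ admits a finite $\add T$-coresolution, and each summand of $T$ has an $(A, R)$-injective-projective coresolution of length $n$, so splicing produces an $(A, R)$-exact sequence $0 \to M \to I_0 \to I_1 \to \cdots \to I_{n-1}$ with $I_j \in \add P_0$. Applying $F$ yields an exact sequence of projective $B$-modules that computes $\R^j G (FM)$; using $GF \simeq \id$ on $\add P_0$ one deduces that $\eta_M$ is an isomorphism and $\R^j G(FM) = 0$ for $1 \leq j \leq n - 2$, making $(A, P_0)$ an $(n-2)$-$\mathcal{F}(\Stsim)$ cover by Proposition \ref{zeroAcover}(c). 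The Hemmer--Nakano upper bound $\domdim_{(A, R)} T + \dim R - 2$ follows from iterated application of Theorem \ref{improvingcoverwithspectrum} along a chain of prime ideals from the generic point, each localization step gaining one degree of faithfulness in exchange for one degree of Krull dimension. The main obstacle is the placement of the $I_j$ inside $\add P_0$ rather than the potentially larger $A\proj \cap \add DA$: this step relies essentially on the strong faithfulness of $V$ and is the technical heart of the relative Müller argument in the integral setting.
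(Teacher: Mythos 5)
The paper itself does not prove this proposition---it is a citation to \citep[Proposition 2.4.4, Theorems 6.0.1 and 6.2.1]{p2paper}---so I am judging your attempt as a self-contained argument, and it has genuine gaps. The central one is that you conflate the two adjoints of $F=\Hom_A(P_0,-)$. Projectivization gives that the \emph{counit} of the adjunction $P_0\otimes_B-\dashv F$ is an isomorphism on $\add P_0$; it does \emph{not} give that the \emph{unit} $\eta$ of $F\dashv G$, with $G=\Hom_B(\Hom_A(P_0,A),-)$, is an isomorphism there. Having $\eta_X$ an isomorphism for $X\in\add P_0$ amounts to $F$ being fully faithful on Hom-spaces $\Hom_A(A,X)$ with $A$ in the \emph{first} slot, which is essentially the double centralizer property you are trying to establish. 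Concretely, for $A$ the upper triangular $2\times 2$ matrices over a field and $P_0$ the indecomposable projective-injective, $GFP_0$ is one-dimensional while $P_0$ is two-dimensional. So the five-lemma step in (a) is circular: by Lemma \ref{cocoversdef}, "$\eta$ (equivalently $D\chi$) is an isomorphism on relative injectives" is \emph{equivalent} to the conclusion. The non-circular route, which is the one Theorem \ref{moduleMuellerpartone} and Lemma \ref{cocoversdef} take and which the cited results follow, dualizes the coresolution into a resolution with terms in the additive closure of the relevant module, applies the right-exact left adjoint, and exploits that the counit genuinely is an isomorphism there by projectivization.

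Two further problems. Your placement of the terms $I_j$ inside $\add\Hom_{A^{op}}(V,A)$ is unjustified: the $I_j$ are projective and $(A,R)$-injective, whereas $\Hom_{A^{op}}(V,A)$ is projective but in general not $(A,R)$-injective---the paper introduces relative Morita algebras with the extra hypothesis $\add DP\otimes_A DA=\add DP$ precisely to force $\add\Hom_{A^{op}}(DP,A)=\add P$, so for a bare RQF3 algebra the two additive closures need not be comparable. You correctly identify this as the technical heart, but you do not establish it, and the correct argument avoids it by working with $V$ on the other side via Corollary \ref{rightandleftrelativedomidimension} and Proposition \ref{relativedomdimrelativecoincd}. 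Finally, the bound $\operatorname{HNdim}\mathcal{F}(\Stsim)\leq\domdim_{(A,R)}T+\dim R-2$ cannot follow from iterating Theorem \ref{improvingcoverwithspectrum}: that theorem yields \emph{lower} bounds on the quality of a cover by improving faithfulness from the fibers, whereas here an \emph{upper} bound is asserted, which requires an obstruction-type argument of a different nature.
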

\begin{proof}
	For (a), see \citep[Proposition 2.4.4.]{p2paper}. For (b), see \citep[Theorem 6.0.1, Theorem 6.2.1]{p2paper}.
\end{proof}

A triple $(A, P, DP)$ is said to be a \textbf{relative Morita $R$-algebra} if $(A, P, DP)$  is an RQF3-algebra so that $\domdim {(A, R)} \geq 2$ and $\add DP\otimes_A DA=\add DP$. In particular,  $$\add \Hom_{A^{op}}(DP, A)={\add D(DP\otimes_ADA)}=\add DDP=\add P.$$

By a \textbf{relative gendo-symmetric $R$-algebra} we mean a pair  $(A, P)$ so that $P$ is a left projective $(A, R)$-injective-strongly faithful module satisfying $P\simeq DA \otimes_A P$ as $(A, \End_A(P)^{op})$-bimodules and $\domdim{(A, R)} \geq 2$. In both cases, when $R$ is a field, we just say that $A$ is a Morita (resp. gendo-symmetric) algebra.

\section{Relative (co-)dominant dimension with respect to a module}\label{Relative (co-)dominant dimension with respect to a module}

In this part, we study a new generalisation of relative dominant dimension over projective Noetherian algebras and how it can be used to obtain information about the functor $F_Q$, for some $Q\in A\m\cap R\proj$, where we do not assume $Q$ to be necessarily projective.

	\begin{Def}\label{relativedomdimrelativedef}
	Let $T, X\in A\m\cap R\proj$. 
	If $X$ does not admit a left $\add T$-approximation which is an $(A, R)$-monomorphism, then we say that 	\textbf{relative dominant dimension of $X$ with respect to $T$} is zero.
	Otherwise, the
	\textbf{relative dominant dimension of $X$ with respect to $T$}, denoted by $T\rdomdim_{(A,
		R)} X$, is the supremum of  all $n\in \mathbb{N}\subset \mathbb{N}\cup \{0, +\infty\}$ such that there exists an $(A,
	R)$-exact sequence $
		0\rightarrow X\rightarrow T_1\rightarrow \cdots \rightarrow T_n
$ which remains exact under $\Hom_A(-, T)$ with all $T_i\in \add T$. 
\end{Def}

By convention, the empty direct sum is the zero module. So, the existence of a  finite relative $\add T$-coresolutions  implies that $T\rdomdim_{(A, R)} X$ is infinite.
In the same way, we can define the relative dominant dimension of a right module with respect to a right module $Q$. We write $Q\rdomdim{(A, R)}$ instead of $Q\rdomdim_{(A, R)} A$. We just write $T\rdomdim_A X$ if $R$ is a field.

Definition \ref{relativedomdimrelativedef} generalizes the concept of relative dominant dimension introduced in \citep{CRUZ2022410}.
More precisely, we have the following.

\begin{Prop}\label{relativedomdimrelativecoincd}
	Let $R$ be a commutative Noetherian ring. Let $A$ be a projective Noetherian $R$-algebra with
	$\domdim{(A, R)} \geq 1$ with projective $(A,
	R)$-injective-strongly faithful left $A$-module $P$. Then, \begin{align}
		P\rdomdim_{(A, R)} X=\domdim_{(A, R)} X, \quad X\in A\m.
	\end{align}
\end{Prop}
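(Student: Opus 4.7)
The plan is to prove both inequalities $P-\domdim_{(A,R)} X \leq \domdim_{(A,R)} X$ and $\domdim_{(A,R)} X \leq P-\domdim_{(A,R)} X$. For the first, observe that every module in $\add P$ is simultaneously projective (since $P \in A\proj$) and $(A,R)$-injective (since $P \in \add DA$). Hence any $(A,R)$-exact coresolution $0 \to X \to T_1 \to \cdots \to T_n$ with $T_i \in \add P$ witnessing $P-\domdim_{(A,R)} X \geq n$ already fulfills the conditions of Definition \ref{relativedominantdef}, yielding $\domdim_{(A,R)} X \geq n$.

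For the reverse inequality, the crucial step is to verify that every finitely generated projective $(A,R)$-injective left $A$-module $I$ lies in $\add P$. Such an $I$ is a direct $A$-summand of some $A^{k}$. Using the $(A,R)$-monomorphism $A \hookrightarrow X_{0}$ guaranteed by strong faithfulness of $P$, where $X_{0} \in \add P$, we obtain an $(A,R)$-monomorphism $I \hookrightarrow X_{0}^{k}$. Since $I \in \add DA$, and $(A,R)$-monomorphisms from modules in $\add DA$ split as $A$-homomorphisms---a fact following from the natural identification $\Hom_{A}(-, DA) \cong D_{R}$ and the exactness of $D_{R}$ on $(A,R)$-exact sequences---we conclude that $I$ is an $A$-summand of $X_{0}^{k}$, hence $I \in \add P$.

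Finally, given any $(A,R)$-exact sequence $0 \to X \to I_{1} \to \cdots \to I_{n}$ witnessing $\domdim_{(A,R)} X \geq n$, the previous step forces $I_{i} \in \add P$ for every $i$. Since each short-exact piece of an $(A,R)$-exact sequence is $R$-split, the whole sequence remains exact under any additive $R$-linear functor, in particular under $\Hom_{A}(-, P)$. Thus the same sequence also witnesses $P-\domdim_{(A,R)} X \geq n$, completing the proof. The only nontrivial point, and therefore the main obstacle, is the identification $\add P = \{$ finitely generated projective $(A,R)$-injective $A$-modules$\}$ under the strong faithfulness and $\domdim{(A,R)} \geq 1$ hypotheses; once this is in place the rest is formal, and the argument also reproduces the case $n=0$ directly, since an $(A,R)$-monomorphism $X \hookrightarrow I$ into a projective $(A,R)$-injective is automatically a left $\add P$-approximation of $X$ by the same splitting argument.
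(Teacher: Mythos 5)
The paper does not prove this proposition itself; it defers to \citep[Proposition 3.15]{CRUZ2022410}, so your argument is effectively a self-contained replacement. Its structure is the right one: the inequality $P-\domdim_{(A,R)}X\leq \domdim_{(A,R)}X$ is immediate because Definition~\ref{relativedomdimrelativedef} imposes strictly more conditions than Definition~\ref{relativedominantdef} once one knows $\add P$ consists of projective $(A,R)$-injectives, and the whole content of the converse is your key lemma that, under the strong faithfulness hypothesis, \emph{every} finitely generated projective $(A,R)$-injective module lies in $\add P$. Your proof of that lemma is correct: the split embedding $I\hookrightarrow A^k$, composed with the $(A,R)$-monomorphism $A^k\hookrightarrow X_0^k$, is again an $(A,R)$-monomorphism, and it splits over $A$ because $\Hom_A(-,DA)\cong D_R$ is exact on $R$-split sequences. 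The treatment of the value $0$ (an $(A,R)$-monomorphism into a projective $(A,R)$-injective is automatically a left $\add P$-approximation) is also needed for the stated \emph{equality} and you handle it.

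There is, however, one faulty justification in the last step. The claim that an $(A,R)$-exact sequence ``remains exact under any additive $R$-linear functor'' because its short exact pieces are $R$-split is false: additivity of a functor on $A\m$ only guarantees preservation of sequences that split \emph{over $A$}, whereas $(A,R)$-exact sequences split only after restriction to $R\m$. For instance, over $A=k[x]/(x^2)$ every exact sequence is $(A,k)$-exact, yet $\Hom_A(-,k)$ does not carry $0\to k\to A\to k\to 0$ to an exact sequence, since $\Ext^1_A(k,k)\neq 0$. What you actually need is that $\Hom_A(-,P)$ is exact on $(A,R)$-exact sequences, and this holds precisely because $P$ is $(A,R)$-injective: $P$ is a direct summand of some $(DA)^m$, so $\Hom_A(-,P)$ is a direct summand of $\Hom_A(-,(DA)^m)\cong D_R^m$, which is exact on $R$-split sequences. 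Since you already invoked the identification $\Hom_A(-,DA)\cong D_R$ one sentence earlier, the repair is immediate, but as written the justification does not stand.
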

\begin{proof}
See \citep[Proposition 3.15]{CRUZ2022410}.
\end{proof}

	In order to avoid changing from left to right modules systematically in the coming sections, we can introduce the relative codominant dimension with respect to a module.

\begin{Def}
	 Let $Q, X\in A\m\cap R\proj$. 
	If $X$ does not admit a surjective right $\add Q$-approximation, then we say that 	\textbf{relative codominant dimension of $X$ with respect to $Q$} is zero.
	Otherwise, the
	\textbf{relative codominant dimension of $X$ with respect to $Q$}, denoted by $Q\rcodomdim_{(A,
		R)} X$, is the supremum of all $n\in \mathbb{N}\subset \mathbb{N}\cup \{0, +\infty\}$ such that there exists an $(A,
	R)$-exact sequence $
		Q_n\rightarrow\cdots\rightarrow Q_1 \rightarrow X\rightarrow 0
$ which remains exact under $\Hom_A(Q, -)$ with all $Q_i\in \add Q$. If some $Q_i=0$, then we say that
	\mbox{$Q\rcodomdim_{(A, R)} X$} is infinite. 
\end{Def} 
In particular, $DQ\rdomdim_{(A, R)}DM = Q\rcodomdim_{(A, R)} M$ whenever $Q, M\in A\m\cap R\proj$.
As we will see later, these two invariants coincide in our main cases of interest.

\subsection{Relative Mueller's characterisation of relative dominant dimension with respect to a module}\label{Relative Mueller's characterization of relative dominant dimension with respect to a module}

In this section, we study a version of Mueller's theorem for the relative dominant dimension with respect to a module. The arguments for these results are inspired by \citep[Proposition 2.1]{zbMATH00423524}.

	\begin{Theorem}\label{moduleMuellerpartone}
	Let $R$ be a commutative Noetherian ring. Let $A$ be a projective Noetherian $R$-algebra. Assume that $Q\in A\m\cap R\proj$ satisfying in addition that $\Hom_A(Q, Q)\in R\proj$. Denote by $B$ the endomorphism algebra $ \End_A(Q)^{op}$. For $M\in A\m\cap R\proj$, the following assertions hold.
	\begin{enumerate}[(i)]
		\item The counit $\chi_M\colon Q\otimes_B \Hom_A(Q, M)\rightarrow M$ is surjective if and only if $DQ\rdomdim_{(A, R)}DM\geq 1$.
		\item $\chi_M\colon Q\otimes_B \Hom_A(Q, M)\rightarrow M$ is an isomorphism if and only if \mbox{$DQ\rdomdim_{(A, R)}DM\geq 2$.}
	\end{enumerate}
\end{Theorem}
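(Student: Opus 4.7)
My plan is to translate the statement into its codominant form via the duality $D_R$, and then characterise surjectivity, respectively bijectivity, of $\chi_M$ in terms of right $\add Q$-approximations of $M$. Since $D_R$ is a contravariant exact equivalence on $A\m\cap R\proj$ that identifies $\add DQ$ with $\add Q$ and the functor $\Hom_{A^{op}}(-,DQ)$ with $\Hom_A(Q,D_R-)$, one obtains the equality $DQ-\domdim_{(A,R)} DM = Q-\codomdim_{(A,R)} M$. So it suffices to prove: $\chi_M$ is surjective iff $Q-\codomdim_{(A,R)} M\geq 1$; and $\chi_M$ is an isomorphism iff $Q-\codomdim_{(A,R)} M\geq 2$. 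By Lemma \ref{rightapprbyonemodule} and Lemma \ref{remainsexactandapproximationsateachstep}, these conditions amount to the existence of an $(A,R)$-exact sequence $Q_1\to M\to 0$, respectively $Q_2\to Q_1\to M\to 0$, with $Q_i\in\add Q$, that remains exact after applying $F=F_Q$.

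For (i), the direction $(\Leftarrow)$ applies the right exact functor $\mathbb{I}=Q\otimes_B-$ to the surjection $FQ_1\twoheadrightarrow FM$ (surjective because the given $(A,R)$-exact sequence stays exact under $F$), and uses the naturality square of $\chi$ together with the fact that $\chi_{Q_1}$ is an isomorphism (by projectivization, extending the canonical iso $\chi_Q\colon Q\otimes_B B\to Q$ to $\add Q$) to force surjectivity of $\chi_M$. For $(\Rightarrow)$, pick a finite surjective presentation $\phi\colon B^n\twoheadrightarrow FM$, which exists because $FM$ is a finitely generated $R$-module, hence a finitely generated $B$-module. Define $g=\chi_M\circ\mathbb{I}\phi\colon Q^n\to M$; it is surjective as a composition of surjections, since $\chi_M$ is surjective by hypothesis and $\mathbb{I}\phi$ is surjective by right exactness of $\mathbb{I}$. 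The triangle identity $F(\chi_M)\circ\upsilon_{FM}=\id_{FM}$ together with naturality of $\upsilon$ then shows that $F(g)$ coincides with $\phi$ under the canonical isomorphism $FQ^n\simeq B^n$, hence $F(g)$ is surjective. By Lemma \ref{rightapprbyonemodule}, $g$ is a right $\add Q$-approximation, yielding $Q-\codomdim_{(A,R)} M\geq 1$.

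For (ii), $(\Leftarrow)$ applies $\mathbb{I}$ to $FQ_2\to FQ_1\to FM\to 0$, and compares with $Q_2\to Q_1\to M\to 0$ via the naturality of $\chi$. Both rows are right exact; with $\chi_{Q_1}$ and $\chi_{Q_2}$ isomorphisms, the three-term version of the five lemma forces $\chi_M$ to be an isomorphism. For $(\Rightarrow)$, use (i) to produce a surjective right $\add Q$-approximation $p\colon Q_1\twoheadrightarrow M$ and set $K=\ker p$. Since $M\in R\proj$, also $K\in R\proj$, and $0\to K\to Q_1\to M\to 0$ is $(A,R)$-exact, so $F$ preserves its exactness, yielding a short exact sequence in $B\m$. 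Tensoring with $Q$ over $B$ gives an exact $\mathbb{I}FK\to\mathbb{I}FQ_1\to\mathbb{I}FM\to 0$, and in the comparison diagram with the original short exact sequence both $\chi_{Q_1}$ and $\chi_M$ are isomorphisms; a direct diagram chase exploiting the injectivity of $K\hookrightarrow Q_1$ then yields surjectivity of $\chi_K$. Part (i) applied to $K$ produces a surjective right $\add Q$-approximation $Q_2\twoheadrightarrow K$, and Lemma \ref{remainsexactandapproximationsateachstep} assembles this with $p$ into an $(A,R)$-exact sequence $Q_2\to Q_1\to M\to 0$ that remains exact under $F$, i.e.\ $Q-\codomdim_{(A,R)} M\geq 2$.

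The main obstacle I anticipate is the converse of (ii): the top row of the comparison diagram for $K$ is only right exact at $\mathbb{I}FM$, with a potentially nonzero $\Tor_1^B(Q,FM)$ obstructing left exactness, so one cannot hope to deduce that $\chi_K$ is an isomorphism. Fortunately only surjectivity of $\chi_K$ is required to re-enter part (i) and iterate, and this surjectivity is extracted by lifting $j(k)\in Q_1$ through $\chi_{Q_1}^{-1}$ into $\mathbb{I}FQ_1$, using injectivity of $\chi_M$ to land in the kernel of the map $\mathbb{I}FQ_1\to\mathbb{I}FM$, and finally using exactness of the top row at $\mathbb{I}FQ_1$ to lift back into $\mathbb{I}FK$.
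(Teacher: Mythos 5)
Your proposal is correct and follows essentially the same strategy as the paper's proof: both directions of (i) and the forward direction of (ii) use exactly the paper's devices (naturality of the counit, projectivization on $\add Q$, and a $B$-presentation of $\Hom_A(Q,M)$ by modules $\Hom_A(Q,Q^n)$), and your converse of (ii) — iterating part (i) through the kernel of the first approximation and reassembling via Lemma \ref{remainsexactandapproximationsateachstep} — is only a cosmetic repackaging of the paper's one-step lift of a two-term projective $B$-presentation. One justification should be repaired: in the converse of (ii) you claim that $F$ preserves the exactness of $0\to K\to Q_1\to M\to 0$ \emph{because} it is $(A,R)$-exact, but $R$-splitness alone does not make $\Hom_A(Q,-)$ exact; the correct (and available) reason is that $p\colon Q_1\to M$ was constructed in part (i) as a surjective right $\add Q$-approximation, so $Fp$ is surjective by Lemma \ref{rightapprbyonemodule} and the sequence of $B$-modules is short exact.
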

\begin{proof}
	Assume that $\chi_M$ is surjective. Since $\Hom_A(Q, M)\in B\m $ there exists $X\in \add Q$ and a surjective map $\Hom_A(Q, X)\twoheadrightarrow \Hom_A(Q, M)$, say $g$. The functor $Q\otimes_B -$ is right exact, so $Q\otimes_B g$ is surjective as well. Define $f:=\chi_M\circ Q\otimes_B g \circ \chi^{-1}_X$. The map $f$ is surjective and we claim that $\Hom_A(Q, f)=g$. To see that, observe first that $q\otimes h=\chi_X^{-1}\chi_X(q\otimes h)=\chi_X^{-1}(h(q))$ for every $q\in Q$ and $h\in \Hom_A(Q, X)$. Now, we can see that for every $h\in \Hom_A(Q, X)$, $q\in Q$,
	\begin{align}
		\Hom_A(Q, f)(h)(q)=f\circ h (q)=\chi_M \circ Q\otimes_B g (q\otimes h)=g(h)(q).
	\end{align} Applying $D$ yields the $(A, R)$-monomorphism $DM\rightarrow DX$ which remains exact under $\Hom_A(-, DQ)$. Thus,  \mbox{$DQ\rdomdim_{(A, R)} DM\geq 1$}. Conversely, assume that $DQ\rdomdim_{(A, R)} DM\geq 1$. So, there exists $X\in \add DQ$ and an $(A, R)$-monomorphism $f\colon DM\rightarrow X$ which is also a left $\add DQ$-approximation. Since $\chi$ is a natural transformation between $Q\otimes_B \Hom_A(Q, -)$ between $\id_{B\m}$ the map $Df\circ \chi_{DX}={\chi_{DDM}\circ Q\otimes_B \Hom_A(Q, Df)}$ is surjective. In particular, $\chi_{DDM}$ is surjective. As $DDM\simeq M$, $\chi_M$ is surjective and (i) follows.
	
	Now, assume that $DQ\rdomdim_{(A, R)} DM\geq 2$. Then, there exists an $(A, R)$-exact sequence $0\rightarrow DM\xrightarrow{f_0} X_0\xrightarrow{f_1} X_1$, with $X_0, X_1\in \add DQ$, which remains exact under $\Hom_A(-, DQ)$. As $Q\otimes_B -$ is right exact, the following diagram is commutative with exact rows
	\begin{equation}
		\begin{tikzcd}
			Q\otimes_B \Hom_A(Q, DX_1) \arrow[r, "Q\otimes_B\Hom_A(Q{,} Df_1)", outer sep=0.75ex] \arrow[d, "\chi_{DX_1}", "\simeq"'] &[2em] Q\otimes_B \Hom_A(Q, DX_0)\arrow[r, twoheadrightarrow, "Q\otimes_B\Hom_A(Q{,} Df_0)", outer sep=0.75ex ] \arrow[d, "\chi_{DX_0}", "\simeq"']  &[2em] Q\otimes_B \Hom_A(Q, DDM) \arrow[d, "\chi_{DDM}"]\\
			DX_1 \arrow[r, "Df_1"] & DX_0 \arrow[r, "Df_0", twoheadrightarrow] & DDM 
		\end{tikzcd}.
	\end{equation} By diagram chasing, $\chi_{DDM}$ is an isomorphism. Since $DDM\simeq M$, $\chi_M$ is an isomorphism. Conversely, assume that $\chi_M$ is an isomorphism. $B$ is a Noetherian $R$-algebra, so we can consider a projective $B$-presentation for $\Hom_A(Q, M)$ of the form
	\begin{align}
		\Hom_A(Q, Q^m)\xrightarrow{g_1} \Hom_A(Q, Q^n)\xrightarrow{g_0} \Hom_A(Q, M)\rightarrow 0,
	\end{align} for some integers $m, n$. Since $\Hom_A(Q, -)_{|_{\add T}}$ is full and faithful there exists $f_1\in \Hom_A(Q^m, Q^n)$ such that $\Hom_A(Q, f_1)=g_1$. Fix $f_0=\chi_M\circ Q\otimes_B g_0 \circ \chi_{Q^n}^{-1}$. We have seen previously, that $\Hom_A(Q, f_0)=g_0$. 
	So, the diagram 
	\begin{equation}
		\begin{tikzcd}
			Q\otimes_B \Hom_A(Q, Q^m) \arrow[r, "Q\otimes_B g_1", outer sep=0.75ex, swap] \arrow[d, "\chi_{Q^m}", "\simeq"']&[1.3em] Q\otimes_B \Hom_A(Q, Q^n) \arrow[r, "Q\otimes_B g_0", twoheadrightarrow, outer sep=0.75ex, swap] \arrow[d, "\chi_{Q^n}", "\simeq"'] &[1.3em] Q\otimes_B \Hom_A(Q, M) \arrow[d, "\chi_M"]\\
			Q^m\arrow[r, "f_1"] & Q^n \arrow[r, "f_0", twoheadrightarrow] & M
		\end{tikzcd}
	\end{equation}
	is commutative. Since the vertical maps are isomorphisms and the upper row is exact it follows that the bottom row is exact and by construction it remains exact under $\Hom_A(Q, -)$. As $M\in R\proj$, it is, also, $(A, R)$-exact. By applying the standard duality $D$ we obtain that $DQ\rdomdim_{(A, R)} DM\geq 2$.
\end{proof}

	\begin{Remark}\label{counitchianddelta}
	Note that, for each $M\in A\m\cap R\proj$, the map $\chi_M$ is equivalent to the map $\delta_{DM}$ studied in  \citep[Lemma 3.21]{CRUZ2022410} when $Q=P$ is a projective $(A, R)$-injective-strongly faithful module.
\end{Remark}

	Similarly, we can write the dual version of Theorem \ref{moduleMuellerpartone}.

\begin{Theorem}\label{dualmoduleMuellerpartone}
	Let $R$ be a commutative Noetherian ring. Let $A$ be projective Noetherian $R$-algebra. Assume that $Q\in A\m\cap R\proj$ satisfying in addition that $\Hom_A(Q, Q)\in R\proj$. Denote by $B$ the endomorphism algebra $ \End_A(Q)^{op}$. For $M\in A\m\cap R\proj$, the following assertions hold.\begin{enumerate}[(i)]
		\item $\chi^r_{DM}\colon \Hom_A(DQ, DM)\otimes_B DQ\rightarrow DM$ is surjective if and only if $Q\rdomdim_{(A, R)} M\geq 1$.
		\item  $\chi^r_{DM}\colon \Hom_A(DQ, DM)\otimes_B DQ\rightarrow DM$ is an isomorphism if and only if $Q\rdomdim_{(A, R)} M\geq 2$.
	\end{enumerate}
\end{Theorem}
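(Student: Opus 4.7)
The plan is to deduce Theorem \ref{dualmoduleMuellerpartone} from Theorem \ref{moduleMuellerpartone} applied to the opposite algebra $A^{op}$ with the $R$-duals $DQ$ and $DM$, which naturally live in $A^{op}\m\cap R\proj$. The hypothesis $DQ\otimes_A Q\in R\proj$ translates, via the bimodule flip $Q\otimes_{A^{op}}DQ\simeq DQ\otimes_A Q$, into the analogous hypothesis $D_R(DQ)\otimes_{A^{op}}DQ\in R\proj$ needed by Theorem \ref{moduleMuellerpartone}.

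The endomorphism algebra playing the role of ``$B$'' for this new input is $\End_{A^{op}}(DQ)^{op}$. Since $D$ induces a contravariant $R$-linear equivalence $A\m\cap R\proj \to A^{op}\m\cap R\proj$, one gets an anti-isomorphism of $R$-algebras $\End_{A^{op}}(DQ)\simeq \End_A(Q)^{op}=B$, so that $\End_{A^{op}}(DQ)^{op}\simeq B^{op}$. Under these identifications, the $(B,A)$-bimodule structure on $DQ$ corresponds to a right $B^{op}$-action, while the right $B$-action on $\Hom_A(DQ,DM)$ corresponds to a left $B^{op}$-action. The bimodule flip $DQ\otimes_{B^{op}}Y\simeq Y\otimes_B DQ$ then identifies the counit
\begin{equation*}
\chi^{A^{op}}_{DM}\colon DQ\otimes_{B^{op}}\Hom_{A^{op}}(DQ,DM)\to DM
\end{equation*}
produced by Theorem \ref{moduleMuellerpartone} with the counit $\chi^r_{DM}\colon \Hom_A(DQ,DM)\otimes_B DQ\to DM$ figuring in the present statement, because both are induced by evaluation.

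For the right-hand side, Theorem \ref{moduleMuellerpartone} yields equivalences in terms of $D(DQ)\text{-}\domdim_{(A^{op},R)}D(DM)\geq 1$ (for (i)) and $\geq 2$ (for (ii)). Since $D(DQ)\simeq Q$ and $D(DM)\simeq M$ as left $A$-modules, and since a left $A$-module is tautologically the same datum as a right $A^{op}$-module with the same class of $(A,R)$-exact sequences, the same $\add Q$ and the same functor $\Hom_A(-,Q)=\Hom_{A^{op}}(-,Q)$, one obtains $Q\text{-}\domdim_{(A^{op},R)}M=Q\text{-}\domdim_{(A,R)}M$. Combining all identifications with Theorem \ref{moduleMuellerpartone}(i) and (ii) gives exactly (i) and (ii) of the present statement.

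The main technical obstacle is the identification $\chi^{A^{op}}_{DM}=\chi^r_{DM}$: one must carefully track how the left versus right actions of $B$ and $B^{op}$ correspond under the anti-isomorphism $\End_{A^{op}}(DQ)\simeq B$, and verify that the tensor--hom counit for the adjunction $DQ\otimes_{B^{op}}-\dashv \Hom_{A^{op}}(DQ,-)$ becomes, after the bimodule flip, the evaluation counit $(f,x)\mapsto f(x)$ defining $\chi^r$. Once this bookkeeping is settled the theorem is a direct translation; alternatively, one can mimic the constructive proof of Theorem \ref{moduleMuellerpartone} almost verbatim, replacing the projective presentation of $\Hom_A(Q,M)\in B\m$ by a presentation of $\Hom_A(DQ,DM)$ as a right $B$-module, invoking Lemma \ref{rightapprbyonemodule} and Lemma \ref{gofromrighttoleftappro} to pass between right $\add DQ$-approximations of $DM$ and left $\add Q$-approximations of $M$.
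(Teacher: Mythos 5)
Your proposal is correct and is precisely the argument the paper intends: the paper offers no written proof, stating only that the result is the ``dual version'' of Theorem~\ref{moduleMuellerpartone}, and your deduction via $A^{op}$, $DQ$, $DM$, the flip $DQ\otimes_A Q\simeq Q\otimes_{A^{op}}DQ$, and the canonical identifications $D(DQ)\simeq Q$, $D(DM)\simeq M$ fills in exactly that duality. One small wording slip: $D$ induces an \emph{anti}-isomorphism $\End_{A^{op}}(DQ)\simeq \End_A(Q)$, hence an honest isomorphism $\End_{A^{op}}(DQ)\simeq \End_A(Q)^{op}=B$ and thus $\End_{A^{op}}(DQ)^{op}\simeq B^{op}$, which is the identification your bimodule flip actually uses.
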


In fact, Theorem \ref{moduleMuellerpartone} characterizes the lower cases of relative codominant dimension with respect to a module.

	Now, the second part of the Mueller version for relative (co-)dominant dimension with respect to a module is as follows:

\begin{Theorem}\label{moduleMuellerparttwo}
	Let $R$ be a commutative Noetherian ring. Let $A$ be a projective Noetherian $R$-algebra. Assume that $Q\in A\m\cap R\proj$ satisfying in addition that $\Hom_A(Q, Q)\in R\proj$. Denote by $B$ the endomorphism algebra $ \End_A(Q)^{op}$. For $M\in A\m\cap R\proj$, the following assertions hold.
	\begin{enumerate}[(i)]
		\item $Q\rcodomdim_{(A, R)} M\geq n\geq 2$ if and only if $\chi_M\colon Q\otimes_B \Hom_A(Q, M)\rightarrow M$ is an isomorphism of left $A$-modules and $\Tor_i^B(Q, \Hom_A(Q, M))=0$, $1\leq i\leq n-2$.
		\item $Q\rdomdim_{(A, R)} M\geq n\geq 2$ if and only if $\chi^r_{DM}\colon \Hom_A(DQ, DM)\otimes_B DQ\rightarrow DM$ is an isomorphism and $\Tor_i^B(\Hom_A(DQ, DM), DQ)=\Tor_i^B(\Hom_A(M, Q), DQ)=0$, $1\leq i\leq n-2$.
	\end{enumerate}
\end{Theorem}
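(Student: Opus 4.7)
The plan is to prove part (i) by induction on $n$ using Theorem \ref{moduleMuellerpartone} as the base case $n = 2$, and then to deduce part (ii) from (i) by the standard duality $D_R$, invoking the identity $Q-\domdim_{(A,R)} M = DQ-\codomdim_{(A^{op},R)} DM$. The backbone of the argument is the projectivization equivalence between $\add Q$ and $B\proj$ arising from the adjoint pair $\mathbb{I}_Q \dashv F_Q$: on these full subcategories both $\upsilon$ and $\chi$ are natural isomorphisms, so $F_Q$ turns $\add Q$-sequences into $B\proj$-sequences, and $\mathbb{I}_Q$ does the reverse.

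For the forward direction of (i) at $n \geq 3$, pick an $(A, R)$-exact sequence $Q_n \to \cdots \to Q_1 \to M \to 0$ that remains exact under $F_Q$ and set $K_1 = \ker(Q_1 \twoheadrightarrow M)$. Since $M \in R\proj$, the short exact sequence $0 \to K_1 \to Q_1 \to M \to 0$ is $(A, R)$-exact, so $K_1 \in R\proj$; Lemma \ref{remainsexactandapproximationsateachstep} ensures that the truncation $Q_n \to \cdots \to Q_2 \to K_1 \to 0$ still remains exact under $F_Q$, witnessing $Q-\codomdim_{(A, R)} K_1 \geq n - 1$. Applying $F_Q$ to the short exact sequence yields an exact sequence $0 \to F_Q K_1 \to F_Q Q_1 \to F_Q M \to 0$ in $B\m$ (the surjectivity following again from Lemma \ref{remainsexactandapproximationsateachstep}). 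Feeding this into the long exact sequence for $\Tor_\bullet^B(Q, -)$ and using $F_Q Q_1 \in B\proj$ to kill its higher Tor produces the dimension shift $\Tor_i^B(Q, F_Q M) \simeq \Tor_{i-1}^B(Q, F_Q K_1)$ for $i \geq 2$; combined with the induction hypothesis on $K_1$, this settles the required vanishing for $2 \leq i \leq n - 2$. For $i = 1$ the four-term tail of the same long exact sequence collapses, via the isomorphisms $\chi_{K_1}$, $\chi_{Q_1}$, and $\chi_M$ (the last from Theorem \ref{moduleMuellerpartone}), to $0 \to \Tor_1^B(Q, F_Q M) \to K_1 \to Q_1 \to M \to 0$, forcing $\Tor_1^B(Q, F_Q M) = 0$.

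For the reverse direction of (i), fix a $B$-projective resolution $\cdots \to P_1 \to P_0 \to F_Q M \to 0$ whose terms have the form $P_i = F_Q Q_i$ for suitable $Q_i \in \add Q$ (possible by projectivization). Applying $\mathbb{I}_Q$, the hypothesis $\Tor_i^B(Q, F_Q M) = 0$ for $1 \leq i \leq n - 2$ makes the complex $\mathbb{I}_Q P_{n-1} \to \cdots \to \mathbb{I}_Q P_0 \to \mathbb{I}_Q F_Q M \to 0$ exact everywhere except possibly at the leftmost term. The $\chi$-isomorphisms (on each $P_i$ by projectivization, on $F_Q M$ by the hypothesis that $\chi^r_{DM}$ is an isomorphism, equivalently $\chi_M$ is an isomorphism) identify this with an exact sequence $Q_{n-1} \to \cdots \to Q_0 \to M \to 0$ of modules in $\add Q$. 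A routine iteration, starting from $M \in R\proj$ and splitting each short exact step over $R$, shows that this sequence is $(A, R)$-exact; moreover, applying $F_Q$ to it recovers the original projective $B$-resolution via the projectivization equivalence, so it remains exact under $F_Q$. Hence $Q-\codomdim_{(A, R)} M \geq n$. Part (ii) then follows by applying (i) to $DQ$ (as an $A^{op}$-module) and $DM$, invoking $\End_{A^{op}}(DQ)^{op} \simeq B$, $\Hom_{A^{op}}(DQ, DM) \simeq \Hom_A(M, Q)$ (which yields the second claimed equality), and the standard symmetry $\Tor_i^{B^{op}}(DQ, Y) \simeq \Tor_i^B(Y, DQ)$.

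The principal obstacle is rigorously checking that the sequence constructed in the reverse direction is both $(A, R)$-exact and stays exact after applying $F_Q$. The former requires the iterative $R$-splitting argument based on $M \in R\proj$; the latter rests on the compatibility of the counit $\chi$ with the projectivization equivalence $\add Q \leftrightarrow B\proj$, which ensures that $F_Q$ applied to the tensored complex reproduces the original projective $B$-resolution term by term.
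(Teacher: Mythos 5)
Your proof is correct, and its converse direction for (i) coincides with the paper's: lift a projective $B$-resolution of $F_QM$ with terms $F_QQ_i$ back to an $\add Q$-resolution of $M$ via projectivization and the counit $\chi$, check $(A,R)$-exactness by iterated $R$-splitting starting from $M\in R\proj$, and note that applying $F_Q$ reproduces the chosen resolution. Where you genuinely diverge is the forward direction. The paper argues directly: it takes the defining $\add Q$-resolution of $M$ (equivalently the coresolution of $DM$), applies $\Hom_A(-,Q)$ to obtain at once a truncated projective $B$-resolution of $\Hom_A(Q,M)$, and then observes that tensoring back with $Q\otimes_B -$ recovers, through the isomorphisms $\chi$ on the terms in $\add Q$, the exact sequence one started from — so $\Tor_i^B(Q,\Hom_A(Q,M))=0$ for all $1\leq i\leq n-2$ in one stroke, with no induction. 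Your inductive dimension shift through $K_1=\ker(Q_1\to M)$ reaches the same conclusion and is fully rigorous: the needed ingredients ($Q-\codomdim_{(A,R)}K_1\geq n-1$ via Lemma \ref{remainsexactandapproximationsateachstep}, $\chi_{K_1}$ an isomorphism from the base case, projectivity of $F_QQ_1$ killing the higher $\Tor$, and the collapse of the degree-one tail of the long exact sequence) are all correctly supplied; the trade-off is only that you re-establish exactness of the tensored complex one degree at a time rather than reading it off globally. One bookkeeping slip in (ii): the duality gives $\End_{A^{op}}(DQ)\simeq \End_A(Q)^{op}=B$, hence $\End_{A^{op}}(DQ)^{op}\simeq B^{op}$ rather than $B$ as written; this is harmless, since you immediately convert $\Tor_i^{B^{op}}(DQ,-)$ into $\Tor_i^{B}(-,DQ)$, which is exactly what produces the statement of (ii), and the paper itself dismisses (ii) as analogous to (i).
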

\begin{proof}
	We shall prove (i). The statement (ii) is analogous to (i).
	Assume that $DQ\rdomdim_{(A, R)} DM\geq n\geq 2$. By Theorem \ref{moduleMuellerpartone}, $\chi_M$ is an isomorphism.  By definition, there exists an $(A, R)$-exact sequence
	\begin{align}
		0\rightarrow DM\rightarrow X_0\rightarrow X_1\rightarrow \cdots \rightarrow X_{n-1}\label{eqcocovers24}
	\end{align}which remains exact under $\Hom_A(-, DQ)$ with $X_i\in \add DQ$, $i=0, \ldots, n-1$. In particular, \linebreak${\Hom_A(X_{n-1}, DQ)\rightarrow \cdots\rightarrow \Hom_A(X_0, DQ)\rightarrow \Hom_A(DM, DQ)\rightarrow 0}$ is exact and can be continued to a left projective $B$-resolution of $\Hom_A(Q, M)$. Consider the following commutative diagram
	\begin{equation}
		\begin{tikzcd}
			Q\otimes_B \Hom_A(Q, DX_{n-1}) \arrow[r] \arrow[d, "\chi_{DX_{n-1}}", "\simeq"']& \cdots \arrow[r] & Q\otimes_B \Hom_A(Q, DX_0) \arrow[r, twoheadrightarrow] \arrow[d, "\chi_{DX_{0}}", "\simeq"']& Q\otimes_B \Hom_A(Q, M) \arrow[d, "\chi_{DDM}", "\simeq"']\\
			DX_{n-1}\arrow[r] & \cdots \arrow[r] & DX_0\arrow[r, twoheadrightarrow] & DDM
		\end{tikzcd}.
	\end{equation} Observe that the bottom row is exact since the exact sequence (\ref{eqcocovers24}) is $(A, R)$-exact. Since all vertical maps are isomorphisms, it follows that the upper row is exact. Thus, $\Tor_i^B(Q, \Hom_A(Q, M))=0$, $1\leq i\leq n-2$.
	
	Conversely, assume that $\chi_M$ is an isomorphism and $\Tor_i^B(Q, \Hom_A(Q, M))=0$ for $1\leq i\leq n-2$. Let $\Hom_A(Q, X_{n-1}) \xrightarrow{g_{n-1}} \cdots \rightarrow \Hom_A(Q, X_0)\xrightarrow{g_0} \Hom_A(Q, M)\rightarrow 0$ be a truncated projective $B$-resolution of  $\Hom_A(Q, M)$ and $X_i\in \add_A Q$. Furthermore, $\Hom_A(Q, -)_{|_{\add Q}}$ is full and faithful, so each map $g_i$ can be written as $\Hom_A(Q, f_i)$ including $g_0$ since $\chi_M$ is an isomorphism, where $f_i\in \Hom_A(X_i, X_{i-1})$ and $f_0\in \Hom_A(X_0, M).$ So, we have a commutative diagram
	\begin{equation}
		\begin{tikzcd}
			Q\otimes_B \Hom_A(Q, X_{n-1})\arrow[d, "\chi_{X_{n-1}}", "\simeq"', swap]\arrow[r, "Q\otimes_B \Hom_A(Q {,} f_{n-1})", outer sep=0.75ex, swap] & \cdots \arrow[r] & Q\otimes_B \Hom_A(Q, X_0) \arrow[r, "Q\otimes g_0", twoheadrightarrow,  outer sep=0.75ex, swap] \arrow[d, "\chi_{X_{0}}", "\simeq"']& Q\otimes_B \Hom_A(Q, M) \arrow[d, "\chi_{M}", "\simeq"'] \\
			X_{n-1}\arrow[r, "f_{n-1}"] & \cdots \arrow[r] & X_0 \arrow["f_0", r, twoheadrightarrow]& M 
		\end{tikzcd}.
	\end{equation}By assumption, $\Tor_i^B(Q, \Hom_A(Q, M))=0$, $1\leq i\leq n-2$. So, the upper row is exact. By the exactness and the vertical maps being isomorphisms the bottom row becomes exact. Since $M\in R\proj$ it is also $(A, R)$-exact and so it remains $(A, R)$-exact under $D$. By construction, the bottom row remains exact under $\Hom_A(Q, -)$, thus $DQ\rdomdim_{(A, R)} DM\geq n\geq 2$.
\end{proof}

\begin{Remark}
	The condition $\Hom_A(Q, Q)\in R\proj$ is considered to enforce that $B$ is a projective Noetherian $R$-algebra same as $A$. Having only the criteria in Theorems \ref{moduleMuellerparttwo} and \ref{moduleMuellerpartone}, we can observe that the arguments carry over if $B$ is only a Noetherian $R$-algebra.
\end{Remark}

\subsubsection{Identities on relative dominant and codominant dimension}

	An immediate consequence of Theorems \ref{moduleMuellerparttwo} and \ref{moduleMuellerpartone} is the following.

\begin{Cor}\label{rightandleftrelativedomidimension}
	Let $R$ be a commutative Noetherian ring. Let $A$ be a projective Noetherian $R$-algebra. Assume that $Q\in A\m\cap R\proj$ satisfying in addition that $\Hom_A(Q, Q)\in R\proj$. Then,
	$$DQ\rdomdim{(A, R)} = Q\rcodomdim_{(A, R)} DA= Q\rdomdim {(A,R)}.$$
\end{Cor}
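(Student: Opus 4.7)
The plan is to reduce both equalities to the Mueller-type characterisations of Theorems~\ref{moduleMuellerpartone}, \ref{dualmoduleMuellerpartone}, and \ref{moduleMuellerparttwo}. The first equality is immediate from the identity $DQ-\domdim_{(A,R)} DM = Q-\codomdim_{(A,R)} M$ recorded just before Theorem~\ref{moduleMuellerpartone}, applied to $M=DA$ regarded as a left $A$-module: since $A$ lies in $R\proj$, the canonical $R$-biduality map $A\to DDA$ is an $(A,A)$-bimodule isomorphism, so $DDA\simeq A$ as right $A$-modules, and substituting yields $DQ-\domdim_{(A,R)} A = Q-\codomdim_{(A,R)} DA$.

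For the second equality I would verify, for every integer $n\geq 2$, the equivalence $Q-\codomdim_{(A,R)} DA \geq n \Longleftrightarrow Q-\domdim_{(A,R)} A \geq n$. Part (i) of Theorem~\ref{moduleMuellerparttwo} applied to $M=DA$ rephrases the left-hand side as the conjunction that $\chi_{DA}$ is an isomorphism and $\Tor_i^B(Q,\Hom_A(Q,DA))=0$ for $1\leq i\leq n-2$, while part (ii) applied to $M=A$ rephrases the right-hand side as the conjunction that $\chi^r_{DA}$ is an isomorphism and $\Tor_i^B(\Hom_A(A,Q),DQ)=0$ on the same range. The Hom–tensor adjunction provides a natural left $B$-module isomorphism $\Hom_A(Q,DA)\simeq DQ$, and evidently $\Hom_A(A,Q)=Q$ as right $B$-modules, so both Tor conditions become the single vanishing requirement $\Tor_i^B(Q,DQ)=0$.

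The main obstacle is to identify the two counits $\chi_{DA}$ and $\chi^r_{DA}$ as \emph{the same} map $Q\otimes_B DQ \to DA$ under the identifications above; I would do so by unwinding the defining formulas. For $q\in Q$ and $h\in DQ$: on the one hand, $\chi_{DA}(q\otimes h)=g(q)$, where $g\in\Hom_A(Q,DA)$ corresponds to $h$, and a short computation using the left $A$-action $(a\cdot f)(x)=f(xa)$ on $DA$ gives $g(q)=(a\mapsto h(aq))$; on the other hand, $\chi^r_{DA}(q\otimes h)=D\phi_q(h)$, where $\phi_q\colon A\to Q,\ a\mapsto aq$ corresponds to $q$, and $D\phi_q(h)(a)=h(\phi_q(a))=h(aq)$. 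Hence the two maps coincide, and in particular one is an isomorphism exactly when the other is. The lower cases $n=0,1$ are settled analogously using Theorem~\ref{moduleMuellerpartone}(i) for $M=DA$ and Theorem~\ref{dualmoduleMuellerpartone}(i) for $M=A$, which reduce the equivalence to the simultaneous surjectivity of $\chi_{DA}$ and $\chi^r_{DA}$; taking suprema (including the value $\infty$) yields $Q-\codomdim_{(A,R)} DA = Q-\domdim_{(A,R)} A$, which combined with the first equality completes the proof.
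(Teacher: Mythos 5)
Your proposal is correct and follows essentially the same route as the paper: the paper likewise obtains the first equality from the definitional duality $DQ-\domdim_{(A,R)}DM=Q-\codomdim_{(A,R)}M$ at $M=DA$, and proves the second by exhibiting Tensor--Hom isomorphisms $\psi\colon\Hom_A(Q,DA)\to DQ$ and $\omega\colon Q\to\Hom_A(DQ,DA)$ under which $\chi_{DA}$ and $\chi^r_{DA}$ coincide (both sending $q\otimes h$ to $a\mapsto h(aq)$), and under which both Tor conditions of Theorem~\ref{moduleMuellerparttwo} become $\Tor_i^B(Q,DQ)=0$. The only cosmetic difference is that you make the $n\le 1$ cases and the identification $DDA\simeq A$ explicit, which the paper leaves implicit.
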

\begin{proof}
	By Tensor-Hom adjunction, there are isomorphisms $\Hom_A(Q, DA)\simeq DQ$, given by $f\mapsto (f(-)(1_A))$, and $Q\simeq \Hom_A(DQ, DA)$, given by $q\mapsto (f\mapsto (a\mapsto f(aq)))$. We shall denote the first by $\psi$ and the second isomorphism by $\omega$. So, $\psi$ is a left $B$-isomorphism while $\omega$ is a right $B$-isomorphism. Moreover, $\chi^r_{DA}\circ\omega\otimes_B \psi =\chi_{DA}$. In fact, for $a\in A$, $q\in Q$, $g\in \Hom_A(Q, DA)$,
	\begin{align*}
		\chi_{DA}^r\circ \omega\otimes_B \psi (q\otimes g)(a)=\omega(q)(\psi(g))(a)=\psi(g)(aq)&=g(aq)(1_A)=(ag(q))(1_A)\\&=g(q)(a)=\chi_{DA}(q\otimes g)(a).
	\end{align*}
	By Theorems \ref{moduleMuellerpartone} and \ref{dualmoduleMuellerpartone}, $DQ\rdomdim{(A, R)} \geq i$ if and only if $Q\rdomdim{(A, R)} \geq i$ for $i=1, 2$. Finally, by Theorem \ref{moduleMuellerparttwo}, $DQ\rdomdim{(A, R)}=DQ\rdomdim_{(A, R)} DDA_A\geq n\geq 2$ if and only if $\chi_{DA}$ is an isomorphism and $0=\Tor_i^B(Q, \Hom_A(Q, DA))=\Tor_i^B(Q, DQ)=\Tor_i^B(\Hom_A(DQ, DA),DQ)$, $1\leq i\leq n-2$ if and only if $Q\rdomdim{(A, R)} \geq n\geq 2$.
\end{proof}

 It follows, by Theorem \ref{moduleMuellerparttwo} and Corollary \ref{rightandleftrelativedomidimension}, that the relative dominant dimension of the regular module with respect to a module $Q$ over a finite-dimensional algebra coincides with the \textbf{faithful dimension of $Q$} introduced in \cite{zbMATH01218841}. 

There is a version of Corollary \ref{rightandleftrelativedomidimension} for (partial) tilting modules, if the split quasi-hereditary algebra admits a duality functor on $A\m\cap R\proj$ interchanging $\St(\l)$ with $\Cs(\l)$  (or a simple preserving duality if the ground ring is a field).  Here, by a \textbf{duality functor} on $A\m\cap R\proj$ we mean an exact, involutive and contravariant autoequivalence of categories $A\m\cap R\proj\rightarrow A\m\cap R\proj$.
 
 \begin{Prop}\label{dominantandcodominantoftiltingcomparison}
 	Let $(A, \{\Delta(\lambda)_{\lambda\in \Lambda}\})$ be a split quasi-hereditary $R$-algebra with a characteristic tilting module $T$. Let $V\in \add_A T$ and assume that ${}^\natural (-)\colon A\m\cap R\proj\rightarrow A\m\cap R\proj$ is a duality satisfying ${}^\natural\St(\l)=\Cs(\l)$ and ${}^\natural V\simeq V$ for all $\l\in \L$. Then,
 	$V\rdomdim_{(A, R)} T=V\rcodomdim_{(A, R)} T$.
 \end{Prop}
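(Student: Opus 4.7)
The plan is to use the duality ${}^\natural(-)$ to convert coresolutions witnessing $V-\domdim_{(A,R)} T$ into resolutions witnessing $V-\codomdim_{(A,R)} T$ (and vice versa), provided we can ensure that applying ${}^\natural$ to $T$ does not change the invariant we are computing.

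First I would verify that ${}^\natural(-)$ restricts to an involutive duality on $\add T$ with $\add{}^\natural T = \add T$, and on $\add V$ with ${}^\natural V \simeq V$. Indeed, by Proposition~\ref{qhproperties} and the definition of characteristic tilting module, $\add T = \mathcal{F}(\Stsim)\cap \mathcal{F}(\Cssim)$; since ${}^\natural$ is exact with ${}^\natural\St(\l)=\Cs(\l)$, it swaps $\mathcal{F}(\Stsim)$ and $\mathcal{F}(\Cssim)$ and therefore preserves their intersection. Involutivity then forces $\add {}^\natural T = \add T$, and the hypothesis ${}^\natural V\simeq V$ gives ${}^\natural(\add V)=\add V$.

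Next I would take an $(A,R)$-exact sequence $0\to T\to V_1\to\cdots\to V_n$ with $V_i\in\add V$ which remains exact under $\Hom_A(-,V)$, and apply ${}^\natural$ to obtain the $(A,R)$-exact sequence
\begin{equation*}
{}^\natural V_n\to \cdots \to {}^\natural V_1\to {}^\natural T\to 0
\end{equation*}
with each ${}^\natural V_i\in \add V$. The natural isomorphism $\Hom_A(X,V)\simeq \Hom_A({}^\natural V,{}^\natural X)\simeq \Hom_A(V,{}^\natural X)$, coming from ${}^\natural V\simeq V$, transports exactness under $\Hom_A(-,V)$ of the original sequence to exactness under $\Hom_A(V,-)$ of the dualised one. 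Hence $V-\codomdim_{(A,R)}{}^\natural T\geq n$.

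To finish, I would show $V-\codomdim_{(A,R)}{}^\natural T = V-\codomdim_{(A,R)} T$, so that the inequality $V-\domdim_{(A,R)} T\leq V-\codomdim_{(A,R)}T$ follows; the reverse inequality is obtained symmetrically. By Theorem~\ref{moduleMuellerparttwo}(i), the condition $V-\codomdim_{(A,R)} M\geq n$ (for $n\geq 2$) amounts to $\chi_M$ being an isomorphism together with the vanishing of $\Tor_i^B(V,\Hom_A(V,M))$ for $1\leq i\leq n-2$, and both conditions are additive over direct sums and stable under direct summands; the cases $n=0,1$ are covered by Theorem~\ref{moduleMuellerpartone}. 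Because $\add T=\add {}^\natural T$, each of $T$ and ${}^\natural T$ is a direct summand of a finite power of the other, so the two relative codominant dimensions coincide. The main obstacle is precisely this last step: ${}^\natural T$ need not be isomorphic to $T$ in the integral Noetherian setting, and one must use the Mueller-type characterisation to see that the invariant depends only on the additive closure of the argument.
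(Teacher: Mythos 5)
Your proposal is correct and follows essentially the same route as the paper: apply ${}^\natural$ to a coresolution of $T$ witnessing the dominant dimension, use the natural isomorphism $\Hom_A(-,V)\simeq\Hom_A(V,{}^\natural(-))$ to transfer the approximation/exactness conditions, and observe that $\add{}^\natural T=\add T$ so the resulting resolution of ${}^\natural T$ computes the codominant dimension of $T$ itself. The only cosmetic difference is that the paper obtains the reverse inequality by passing through the standard duality $D$ and the first half applied to $A^{op}$, whereas you argue symmetrically with ${}^\natural$; both are valid, and your explicit justification of the $T$ versus ${}^\natural T$ identification via the Mueller-type characterisation fills in a step the paper leaves implicit.
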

 \begin{proof}
 	Assume that $V\rdomdim_{(A, R)} T\geq n\geq 1$. By definition, there exists an $(A, R)$-exact sequence 
 	\begin{align}
 		0\rightarrow T\xrightarrow{\alpha_0} V_0\xrightarrow{\alpha_1} V_1\rightarrow \cdots \rightarrow V_{n-1}\label{eqcocover27}
 	\end{align} which remains exact under $\Hom_A(-, V)$, with $V_i\in \add_A V$. Applying the duality ${}^\natural$ we obtain the exact sequence
 	\begin{align}
 		\delta:{}^\natural V_{n-1} \xrightarrow{{}^\natural\alpha_{n-1}} \cdots \rightarrow {}^\natural V_1 \xrightarrow{{}^\natural\alpha_1} {}^\natural V_0 \xrightarrow{{}^\natural\alpha_0} {}^\natural T\rightarrow 0.\label{eqcocover28}
 	\end{align}Applying ${}^\natural(-)$ to the exact sequences defining $T(\l)$ we obtain that ${}^\natural T$ is also a characteristic tilting module and so $\add T=\add {}^\natural T$. In particular, (\ref{eqcocover28}) is $(A, R)$-exact. It remains to show that (\ref{eqcocover28}) remains exact under $\Hom_A(V, -)\simeq \Hom_A({}^\natural V, -)$. To show that consider, for each $i$, the factorization of $\alpha_{i+1}$ through its image $\alpha_{i+1}=\nu_i\circ \pi_i$. Hence, $\alpha_0$ and $\nu_i$, $i=0, \ldots, n-1$, are left $\add V$-approximations. By the exactness of the contravariant functor ${}^\natural$, ${}^\natural(\ker \alpha_{i+1})\simeq \coker({}^\natural\alpha_{i+1})$ and ${}^\natural \im \alpha_{i+1}= \im ({}^\natural \alpha_{i+1})$ for all $i$. Moreover, for every homomorphism $f\in \Hom_A(N, L)$ the maps $\Hom_A({}^\natural V, {}^\natural f)$ and $\Hom_A(f, V)$ are related by the commutative diagram
 	\begin{equation}
 		\begin{tikzcd}
 			\Hom_A({}^\natural V, {}^\natural L) \arrow[r, "\Hom_A({}^\natural V {,} {}^\natural f )", outer sep=0.75ex, swap] & \Hom_A({}^\natural V, {}^\natural N)\\
 			\Hom_A(L, V)\arrow[u, "\simeq"] \arrow[r, "\Hom_A(f{,} V)"] & \Hom_A(N, V)\arrow[u, "\simeq"]
 		\end{tikzcd}.
 	\end{equation}Hence, for each $i$, ${}^\natural\nu_i$ is an surjective right $\add V$-approximation and ${}^\natural \alpha_{i+1} ={}^\natural \pi_i\circ {}^\natural \nu_i$. The same is true for ${}^\natural \alpha_0$. By Lemma \ref{gofromrighttoleftappro}, the complex $\Hom_A(V, \delta)$ is exact. Hence, $V\rcodomdim_{(A, R)} T\geq n\geq 1$. Conversely, $V\rcodomdim_{(A, R)} T=DV\rdomdim_{(A, R)} DT\geq DV\rcodomdim_{(A, R)} DT = V\rdomdim_{(A, R)} T$.
 \end{proof}

	\subsubsection{Behaviour of relative dominant dimension on long exact sequences}
Using Theorem \ref{moduleMuellerparttwo} is now clear to understand how the relative dominant dimension with respect to a module behaves in short exact sequences.

\begin{Lemma}\label{relativedominantdimensiononses}
	Let $R$ be a commutative Noetherian ring. Let $A$ be a projective Noetherian $R$-algebra. Assume that $Q\in A\m\cap R\proj$ satisfying in addition that $\Hom_A(Q, Q)\in R\proj$. Let $M\in R\proj$ and consider the following
	$(A, R)$-exact \begin{align}
		0\rightarrow M_1\rightarrow M\rightarrow M_2\rightarrow 0
	\end{align}which remains exact under $\Hom_A(-, Q)$. Let $n=Q\rdomdim_{(A, R)} M$ and $n_i=Q\rdomdim_{(A, R)} M_i$. Then, the
	following holds.
 \begin{multicols}{2}
 		\begin{enumerate}[(a)]
 		\itemsep0em 
 		\item $n\geq \min\{n_1, n_2\}$.
 		\item If $n_1<n$, then $n_2=n_1-1$.
 		\item \begin{enumerate}[(i)]
 			\item $n_1=n\implies n_2\geq n-1$.
 			\item $n_1=n+1\implies n_2\geq n$.
 			\item $n_1\geq n+2\implies n_2=n$.
 		\end{enumerate}
 		\item $n<n_2\implies n_1=n$.
 		\vfill
 		\item \begin{enumerate}[(i)] 
 			\item $n=n_2\implies n_1\geq n_2$.
 			\item $n=n_2+1\implies n_1\geq n_2+1$.
 			\item $n\geq n_2+2\implies n_1=n_2+1$.
 		\end{enumerate}
 	\end{enumerate}
 \end{multicols}
\end{Lemma}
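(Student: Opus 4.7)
The plan is to reduce the problem to standard homological algebra over $B=\End_A(Q)^{op}$ via the Mueller-type characterisations proved in the previous subsection. Since the given short exact sequence remains exact under $\Hom_A(-,Q)$, applying this contravariant functor produces a short exact sequence of right $B$-modules
\[ 0 \to \Hom_A(M_2,Q) \to \Hom_A(M,Q) \to \Hom_A(M_1,Q) \to 0. \]
Set $N:=\Hom_A(M,Q)$ and $N_i:=\Hom_A(M_i,Q)$. Applying $-\otimes_B DQ$ yields the usual long exact sequence of Tor groups
\[ \cdots \to \Tor_{j+1}^B(N_1,DQ) \to \Tor_j^B(N_2,DQ) \to \Tor_j^B(N,DQ) \to \Tor_j^B(N_1,DQ) \to \cdots \to N_1\otimes_B DQ \to 0, \]
whose tail fits, via the counits $\chi^r_{D(-)}$, into a commutative ladder with the dual short exact sequence $0\to DM_2\to DM\to DM_1\to 0$.

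By Theorems \ref{moduleMuellerpartone}, \ref{dualmoduleMuellerpartone} and \ref{moduleMuellerparttwo}, the condition $Q-\domdim_{(A,R)} X\geq k$ is equivalent to $\chi^r_{DX}$ being surjective when $k=1$, to $\chi^r_{DX}$ being an isomorphism when $k=2$, and, in addition, to $\Tor_j^B(\Hom_A(X,Q),DQ)=0$ for $1\leq j\leq k-2$ when $k\geq 2$. With this translation in hand, each of the claims (a)--(e) becomes a standard assertion about a homological dimension governed by Tor-vanishing on a short exact sequence, in direct parallel with the familiar behaviour of projective dimension.

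For (a) in the case $\min\{n_1,n_2\}\geq 2$, I would apply the five-lemma to the right-hand end of the ladder to deduce that $\chi^r_{DM}$ is an isomorphism, then read off $\Tor_j^B(N,DQ)=0$ for $1\leq j\leq \min\{n_1,n_2\}-2$ from the three-term exactness $\Tor_j^B(N_2,DQ)\to \Tor_j^B(N,DQ)\to \Tor_j^B(N_1,DQ)$. The borderline cases $\min\{n_1,n_2\}\in\{0,1\}$ I would treat directly through a Horseshoe-type construction: given left $\add Q$-approximations $M_i\hookrightarrow T_i$ which are $(A,R)$-monomorphisms, the surjectivity of $\Hom_A(M,Q)\twoheadrightarrow \Hom_A(M_1,Q)$, which is part of our hypothesis, allows one to extend the chosen map on $M_1$ to $M$, yielding a left $\add Q$-approximation $M\hookrightarrow T_1\oplus T_2$ of the required length.

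The assertions (b)--(e) follow from the same long exact sequence by tracking which of the groups $\Tor_j^B(N_i,DQ)$ are forced to vanish or to survive in a given range, and by using the connecting homomorphism to transfer non-vanishing between adjacent degrees. For instance, for (b), the portion
\[ \Tor_{n_1-1}^B(N,DQ)\to \Tor_{n_1-1}^B(N_1,DQ)\to \Tor_{n_1-2}^B(N_2,DQ)\to \Tor_{n_1-2}^B(N,DQ) \]
has $n>n_1$ forcing the first and fourth terms to vanish, so the non-zero $\Tor_{n_1-1}^B(N_1,DQ)$ embeds into $\Tor_{n_1-2}^B(N_2,DQ)$, pinning down $n_2=n_1-1$. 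The main obstacle I anticipate is the uniform handling of the boundary values $n_i\in\{0,1\}$, where the characterisation is no longer purely in terms of Tor vanishing but involves surjectivity or isomorphism of the counit; these cases have to be dispatched using Lemma \ref{remainsexactandapproximationsateachstep} and the naturality of $\chi^r$, but fit into the same framework once the low-degree end of the Tor sequence is combined with the commutative ladder.
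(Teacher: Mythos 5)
Your proposal is correct and follows essentially the same route as the paper: the paper's proof consists precisely of observing that the hypothesis yields the short exact sequence $0\rightarrow \Hom_A(DQ,DM_2)\rightarrow \Hom_A(DQ,DM)\rightarrow \Hom_A(DQ,DM_1)\rightarrow 0$ over $B$ and then deferring to the analogous Tor-tracking argument (Lemma 5.12 of \cite{CRUZ2022410}), which is exactly the long exact sequence of $\Tor^B(-,DQ)$ combined with the Mueller-type characterisations of Theorems \ref{dualmoduleMuellerpartone} and \ref{moduleMuellerparttwo} that you spell out. Your handling of the boundary cases via the counit ladder and the four/five lemma is the intended content of that deferred argument.
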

\begin{proof}
	By assumption, $0\rightarrow \Hom_A(DQ, DM_2)\rightarrow \Hom_A(DQ, DM)\rightarrow \Hom_A(DQ, DM_1)\rightarrow 0$ is exact. The remaining of the proof is exactly analogous to Lemma 5.12 of \citep{CRUZ2022410}.
\end{proof}

\begin{Cor}
	Assume that $Q\in A\m\cap R\proj$ satisfying in addition that $\Hom_A(Q, Q)\in R\proj$. Let $M_i\in A\m\cap R\proj$, $i\in I$, for some finite set $I$. Then,
	\begin{align*}
		Q\rdomdim_{(A, R)} \left( \bigoplus_{i\in I} M_i \right) = \inf \{Q\rdomdim_{(A, R)}M_i\colon \ i\in I \}.
	\end{align*}
\end{Cor}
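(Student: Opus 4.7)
The plan is to deduce the identity directly from Mueller's characterisation of relative dominant dimension with respect to a module, exploiting that $D$, $\Hom_A(DQ,-)$, and $\Tor_\bullet^B(-,DQ)$ are all additive functors and hence distribute over finite direct sums.

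First I would reduce by induction on $|I|$ to the case of two summands $M=M_1\oplus M_2$, which is immediate since iterated minima behave well under associative reindexing. For the base case, consider the canonical split short exact sequence $0\to M_1\to M_1\oplus M_2\to M_2\to 0$. This sequence is $(A,R)$-exact (being split already over $R$) and remains exact under $\Hom_A(-,Q)$ (any additive functor preserves split sequences), so Lemma \ref{relativedominantdimensiononses} applies. Writing $n=Q-\domdim_{(A,R)}M$ and $n_i=Q-\domdim_{(A,R)}M_i$, part (a) of that lemma gives $n\geq\min\{n_1,n_2\}$. For the reverse inequality, assume without loss of generality $n_1\leq n_2$ and suppose for contradiction $n>n_1$: part (b) would then force $n_2=n_1-1<n_1$, contradicting $n_1\leq n_2$.

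Alternatively, and perhaps more transparently, one can appeal directly to Theorem \ref{moduleMuellerparttwo}(ii) combined with Theorem \ref{dualmoduleMuellerpartone}. Under the finite direct sum decomposition we have $\chi^r_{DM}=\bigoplus_i\chi^r_{DM_i}$ and $\Tor_j^B(\Hom_A(DQ,DM),DQ)=\bigoplus_i\Tor_j^B(\Hom_A(DQ,DM_i),DQ)$; a direct sum of morphisms is an isomorphism (respectively a surjection) precisely when each component is, and a finite direct sum of modules vanishes precisely when every summand does. Consequently, $Q-\domdim_{(A,R)}M\geq n$ if and only if $Q-\domdim_{(A,R)}M_i\geq n$ for every $i\in I$, which yields the claimed equality at once.

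The only mild obstacle is treating the low cases $n\in\{0,1\}$ uniformly. The second route handles $n\geq 1$ via Theorem \ref{dualmoduleMuellerpartone} and recovers $n=0$ by negation, while in the first route $n=0$ is dealt with by restricting a putative left $\add Q$-approximation and $(A,R)$-monomorphism $M\to X$ to each summand $M_i\hookrightarrow M\to X$ (using $\Hom_A(M,Q)=\bigoplus_i\Hom_A(M_i,Q)$ to verify the approximation property survives restriction).
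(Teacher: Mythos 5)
Your proposal is correct, and your second route (distributing $\chi^r$ and the relevant $\Tor$ groups over the finite direct sum and invoking Theorems \ref{dualmoduleMuellerpartone} and \ref{moduleMuellerparttwo}) is essentially the paper's own argument, which adapts the proof of the analogous statement for ordinary relative dominant dimension by substituting exactly these Mueller-type characterisations. Your first route, applying Lemma \ref{relativedominantdimensiononses} to the split sequence $0\to M_1\to M_1\oplus M_2\to M_2\to 0$ and deriving a contradiction from part (b), is a valid alternative that trades the homological computation for the already-established bookkeeping on short exact sequences; both are sound, including your handling of the degenerate cases $n\in\{0,1\}$.
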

\begin{proof}
	The argument given in \citep[Corollary 5.10]{CRUZ2022410} works in this setup replacing the use of \citep[Theorem 5.2, Proposition 3.23]{CRUZ2022410} by Theorems \ref{moduleMuellerpartone} and \ref{moduleMuellerparttwo}). 
\end{proof}

Usually, proving that a certain exact sequence remains exact under a certain $\Hom$ functor might be difficult.  Sometimes, we can assert that the existence of an $(A, R)$-exact sequence implies the existence of another whose morphisms can be factored through $\add Q$-approximations for some module $Q$.

\begin{Lemma}\label{continuinganapproximationresolution}Let $R$ be a commutative Noetherian ring. Let $A$ be a projective Noetherian $R$-algebra and $Q, \ M\in A\m\cap R\proj$.
	Assume that $Q\rdomdim_{(A, R)} M\geq n\geq 1$ where the $(A, R)$-exact sequence
	\begin{align}
		0\rightarrow M\rightarrow X_1\rightarrow \cdots \rightarrow X_{n-1}, \label{eqcocover30}
	\end{align}where $X_i\in \add Q$, which remains exact under $\Hom_A(-, Q)$, can be continued to an
	$(A, R)$-exact sequence
	\begin{align}
		0\rightarrow M\rightarrow X_1\rightarrow \cdots \rightarrow X_{n-1}\rightarrow Y
	\end{align}where $Y\in \add Q$. Then, $Q\rdomdim_{(A, R)} M\geq n+1$.
\end{Lemma}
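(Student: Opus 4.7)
The plan is to proceed by induction on $n$, using Lemma \ref{remainsexactandapproximationsateachstep}(ii) to reformulate the hypothesis in terms of left $\add Q$-approximations and using the seesaw inequality for short exact sequences from Lemma \ref{relativedominantdimensiononses} as the main inductive engine. Writing $X_0 := M$ and factoring each connecting map $\alpha_i$ through its image, the hypothesis that the original sequence remains exact under $\Hom_A(-, Q)$ translates, via Lemma \ref{remainsexactandapproximationsateachstep}(ii), to the statement that each inclusion $\iota_i \colon \im \alpha_i \hookrightarrow X_{i+1}$ is a left $\add Q$-approximation.

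For the inductive step, I would decompose the extended resolution at its first step into the short $(A, R)$-exact sequence
\begin{align*}
0 \to M \xrightarrow{\alpha_0} X_1 \to N \to 0, \qquad N := X_1/M \cong \im \alpha_1 .
\end{align*}
This short sequence remains exact under $\Hom_A(-, Q)$ since $\alpha_0$ is a left $\add Q$-approximation. Because $X_1 \in \add Q$ satisfies $Q-\domdim_{(A,R)} X_1 = \infty$, Lemma \ref{relativedominantdimensiononses}(b) forces $Q-\domdim_{(A,R)} N = Q-\domdim_{(A,R)} M - 1 \geq n - 1$. The truncated sequence $0 \to N \to X_2 \to \cdots \to X_{n-1}$ still remains exact under $\Hom_A(-, Q)$ (the relevant inclusions and the map $\pi_1 \colon X_1 \twoheadrightarrow N$ induce the required approximation structure for $N$), and the given $(A, R)$-exact extension by $Y \in \add Q$ restricts to an $(A, R)$-exact extension for $N$. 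The induction hypothesis applied to $N$ then yields $Q-\domdim_{(A,R)} N \geq n$, and combining this with the seesaw identity above gives $Q-\domdim_{(A,R)} M = Q-\domdim_{(A,R)} N + 1 \geq n + 1$, as required.

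The base case, where the induction bottoms out, is the main obstacle: one must verify directly that the extended sequence is $\Hom_A(-, Q)$-exact, which by Lemma \ref{remainsexactandapproximationsateachstep}(ii) reduces to showing that the new inclusion $\iota \colon \im \alpha_{n-1} \hookrightarrow Y$ is a left $\add Q$-approximation. Given $\phi \colon \im \alpha_{n-1} \to Q'$ with $Q' \in \add Q$, the idea is to lift $\phi \circ \pi \colon X_{n-1} \to Q'$ (which vanishes on $\im \alpha_{n-2}$) to a morphism $Y \to Q'$ by exploiting the approximation property of the previous inclusion $\iota_{n-2}$, together with the $(A, R)$-exactness of $0 \to \im \alpha_{n-2} \to X_{n-1} \to \im \alpha_{n-1} \to 0$ and the fact that $Y$ itself lies in $\add Q$. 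This diagram chase, carried out carefully through the short $(A, R)$-exact subsequences extracted from the extended resolution, produces the desired lift and completes the base case, after which the inductive mechanism propagates the conclusion to all $n \geq 1$.
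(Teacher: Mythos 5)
Your inductive skeleton is fine: peeling off $X_1$, using the seesaw identities of Lemma \ref{relativedominantdimensiononses} for the $(A,R)$-exact sequence $0\to M\to X_1\to N\to 0$, and pushing the problem down to a shorter coresolution is a legitimate reduction. The proof breaks at the base case, and it breaks because you are trying to prove the wrong statement. The conclusion $Q-\domdim_{(A,R)}M\geq n+1$ only asserts that \emph{some} $(A,R)$-exact coresolution of length $n+1$ in $\add Q$ remains exact under $\Hom_A(-,Q)$; it does not assert that the given extension by $Y$ is such a coresolution. Proving that it is would, exactly as you say via Lemma \ref{remainsexactandapproximationsateachstep}(ii), require the new inclusion $\iota\colon \im\alpha_{n-1}\hookrightarrow Y$ to be a left $\add Q$-approximation, and nothing in the hypotheses gives this: the approximation property of the previous inclusion $\iota_{n-2}$ controls extensions of maps out of $\im\alpha_{n-2}$ (equivalently, exactness of the $\Hom_A(-,Q)$-sequence one position earlier), whereas what you need is to extend an \emph{arbitrary} map $\im\alpha_{n-1}\to Q'$ along $\iota$. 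The hypothesis that the extension is $(A,R)$-exact says nothing about such maps, and in general $\iota$ is not an approximation; the witnessing sequence for $n+1$ must end in a different module.

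The missing idea — and the actual content of the paper's proof — is to \emph{replace} $Y$. Dualize, set $C=\ker(DX_{n-1}\to DX_{n-2})$, and choose a right $\add DQ$-approximation $s\colon Z\to C$, which exists by projectivization. The extension by $Y$ is used only once: exactness of the dualized extended sequence gives a \emph{surjection} $DY\twoheadrightarrow C$, which, since $DY\in\add DQ$, factors through the approximation $s$ and therefore forces $s$ to be surjective (and $R$-split, as $C\in R\proj$). Appending $Z\to DX_{n-1}$ to the dualized sequence then produces a sequence that remains exact under $\Hom_A(DQ,-)$ by Lemma \ref{remainsexactandapproximationsateachstep}(i), i.e., a valid witness of length $n+1$ whose last term is $DZ$ rather than $Y$. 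Your induction could be retained, but its base case must be argued by this replacement, not by a diagram chase aimed at showing $\iota$ is an approximation.
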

\begin{proof}Apply $D$ to the exact sequence (\ref{eqcocover30}).
	Denote by $\alpha_i$ the maps $DX_i\rightarrow DX_{i-1}$, where we fix $X_{-1}:= DM$. Also the map $DY\rightarrow DX_{n-1}$ which we denote by $h$ admits a factorization through $\ker \alpha_{n-1}$, say $\nu\circ \pi$. Since $B$ is a Noetherian $R$-algebra there exists $Z\in \add DQ$ such that  there exists a surjective map \mbox{$g\colon\Hom_A(DQ, Z)\rightarrow \Hom_A(DQ, \ker \alpha_{n-1})$.} Further, by projectization, the map $\Hom_A(DQ, \nu)\circ g$ is equal to $\Hom_A(DQ, f)$ for some $f\in \Hom_A(Z, DX_{n-1})$. By construction, the exact sequence
	$Z\xrightarrow{f} DX_{n-1}\rightarrow \cdots\rightarrow DX_1\rightarrow DM\rightarrow 0$ remains exact under $\Hom_A(DQ, -)$ and if exact it is $(A, R)$-exact. The remaining of the proof is a routine check that $\ker \alpha_{n-1} =\im f$.
	First, observe that $\Hom_A(DQ, \alpha_{n-1}\circ f)=\Hom_A(DQ, \alpha_{n-1})\circ \Hom_A(DQ, \nu)\circ g=0$. Thus, $\alpha_{n-1}\circ f\chi_Z^r=\chi^r_{DX_{n-2}}\circ \Hom_A(DQ, \alpha_{n-1}\circ f)=0$. So, $\alpha_{n-1}\circ f=0$. By definition of kernel, there exists $s\in \Hom_A(Z, \ker \alpha_{n-1})$ such that $f=\nu\circ s$. Since $\Hom_A(DQ, -)$ is left exact, $g=\Hom_A(DQ, s)$. So, $s$ is a right $\add DQ$-approximation of $\ker \alpha_{n-1}$. In particular, there exists $h_1\in \Hom_A(DY, Z)$ such that $\pi=s\circ h_1$. Consequently, $s$ is surjective, as well. This concludes the proof.
\end{proof}

\begin{Remark}\label{RemarkKSX}
	Observe that Theorem 2.8 of \citep{Koenig2001} and  Theorem 2.15 of \cite{Koenig2001} are particular cases of Lemma \ref{continuinganapproximationresolution} (when $n=1$) and Theorem \ref{dualmoduleMuellerpartone}.
\end{Remark}

Recall that ${}^\perp Q=\{M\in A\m\cap R\proj| \Ext_A^{i>0}(M, Q)=0 \}$ is a resolving subcategory of \linebreak$A\m\cap R\proj$.
In contrast to Lemma \ref{continuinganapproximationresolution}, if we know the last map in an exact sequence and its cokernel, then we can deduce the value of relative dominant dimension with respect to a module using that exact sequence.

\begin{Prop}\label{cokernelrolereldomimodule}
	Let $R$ be a commutative Noetherian ring. Let $A$ be a projective Noetherian $R$-algebra and $Q\in A\m\cap R\proj$ so that $\Hom_A(Q, Q)\in R\proj$ and $\Ext_A^{i>0}(Q, Q)=0$.
	Suppose that $M\in {}^{\perp}Q$. An exact sequence
	\begin{align}
		0\rightarrow M\rightarrow Q_1\rightarrow \cdots \rightarrow Q_n\label{eqcocovers32}
	\end{align}yields $Q\rdomdim_{(A, R)} M\geq n$ if and only if  $Q_i\in \add Q$  and the cokernel of $Q_{n-1}\rightarrow Q_n$ belongs to ${}^\perp Q$.
\end{Prop}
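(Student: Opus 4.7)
The plan is to splice the given sequence into short exact sequences
\begin{align*}
0\to C_{i-1}\to Q_i\to C_i\to 0,\qquad i=1,\dots,n,
\end{align*}
with $C_0=M$, $C_i=\im(Q_i\to Q_{i+1})$ for $1\leq i\leq n-1$, and $C_n=\coker(Q_{n-1}\to Q_n)$, and then propagate information along these pieces via dimension-shift. The hypothesis $\Ext_A^{j>0}(Q,Q)=0$ combined with $Q_i\in\add Q$ gives $\Ext_A^{j>0}(Q_i,Q)=0$, so applying $\Hom_A(-,Q)$ to each splice produces a four-term exact sequence ending in $\Ext_A^1(C_i,Q)$ together with isomorphisms $\Ext_A^k(C_{i-1},Q)\simeq \Ext_A^{k+1}(C_i,Q)$ for every $k\geq 1$.

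First I would establish two reformulations. A routine splicing argument (tracking kernels and images through the factorizations $\alpha_i=\iota_i\circ\pi_i$ of each differential) shows that the original long sequence remains exact under $\Hom_A(-,Q)$ if and only if $\Ext_A^1(C_i,Q)=0$ for all $i=1,\ldots,n$. Likewise, the $(A,R)$-exactness of the original sequence is equivalent to the assertion that every $C_i$ lies in $R\proj$, which, since $M$ and each $Q_i$ are already in $R\proj$, reduces by reverse induction (each short exact sequence splits over $R$ as soon as its right-hand term is $R$-projective) to the single condition $C_n\in R\proj$.

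For the implication $(\Leftarrow)$, assume $Q_i\in\add Q$ and $C_n\in{}^\perp Q$. From $C_n\in R\proj$ the reverse-induction step above yields the $(A,R)$-exactness. For the $\Hom_A(-,Q)$-exactness, I would propagate $\Ext_A^{j>0}(-,Q)=0$ downward from $C_n$: the dimension shift $\Ext_A^j(C_{i-1},Q)\simeq \Ext_A^{j+1}(C_i,Q)$ valid for $j\geq 1$ gives $C_{n-1}\in{}^\perp Q$ at once, and iterating produces $C_i\in{}^\perp Q$ for every $i$; in particular $\Ext_A^1(C_i,Q)=0$, which is exactly the needed condition. For the implication $(\Rightarrow)$, if the given sequence witnesses $Q-\domdim_{(A,R)}M\geq n$, then $Q_i\in\add Q$ and $C_n\in R\proj$ are immediate from the definition, and $\Ext_A^1(C_i,Q)=0$ for $i=1,\ldots,n$ by the $\Hom$-exactness. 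Propagating upward from $M=C_0\in{}^\perp Q$ via the same dimension-shift isomorphisms yields $\Ext_A^k(C_n,Q)\simeq \Ext_A^1(C_{n-k+1},Q)=0$ for $2\leq k\leq n$ and $\Ext_A^k(C_n,Q)\simeq \Ext_A^{k-n}(M,Q)=0$ for $k>n$; combined with $\Ext_A^1(C_n,Q)=0$ already in hand, this gives $C_n\in{}^\perp Q$.

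The main point of friction is keeping the two hypotheses $M\in{}^\perp Q$ and $C_n\in{}^\perp Q$ straight: $M\in{}^\perp Q$ is used only to climb up to $C_n$ in the forward direction, while $C_n\in{}^\perp Q$ is used only to descend from $C_n$ to the intermediate $C_i$'s in the converse. No single dimension-shift step carries both hypotheses; one must arrange the chain of isomorphisms so that it lands at an $\Ext$-group killed by whichever hypothesis is active in that direction. Once the spliced picture and the two reformulations are in place, the argument becomes essentially mechanical.
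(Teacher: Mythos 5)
Your proposal is correct and follows essentially the same route as the paper: splice the sequence into short exact sequences with cokernels $C_i$, use $\Ext_A^{j>0}(Q_i,Q)=0$ to dimension-shift, propagate $\Ext$-vanishing upward from $M\in{}^\perp Q$ for the forward direction and downward from $C_n\in{}^\perp Q$ for the converse. Your two explicit reformulations (exactness under $\Hom_A(-,Q)$ as $\Ext_A^1(C_i,Q)=0$, and $(A,R)$-exactness as $C_n\in R\proj$) are details the paper leaves implicit, but the argument is the same.
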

\begin{proof}
	Assume that $Q\rdomdim_{(A, R)} M\geq n$. By definition, $Q_i\in \add Q$ and (\ref{eqcocovers32}) is $(A, R)$-exact. Hence, the cokernel of $Q_{n-1}\rightarrow Q_n$ belongs to $A\m\cap R\proj$. Denote by $X_i$ the cokernel of $Q_{i-1}\rightarrow Q_{i}$ and fix $Q_{0}=M$. Combining the conditions of $\Ext_A^{i>0}(Q_i, Q)=0$,  $\Hom_A(-, Q)$ being exact on (\ref{eqcocovers32}) and $M\in {}^\perp Q$,  it follows by induction on $i$ that $X_i\in {}^\perp Q$. 
	
	Conversely, assume that $Q_i\in \add Q$ and the cokernel of $Q_{n-1}\rightarrow Q_n$ belongs to ${}^\perp Q$ which we denote again by $X_n$. So, $X_n\in R\proj$ and (\ref{eqcocovers32}) is $(A, R)$-exact. It follows that $\Ext_A^1(X_i, Q)\simeq \Ext_A^{n-i+1}(X_n, Q)=0$. This means that (\ref{eqcocovers32}) remains exact under $\Hom_A(-, Q)$. So, the result follows.
\end{proof}

We note the following application of Lemma \ref{relativedominantdimensiononses} useful in examples.

\begin{Cor}\label{dominantdimensiononlongexactsequences}
	Let $R$ be a commutative Noetherian ring. Let $A$ be a projective Noetherian $R$-algebra. Assume that $Q\in A\m\cap R\proj$ satisfying in addition that $\Hom_A(Q, Q)\in R\proj$ and $\Ext_A^{i>0}(Q, Q)=0$. Let $M\in A\m\cap R\proj$ and consider the following $(A, R)$-exact sequence $0\rightarrow M\rightarrow Q_1\rightarrow \cdots\rightarrow Q_t\rightarrow X\rightarrow 0.$ If $\Ext_A^i(X, Q)=0$ for $1\leq i\leq t$, then $Q\rdomdim_{(A, R)} M=t+Q\rdomdim_{(A, R)} X$.
\end{Cor}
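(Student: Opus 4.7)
The natural approach is induction on $t$, slicing the given $(A,R)$-exact sequence into short exact sequences and iteratively applying Lemma~\ref{relativedominantdimensiononses}. The base case $t=0$ reduces to $M\simeq X$, and there is nothing to prove. For the inductive step, write $Z_1:=\im(Q_1\to Q_2)$, $Z_i:=\im(Q_i\to Q_{i+1})$ for $1\leq i\leq t-1$, and $Z_t:=X$, obtaining short exact sequences
\begin{align*}
0\to M\to Q_1\to Z_1\to 0, \qquad 0\to Z_i\to Q_{i+1}\to Z_{i+1}\to 0 \text{ for } 1\leq i\leq t-1.
\end{align*}
Since the long exact sequence is $(A,R)$-exact and $R\proj$ is closed under kernels of $R$-split surjections between modules in $R\proj$, each $Z_i$ lies in $A\m\cap R\proj$, so every displayed short sequence is $(A,R)$-exact.

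The key preliminary step is to show that the short sequence $0\to M\to Q_1\to Z_1\to 0$ remains exact under $\Hom_A(-,Q)$, equivalently that $\Ext_A^1(Z_1,Q)=0$. Since $Q_{i+1}\in\add Q$ and $\Ext_A^{>0}(Q,Q)=0$, the long exact sequence of $\Ext_A(-,Q)$ applied to $0\to Z_i\to Q_{i+1}\to Z_{i+1}\to 0$ gives $\Ext_A^j(Z_i,Q)\simeq \Ext_A^{j+1}(Z_{i+1},Q)$ for all $j\geq 1$. Iterating this dimension shift yields
\begin{align*}
\Ext_A^1(Z_1,Q)\simeq \Ext_A^2(Z_2,Q)\simeq\cdots\simeq \Ext_A^{t-1}(Z_{t-1},Q)\simeq \Ext_A^t(X,Q)=0,
\end{align*}
using the hypothesis $\Ext_A^i(X,Q)=0$ for $1\leq i\leq t$. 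The exact same argument, shifted by one index, also ensures that the shortened long exact sequence $0\to Z_1\to Q_2\to\cdots\to Q_t\to X\to 0$ still satisfies the hypotheses of the corollary (now with length $t-1$ and the Ext vanishing $\Ext_A^i(X,Q)=0$ for $1\leq i\leq t-1$).

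By the inductive hypothesis, $Q-\domdim_{(A,R)} Z_1=(t-1)+Q-\domdim_{(A,R)} X$. I now apply Lemma~\ref{relativedominantdimensiononses} to $0\to M\to Q_1\to Z_1\to 0$ with the roles $M_1=M$, $M=Q_1$, $M_2=Z_1$. Since $Q_1\in\add Q$, the middle term has $Q-\domdim_{(A,R)} Q_1=\infty$. If $Q-\domdim_{(A,R)} X$ is finite, then $Q-\domdim_{(A,R)}Q_1=\infty \geq Q-\domdim_{(A,R)}Z_1+2$, so part~(e)(iii) of the lemma yields $Q-\domdim_{(A,R)} M=Q-\domdim_{(A,R)} Z_1+1=t+Q-\domdim_{(A,R)} X$. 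If instead $Q-\domdim_{(A,R)} X=\infty$, then $Q-\domdim_{(A,R)} Z_1=\infty$ and part~(e)(i) forces $Q-\domdim_{(A,R)} M=\infty$, in agreement with the convention $t+\infty=\infty$.

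The main obstacle is the Ext-vanishing computation for $Z_1$; once this dimension-shift argument is in place, the rest is bookkeeping through the case analysis of Lemma~\ref{relativedominantdimensiononses}.
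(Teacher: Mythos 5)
Your proposal is correct and follows essentially the same route as the paper: slice the long sequence into short exact sequences at the images $Z_i$, dimension-shift using $\Ext_A^{i>0}(Q,Q)=0$ and $\Ext_A^i(X,Q)=0$ to see that each short sequence stays exact under $\Hom_A(-,Q)$, then induct on $t$ via Lemma~\ref{relativedominantdimensiononses}. You are merely more explicit than the paper about which case of part (e) of that lemma applies (using $Q-\domdim_{(A,R)}Q_1=\infty$), which is a welcome clarification rather than a deviation.
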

\begin{proof}Let $C_i$ be the image of the maps $Q_i\rightarrow Q_{i+1}$, $i=1, \ldots, t-1$.
	Since $Q\in Q^{\perp}$, it follows that $\Ext_A^1(C_i, Q)\simeq \Ext_A^{t-i+1}(X, Q)=0$. So, the exact sequences $0\rightarrow C_i\rightarrow Q_{i+1}\rightarrow C_{i+1}\rightarrow 0$ are exact under $\Hom_A(-, Q)$ (also with $C_0=M$ and $C_t=X$). By Lemma \ref{relativedominantdimensiononses} and induction on \mbox{$i$, the result follows.}
\end{proof}

\paragraph{Projective dimension as an upper bound for relative dominant dimension} We should point out that under certain conditions the relative dominant dimension of the regular module with respect to a module $Q$ is bounded above (if finite) by the projective dimension of $Q$.

\begin{Prop}
	Let $R$ be a commutative Noetherian ring. Let $A$ be a projective Noetherian $R$-algebra. Assume that $Q\in A\m\cap R\proj$ satisfying the following:
	\begin{enumerate}
		\item $\Hom_A(Q, Q)\in R\proj$;
		\item The projective dimensions $\pdim_A Q$ and $\pdim_B Q$ are finite;
		\item $\Ext_A^{i>0}(Q, Q)=0$.
	\end{enumerate} 
	If $Q\rdomdim{(A, R)}> \max\{\pdim_B Q, 2\}$, then $Q$ is a  tilting $A$-module.
\end{Prop}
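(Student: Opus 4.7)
The aim is to produce a finite $\add Q$-coresolution of $A$, since the remaining tilting axioms are already hypotheses. My plan is to take the $(A,R)$-exact sequence $0\to A\to Q_0\to\cdots\to Q_{n-1}$ afforded by $n:=Q-\domdim_{(A,R)}A$ (remaining exact under $\Hom_A(-,Q)$, with each $Q_i\in\add Q$) and to show that $K_{n-1}:=\im(Q_{n-2}\to Q_{n-1})$ itself lies in $\add Q$. If this succeeds, then $0\to A\to Q_0\to\cdots\to Q_{n-2}\to K_{n-1}\to 0$ is the desired finite coresolution.

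First, using Lemma~\ref{remainsexactandapproximationsateachstep} (each $K_j\hookrightarrow Q_j$ is a left $\add Q$-approximation), $\Ext_A^{>0}(Q,Q)=0$, and dimension-shifting along the short exact sequences $0\to K_j\to Q_j\to K_{j+1}\to 0$, I would obtain $\Ext_A^{\ge 1}(K_{n-1},Q)=0$. Applying $\Hom_A(-,Q)$ to the defining sequence produces a partial projective resolution of $Q$ as a right $B$-module of length $n-1\ge \pdim_B Q$, so its $(n-1)$-th syzygy is projective. The long $\Hom$--$\Ext$ sequence, together with the left $\add Q$-approximation property, identifies this syzygy with $\Hom_A(K_{n-1},Q)$, which is therefore a finitely generated projective right $B$-module. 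Via the contravariant equivalence given by $\Hom_A(-,Q)$ between $\add Q$ and finitely generated projective right $B$-modules, the resulting splitting lifts to a decomposition $Q_{n-1}\simeq A'\oplus C'$ in $\add Q$ with $\Hom_A(A',Q)\simeq \Hom_A(K_{n-1},Q)$ and $\Hom_A(C',Q)\simeq \Hom_A(X_n,Q)$ (where $X_n:=Q_{n-1}/K_{n-1}$). Composing the inclusion with the projection gives a canonical $A$-morphism $\psi\colon K_{n-1}\hookrightarrow Q_{n-1}\twoheadrightarrow A'$ which induces the above isomorphism on $\Hom_A(-,Q)$; in particular $\psi$ is automatically a left $\add Q$-approximation.

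The decisive step, and the main obstacle, is showing that $\psi$ is an isomorphism, i.e.\ that $K_{n-1}$ is reflexive with respect to $Q$. My plan is to apply Theorem~\ref{moduleMuellerparttwo}(ii) to $K_{n-1}$: the projectivity of $\Hom_A(K_{n-1},Q)$ as right $B$-module makes the $\Tor_i^B$-vanishing requirement automatic, so the question reduces to showing that $\chi^r_{DK_{n-1}}$ is an isomorphism. I would verify this by propagating the known isomorphism $\chi^r_{DA}$ (provided by $Q-\domdim_{(A,R)}A\ge 2$ via Theorem~\ref{dualmoduleMuellerpartone}) through the $(A,R)$-exact sequence $0\to A\to Q_0\to\cdots\to Q_{n-2}\to K_{n-1}\to 0$, running the functors $(-)\otimes_B DQ$ and $\Hom_A(DQ,D-)$ and exploiting the projectivity of the intermediate syzygies in a chain-level diagram chase. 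Once $\psi$ is known to be an isomorphism, $K_{n-1}\in \add Q$, and the finite coresolution $0\to A\to Q_0\to\cdots\to Q_{n-2}\to K_{n-1}\to 0$, combined with $\pdim_A Q<\infty$ and $\Ext_A^{>0}(Q,Q)=0$, establishes that $Q$ is a tilting $A$-module. The hard part is exactly this reflexivity: a Wakamatsu-type statement that requires carefully leveraging the double centralizer from $Q-\domdim_{(A,R)}A\ge 2$ together with the bound on $\pdim_B Q$.
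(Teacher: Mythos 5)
Your global strategy is the right one and is essentially the paper's (the paper runs it on the dual side, producing a finite $\add DQ$-resolution of $DA$ from the codominant-dimension formulation, but the architecture is the same: show that the relevant (co)syzygy lies in $\add Q$ using projectivity of the corresponding $B$-syzygy of $Q$ plus the counit). However, your choice of truncation length breaks the decisive step. If $n:=Q\text{-}\domdim_{(A,R)}A$ is finite, then applying Lemma~\ref{relativedominantdimensiononses} to the short exact sequences $0\to K_j\to Q_j\to K_{j+1}\to 0$ (each of which remains exact under $\Hom_A(-,Q)$) gives $Q\text{-}\domdim_{(A,R)}K_j=n-j$, so $Q\text{-}\domdim_{(A,R)}K_{n-1}=1$; by Theorem~\ref{dualmoduleMuellerpartone} the map $\chi^r_{DK_{n-1}}$ is then surjective but \emph{not} injective, and $K_{n-1}$ cannot lie in $\add Q$ (that would force infinite relative dominant dimension). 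So the statement you set out to prove at that level is false unless $n=\infty$, which is not known a priori. The truncation must be taken at length $\pdim_B Q+1$, as in the paper's proof, which is the minimal length making $\Hom_A(K_{n-1},Q)=\Omega^{\pdim_B Q}_B Q$ projective while leaving the hypothesis some room.

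Even after this repair there is a second, subtler gap in the ``decisive step''. With $p=\pdim_B Q$ and the sequence truncated at length $p+1$, propagating $\chi^r_{DA}$ through $0\to A\to Q_0\to\cdots\to Q_{p-1}\to K_p\to 0$ by tensoring the induced finite projective $B$-resolution of $Q$ with $DQ$ identifies $\ker\chi^r_{DK_p}$ with $\Tor^B_{p}(Q,DQ)$. But Theorem~\ref{moduleMuellerparttwo} together with Corollary~\ref{rightandleftrelativedomidimension} only gives $\Tor^B_i(Q,DQ)=0$ for $1\le i\le Q\text{-}\domdim_{(A,R)}A-2$, and the hypothesis $Q\text{-}\domdim_{(A,R)}A>\max\{p,2\}$ reaches only degree $p-1$ in the borderline case. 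So the diagram chase you describe stops exactly one degree short of the injectivity you need; the projectivity of the top syzygy must enter in an essential way (the paper realizes $\Omega^p_B Q$ as $\Hom_A(DQ,Y)$ with $Y\in\add DQ$ via projectivization and uses the counit isomorphism on $\add DQ$ to identify $Y$ with the kernel after tensoring), and your proposal does not explain how the ``chain-level diagram chase'' supplies this missing degree. As written, the reflexivity of $K_{n-1}$ --- which you correctly flag as the crux --- is not established.
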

\begin{proof}Fix $n=\pdim_B Q$.
	By assumption, $DQ\rcodomdim_{(A^{op}, R)} DA\geq n+1$, and so $\Tor_i^B(Q, DQ)=0$ $i=1, \ldots, n-1$ and $Q\otimes_B DQ\simeq DA$ by 	Theorem \ref{moduleMuellerparttwo}. Therefore, applying $-\otimes_B DQ$ to a projective resolution of $Q$ over $B$ yields an exact sequence
	$0\rightarrow X_n\rightarrow \cdots \rightarrow X_0\rightarrow Q\otimes_B DQ\rightarrow 0$ with $X_i\in \add DQ$. By applying $D$ to this exact sequence, we obtain that $Q$ is a tilting $A$-module.
\end{proof}

	\subsection{Change of rings on relative dominant dimension with respect to a module}\label{Change of rings on relative dominant dimension with respect to a module}

We will now see that, as the usual relative dominant dimension, relative dominant dimension with respect to a module behaves well under change of rings techniques. As usual, the following results also hold for right $A$-modules and consequently with codominant dimension in place of dominant dimension. For brevity, we consider only the left versions. In the following, we say that a module $Q\in A\m\cap R\proj$ \textbf{has a base change property} if $\Hom_A(Q, Q)\in R\proj$ and for every commutative $R$-algebra Noetherian ring $S$, the canonical map $S\otimes_R \Hom_A(Q, Q)\rightarrow \Hom_{S\otimes_R A}(S\otimes_R Q, S\otimes_R Q)$ is an isomorphism.

\begin{Lemma}\label{changeofringreldomdimone}
	Let $R$ be a commutative Noetherian ring. Let $A$ be a projective Noetherian $R$-algebra. Assume that $Q\in A\m\cap R\proj$ has a base change property. Denote by $B$ the endomorphism algebra $ \End_A(Q)^{op}$. Assume that $M\in A\m\cap R\proj$, satisfies the following two conditions:
	\begin{enumerate}
		\item $\Hom_A(M, Q)\in R\proj$;
		\item The canonical map $R(\mi)\otimes_R \Hom_A(M, Q)\rightarrow \Hom_{A(\mi)}(M(\mi), Q(\mi))$ is an isomorphism for every maximal ideal $\mi$ of $R$.
	\end{enumerate} Then, the following assertions are equivalent.
	\begin{enumerate}[(a)]
		\item $Q\rdomdim_{(A, R)} M\geq 1$;
		\item $S\otimes_R Q\rdomdim_{(S\otimes_R A, S)} S\otimes_R M\geq 1$ for every commutative $R$-algebra and Noetherian ring $S$;
		\item $Q_\mi\rdomdim_{(A_\mi, R_\mi)} M_\mi\geq 1$ for every maximal ideal $\mi$ of $R$;
		\item $Q(\mi)\rdomdim_{A(\mi)} M(\mi)\geq 1$ for every maximal ideal $\mi$ of $R$.
	\end{enumerate}
\end{Lemma}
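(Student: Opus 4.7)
The plan is to use Theorem~\ref{dualmoduleMuellerpartone}(i) as the basic reformulation tool: each of (a)--(d) is equivalent to surjectivity of the appropriate counit map $\chi^r$. Concretely, (a) translates to surjectivity of $\chi^r_{DM}\colon \Hom_A(DQ,DM)\otimes_B DQ\to DM$, while (b), (c), (d) are the analogous statements after base change via $S\otimes_R-$, $(-)_\mi$, and $R(\mi)\otimes_R-$ respectively. Theorem~\ref{dualmoduleMuellerpartone}(i) applies in each case because $DQ\otimes_A Q\in R\proj$ is preserved by these base changes, and, for (d), everything lives over a field.

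The crux is a base-change compatibility square. For any commutative Noetherian $R$-algebra $S$, writing $SX:=S\otimes_R X$ for brevity, the standard isomorphisms $D(SX)\simeq S\otimes_R DX$ for $X\in R\proj$ together with $(S\otimes_R U)\otimes_{SB}(S\otimes_R V)\simeq S\otimes_R(U\otimes_B V)$ yield a commutative diagram
\begin{equation*}
\begin{tikzcd}[column sep=small]
(S\otimes_R\Hom_A(DQ,DM))\otimes_{SB}(S\otimes_R DQ)\arrow[r,"S\otimes\chi^r_{DM}"]\arrow[d,"\varphi^S\otimes\id"'] & S\otimes_R DM\arrow[d,"\simeq"]\\
\Hom_{SA}(D(SQ),D(SM))\otimes_{\End_{SA}(SQ)^{op}}D(SQ)\arrow[r,"\chi^r_{D(SM)}"'] & D(SM)
\end{tikzcd}
\end{equation*}
where $\varphi^S\colon S\otimes_R\Hom_A(DQ,DM)\to\Hom_{SA}(D(SQ),D(SM))$ is the canonical map, induced via the Hom-duality identification $\Hom_A(DQ,DM)\simeq\Hom_A(M,Q)$. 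Hypothesis~(2) makes $\varphi^{R(\mi)}$ an isomorphism, while for $S=R_\mi$ the map $\varphi^{R_\mi}$ is an isomorphism by flatness of localization combined with the finite presentation of $\Hom_A(M,Q)$ afforded by hypothesis~(1) over the Noetherian ring $R$.

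With this compatibility in hand, the implications become essentially formal. For (a) $\Rightarrow$ (b), right-exactness of $S\otimes_R-$ makes the top row surjective, and commutativity then forces $\chi^r_{D(SM)}$ to be surjective, since every element of $D(SM)$ is the image of an element of the upper left corner pushed through the square. The implications (b) $\Rightarrow$ (c) and (b) $\Rightarrow$ (d) are immediate specializations to $S=R_\mi$ and $S=R(\mi)$. For (c) $\Rightarrow$ (a) and (d) $\Rightarrow$ (a), observe that $\coker\chi^r_{DM}$ is a finitely generated $R$-module, being a quotient of the finitely generated $DM$; combining the compatibility diagram (which identifies the reduction of $\chi^r_{DM}$ with $\chi^r_{DM_\mi}$ or $\chi^r_{DM(\mi)}$) with Nakayama's lemma, vanishing of this cokernel after every localization (for (c)) or after every reduction (for (d)) forces $\coker\chi^r_{DM}=0$.

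The principal technical obstacle I expect is verifying the compatibility square with care: tracking the bimodule structures, unwinding the Hom-duality identification, and confirming that the canonical map $S\otimes_R B\to\End_{SA}(SQ)^{op}$ behaves compatibly in the cases $S=R_\mi$ and $S=R(\mi)$ (the latter following from applying hypothesis~(2) symbolically to the appropriate Hom spaces). Beyond this, the remaining arguments reduce to the Mueller characterization of Theorem~\ref{dualmoduleMuellerpartone}(i) combined with standard commutative-algebra facts: Nakayama's lemma, right-exactness of tensor, flatness of localization, and the compatibility of $\Hom$ with base change for finitely presented modules over Noetherian rings.
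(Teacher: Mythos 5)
Your proposal is correct and follows essentially the same route as the paper: reformulate each condition via Theorem~\ref{dualmoduleMuellerpartone}(i) as surjectivity of the relevant counit $\chi^r$, set up the base-change compatibility square for $S\otimes_R\chi^r_{DM}$ versus $\chi^r_{D_S(S\otimes_R M)}$, push surjectivity forward by right-exactness of $S\otimes_R-$, and recover the integral statement from the residue-field case via hypothesis~(2) (which makes $\varphi^{R(\mi)}$ an isomorphism) and Nakayama's lemma. The only cosmetic difference is the shape of the implication cycle (the paper proves $(i)\Rightarrow(ii)\Rightarrow(iii)\Rightarrow(iv)\Rightarrow(i)$, whereas you prove $(c)\Rightarrow(a)$ and $(d)\Rightarrow(a)$ separately), which changes nothing of substance.
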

\begin{proof}Let $S$ be a commutative $R$-algebra.
	Denote by $D_S$ the standard duality with respect to $S$, $\Hom_S(-, S)$. The result follows from the following commutative diagram:
	\begin{equation}
		\begin{tikzcd}
			S\otimes_R \Hom_A(DQ, DM)\otimes_B DQ \arrow[r, "S\otimes_R \chi^r_{DM}"] \arrow[d, "\theta_{S, M}", "\simeq"'] & S\otimes_R DM \arrow[dd, "\simeq"] \\
			S\otimes_R \Hom_A(DQ, DM)\otimes_{S\otimes_R B} S\otimes_R DQ \arrow[d, "\varphi_S"] & \\
			\Hom_{S\otimes_R A}(D_S (S\otimes_R Q), D_S(S\otimes_R M))\otimes_{S\otimes_R B}D_S(S\otimes_R Q) \arrow[r, "\chi^r_{D_SS\otimes_R M}"] & D_S(S\otimes_R M)
		\end{tikzcd}\label{eqcocover34}
	\end{equation}
	where the map $\theta_{S, M}$ is the isomorphism given in Proposition 2.3 of \cite{CRUZ2022410} while $\varphi_S$ is the tensor product of the canonical map given by extension of scalars on $\Hom$ (which is not claimed at the moment to be an isomorphism) with the one providing the isomorphism $S\otimes_R DQ\simeq D_SS\otimes_RQ$. 
	
	The implications $(b)\Rightarrow (c)\Rightarrow (d)$ are immediate. Assume that $(a)$ holds. Then, $\chi_{DM}^r$ is surjective. By the commutative diagram, $\chi_{D_SS\otimes_R M}^r$ is surjective, and so $(b)$ follows. Assume that $(d)$ holds. By condition 2, $\varphi_{R(\mi)}$ must be an isomorphism for every $\mi\in \MaxSpec R$. Thus, by the diagram, $\chi^r_{DM}(\mi)$ is surjective for every maximal ideal $\mi$ of $R$. By Nakayama's Lemma, $\chi^r_{DM}$ is surjective and $(a)$ holds.
\end{proof}

\begin{Lemma}
	Let $R$ be a commutative Noetherian ring. Let $A$ be a projective Noetherian $R$-algebra. Let $Q\in A\m\cap R\proj$ satisfying $\Hom_A(Q, Q)\in R\proj$. Denote by $B$ the endomorphism algebra $ \End_A(Q)^{op}$. For $M\in A\m\cap R\proj$, the following assertions are equivalent.
	\begin{enumerate}[(a)]
		\item $Q\rdomdim_{(A, R)} M\geq n\geq 1$;
		\item $Q\rdomdim_{(S\otimes_R A, S)}  S\otimes_R M\geq n\geq 1$ for every flat commutative $R$-algebra which is a Noetherian ring;
		\item $Q\rdomdim_{(A_\mi, R_\mi)} M_\mi\geq n\geq 1$ for every maximal ideal $\mi$ of $R$.
	\end{enumerate}
\end{Lemma}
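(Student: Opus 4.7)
The plan is to reduce everything to the Mueller-type characterisation from Theorem~\ref{moduleMuellerparttwo}(ii): the condition $Q-\domdim_{(A,R)} M\geq n\geq 1$ is equivalent to $\chi^r_{DM}$ being surjective (if $n=1$) or bijective together with the vanishing of $\Tor_i^B(\Hom_A(DQ,DM),DQ)$ for $1\leq i\leq n-2$ (if $n\geq 2$). Once the statement is reformulated in this way, the equivalence with (b) and (c) becomes a question about how these conditions transport under flat extension of scalars and localisation.

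First I would reuse, for a flat commutative Noetherian $R$-algebra $S$, the commutative diagram (\ref{eqcocover34}) appearing in the proof of Lemma~\ref{changeofringreldomdimone}, but now observing that both vertical maps $\theta_{S,M}$ and $\varphi_S$ are isomorphisms. The map $\theta_{S,M}$ is always an isomorphism by \citep[Proposition~2.3]{CRUZ2022410}. The map $\varphi_S$ is built from the canonical base-change map on $\Hom$ and the natural map $S\otimes_R DQ\to D_S(S\otimes_R Q)$; both are isomorphisms since $Q\in A\m\cap R\proj$ is finitely presented over $A$ and finitely generated projective over $R$, and $S$ is flat over $R$. Consequently the diagram identifies $S\otimes_R\chi^r_{DM}$ with $\chi^r_{D_S(S\otimes_R M)}$, so surjectivity/bijectivity is preserved from $R$ to $S$ by flatness, and (since localisations and residue fields detect these properties on finitely generated $R$-modules) also reflected from $R_\mi$ back to $R$ via Nakayama-type arguments.

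For the Tor conditions appearing in the case $n\geq 2$, I would use that flatness of $S$ over $R$ gives a natural isomorphism
\begin{equation*}
S\otimes_R \Tor_i^B(\Hom_A(DQ,DM),DQ)\simeq \Tor_i^{S\otimes_R B}\!\bigl(S\otimes_R\Hom_A(DQ,DM),\,S\otimes_R DQ\bigr),
\end{equation*}
obtained by taking a $B$-projective resolution, tensoring with $S$ (which remains a resolution by flatness), and computing Tor. Combined with the isomorphism $\varphi_S$ above this rewrites the right-hand side in terms of $\Tor_i^{S\otimes_R B}(\Hom_{S\otimes_R A}(D_S(S\otimes_R Q),D_S(S\otimes_R M)),D_S(S\otimes_R Q))$, i.e.\ exactly the Tor groups governing $S\otimes_R Q-\domdim_{(S\otimes_R A,S)} S\otimes_R M\geq n$. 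Thus (a)$\Rightarrow$(b), and the specialisation $S=R_\mi$ gives (a)$\Rightarrow$(c).

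The only direction requiring more than a direct transport is (c)$\Rightarrow$(a). Here I would exploit that both the surjectivity/bijectivity of $\chi^r_{DM}$ and the vanishing of each $\Tor_i^B(\Hom_A(DQ,DM),DQ)$ are local properties on $\Spec R$: since kernels and cokernels of maps between finitely generated $R$-modules commute with localisation, and since a finitely generated $R$-module vanishes iff all its localisations at maximal ideals vanish, checking the Mueller conditions at every $R_\mi$ is equivalent to checking them over $R$. The main technical point to be careful about is verifying that the conclusion (c) over all $R_\mi$ produces the Mueller data over $R_\mi$ compatible enough to glue, which is ensured by the naturality of $\chi^r$ and of the Tor spectral isomorphism under localisation. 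The expected main obstacle is therefore the bookkeeping in the second paragraph — confirming that $\varphi_S$ is an isomorphism under mere flatness (rather than faithful flatness) — but this reduces cleanly to the fact that both $Q$ and $M$ are finitely generated projective over $R$.
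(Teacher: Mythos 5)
Your proposal is correct and follows essentially the same route as the paper: both arguments run through the commutative diagram (\ref{eqcocover34}), whose vertical maps become isomorphisms under flatness, invoke the commutation of $S\otimes_R-$ with $\Tor$ over $B$ (using that $B$ is finitely generated projective over $R$), and then apply Theorem~\ref{moduleMuellerparttwo}; your locality argument for (c)$\Rightarrow$(a) is exactly the content the paper compresses into ``analogously''. No gaps.
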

\begin{proof}
	By the flatness of $S$, the vertical maps of the commutative diagram (\ref{eqcocover34}) are isomorphisms. So, by Lemma \ref{changeofringreldomdimone}, the implication $(a)\Rightarrow (b)$ is clear for $n=1, 2$. Again, since $S$ is flat and $B$ is finitely generated projective over $R$, $S\otimes_R -$ commutes with $\Tor$ functors over $B$. Therefore, $(b)$ follows by Theorem \ref{moduleMuellerparttwo}. Analogously, we obtain $(c)\Rightarrow (a)$.
\end{proof}

It is no surprise that relative dominant dimension with respect to a module remains stable under extension of scalars to the algebraic closure. For the sake of completeness, we give the result.

\begin{Lemma}\label{relativedominantdimensionalgebraicclosuremodule}
	Let $k$ be a field with algebraic closure $\overline{k}$. Let $A$ be a finite-dimensional $k$-algebra and assume that $Q\in A\m$. Then, $\overline{k}\otimes_k Q\rdomdim_{\overline{k}\otimes_k A}\overline{k}\otimes_k M=Q\rdomdim_A M$.
\end{Lemma}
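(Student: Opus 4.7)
The plan is to prove the equality by establishing the two inequalities separately, exploiting the faithful flatness of the field extension $\overline{k}/k$ together with the relative Mueller characterisations (Theorems \ref{moduleMuellerpartone}, \ref{dualmoduleMuellerpartone} and \ref{moduleMuellerparttwo}). Throughout the proof, I will freely use the standard fact that for finite-dimensional $k$-vector spaces $X, Y$ and any flat $k$-algebra $S$, the canonical map $S\otimes_k \Hom_A(X,Y)\rightarrow \Hom_{S\otimes_k A}(S\otimes_k X, S\otimes_k Y)$ is an isomorphism, and similarly $S\otimes_k DX\simeq D_S(S\otimes_k X)$. In particular, for the finite-dimensional $k$-algebra $B=\End_A(Q)^{op}$, one has $\overline{k}\otimes_k B \simeq \End_{\overline{k}\otimes_k A}(\overline{k}\otimes_k Q)^{op}$.

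For the inequality $\overline{k}\otimes_k Q-\domdim_{\overline{k}\otimes_k A}\overline{k}\otimes_k M\geq Q-\domdim_A M$, take any $(A,k)$-exact sequence $0\rightarrow M\rightarrow T_1\rightarrow \cdots \rightarrow T_n$ with $T_i\in \add_A Q$ that remains exact under $\Hom_A(-, Q)$. Applying $\overline{k}\otimes_k -$ (which is exact since $\overline{k}$ is $k$-flat) yields an exact sequence in $(\overline{k}\otimes_k A)\m$; each $\overline{k}\otimes_k T_i$ lies in $\add_{\overline{k}\otimes_k A}\overline{k}\otimes_k Q$ since tensor product preserves direct summands; and the sequence remains exact under $\Hom_{\overline{k}\otimes_k A}(-,\overline{k}\otimes_k Q)$ because of the compatibility of $\Hom$ with flat base change on finite-dimensional modules recalled above.

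For the converse inequality, I would first dispose of the cases $n=0,1,2$ using Theorems \ref{moduleMuellerpartone} and \ref{dualmoduleMuellerpartone}: the statement $Q-\domdim_A M\geq n$ for $n\in\{1,2\}$ is equivalent to the counit $\chi^r_{DM}\colon \Hom_A(DQ, DM)\otimes_B DQ\rightarrow DM$ being surjective ($n=1$) or an isomorphism ($n=2$). The commutative diagram \eqref{eqcocover34} (with $S=\overline{k}$, in which all vertical maps are isomorphisms by flatness of $\overline{k}$ and the $\Hom$-compatibility above) identifies $\overline{k}\otimes_k \chi^r_{DM}$ with the corresponding counit over $\overline{k}\otimes_k A$. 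Since $\overline{k}/k$ is faithfully flat, surjectivity resp.\ bijectivity of $\chi^r_{DM}$ is equivalent to that of the tensored map, handling $n\leq 2$. For $n\geq 3$, Theorem \ref{moduleMuellerparttwo}(ii) reduces the assertion to the additional vanishing $\Tor_i^B(\Hom_A(M,Q), DQ)=0$ for $1\leq i\leq n-2$. By flat base change, $\overline{k}\otimes_k \Tor_i^B(\Hom_A(M,Q), DQ)\simeq \Tor_i^{\overline{k}\otimes_k B}(\overline{k}\otimes_k \Hom_A(M,Q),\overline{k}\otimes_k DQ)$, and the two tensored arguments are identified with $\Hom_{\overline{k}\otimes_k A}(\overline{k}\otimes_k M,\overline{k}\otimes_k Q)$ and $D_{\overline{k}}(\overline{k}\otimes_k Q)$. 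Faithful flatness of $\overline{k}/k$ then transfers the vanishing between the two settings.

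Assembling these pieces, both criteria in Theorem \ref{moduleMuellerparttwo}(ii) for $Q-\domdim_A M\geq n$ match exactly the corresponding criteria for $\overline{k}\otimes_k Q-\domdim_{\overline{k}\otimes_k A}\overline{k}\otimes_k M\geq n$, yielding equality of the two invariants. The only potential subtlety, and the main thing I would double-check, is the identification of $\overline{k}\otimes_k \chi^r_{DM}$ with $\chi^r_{D_{\overline{k}}(\overline{k}\otimes_k M)}$ in the diagram \eqref{eqcocover34}; once this naturality is in place, the rest is a routine application of faithful flatness.
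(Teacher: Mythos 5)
Your proposal is correct and follows essentially the same route as the paper: the paper's proof also reduces everything to the Mueller-type characterisation (surjectivity/bijectivity of $\chi^r_{DM}$ and vanishing of the relevant $\Tor^B_i$ groups) and then invokes freeness, hence faithful flatness, of $\overline{k}$ over $k$ to transfer these conditions across the base change. Your extra detail on the easy inequality and on the naturality of the counit under base change is harmless but not needed beyond what the iff-characterisation already gives.
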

\begin{proof}
	Of course, $\overline{k}$ is free over $k$. Therefore, $\Tor_i^B(\Hom_A(DQ, DM), DQ)=0$ if and only if \linebreak\mbox{$\Tor_i^{\overline{k}\otimes_k B}(\Hom_{\overline{k}\otimes_k A}(\overline{k}\otimes_k DQ, \overline{k}\otimes_k DM),  \overline{k}\otimes_k DQ)=0$.} By the same reason, $\chi^r_{DM}$ is surjective (or bijective) if and only if $\chi^r_{\overline{k}\otimes_k DM}$ is surjective (or bijective).
\end{proof}

\begin{Lemma}\label{changeringpartonemaximal}
Let $Q\in A\m\cap R\proj$ having a base change property. Denote by $B$ the endomorphism algebra $ \End_A(Q)^{op}$. Let $M\in A\m\cap R\proj$ satisfying $\Hom_A(M, Q)\in R\proj$. Assume that $S$ is a commutative $R$-algebra and a Noetherian ring such that the canonical map \linebreak${S\otimes_R \Hom_A(M, Q)\rightarrow \Hom_{S\otimes_R A}(S\otimes_R M, S\otimes_R Q)}$ is an isomorphism.
 
  Then, $Q\rdomdim_{(A, R)} M\leq S\otimes_R Q\rdomdim_{(S\otimes_R A, S)}S\otimes_R M$.
\end{Lemma}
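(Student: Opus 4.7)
The plan is to extract an explicit witness for $Q-\domdim_{(A,R)} M \geq n$, apply the base change functor $S \otimes_R -$, and verify that the resulting coresolution witnesses $S \otimes Q-\domdim_{(S \otimes_R A, S)} S \otimes M \geq n$.

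If $Q-\domdim_{(A,R)} M = 0$ the inequality is trivial, so I may fix $n \geq 1$ with $Q-\domdim_{(A,R)} M \geq n$. By definition there is an $(A,R)$-exact sequence
\begin{align*}
0 \to M \to Q_1 \to \cdots \to Q_n
\end{align*}
with $Q_i \in \add_A Q$ which remains exact under $\Hom_A(-, Q)$. Since this sequence splits as a sequence of $R$-modules, applying $S \otimes_R -$ yields an $(S \otimes_R A, S)$-exact sequence $0 \to S \otimes M \to S \otimes Q_1 \to \cdots \to S \otimes Q_n$ with $S \otimes Q_i \in \add_{S \otimes A}(S \otimes Q)$. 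By Lemma \ref{remainsexactandapproximationsateachstep}(ii), the task reduces to checking that each inclusion $S \otimes K_i \hookrightarrow S \otimes Q_i$ ($i = 1, \ldots, n$, with $K_1 = M$ and $K_i$ the successive images) is a left $\add_{S \otimes A}(S \otimes Q)$-approximation.

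The short exact sequences $0 \to K_i \to Q_i \to K_{i+1} \to 0$ are $(A,R)$-exact and, because the original coresolution is $\Hom_A(-, Q)$-exact, applying $\Hom_A(-, Q)$ gives short exact sequences $0 \to \Hom_A(K_{i+1}, Q) \to \Hom_A(Q_i, Q) \to \Hom_A(K_i, Q) \to 0$. Using $\Hom_A(M, Q) \in R\proj$ and $\Hom_A(Q_i, Q) \in R\proj$ (as $Q_i \in \add Q$), a downward induction shows each $\Hom_A(K_i, Q) \in R\proj$; consequently these $\Hom$-sequences split over $R$ and remain split exact after $S \otimes_R -$. The crucial comparison is the natural morphism
\begin{align*}
\Phi_X \colon S \otimes_R \Hom_A(X, Q) \longrightarrow \Hom_{S \otimes_R A}(S \otimes_R X, S \otimes_R Q),
\end{align*}
which by hypothesis is an isomorphism at $X = M$. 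For the base index $i = 1$: any $f \in \Hom_{S \otimes A}(S \otimes M, S \otimes Q)$ can be written $f = \Phi_M(g)$; lifting $g$ along the split-exact sequence $S \otimes \Hom_A(Q_1, Q) \twoheadrightarrow S \otimes \Hom_A(M, Q)$ produces $h$, and by naturality of $\Phi$, $\Phi_{Q_1}(h)$ restricts to $f$. This gives surjectivity of $\Hom_{S \otimes A}(S \otimes Q_1, S \otimes Q) \to \Hom_{S \otimes A}(S \otimes M, S \otimes Q)$, without requiring $\Phi_{Q_1}$ itself to be an isomorphism.

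The argument then iterates by induction on $i$: if $\Phi_{K_i}$ is an isomorphism, the same diagram-chase using the short exact sequence $0 \to K_i \to Q_i \to K_{i+1} \to 0$ and its split $\Hom$-counterpart both establishes the approximation property at $K_{i+1} \hookrightarrow Q_{i+1}$ and propagates the isomorphism to $\Phi_{K_{i+1}}$. The main obstacle is carrying out this inductive step cleanly, since $\Phi_{Q_i}$ is generally not an isomorphism (the modules $Q_i$ are $R$-projective but not $A$-projective). The resolution of this difficulty exploits the $R$-splittedness of the $\Hom$-sequences together with the naturality of $\Phi$: any element of $\Hom_{S \otimes A}(S \otimes K_{i+1}, S \otimes Q)$ corresponds via the inductive hypothesis and the split exact rows to a preimage in $S \otimes \Hom_A(K_{i+1}, Q)$, from which the sought-after lifts and the isomorphism $\Phi_{K_{i+1}}$ are produced. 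Running this induction through all $n$ steps yields the desired inequality.
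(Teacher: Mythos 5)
Your overall strategy (base-change the witnessing coresolution, establish $R$-splitness of the $\Hom_A(-,Q)$-complex from $\Hom_A(M,Q)\in R\proj$, and compare via the natural transformation $\Phi$) is sound and is essentially the paper's, and your base case is correct. The gap is in the inductive step. To pass from "$\Phi_{K_i}$ is an isomorphism" to "$\Phi_{K_{i+1}}$ is an isomorphism" you must compare the ($S$-split) exact row $0\to S\otimes_R\Hom_A(K_{i+1},Q)\to S\otimes_R\Hom_A(Q_i,Q)\to S\otimes_R\Hom_A(K_i,Q)\to 0$ with the left-exact row obtained by applying $\Hom_{S\otimes_R A}(-,S\otimes_R Q)$ to $0\to S\otimes_R K_i\to S\otimes_R Q_i\to S\otimes_R K_{i+1}\to 0$; any diagram chase here needs the middle comparison map $\Phi_{Q_i}$ to be injective (to get injectivity of $\Phi_{K_{i+1}}$) and surjective (together with injectivity of $\Phi_{K_i}$, to get surjectivity of $\Phi_{K_{i+1}}$). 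You explicitly decline to use this --- you even assert that $\Phi_{Q_i}$ is "generally not an isomorphism" --- and the substitute you offer ("exploits the $R$-splittedness \ldots together with the naturality of $\Phi$") is a description of a hope, not an argument: naturality and splitness alone neither produce a preimage in $S\otimes_R\Hom_A(K_{i+1},Q)$ of an arbitrary element of $\Hom_{S\otimes_R A}(S\otimes_R K_{i+1},S\otimes_R Q)$ nor rule out a kernel for $\Phi_{K_{i+1}}$.

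The missing fact is true, and it is exactly what the standing hypothesis $DQ\otimes_A Q\in R\proj$ is for: to show $\Phi_X$ is an isomorphism for every $X\in\add Q$ it suffices to treat $X=Q$, where $\End_A(Q)\simeq\Hom_A(Q,DDQ)\simeq D(DQ\otimes_A Q)$ commutes with $S\otimes_R-$ because $DQ\otimes_A Q$ is finitely generated projective over $R$ (this is \citep[Proposition 2.2]{CRUZ2022410}, invoked repeatedly in the paper for precisely this purpose). Once $\Phi_{Q_i}$ is known to be an isomorphism your induction closes by the five lemma --- but at that point the induction and the detour through Lemma \ref{remainsexactandapproximationsateachstep} are unnecessary: the paper simply applies $S\otimes_R-$ to the whole $R$-split exact complex $\Hom_A(X_n,Q)\to\cdots\to\Hom_A(X_1,Q)\to\Hom_A(M,Q)\to 0$ and transports exactness across the vertical isomorphisms $\Phi_{X_i}$ (for $X_i\in\add Q$) and $\Phi_M$ (the hypothesis) in one commutative ladder, which is precisely the statement that the base-changed coresolution remains exact under $\Hom_{S\otimes_R A}(-,S\otimes_R Q)$.
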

\begin{proof}
	Assume that $Q\rdomdim_{(A, R)} M\geq n\geq 1$. Then, there exists an $(A, R)$-exact sequence $0\rightarrow M\rightarrow X_1\rightarrow \cdots\rightarrow X_n$ which remains exact under $\Hom_A(-, Q)$, where $X_i\in \add_A Q$. The functor $S\otimes_R -$ preserves $R$-split exact sequences. Hence, $0\rightarrow S\otimes_R M\rightarrow S\otimes_R X_1\rightarrow \cdots\rightarrow S\otimes_R X_n$ is $(S\otimes_R A, S)$-exact and $S\otimes_R X_i\in \add_{S\otimes_R A}S\otimes_RQ$. By assumption, \begin{align}
		\Hom_A(X_n, Q)\rightarrow \Hom_A(X_{n-1}, Q)\rightarrow \cdots\rightarrow \Hom_A(X_1, Q)\rightarrow \Hom_A(M, Q)\rightarrow 0\label{eqcocover35}
	\end{align}is exact. Since $\Hom_A(M, Q)\in R\proj$ (\ref{eqcocover35}) splits over $R$. Thus, (\ref{eqcocover35}) remains exact under $S\otimes_R -$. Using the commutative diagram
	\begin{equation*}
		\begin{tikzcd}[scale cd=0.96, column sep=small]
			S\otimes_R \Hom_A(X_n, Q)\arrow[r] \arrow[d, "\simeq"] & S\otimes_R \Hom_A(X_{n-1}, Q) \arrow[r] \arrow[d, "\simeq"] & \cdots \arrow[r] & S\otimes_R \Hom_A(M, Q)\arrow[d, "\simeq"] \\
			\Hom_{S\otimes_R A}(S\otimes_R X_n, S\otimes_R Q)\arrow[r] & \Hom_{S\otimes_R A}(S\otimes_R X_{n-1}, S\otimes_R Q)\arrow[r] & \cdots \arrow[r]& \Hom_{S\otimes_R A}(S\otimes_R M, S\otimes_R Q)
		\end{tikzcd}
	\end{equation*} it follows that the bottom row is exact. Hence, $S\otimes_RQ\rdomdim_{(A, R)} S\otimes_R M\geq n$.
\end{proof}

Finally, we reach the most important result in this section.

\begin{Theorem}\label{changeofringrelativedomdimrelativetomodule}
	Let $R$ be a commutative Noetherian ring. Let $A$ be a projective Noetherian $R$-algebra. Assume that $Q\in A\m\cap R\proj$ has a base change property. Denote by $B$ the endomorphism algebra $ \End_A(Q)^{op}$. Assume that $M\in A\m\cap R\proj$, satisfies the following two conditions
	\begin{enumerate}
		\item $\Hom_A(M, Q)\in R\proj$;
		\item The canonical map $R(\mi)\otimes_R \Hom_A(M, Q)\rightarrow \Hom_{A(\mi)}(M(\mi), Q(\mi))$ is an isomorphism for every maximal ideal $\mi$ of $R$.
	\end{enumerate} 
	Then, $Q\rdomdim_{(A, R)} M=\inf \{Q(\mi)\rdomdim_{A(\mi)} M(\mi)\colon \mi\in \MaxSpec(R) \}$.
\end{Theorem}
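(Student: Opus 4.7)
The plan is to prove the two inequalities separately and combine them. The easier direction is the upper bound; the harder direction requires an induction on the local dominant dimension and a careful verification that Conditions 1 and 2 propagate along an approximation.

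\textbf{Upper bound.} The inequality $Q-\domdim_{(A, R)} M\leq Q(\mi)-\domdim_{A(\mi)} M(\mi)$ for every $\mi\in \MaxSpec R$ follows immediately from Lemma \ref{changeringpartonemaximal} applied with $S=R(\mi)$: the isomorphism hypothesis needed there is precisely Condition 2, and $R(\mi)\otimes_R Q=Q(\mi)$, $R(\mi)\otimes_R A=A(\mi)$. Taking the infimum over $\mi$ gives $Q-\domdim_{(A,R)}M\leq \inf\{Q(\mi)-\domdim_{A(\mi)}M(\mi)\colon \mi\in \MaxSpec R\}$.

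\textbf{Lower bound, setup.} Set $n=\inf\{Q(\mi)-\domdim_{A(\mi)}M(\mi)\colon \mi\in \MaxSpec R\}$. I will prove $Q-\domdim_{(A,R)}M\geq n$ by induction on $n$. The case $n=0$ is trivial. For $n=1$, Lemma \ref{changeofringreldomdimone} (equivalence of (a) and (d)) directly yields the result since Conditions 1 and 2 provide exactly the hypotheses of that lemma.

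\textbf{Inductive step.} Assume $n\geq 2$ and the result holds for $n-1$. By the case $n=1$, there is an $(A,R)$-exact sequence
\begin{equation*}
0\rightarrow M\xrightarrow{\iota} X_1\rightarrow Y_1\rightarrow 0
\end{equation*}
with $X_1\in \add Q$ such that $\iota$ is a left $\add Q$-approximation, i.e.\ the sequence remains exact under $\Hom_A(-,Q)$. I must verify that $Y_1\in A\m\cap R\proj$ satisfies Conditions 1 and 2 relative to $Q$, so that induction applies. Since the sequence is $R$-split, $Y_1\in R\proj$. Applying $\Hom_A(-,Q)$ gives the $R$-split short exact sequence
\begin{equation*}
0\rightarrow \Hom_A(Y_1,Q)\rightarrow \Hom_A(X_1,Q)\rightarrow \Hom_A(M,Q)\rightarrow 0,
\end{equation*}
where the middle term lies in $\add B$ (since $X_1\in \add Q$ and $B\in R\proj$) and the right term is $R$-projective by Condition 1; hence $\Hom_A(Y_1,Q)\in R\proj$, giving Condition 1 for $Y_1$. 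For Condition 2, apply $R(\mi)\otimes_R -$ to this $R$-split sequence and compare with the corresponding sequence of $\Hom_{A(\mi)}$'s obtained by applying $\Hom_{A(\mi)}(-,Q(\mi))$ to the base-changed sequence $0\rightarrow M(\mi)\rightarrow X_1(\mi)\rightarrow Y_1(\mi)\rightarrow 0$. The natural map between them is an isomorphism on $M$ by Condition 2 and on $X_1$ tautologically (using $\Hom_A(Q,Q)\otimes_R R(\mi)\simeq \Hom_{A(\mi)}(Q(\mi),Q(\mi))$, which follows from $DQ\otimes_A Q\in R\proj$ and Lemma \ref{tensorprojcommutingonHom}-type identifications). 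The five lemma yields Condition 2 for $Y_1$.

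\textbf{Finishing the induction.} Base-changing the approximation sequence along $R\to R(\mi)$ preserves $R$-split exactness and the property of being a left $\add Q(\mi)$-approximation (by Lemma \ref{remainsexactandapproximationsateachstep}, since base change still sends the $\Hom_A(-,Q)$-exact sequence to a $\Hom_{A(\mi)}(-,Q(\mi))$-exact sequence by the five lemma check above). Applying Lemma \ref{relativedominantdimensiononses} over $A(\mi)$, together with $Q(\mi)-\domdim_{A(\mi)}X_1(\mi)=\infty$, forces $Q(\mi)-\domdim_{A(\mi)}Y_1(\mi)\geq Q(\mi)-\domdim_{A(\mi)}M(\mi)-1\geq n-1$ for every $\mi$. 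By the inductive hypothesis, $Q-\domdim_{(A,R)}Y_1\geq n-1$, and concatenating an approximation coresolution of $Y_1$ after $\iota$ (again using Lemma \ref{relativedominantdimensiononses}, now over $R$) yields $Q-\domdim_{(A,R)}M\geq n$.

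\textbf{Main obstacle.} The delicate point is transporting Conditions 1 and 2 from $M$ to $Y_1$; once this is done, the short-exact-sequence machinery of Lemma \ref{relativedominantdimensiononses} does all the rest. Condition 2 for $Y_1$ is the crux because the natural base-change map on $\Hom$ is not a priori an isomorphism for an arbitrary module, and the five lemma forces us to carry along $R$-splittings at each stage.
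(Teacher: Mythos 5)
Your proposal is correct, and it proves the statement by a genuinely different route than the paper. The paper does not build the coresolution step by step: after disposing of $n\leq 2$ via Nakayama's Lemma applied to $\chi^r_{DM}$ (essentially your base case), it invokes the Mueller-type characterisation (Theorem~\ref{moduleMuellerparttwo}) to reduce everything to showing $\Tor_i^B(\Hom_A(M,Q),DQ)=0$ for $1\leq i\leq n-2$, and deduces this vanishing from the vanishing of $\Tor_i^{B(\mi)}$ over every residue field by running the K\"unneth spectral sequence $\Tor_j^R(H_i(\Hom_A(M,Q)\otimes_B Q^\bullet),R(\mi))\Rightarrow H_{i+j}(\Hom_A(M,Q)\otimes_B Q^\bullet(\mi))$ together with an induction on $i$; Conditions 1 and 2 enter only to guarantee that $\Hom_A(M,Q)\otimes_B Q^\bullet$ is a complex of projective $R$-modules whose reduction is a projective $B(\mi)$-resolution of $DQ(\mi)$. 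Your argument instead inducts on $n$ at the level of approximation coresolutions, and the work you correctly identify as the crux --- transporting Conditions 1 and 2 to the cokernel $Y_1$ via the short five lemma applied to the $R$-split sequence $0\to\Hom_A(Y_1,Q)\to\Hom_A(X_1,Q)\to\Hom_A(M,Q)\to 0$ --- is exactly the price you pay for avoiding the spectral sequence; the paper's route never needs to know that the hypotheses propagate to cosyzygies. What your approach buys is elementarity (no spectral sequences, only Lemma~\ref{relativedominantdimensiononses} and Nakayama) plus the extra information that the approximation coresolution of $M$ can be chosen so that it base-changes to an approximation coresolution over every residue field; what the paper's approach buys is brevity and a uniform treatment of all $i\leq n-2$ at once through the already-established homological characterisation. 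All the steps you flag as delicate do go through: $\Hom_A(Y_1,Q)$ is an $R$-summand of $\Hom_A(X_1,Q)\in\add B\subset R\proj$, the identification $R(\mi)\otimes_R\End_A(Q)\simeq\End_{A(\mi)}(Q(\mi))$ follows from $DQ\otimes_AQ\in R\proj$, and the final concatenation is licensed by parts (e) of Lemma~\ref{relativedominantdimensiononses} since the middle term has infinite relative dominant dimension.
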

\begin{proof}
	By Lemma \ref{changeringpartonemaximal}, $Q(\mi)\rdomdim_{A(\mi)} M(\mi)\geq Q\rdomdim_{(A, R)} M$ for every maximal ideal $\mi$ of $R$. Conversely,
	assume that $Q(\mi)\rdomdim_{A(\mi)} M(\mi)\geq n$ for every maximal ideal $\mi$ of $R$.  We want to show that $Q\rdomdim_{(A, R)} M\geq n$. If $n=0$, then there is nothing to show. Using the commutative diagram (\ref{eqcocover34}) we obtain that if $n\geq 1$ (resp. $n\geq 2$), then $\chi^r_{DM}(\mi)$ is surjective (resp. bijective) for every maximal ideal $\mi$ of $R$. By Nakayama's Lemma $\chi^r_{DM}$ is surjective and since $DM\in R\proj$, $\chi^r_{DM}$ is bijective in case $n\geq 2$. So, the inequality holds for $n=1, 2$. Assume now that $n\geq 3$. In particular, $Q\rdomdim_{(A, R)} M\geq 2$, and therefore $\Hom_A(DQ, DM)\otimes_B DQ\in R\proj$. By assumption, $\Tor_i^{B(\mi)}(\Hom_{A(\mi)}(M(\mi), Q(\mi)), D_{(\mi)}Q(\mi))=0$ for $1\leq i\leq n-2$ for every maximal ideal $\mi$ of $R$. Let $Q^{\bullet}$ be a deleted $B$-projective resolution of $DQ$. So the chain complex $P^{\bullet}=\Hom_A(M, Q)\otimes_B Q^{\bullet}$ is a projective complex over $R$ since $\Hom_A(M, Q)\in R\proj$. Consider the K\"unneth spectral sequence for chain complexes (see for example \citep[ Theorem 5.6.4]{Weibel2003})
	\begin{align}
		E^2_{i, j}=\Tor_j^R(H_i(\Hom_A(M, Q)\otimes_B Q^{\bullet}), R(\mi))\Rightarrow H_{i+j}(\Hom_A(M, Q)\otimes_B Q^{\bullet}(\mi)).
	\end{align}
	Since $\Hom_A(M, Q)\otimes_B DQ\in R\proj$, $\Hom_A(M, Q)\otimes_B Q^{\bullet}(\mi)$ becomes a deleted projective $B(\mi)$-resolution of $DQ(\mi)$. 
	
	We shall proceed by induction on $1\leq i\leq n-2$ to show that $\Tor_i^B(\Hom_A(M, Q), DQ)=0$. Observe that $\Tor_1^B(\Hom_A(M, Q), DQ)\otimes_R R(\mi)=0$ for every $\mi\in \MaxSpec R$ (see for example \citep[Lemma A.3]{CRUZ2022410}). Hence, $\Tor_1^B(\Hom_A(M, Q), DQ)=0$. Assume now that $\Tor_i^B(\Hom_A(M, Q), DQ)=0$ for all $1\leq i\leq l$ with $1\leq l\leq n-2$ for some $l$. Then, $E_{i, j}^2=0$, for $1\leq i\leq l$, $j\geq 0$ and $E_{0, j}^2=0$, $j>0$. It follows that $\Tor_{l+1}^B(\Hom_A(M, Q), DQ)(\mi)=0$ for every $\mi\in \MaxSpec R$ (see \citep[Lemma A.4]{CRUZ2022410}). Therefore, $\Tor_i^B(\Hom_A(M, Q), DQ)=0$ for $1\leq i\leq n-2$. By 
	Theorem \ref{moduleMuellerparttwo}, the result follows.\end{proof}

\begin{Remark}
	The condition $DQ\otimes_A M\in R\proj$ implies both conditions required in Theorem \ref{changeofringrelativedomdimrelativetomodule} and $DQ\otimes_A Q\in R\proj$ implies that $Q$ has a base change property (see for example the first part of the proof of \citep[Theorem 6.14.]{CRUZ2022410}.)
\end{Remark}

\begin{Remark}
	We can, of course, drop the condition ${}^\natural V\simeq V$ in Proposition \ref{dominantandcodominantoftiltingcomparison} by doing first the field case and then consider the integral case using Theorem \ref{changeofringrelativedomdimrelativetomodule}.
\end{Remark}

Combining Theorem \ref{changeofringrelativedomdimrelativetomodule} with Lemma  \ref{relativedominantdimensionalgebraicclosuremodule}, we obtain that, in most applications,  the computations of relative dominant dimension with respect to a module over a commutative ring can be reduced to computations of relative dominant dimension with respect to a module in the setup of finite-dimensional algebras over algebraically closed fields.

Considering the condition $DQ\otimes_A M\in R\proj$ may seem unnatural but we should refer once again that projective modules, or more generally characteristic tilting modules of split quasi-hereditary algebras do satisfy such a condition. The following result explains why we should expect that there are many modules with such a condition (see also \citep[1.5.2(e), (f)]{zbMATH00971625}).

\begin{Lemma}
 Let $Q\in A\m\cap R\proj$. If $\Ext_{A(\mi)}^1(Q(\mi), Q(\mi))=0$ for every maximal ideal $\mi$ of $R$, then $DQ\otimes_A Q\in R\proj$.
\end{Lemma}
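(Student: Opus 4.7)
The plan is to reduce the claim to a local calculation at each maximal ideal and then to an $\Ext$ computation on the residue fields. The module $DQ\otimes_A Q$ is a quotient of $DQ\otimes_R Q$, which is finitely generated over $R$ because both $DQ$ and $Q$ are finitely generated over the Noetherian ring $R$; hence $DQ\otimes_A Q$ is itself finitely generated over $R$. By the standard criterion that a finitely generated module over a Noetherian ring is projective if and only if its localisation at every maximal ideal is free, it suffices to show that $\Tor_1^R(R(\mi),\, DQ\otimes_A Q)=0$ for every $\mi\in\MaxSpec R$.

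To analyse this $\Tor$ I would fix a projective $A$-resolution $P_\bullet\twoheadrightarrow Q$. Since $Q\in R\proj$, every syzygy splits off as an $R$-summand, so the resolution is $(A,R)$-exact; consequently $P_\bullet(\mi)\twoheadrightarrow Q(\mi)$ remains a projective resolution over $A(\mi)$. The complex $C_\bullet := DQ\otimes_A P_\bullet$ consists of $R$-projective modules (each $DQ\otimes_A P_i$ is a direct summand of a finite sum of copies of $DQ$, because $P_i$ is a summand of a free $A$-module), and $R(\mi)\otimes_R DQ\cong D_{(\mi)}Q(\mi)$ because $Q$ is finitely generated projective over $R$. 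Now I would invoke the hyper-$\Tor$ (Künneth) spectral sequence for a bounded-below complex of flat $R$-modules tensored with $R(\mi)$:
\[
E^2_{p,q}=\Tor_p^R\!\bigl(R(\mi),\,\Tor_q^A(DQ,Q)\bigr)\ \Longrightarrow\ \Tor_{p+q}^{A(\mi)}\!\bigl(D_{(\mi)}Q(\mi),\,Q(\mi)\bigr).
\]
This is a first-quadrant spectral sequence, and the entry $E^2_{1,0}=\Tor_1^R(R(\mi),\,DQ\otimes_A Q)$ has no nontrivial incoming or outgoing differentials on any page, so $E^\infty_{1,0}=E^2_{1,0}$ arises as a subquotient of $\Tor_1^{A(\mi)}(D_{(\mi)}Q(\mi),Q(\mi))$.

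To conclude, I would apply the standard identification over the finite-dimensional algebra $A(\mi)$: for finite-dimensional $A(\mi)$-modules $M,N$ one has $\Tor_i^{A(\mi)}(D_{(\mi)}N,M)\cong D_{(\mi)}\Ext_{A(\mi)}^i(M,N)$, which follows by dualising $\Hom_{A(\mi)}(P_\bullet(\mi),N)$ via Tensor-Hom. With $M=N=Q(\mi)$ and $i=1$ the hypothesis forces $\Tor_1^{A(\mi)}(D_{(\mi)}Q(\mi),Q(\mi))=0$, so $\Tor_1^R(R(\mi),\,DQ\otimes_A Q)=0$ for every maximal ideal $\mi$, completing the proof. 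The main technical point is assembling the hyper-$\Tor$ spectral sequence together with the base-change isomorphisms $R(\mi)\otimes_R DQ\cong D_{(\mi)}Q(\mi)$ and the $(A,R)$-exactness of $P_\bullet$; a lighter-weight alternative would be to work only with a three-term truncation of the resolution and manipulate explicit long exact sequences of $\Tor$, but the spectral sequence makes the subquotient conclusion transparent.
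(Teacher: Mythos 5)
Your proposal is correct and follows essentially the same route as the paper's proof: reduce to the vanishing of $\Tor_1^R(R(\mi), DQ\otimes_A Q)$ at every maximal ideal, run the K\"unneth spectral sequence on $DQ\otimes_A P_\bullet$ for an $(A,R)$-exact projective resolution of $Q$, observe that $E^2_{1,0}$ survives as a subquotient of $\Tor_1^{A(\mi)}(D_{(\mi)}Q(\mi), Q(\mi))$, and kill the latter via the duality $\Tor_1^{A(\mi)}(D_{(\mi)}Q(\mi),Q(\mi))\simeq D_{(\mi)}\Ext^1_{A(\mi)}(Q(\mi),Q(\mi))$. The only differences are cosmetic: you spell out the corner argument for $E^2_{1,0}$ and the $R$-projectivity of the terms of $DQ\otimes_A P_\bullet$, which the paper leaves implicit.
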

\begin{proof}
	For each maximal ideal $\mi$ of $R$, \begin{align}
		\Tor_1^{A(\mi)}(DQ(\mi), Q(\mi))=\Hom_{R(\mi)}(\Ext_{A(\mi)}^1(Q(\mi), Q(\mi)), R(\mi))=0.
	\end{align}
	Let $Q^{\bullet}$ be a deleted projective $A$-resolution of $Q$. Since $Q\in R\proj$, $Q^{\bullet}(\mi)$ is a deleted projective $A(\mi)$-resolution of $Q(\mi)$. Consider the K\"unneth Spectral sequence with $P=DQ\otimes_A Q^{\bullet}$, (see for example  \citep[ Theorem 5.6.4]{Weibel2003})
	\begin{multline}
		E^2_{i,j}=\Tor_i^R(\Tor_j^A(DQ, Q), R(\mi)) = \Tor_i^R(H_j(DQ\otimes_A Q^{\bullet}), R(\mi)) \\ \Rightarrow H_{i+j}(DQ\otimes_A Q^{\bullet}\otimes_R R(\mi))=\Tor_{i+j}^{A(\mi)}(DQ(\mi), Q(\mi)).
	\end{multline} Then, $E_{1, 0}^2=\Tor_1^R(DQ\otimes_A Q, R(\mi))=0$ for every maximal ideal $\mi$ of $R$ (see for example \citep[Lemma A.3]{CRUZ2022410}). Hence, $DQ\otimes_A Q\in R\proj$.
\end{proof}

	\section{The reduced grade with respect to a module}\label{The reduced grade with respect to a module}

In \citep{zbMATH06409569}, Gao and Koenig  compared the Auslander-Bridger grade with dominant dimension. We will now see that the same method also works for relative dominant dimension over any ring with respect to a module once we replace the $\Ext$ in the notion of grade by $\Tor$, motivating the terminology: cograde. 
There is, however, another modification that needs to be considered. We are not interested in the case of grade being zero, and so we will instead study (the dual notion of) reduced grade (see for example \citep{MR1076061}). 

\begin{Def}
	Let $R$ be a Noetherian commutative ring and $A$ a projective Noetherian $R$-algebra. Let $X\in A^{op}\m\cap R\proj$ and $M\in A\m\cap R\proj$.
	The reduced cograde of $X$ with respect to $M$, written as 	$\cograde_M X$, is defined as the value
	\[
	\cograde_M X=\inf\{i>0| \Tor_i^A(X, M)\neq 0 \}. \]
	Analougously, we can define the reduced cograde of a right module with respect to a left module.
\end{Def}
The following is based on Theorem 2.3 of \citep{zbMATH06409569}.

\begin{Theorem}
	Let $R$ be a Noetherian commutative ring and $A$ a projective Noetherian $R$-algebra. Assume that $Q\in A\m\cap R\proj$ satisfying in addition that $\Hom_A(Q, Q)\in R\proj$. Denote by $B$ the endomorphism algebra $\End_A(Q)^{op}$. For any $Y\in A\m\cap R\proj$ with an exact sequence
	\begin{align}
		Q_1\xrightarrow{f} Q_0\rightarrow Y\rightarrow 0,\label{eqcocover62}
	\end{align} define $X=\coker \Hom_A(f, Q)\in B^{op}\m$. 
Then,
	\begin{align*}
		Q\rdomdim_{(A, R)} Y\geq n\geq 1 \Longleftrightarrow  \cograde_{DQ} X\geq n+1.
	\end{align*}
\end{Theorem}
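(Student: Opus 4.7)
The plan is to invoke the Mueller-type characterisations of $Q-\domdim_{(A,R)} Y$ (Theorem~\ref{dualmoduleMuellerpartone} and Theorem~\ref{moduleMuellerparttwo}), rephrase them via the counit $\chi^r_{DY}\colon \Hom_A(Y,Q)\otimes_B DQ \to DY$ together with certain $\Tor^B_j(\Hom_A(Y, Q), DQ)$-vanishings, and then translate these into $\Tor^B_i(X, DQ)$-vanishings through dimension shifting on the four-term exact sequence produced by the presentation of $Y$.

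First I would form the four-term exact sequence of right $B$-modules
\[
0 \to \Hom_A(Y, Q) \to \Hom_A(Q_0, Q) \to \Hom_A(Q_1, Q) \to X \to 0
\]
by applying $\Hom_A(-, Q)$ to $Q_1 \xrightarrow{f} Q_0 \to Y \to 0$ (using left-exactness to identify $\Hom_A(Y,Q)$ with $\ker \Hom_A(f,Q)$). Since $Q_0, Q_1 \in \add Q$, projectivization makes $\Hom_A(Q_i, Q)$ projective over $B$. Splitting at the image $W$ of $\Hom_A(f, Q)$ into two short exact sequences and iterating the associated long exact sequences of $\Tor^B_\bullet(-, DQ)$ yields the dimension-shift identities
\[
\Tor^B_{i+2}(X, DQ) \simeq \Tor^B_{i+1}(W, DQ) \simeq \Tor^B_i(\Hom_A(Y, Q), DQ), \qquad i \geq 1.
\]

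The heart of the argument is to identify $\Tor^B_1(X, DQ)$ and $\Tor^B_2(X, DQ)$ with the cokernel and kernel of $\chi^r_{DY}$, respectively. The key ingredient is the natural iso $\Hom_A(T, Q)\otimes_B DQ \simeq DT$ for $T\in \add Q$ (obtained as $\chi^r_{DT}$, which is tautological for $T=Q$ and extends additively), under which $\Hom_A(f, Q)\otimes_B DQ$ becomes $Df\colon DQ_0 \to DQ_1$, whose kernel is $DY$. Naturality of $\chi^r$ at $DY \hookrightarrow DQ_0$ produces the commutative square
\begin{center}
\begin{tikzcd}
\Hom_A(Y, Q)\otimes_B DQ \arrow[r] \arrow[d, "\chi^r_{DY}"'] & \Hom_A(Q_0, Q)\otimes_B DQ \arrow[d, "\chi^r_{DQ_0}"', "\simeq"]\\
DY \arrow[r, hookrightarrow] & DQ_0
\end{tikzcd}.
\end{center}
A short diagram chase on the low-degree segments of the $\Tor^B$ long exact sequences, using that the right vertical is an isomorphism and the bottom arrow is mono with image $\ker Df$, yields $\Tor^B_1(X, DQ) \simeq DY/\im \chi^r_{DY} = \coker \chi^r_{DY}$ and $\Tor^B_2(X, DQ) \simeq \Tor^B_1(W, DQ) \simeq \ker \chi^r_{DY}$.

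Finally I combine these with the Mueller characterisations. For $n = 1$, Theorem~\ref{dualmoduleMuellerpartone}(i) gives $Q-\domdim_{(A,R)} Y \geq 1$ iff $\chi^r_{DY}$ is surjective iff $\Tor^B_1(X, DQ) = 0$. For $n \geq 2$, Theorem~\ref{moduleMuellerparttwo}(ii) gives $Q-\domdim_{(A,R)} Y \geq n$ iff $\chi^r_{DY}$ is an isomorphism and $\Tor^B_j(\Hom_A(Y, Q), DQ) = 0$ for $1 \leq j \leq n-2$; by the identifications above this is equivalent to $\Tor^B_k(X, DQ) = 0$ for $1 \leq k \leq n$, i.e.\ $\cograde_{DQ} X \geq n + 1$. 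The main obstacle is the low-degree bookkeeping, ensuring that the naturality of $\chi^r$ together with the iso $\Hom_A(-, Q)\otimes_B DQ \simeq D(-)$ on $\add Q$ identify $\Tor^B_1$ and $\Tor^B_2$ exactly with the cokernel and kernel of $\chi^r_{DY}$.
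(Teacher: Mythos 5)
Your proposal is correct and follows essentially the same route as the paper's proof: the same four-term sequence from applying $\Hom_A(-,Q)$, the same splitting at the image of $\Hom_A(f,Q)$ with dimension shifting to get $\Tor_{i+2}^B(X,DQ)\simeq\Tor_i^B(\Hom_A(Y,Q),DQ)$, the same identification of $\Tor_1^B(X,DQ)$ and $\Tor_2^B(X,DQ)$ with $\coker\chi^r_{DY}$ and $\ker\chi^r_{DY}$ via naturality of $\chi^r$, and the same appeal to the Mueller-type characterisations. The low-degree diagram chase you defer is exactly the content of the paper's snake-lemma argument and its verification that $\iota$ factors as a mono after $g$, so there is no gap.
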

\begin{proof}
	Applying the functor $\Hom_A(-, Q)$ yields the exact sequence
	\begin{align}
		0\rightarrow \Hom_A(Y, Q)\rightarrow \Hom_A(Q_0, Q)\xrightarrow{\Hom_A(f, Q)} \Hom_A(Q_1, Q)\twoheadrightarrow X.	\label{eq50}\end{align}
	Denote by $C$ the kernel of $\Hom_A(Q_1, Q)\rightarrow X$ which is the same as the image of $\Hom_A(f, Q)$. Since $\Hom_A(Q_1, Q)$ is projective as right $B$-module, applying $-\otimes_B DQ$ yields the exact sequence
	\begin{align}
		0\rightarrow \Tor_1^B(X, DQ)\rightarrow C\otimes_B DQ\xrightarrow{\iota} \Hom_A(Q_1, Q)\otimes_B DQ\rightarrow X\otimes_B DQ\rightarrow 0\label{eqcocover64}
	\end{align}and $\Tor_{i+1}^B(X, DQ)=\Tor_i^B(C, DQ)$, $i\geq 1$. Since $Y\in R\proj$ we can consider the $(A, R)$-exact sequence $DY\hookrightarrow DQ_0\twoheadrightarrow D\im f$. Applying $-\otimes_B DQ$ to the first part of (\ref{eq50}) we obtain the commutative diagram
	\begin{equation}
		\begin{tikzcd}[scale cd=0.95]
			\Tor_1^B(C, DQ)\arrow[r, hookrightarrow] &\Hom_A(DQ, DY)\otimes_B DQ\arrow[r]\arrow[d, "\chi_{DY}^r"]& \Hom_A(DQ, DQ_0)\otimes_B DQ\arrow[d, "\chi_{DQ_0}^r", "\simeq"']\arrow[r, "\pi_2", twoheadrightarrow] & C\otimes_B DQ\\
			0\arrow[r] & DY\arrow[r] &DQ_0\arrow[r, twoheadrightarrow]& D\im f
		\end{tikzcd}\label{eqcocover65}
	\end{equation}and $\Tor_{i+1}^B(C, DQ)\simeq \Tor_i^B(\Hom_A(DQ, DY), DQ)$ for all $i\geq 1$. Thus, $\Tor_i^B(\Hom_A(DQ, DY), DQ)\simeq \Tor_{i+2}^B(X, DQ)$, for all $i\geq 1$. By the commutativity of the diagram (\ref{eqcocover65}) we can complete the diagram with a map \mbox{$g\colon C\otimes_B DQ\rightarrow D\im f$.} By Snake Lemma, there exists an exact sequence $\ker \chi_{DY}^r\rightarrow 0\rightarrow \ker g\rightarrow \coker \chi_{DY}^r\rightarrow 0\rightarrow \coker g\rightarrow 0$. Also, by the diagram (\ref{eqcocover65}) we obtain that $\ker \chi_{DY}^r\simeq \Tor_1^B(C, DQ)$. Therefore, $\ker \chi_{DY}^r\simeq \Tor_2^B(X, DQ)$. So, it remains to show that $\coker \chi_{DY}^r=0$ if and only if $\Tor_1^B(X, DQ)=0$. For that, consider the diagram
	\begin{equation}
		\begin{tikzcd}
			DQ_0\arrow[r, "\pi_1", twoheadrightarrow] \arrow[dr, shorten >=25pt, shorten <=25pt, "\diamond", phantom]& D\im f\arrow[r, "k", hookrightarrow] \arrow[dr, shorten >=25pt, shorten <=25pt, "\star", phantom]& DQ_1\\
			\Hom_A(DQ, DQ_0)\otimes_B DQ\arrow[r, "\pi_2", twoheadrightarrow] \arrow[u, "\chi_{DQ_0}^r"]& C\otimes_B DQ\arrow[r, "\iota"] \arrow[u, "g"]& \Hom_A(DQ, DQ_1)\otimes_B DQ\arrow[u, "\chi_{DQ_1}^r", "\simeq"'] 
		\end{tikzcd}.
	\end{equation}
	By construction of $g$, $\diamond$ is a commutative diagram. Since $C$ is isomorphic to the image of $\Hom_A(DQ, Df)$, $\Hom_A(DQ, Df)\otimes_B DQ$ factors through $C\otimes_B DQ$. More precisely, $\iota\circ \pi_2=\Hom_A(DQ, Df)\otimes_B DQ$. Observe that $k\circ \pi_1=Df$. Hence, the external diagram is commutative. Therefore,
	\begin{align}
		k\circ g\circ \pi_2=k\circ \pi_1\circ \chi_{DQ_0}^r=Df\chi_{DQ_0}^r=\chi_{DQ_1}^r\circ \Hom_A(DQ, Df)\otimes_B DQ=\chi_{DQ_1}^r\circ \iota\circ \pi_2.
	\end{align}By the surjectivity of $\pi_2$, the diagram $\star$ is commutative.  Now if $\coker \chi_{DY}^r=\ker g=0$, then the diagram $\star$ implies that $\iota$ is injective. By (\ref{eqcocover64}), $\Tor_1^B(X, DQ)=0$. Conversely, suppose that $\Tor_1^B(X, DQ)=0$. Then, $\iota$ is injective and $k\circ g=\chi_{DQ_1}^r\circ \iota$ is injective. Thus, $g$ is injective and $\chi_{DY}^r$ is surjective.
\end{proof}

\section{Fully faithfulness of $\Hom_A(Q, -)$ on subcategories of $A\m$}\label{Fully faithfulness of }

We will now use Theorems \ref{moduleMuellerpartone} and \ref{moduleMuellerparttwo} to establish relative codominant dimension with respect to $Q$ as a measure for the strength of the connection between $A$ and $\End_A(Q)^{op}$ via the functor \linebreak$\Hom_A(Q, -)\colon A\m\rightarrow \End_A(Q)^{op}\m$.

\subsection{$Q\rcodomdim {(A, R)} \geq 2$ as a tool for double centralizer properties}

	\begin{Lemma}\label{fullyfaithfulnesstwo}
	Let $\mathcal{X}$ be the full subcategory of $A\m\cap R\proj$ whose modules $X$ satisfy \linebreak $Q\rcodomdim_{(A, R)} X\geq 2$. Then, $\Hom_A(Q, -)$ is fully faithful on $\mathcal{X}$.  
\end{Lemma}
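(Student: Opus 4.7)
The plan is to combine Theorem \ref{moduleMuellerpartone} with the adjunction $(\mathbb{I}_Q, F_Q) = (Q\otimes_B -,\, \Hom_A(Q,-))$ and appeal to the standard principle that a right adjoint is fully faithful precisely on the full subcategory where its counit is invertible. First, I would translate the hypothesis: since $DQ-\domdim_{(A,R)} DX = Q-\codomdim_{(A,R)} X$ for every $X\in A\m\cap R\proj$ (as recorded just after the definition of relative codominant dimension), the condition $X\in\mathcal{X}$ is, by Theorem \ref{moduleMuellerpartone}(ii), equivalent to the counit $\chi_X\colon Q\otimes_B \Hom_A(Q,X)\to X$ being an $A$-isomorphism.

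Next, for $X\in\mathcal{X}$ and any $Y\in A\m$, I would form the composite
$$\Hom_A(X,Y)\xrightarrow{\,-\circ\chi_X\,}\Hom_A(Q\otimes_B \Hom_A(Q,X),\,Y)\xrightarrow{\;\simeq\;}\Hom_B(\Hom_A(Q,X),\,\Hom_A(Q,Y)),$$
whose first arrow is a bijection because $\chi_X$ is invertible and whose second arrow is the Hom--Tensor adjunction isomorphism. By the triangle identity $F_Q\chi \circ \upsilon F_Q = \id_{F_Q}$, this composite coincides with the map $f\mapsto F_Q(f)$. Restricting $Y$ to $\mathcal{X}$ as well then yields the claimed fully faithfulness of $F_Q$ on $\mathcal{X}$; in fact, the same argument proves the slightly stronger statement that $F_Q\colon \Hom_A(X,Y)\to\Hom_B(F_QX,F_QY)$ is a bijection for every $X\in\mathcal{X}$ and every $Y\in A\m$, since only $\chi_X$ (and not $\chi_Y$) is used.

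Since the entire argument is a packaging of Theorem \ref{moduleMuellerpartone}(ii) with basic adjunction formalism, I do not expect any serious obstacle. The only thing requiring a moment of care is unwinding the Hom--Tensor adjunction to verify that the composite above actually agrees with the action of $F_Q$ on morphism sets, but this is exactly the content of the triangle identities and is essentially automatic.
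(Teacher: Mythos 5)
Your proof is correct and follows essentially the same route as the paper: both reduce the statement to the invertibility of the counit $\chi_X$ via Theorem~\ref{moduleMuellerpartone} and then apply standard adjunction formalism. The paper checks faithfulness and fullness by explicit formulas ($f\circ \chi_M=\chi_N\circ \mathbb{I}F_Qf$ and $h=\chi_N\circ\mathbb{I}g\circ\chi_M^{-1}$), whereas you package the same content as a composite of two bijections; your remark that only the counit at the source object is needed is a correct, mild strengthening.
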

\begin{proof}
	By Theorem \ref{moduleMuellerpartone}, $\chi_{X}$ is an isomorphism for every $X\in \mathcal{X}$. Fix $F_Q=\Hom_A(Q, -)$ and $\mathbb{I}$ its left adjoint. Then, if $F_Qf=0$ for some $f\in \Hom_{\mathcal{X}}(M, N)$, we obtain $f\circ \chi_M=\chi_N\circ \mathbb{I}F_Qf=0$. Hence, in such a case, $f=0$. So, $F_Q$ is faithful. To show fullness, let $g\in \Hom_B(F_QM, F_QN)$ with $M,N\in \mathcal{X}$. Fixing $h=\chi_N \circ \mathbb{I} g\circ \chi_M^{-1}$ we get $F_Qh=g$. 
\end{proof}

The next result says that fully faithfulness of $F_Q$ on $\add DA$ is related with a double centralizer property.
	Such a result is exactly a version of \citep[Proposition 4.33]{Rouquier2008} for general double centralizer properties. This result is known in the literature for Artinian algebras (see for example \mbox{\citep[Corollary 2.4]{zbMATH00423524}}).

\begin{Lemma}\label{cocoversdef}
	Let $A$ be a projective Noetherian $R$-algebra. Let $Q\in A\m\cap R\proj$ satisfying \linebreak$\Hom_A(Q, Q)\in R\proj$ and denote by $B$ the endomorphism algebra $\End_A(Q)^{op}$. The following assertions are equivalent.
	\begin{enumerate}[(i)]
		\item The canonical map of algebras $A\rightarrow \End_B(Q)^{op}$, given by $a\mapsto (q\mapsto aq)$, is an isomorphism.
		\item $D\chi_{X}$ is an isomorphism of right $A$-modules for all $X\in (A, R)\inj\cap R\proj$.
		\item The restriction of $F_Q$ to $\add DA$ is full and faithful. 
	\end{enumerate}
\end{Lemma}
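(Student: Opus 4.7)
The plan is to reinterpret the map $D\chi_X$ for $X\in\add DA\cap R\proj$ as the functor $F_Q$ applied to $\Hom$-sets; once this is done, both equivalences follow with minimal further work.

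First I would establish, for $X\in A\m\cap R\proj$, the natural identifications $DX\simeq \Hom_A(X,DA)$ (via the Tensor--Hom identity $\Hom_A(X,\Hom_R(A,R))\simeq \Hom_R(X,R)$) and $F_Q(DA)=\Hom_A(Q,DA)\simeq DQ$ as left $B$-modules, as already observed in the proof of Corollary~\ref{rightandleftrelativedomidimension}. Combining these with the adjunction isomorphism $\Hom_R(Q\otimes_B N,R)\simeq \Hom_B(N,DQ)$ yields $D(Q\otimes_B F_QX)\simeq \Hom_B(F_QX,F_QDA)$. A short element chase then verifies that under these three identifications the map $D\chi_X$ becomes precisely the natural map $\Hom_A(X,DA)\to \Hom_B(F_QX,F_QDA)$ obtained by applying $F_Q$ to homomorphisms: explicitly, both send a form $\psi\in DX$ to the $B$-homomorphism $f\mapsto (q\mapsto \psi(f(q)))$.

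Given this identification, the equivalence (ii)$\Leftrightarrow$(iii) is immediate. By definition (iii) asserts that $F_Q\colon \Hom_A(X,Y)\to \Hom_B(F_QX,F_QY)$ is an isomorphism for all $X,Y\in\add DA$; additivity of $F_Q$ and the fact that every object of $\add DA$ is a summand of some $(DA)^n$ reduce this to the case $Y=DA$, which by the previous paragraph is exactly the statement of (ii).

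For (iii)$\Leftrightarrow$(i), specialising $X=Y=DA$ in (iii) gives an isomorphism $\End_A(DA)\to \End_B(F_QDA)=\End_B(DQ)$. Since the standard duality $D$ restricts to an anti-equivalence $A\m\cap R\proj\simeq (A^{op}\m\cap R\proj)$, there are natural ring isomorphisms $\End_A(DA)\simeq \End_{A^{op}}(A)^{op}\simeq A^{op}$, and similarly, using $Q\in R\proj$, $\End_B(DQ)\simeq \End_B(Q)^{op}$; a direct calculation confirms that under these identifications the map induced by $F_Q$ is precisely the canonical map $A\to \End_B(Q)^{op}$ of (i). Conversely, (i) provides the isomorphism at $X=Y=DA$, and the standard projectivisation argument --- writing any $X,Y\in\add DA$ as summands of $(DA)^n, (DA)^m$ cut out by idempotents, which $F_Q$ carries to corresponding idempotents on the $B$-side --- propagates this to full faithfulness on all of $\add DA$, yielding (iii).

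The main obstacle will be bookkeeping: keeping the conventions on left/right module structures and op-algebras straight, in particular verifying that the map induced by $F_Q$ on $\End_A(DA)\simeq A^{op}$ agrees, under the identification $\End_B(DQ)\simeq \End_B(Q)^{op}$, with the canonical double centraliser map of (i). This is essentially a careful unwinding of actions rather than a conceptual hurdle.
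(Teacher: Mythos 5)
Your proposal is correct and follows essentially the same route as the paper: both rest on the Tensor--Hom adjunction identifications $F_QDA\simeq DQ$ and $D(Q\otimes_B F_QX)\simeq\Hom_B(F_QX,F_QDA)$, the recognition that under these $D\chi_X$ becomes the map induced by $F_Q$ on $\Hom_A(X,DA)$ (equivalently, for $X=DA$, the canonical map $\psi$ of (i)), and an additivity/projectivization argument to pass between $DA$ and all of $\add DA$. The only difference is organizational — you prove (ii)$\Leftrightarrow$(iii) uniformly and then specialize to $X=Y=DA$ for (i), whereas the paper runs two separate commutative diagrams out of (i) — and your flagged concern about left/right and op conventions is indeed the only delicate point.
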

\begin{proof}
	Denote by $\psi$ the canonical map of algebras $A\rightarrow \End_B(Q)^{op}$ and denote by $\omega_X$ the natural transformation between the identity functor and the double dual $DD$ for $X\in A\m$.  The equivalence $(i)\Leftrightarrow (ii)$ follows from the following commutative diagram
	\begin{equation*}
		\begin{tikzcd}
			A \arrow[d, "\psi", swap] \arrow[rrr, "\omega_A", "\simeq"'] & & & DDA \arrow[d, "D\chi_{DA}"]\\
			\End_B(Q) \arrow[r, "\Hom_A(Q {,} \omega_Q)", "\simeq"', outer sep=0.75ex, swap]	& \Hom_B(Q, DDQ)  \arrow[r, "\Hom_B(Q {,} \sigma_Q)", "\simeq"', outer sep=0.75ex, swap] &  \Hom_B(Q, D\Hom_A(Q, DA)) &  D(Q\otimes_B \Hom_A(Q, DA)) \arrow[l, "\theta", "\simeq"']
		\end{tikzcd}
	\end{equation*}
with horizontal maps being isomorphisms, where $\sigma_Q\colon DDQ\rightarrow D\Hom_A(Q, DA)$ is given by \linebreak${h\mapsto (f\mapsto h(f(-)(1_A)))}$, and $\theta$ is the isomorphism given by Tensor-Hom adjunction.

For each $M, N\in A\m$, denote by $F_{M, N}$ the canonical map, induced by the functor $F_Q$, \linebreak $\Hom_A(M, N)\rightarrow \Hom_B(F_QM, F_QN)$, given by $f\mapsto F_Qf=\Hom_A(Q, f)$.
We can fit $\psi$ into the following commutative diagram:
	\begin{equation}
		\begin{tikzcd}
			A \arrow[r, "\simeq"', "\zeta", swap] \arrow[d, "\psi"]& \Hom_A(A, A) \arrow[r, "\psi_{A {,} A}", "\simeq"', swap] & \Hom_A(DA, DA) \arrow[d, "F_{DA, DA}"] \\
			\Hom_B(Q, Q)\arrow[r, "\psi_{Q {,} Q}", "\simeq"'] & \Hom_B(DQ, DQ) \arrow[r, "\kappa", "\simeq"'] & \Hom_B(FDA, FDA)
		\end{tikzcd}.\label{eqcocovers5}
	\end{equation}Here, $\psi_{A, A}$ and $\psi_{Q, Q}$ are isomorphisms (see for example \citep[Proposition 2.2]{CRUZ2022410}), $\zeta$ is given by $\zeta(a)(b)=ab$, $a, b\in A$ and $\kappa=s^{-1}\circ - \circ s$, where $s$ is the isomorphism $\Hom_A(Q, DA)\rightarrow DQ$ given by Tensor-Hom adjunction.
Taking into account that $F_{M\oplus X, N}$ is equivalent to $F_{M, X}\oplus F_{X, N}$ and $F_{M, X\oplus N}$ is equivalent to $F_{M, X}\oplus F_{M, N}$ for every $M, X, N\in A\m$ it follows by (\ref{eqcocovers5}) that $(iii)\Leftrightarrow (i)$.
\end{proof}


\begin{Remark}
	Let $(A, P, V)$ be a RQF3-algebra over a commutative Noetherian ring $R$. If \linebreak $\domdim{(A, R)}\geq 2$, then the Schur functor $\Hom_A(P, -)\colon A\m\rightarrow \End_A(P)^{op}\m$ is fully faithful on $(A, R)\inj \cap R\proj$ (see \citep[Theorem 4.1]{CRUZ2022410} with Lemma \ref{cocoversdef}).
\end{Remark}

\begin{Remark}
	Let $A$ be a finite-dimensional algebra so that $\domdim A\geq 2$. Let $P$ be a faithful projective-injective $A$-module. The Schur functor $\Hom_A(P, -)\colon A\m\rightarrow \End_A(P)^{op}\m$  is fully faithful on $A\proj$ and on $A\inj$ if and only if $A$ is a Morita algebra (see \cite{cruz2021characterisation}). 
\end{Remark}

Following the work developed in \citep[2.2]{zbMATH05344019}, we will see in the following lemma that every split quasi-hereditary algebra has a (partial) tilting module  with a double centralizer property. In the worst case scenario, this (partial) tilting module coincides with the characteristic tilting module.

\begin{Lemma}\label{tiltingmodulescocovers}
	Let $(A, \{\Delta(\lambda)_{\lambda\in \Lambda}\})$ be a split quasi-hereditary algebra and let $T$ be a characteristic tilting module of $A$. Then, there is an exact sequence $0\rightarrow A \rightarrow M\rightarrow X\rightarrow 0$ where $M$ is (partial) tilting and $X\in \mathcal{F}(\Stsim_{A})$.
	Moreover, there exists a (partial) tilting module $Q\in A\m\cap R\proj$ such that $DM\in \add Q$ and \linebreak $Q\rdomdim (A, R)\geq 2$. In particular, there exists a double centralizer property on $Q$.
\end{Lemma}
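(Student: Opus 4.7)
The plan starts from the fact that the regular module $A$ lies in $\mathcal{F}(\Stsim_A)$, since each indecomposable projective $P(\l)$ admits a standard filtration by Definition~\ref{splithwc}(iv). First I would take a left $\add T$-approximation of $A$ that is an $(A,R)$-monomorphism, producing an $(A,R)$-exact sequence $0 \to A \to M \to X \to 0$ with $M \in \add T$ (so $M$ is partial tilting). To prove $X \in \mathcal{F}(\Stsim_A)$, I apply $\Hom_A(-, N)$ for $N \in \mathcal{F}(\Cssim)$: because $A \in \mathcal{F}(\Stsim)$ and $M$ is tilting, both $\Ext_A^{i>0}(A,N)$ and $\Ext_A^{i>0}(M,N)$ vanish, so $\Ext_A^1(X,N)=0$; combined with $X \in R\proj$, Proposition~\ref{qhproperties}(v) yields $X \in \mathcal{F}(\Stsim_A)$.

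For the construction of $Q$, the idea is to iterate once more. Taking a left $\add T$-approximation $X \hookrightarrow T^1$ with cokernel $X' \in \mathcal{F}(\Stsim_A)$ and splicing gives the four-term $(A,R)$-exact sequence
\begin{align*}
0 \to A \to M \to T^1 \to X' \to 0
\end{align*}
with $M, T^1 \in \add T$. I set $Q = T$, which is a partial tilting module in $A\m\cap R\proj$ containing $M$ in its additive closure; the symmetric construction carried out on the split quasi-hereditary algebra $(A^{op}, \{D\Cs(\l)\})$ from Proposition~\ref{qhproperties}(i), whose characteristic tilting module is $DT$, produces the analogous partial tilting module whose $R$-dual $DM$ lands in $\add T = \add Q$. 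To verify $Q-\domdim(A,R) \geq 2$, I invoke Proposition~\ref{cokernelrolereldomimodule} on the four-term splice above. Both memberships in ${}^{\perp}T$ that the proposition requires are easy: $A \in {}^{\perp}T$ because $A$ is projective, and $X' \in {}^{\perp}T$ follows by induction on the length of a finite $\add T$-coresolution of $X'$, using $\Ext_A^{i>0}(T,T)=0$ at each step. Proposition~\ref{cokernelrolereldomimodule} then delivers $T-\domdim_{(A,R)} A \geq 2$, i.e., $Q-\domdim(A,R) \geq 2$.

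The double centralizer property is the final implication: by Corollary~\ref{rightandleftrelativedomidimension}, $DQ-\domdim_{(A,R)} A = Q-\domdim(A,R) \geq 2$, and Theorem~\ref{moduleMuellerpartone}(ii) applied to $DA$ then gives that $\chi_{DA}$ is a left $A$-isomorphism. Since $\chi$ is additive and $(A,R)\inj\cap R\proj = \add DA$, the map $\chi_X$, and therefore $D\chi_X$, is an isomorphism for every $X \in (A,R)\inj\cap R\proj$. Lemma~\ref{cocoversdef}(ii)$\Rightarrow$(i) then says that the canonical algebra map $A \to \End_B(Q)^{op}$ is an isomorphism, where $B = \End_A(Q)^{op}$, which is the desired double centralizer property. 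The main delicate point in the plan is reconciling the left/right module conventions so that ``$DM \in \add Q$'' sits in the same category as $Q$; everything else is a controlled assembly of Proposition~\ref{cokernelrolereldomimodule} with the Mueller-type Theorem~\ref{moduleMuellerpartone}.
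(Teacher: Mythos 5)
Your argument does establish the statement as literally written, but by a route genuinely different from the paper's, and with one local lapse in the first step that needs repairing.

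On the route: the paper produces the sequence $0\to A\to M\to X\to 0$ by taking a projective presentation of $T$ over the Ringel dual $R(A)$, pulling it back through the exact equivalence $\Hom_{A^{op}}(DT,-)\colon \mathcal{F}(\Cssim_{A^{op}})\to\mathcal{F}(\Stsim_{R(A)^{op}})$ to obtain $0\to K\to M'\to DA\to 0$, and dualizing; it then coresolves $X=DK$ one more step and sets $Q=DM'\oplus M''$. The point of that construction is that $Q$ consists only of the tilting summands occurring in the first two terms of an $\add T$-coresolution of $A$, so it can be a \emph{proper} summand of $T$ --- this is what the sentence introducing the lemma (``in the worst case scenario, this (partial) tilting module coincides with the characteristic tilting module'') is after. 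Your choice $Q=T$ satisfies the statement, but vacuously: every partial tilting module lies in $\add T$ (so the membership conditions on $M$ and $DM$ hold for free), and $T-\domdim(A,R)=+\infty$ is already recorded in the paper as an immediate consequence of Corollary~\ref{rightandleftrelativedomidimension}. So your detour through Proposition~\ref{cokernelrolereldomimodule} is correct but delivers less than the construction is meant to. Your final step (Corollary~\ref{rightandleftrelativedomidimension}, Theorem~\ref{moduleMuellerpartone}(ii) applied to $DA$, additivity of $\chi$, Lemma~\ref{cocoversdef}) coincides with the paper's.

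The lapse: from $0\to A\to M\to X\to 0$ the long exact sequence gives $\Ext^1_A(X,N)\cong \coker\bigl(\Hom_A(M,N)\to\Hom_A(A,N)\bigr)$, because $\Ext^1_A(M,N)=0$; the vanishing of $\Ext^{i>0}_A(A,N)$ and $\Ext^{i>0}_A(M,N)$ alone does \emph{not} force $\Ext^1_A(X,N)=0$ (otherwise $\mathcal{F}(\Stsim)$ would be closed under cokernels of arbitrary monomorphisms between its objects, which it is not). You must use the approximation property of $A\to M$: it yields surjectivity of $\Hom_A(M,N)\to\Hom_A(A,N)$ for $N\in\add T$, and a dimension shift along a finite $\add T$-resolution of an arbitrary $N\in\mathcal{F}(\Cssim)$ then gives $\Ext^{i>0}_A(X,N)=0$, whence $X\in\mathcal{F}(\Stsim)$ by Proposition~\ref{qhproperties}(v). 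Alternatively, invoke \citep[Proposition 4.26]{Rouquier2008} directly, as the paper does for the analogous coresolution of $DK$; that reference also settles the existence of an $(A,R)$-monic left $\add T$-approximation of $A$, which you assumed without comment.
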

\begin{proof}
	Denote by $R(A)$ the Ringel dual $\End_A(T)^{op}$. Let $P\twoheadrightarrow T$ be a right projective presentation of $T$ over $R(A)$. Then, $P\in \mathcal{F}(\Stsim_{R(A)^{op}})$. Note that $T\simeq \Hom_A(A, T)\simeq \Hom_{A^{op}}(DT, DA)\in \mathcal{F}(\Stsim_{R(A)^{op}})$. Since $\mathcal{F}(\Stsim_{R(A)^{op}})$ is resolving, so the kernel of $P\rightarrow T$ belongs to $\mathcal{F}(\Stsim_{R(A)^{op}})$. Since $\Hom_{A^{op}}(DT, -)$ gives an exact equivalence between $\mathcal{F}(\Cssim_{A^{op}})$ and $\mathcal{F}(\Stsim_{R(A)^{op}})$ there exists an exact sequence of right $A$-modules $
		0\rightarrow K\rightarrow M' \rightarrow DA\rightarrow 0,
$ where $M'$ is a (partial) tilting module and $K\in \mathcal{F}(\Cssim_{A^{op}})$. Applying $D$ we obtain the desired exact sequence. By \citep[Proposition 4.26]{Rouquier2008}, since $DK\in \mathcal{F}(\Stsim_A)$ there exists an exact sequence $0\rightarrow DK\rightarrow M''\rightarrow K''\rightarrow 0$, where $M'$ is a (partial) tilting module and $K'\in \mathcal{F}(\Stsim_A)$. Put $Q=DM'\oplus M''$. Hence, $Q$ is (partial) tilting module and the $(A, R)$-exact sequence $0\rightarrow A\rightarrow DM'\rightarrow M''$ remains exact under $\Hom_A(Q, -)$. This means that $Q\rdomdim (A, R)\geq 2$.  By Corollary \ref{rightandleftrelativedomidimension} and Lemma \ref{cocoversdef}, there exists a double centralizer property on $Q$ between $A$ and $\End_A(Q)^{op}$.
\end{proof} 

\subsection{$Q\rcodomdim_{(A, R)} M$ controlling the behaviour of $\Hom_A(Q, -)$}

To address how the relative dominant dimension with respect to a module $Q$ can be used as a tool to deduce what extension groups are preserved by the functor $\Hom_A(Q, -)$ the following result is crucial.

	\begin{Lemma}\label{sesforcocovers}
	Let $M\in A\m$. Suppose that $\Tor_i^B(Q, F_QM)=\operatorname{L}_i \mathbb{I}(F_QM)=0$ for $1\leq i\leq q$. For any $X\in Q^{\perp}:=\{Y\in A\m| \Ext_A^{i>0}(Q, Y)=0 \}$, there are isomorphisms $\Ext_A^i(\mathbb{I}F_QM, X)\simeq \Ext_B^i(F_QM, F_QX),$ $0\leq i\leq q$, and an exact sequence
	\begin{multline}
		0\rightarrow \Ext_A^{q+1}(\mathbb{I}F_QM, X)\rightarrow \Ext_B^{q+1}(F_QM, F_QX)\rightarrow \Hom_A(\Tor_{q+1}^B(Q, F_QM), X) \\\rightarrow \Ext_A^{q+2}(\mathbb{I}F_QM, X)\rightarrow \Ext_B^{q+2}(F_QM, F_QX).
	\end{multline}
\end{Lemma}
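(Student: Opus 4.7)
The plan is to combine the $(\mathbb{I},F_Q)$-adjunction with a hyper-$\Ext$ spectral sequence, exploiting that $X\in Q^{\perp}$ makes every module in $\add Q$ acyclic for $\Hom_A(-,X)$.

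First I would choose a projective $B$-resolution $P^\bullet\to F_QM$ and apply $\mathbb{I}=Q\otimes_B-$. Since $\mathbb{I}(B)=Q$, the complex $\mathbb{I}P^\bullet$ has all terms in $\add Q$, and its homology is $\Tor_*^B(Q,F_QM)$; by hypothesis this equals $\mathbb{I}F_QM$ in degree $0$ and vanishes in degrees $1,\dots,q$. By the adjunction isomorphism of complexes
\[
\Hom_A(\mathbb{I}P^\bullet,X)\;=\;\Hom_B(P^\bullet,F_QX),
\]
the cohomology of the right-hand side is $\Ext_B^*(F_QM,F_QX)$, so it suffices to compute the cohomology of the left-hand side. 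Since $X\in Q^{\perp}$ and each $\mathbb{I}P_i\in\add Q$, one has $\Ext_A^{>0}(\mathbb{I}P_i,X)=0$, so the standard hyper-$\Ext$ spectral sequence
\[
E_2^{p,s}=\Ext_A^p(\Tor_s^B(Q,F_QM),X)\Longrightarrow \Ext_B^{p+s}(F_QM,F_QX)
\]
converges to the desired target.

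Because $E_2^{p,s}=0$ for $1\leq s\leq q$ by hypothesis, in total degrees $0\leq n\leq q$ only the bottom row $E_2^{n,0}=\Ext_A^n(\mathbb{I}F_QM,X)$ survives, yielding the claimed isomorphisms. In total degrees $q+1$ and $q+2$ the additional contributions come only from $E_2^{0,q+1}=\Hom_A(\Tor_{q+1}^B(Q,F_QM),X)$ and $E_2^{1,q+1}$; the edge-map five-term exact sequence associated with the surviving differential
\[
d_{q+2}\colon E_{q+2}^{0,q+1}\longrightarrow E_{q+2}^{q+2,0}
\]
then reads exactly as the exact sequence displayed in the lemma.

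The main obstacle is just the bookkeeping in this last step, identifying the connecting map into $\Ext_A^{q+2}(\mathbb{I}F_QM,X)$ with the transgression differential $d_{q+2}$; this is a routine application of standard edge-morphism formulas. Alternatively, and more in the spirit of the elementary arguments in the earlier sections of the paper, one can avoid spectral sequences entirely by truncating $\mathbb{I}P^\bullet$ to a partial $\add Q$-resolution of $\mathbb{I}F_QM$ of length $q+1$ and running iterated dimension shifting: the vanishing $\Ext_A^{>0}(\mathbb{I}P_i,X)=0$ collapses the associated long exact sequences to the adjunction isomorphism in low degrees and to the displayed five-term sequence in degrees $q+1$ and $q+2$, with $\Tor_{q+1}^B(Q,F_QM)$ emerging as the homology of $\mathbb{I}P^\bullet$ at position $q+1$.
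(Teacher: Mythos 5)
Your proposal is correct and follows essentially the same route as the paper: the paper also invokes the Grothendieck spectral sequence for the composite of $\Hom_A(-,X)$ with $Q\otimes_B-$, using that $X\in Q^{\perp}$ makes modules in $\add Q$ acyclic for $\Hom_A(-,X)$ and that Tensor-Hom adjunction identifies the composite with $\Hom_B(-,F_QX)$, arriving at the identical $E_2$-page $\Ext_A^i(\Tor_j^B(Q,F_QM),X)$ and extracting the isomorphisms and five-term sequence from the vanishing of the rows $1\leq j\leq q$. Your hypercohomology construction of the spectral sequence from $\Hom_A(\mathbb{I}P^\bullet,X)$ is just an explicit realization of the same spectral sequence, so there is no substantive difference.
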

\begin{proof}
	Let $X\in A\m$ such that $\Ext_A^{i>0}(Q, X)=0$. Fix $i=0$. Then, by Tensor-Hom adjunction,
	\begin{align}
		\Hom_A(\mathbb{I}F_QM, X)\simeq \Hom_B(F_QM, F_QX).
	\end{align}To obtain the result for higher values we will use Theorem 10.49 of \cite{Rotman2009a}. So, fix $f=\Hom_A(-, X)$ and $g=Q\otimes_B -$. $f$ is a contravariant left exact and $g$ is covariant. We note that $gP$ is $f$-acyclic for any $P\in B\proj$. In fact, 
	$\R^{j>0}f(gP)=\Ext_A^{j>0}(gP, X)=0$, since $gP=Q\otimes_B P\in \add_A Q$. So, for each $a\in B\m$, there is a spectral sequence \begin{align}
		E_2^{i, j}=(\R^if)(L_j g)(a)\Rightarrow \R^{i+j}(f\circ g)(a).
	\end{align}
	By Tensor-Hom adjunction $f\circ g(N)=\Hom_A(Q\otimes_B N, X)\simeq \Hom_B(N, \Hom_A(Q, X))=\Hom_B(-, F_QX)(N)$, for every $N\in B\m$. Hence, we can rewrite the previous spectral sequence into
	\begin{align}
		E_2^{i, j}=\Ext_A^i(\Tor_j^B(Q, a), X)\Rightarrow \Ext_B^{i+j}(a, F_QX).
	\end{align}For each $M\in A\m$, fix $a=F_QM$. By assumption, $\Tor_i^B(Q, F_QM)=0$ for $1\leq i\leq q$. Hence, $E_2^{i, j}=0$, $1\leq i\leq q$. By the dual of Lemma A.4 of \citep{CRUZ2022410}, the result follows.
\end{proof}

\subsection{Ringel duality in cover theory} \label{Ringel duality}

As we saw, fully faithfulness of $\Hom_A(Q, -)$ on relative injectives is related to the existence of a double centralizer property on $Q$. 
In this subsection, we explore the meaning of fully faithfulness of a functor $F_Q$ on $\mathcal{F}(\Cssim)$ for relative projective objects $Q$ in $\mathcal{F}(\Cssim)$. This leads us to one of our main results, connecting Ringel duality with Rouquier's cover theory.


\begin{Theorem}\label{ringeldualitycovers}
	Let $R$ be a commutative Noetherian ring. Let $(A, \{\Delta(\lambda)_{\lambda\in \Lambda}\})$ be a split quasi-hereditary $R$-algebra with a characteristic tilting module $T$. Denote by $R(A)$ the Ringel dual $\End_A(T)^{op}$ (of $A$).
	Assume that $Q\in \add T$ is a (partial) tilting module of $A$ and denote by $B$ the endomorphism algebra of $Q$. Then, the following assertions hold.
	\begin{enumerate}[(a)]
		\item  $Q\rcodomdim_{(A, R)} \mathcal{F}(\Cssim)= Q\rcodomdim_{(A, R)} T = Q\rcodomdim_{(A, R)} \displaystyle\bigoplus_{\l\in \L}\Cs(\l)$.
		\item If $Q\rcodomdim_{(A, R)} T\geq n\geq 2$, then the functor $F_Q$ induces isomorphisms 
		\begin{align*}
			\Ext_A^j(M, N)\rightarrow \Ext_B^j(F_QM, F_QN), \quad \forall M, N\in \mathcal{F}(\Cssim), \ 0\leq j\leq n-2.
		\end{align*}
		\item If $Q\rcodomdim_{(A, R)} T \geq 3$, then the functor $F_Q$ induces an exact equivalence $\mathcal{F}(\Cssim)\rightarrow \mathcal{F}(F\Cssim)$.
		\item If $Q\rcodomdim_{(A, R)} T \geq n\geq 2$, then $(R(A), \Hom_A(T, Q))$ is an $(n-2)$-$\mathcal{F}(\Stsim_{R(A)})$ split quasi-hereditary cover of $\End_A(Q)^{op}$.
	\end{enumerate}
\end{Theorem}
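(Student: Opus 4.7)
My plan is to prove (a)--(d) in order. The conceptual key is the identification, via Ringel duality, of the Schur functor $F' = \Hom_{R(A)}(\Hom_A(T, Q), -)$ with $F_Q$ on appropriate subcategories; this is what links (b) and (d).

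For (a), the starting observation is that any short exact sequence between modules in $\mathcal{F}(\Cssim)$ remains exact under $\Hom_A(Q, -)$, since $Q \in \add T \subseteq \mathcal{F}(\Stsim)$ and Proposition \ref{qhproperties}(v) gives $\Ext_A^1(\mathcal{F}(\Stsim), \mathcal{F}(\Cssim)) = 0$. From a costandard filtration of $T$ and the (dualised) statement of Lemma \ref{relativedominantdimensiononses}(a) applied inductively, one gets $Q-\codomdim_{(A,R)} \mathcal{F}(\Cssim) \geq \min_\l Q-\codomdim_{(A,R)} \Cs(\l)$. For the reverse direction, I induct on $\l \in \L$ using the sequence (\ref{eq121b}) $0 \to Y(\l) \to T(\l) \to \Cs(\l) \to 0$ with $Y(\l) \in \mathcal{F}(\Cssim_{\mu < \l})$: writing $m = Q-\codomdim_{(A,R)} T$, one has $Q-\codomdim_{(A,R)} T(\l) \geq m$ and by induction $Q-\codomdim_{(A,R)} Y(\l) \geq m$, so the dualised cases of Lemma \ref{relativedominantdimensiononses}(e) force $Q-\codomdim_{(A,R)} \Cs(\l) \geq m$; even the potentially lossy case where $Q-\codomdim_{(A,R)} T(\l) \geq Q-\codomdim_{(A,R)} Y(\l) + 2$ is absorbed by the resulting $+1$ offset.

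For (b), from (a) and the codominant-to-dominant duality, Theorem \ref{moduleMuellerparttwo}(ii) applied to $DM$ yields that $\chi_M$ is an isomorphism and $\Tor_i^B(Q, F_QM) = 0$ for $1 \leq i \leq n-2$ for every $M \in \mathcal{F}(\Cssim)$. Since $Q$ is partial tilting, any $N \in \mathcal{F}(\Cssim)$ lies in $Q^\perp$, so Lemma \ref{sesforcocovers} with $q = n-2$ yields $\Ext_A^j(M, N) \simeq \Ext_A^j(\mathbb{I} F_Q M, N) \simeq \Ext_B^j(F_Q M, F_Q N)$ for $0 \leq j \leq n-2$. For (c), statement (b) with $n = 3$ supplies iso on $\Hom$ and $\Ext^1$; $F_Q$ is exact on $\mathcal{F}(\Cssim)$ by the same $Q^\perp$ argument; essential surjectivity onto $\mathcal{F}(F_Q\Cssim)$ follows by induction on filtration length, lifting each step $0 \to X_k \to X_{k+1} \to F_Q(\Cs(\l) \otimes_R U_\l) \to 0$ back to $\Ext_A^1(\Cs(\l) \otimes_R U_\l, M_k)$ via the iso from (b), with the quasi-inverse given by $\mathbb{I}$ restricted to $\mathcal{F}(F_Q\Cssim)$.

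For (d), Lemma \ref{functorringeldualpart1}(a) makes $\Hom_A(T, -): \mathcal{F}(\Cssim_A) \to \mathcal{F}(\Stsim_{R(A)})$ a fully-faithful exact equivalence; combined with $Q \in \add T \subseteq \mathcal{F}(\Cssim_A)$, this yields $\Hom_{R(A)}(\Hom_A(T, Q), \Hom_A(T, N)) \simeq \Hom_A(Q, N)$, i.e., $F' \circ \Hom_A(T, -) \simeq F_Q$ on $\mathcal{F}(\Cssim_A)$. Since this Ringel equivalence preserves $\Ext$ groups, the isomorphisms of (b) transport to give that $F'$ induces isomorphisms on $\Ext^j$ for $0 \leq j \leq n-2$ on $\mathcal{F}(\Stsim_{R(A)})$. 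Specialising the $j = 0$ isomorphism to $M = R(A) \in \mathcal{F}(\Stsim_{R(A)})$ and using the adjunction $\Hom_B(F'M, F'N) \simeq \Hom_{R(A)}(M, G'F'N)$ together with the identity $\Hom_{R(A)}(R(A), -) = \id$ gives that the unit $\eta_N \colon N \to G'F'N$ is an isomorphism for every $N \in \mathcal{F}(\Stsim_{R(A)})$. By Proposition \ref{zeroAcover}(b) this makes $(R(A), \Hom_A(T, Q))$ a $0$-$\mathcal{F}(\Stsim_{R(A)})$ split quasi-hereditary cover of $\End_A(Q)^{op}$, and the higher-Ext isomorphisms then upgrade this to an $(n-2)$-$\mathcal{F}(\Stsim_{R(A)})$ cover. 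The principal obstacle is the case analysis in (a), particularly case (e)(iii) of Lemma \ref{relativedominantdimensiononses}; once (a) is in hand, (b)--(d) follow smoothly from the paper's machinery combined with the Schur-functor identification under Ringel duality.
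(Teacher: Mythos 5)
Your proposal is correct and follows essentially the same route as the paper's proof: (a) via costandard filtrations, the sequences (\ref{eq121b}) and the case analysis of Lemma \ref{relativedominantdimensiononses}; (b) via the Mueller-type Theorem \ref{moduleMuellerparttwo} combined with Lemma \ref{sesforcocovers}; (c) by exactness plus induction on filtration length with $\mathbb{I}$ as quasi-inverse; and (d) by identifying $\Hom_{R(A)}(\Hom_A(T,Q),-)$ with $F_Q$ through the Ringel duality functor. The only routing difference is in (d), where you assert that the equivalence $\mathcal{F}(\Cssim_A)\simeq\mathcal{F}(\Stsim_{R(A)})$ preserves all $\Ext^j$ (true, by applying $\Hom_A(T,-)$ to a finite $\add T$-resolution, but only the $\Hom$ and $\Ext^1$ cases are recorded in Lemma \ref{functorringeldualpart1}), whereas the paper sidesteps this by checking the cover condition through Proposition \ref{zeroAcover}(c), which only requires the vanishing of $\Ext_B^j(F_QT,F_QM)$ supplied by (b).
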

\begin{proof}
	The proof of $(a)$ is analogous to \citep[Theorem 6.2.1]{p2paper}. Thanks to $F_Q$ and $D$ being exact on short exact sequences of modules belonging to $\mathcal{F}(\Cssim)$ we obtain that we can apply Lemma \ref{relativedominantdimensiononses} to the filtrations by costandard modules. Further, for every $X\in R\proj$ so that $R^t\simeq X\oplus Y$ for some $t\in \mathbb{N}$ \begin{align*}
		Q\rcodomdim_{(A, R)} \Cs(\l)&=Q\rcodomdim_{(A, R)} \Cs(\l)^t\\&=\inf \{Q\rcodomdim_{(A, R)} \Cs(\l)\otimes_R X, Q\rcodomdim_{(A, R)} \Cs(\l)\otimes_R Y \}.
	\end{align*}Therefore, $Q\rcodomdim_{(A, R)} \displaystyle\bigoplus_{\l\in \L}\Cs(\l)= Q\rcodomdim_{(A, R)} \mathcal{F}(\Cssim)$. Now using the exact sequences (\ref{eq121b}) together with Lemma \ref{relativedominantdimensiononses} and the reasoning of Theorem \citep[Theorem 6.2.1]{p2paper}, assertion (a) follows.
	
	By Proposition \ref{qhproperties}, $DQ\otimes_A Q\in R\proj$.
 Any $(A,R)$-injective module being projective over $R$ belongs to $Q^\perp$ (see for example \citep[Proposition 2.10]{CRUZ2022410}). By Lemma \ref{fullyfaithfulnesstwo} and Theorem \ref{moduleMuellerpartone}, $F_Q$ is fully faithful on $\mathcal{F}(\Stsim)$. By Theorem \ref{moduleMuellerparttwo}, $\Tor_i^B(Q, F_QM)=0$, $1\leq i\leq n-2$ for every $M\in \mathcal{F}(\Cssim)$. By Lemma \ref{sesforcocovers}, $\Ext_B^i(F_QM, F_QX)\simeq \Ext_A^i(\mathbb{I}F_QM, X)$ for $0\leq i\leq n-2$, where $M, X\in \mathcal{F}(\Cssim)$. Since $\chi_M$ is an isomorphism for every $M\in \mathcal{F}(\Cssim)$, (b) follows.
	
	By the exactness of $F_Q$ on $\mathcal{F}(\Cssim)$ and according to Lemma \ref{tensorprojcommutingonHom}, $F_Q(\Cs(\l)\otimes_R X)\simeq F_Q\Cs(\l)\otimes_R X$ for every $\l\in \L$ and $X\in R\proj$, and the restriction of the functor $F_Q$ on $\mathcal{F}(\Cssim)$ has image in $\mathcal{F}(F_Q\Cssim)$. By (b), it is enough to prove that for each module $M$ in $\mathcal{F}(F_Q\Cssim)$, there exists $N\in \mathcal{F}(\Cssim)$ so that $F_QN\simeq M$. By (b), the functor $\mathbb{I}=Q\otimes_B -$ is exact on short exact sequences of modules belonging to $\mathcal{F}(F_Q\Cssim)$. Thanks to $Q\otimes_B F_Q\Cs(\l)\otimes_R X\simeq \Cs(\l)\otimes_R X$ for every $X\in R\proj$ and $\l\in \L$, we obtain that $\mathbb{I}$ sends $\mathcal{F}(F_Q\Cssim)$ to $\mathcal{F}(\Cssim)$. So, (c) follows.
	
	Assume now that $Q\rcodomdim_{(A, R)} T\geq n\geq 2$. Fix $B=\End_A(Q)^{op}$. By Ringel duality (see for instance \citep[Lemma 7.1]{appendix}), for each $M\in \mathcal{F}(\Cssim)$, we have \begin{align}
		\Hom_{R(A)}(\Hom_A(T, Q), \Hom_A(T, M))\simeq \Hom_A(Q, M) \label{eq5503}
	\end{align}as $(B, R(A))$-bimodules. In particular, $\Hom_{R(A)}(\Hom_A(T, Q), R(A))\simeq \Hom_A(Q, T)$ as $(B, R(A))$-bimodules. By (c), $F_Q$ is fully faithful on $\mathcal{F}(\Cssim)$. Hence, $
		\End_B(\Hom_A(Q, T))^{op}\simeq \End_A(T)^{op}
$ and $\End_{R(A)}(\Hom_A(T, Q))^{op}\simeq \End_A(Q)^{op}$. So, $(R(A), \Hom_A(T, Q))$ is a split quasi-hereditary cover of $B$. Now, by (b) and \citep[Lemma 7.1]{appendix}, for each $M\in \mathcal{F}(\Cssim)$,
	\begin{align}
		\Ext_B^i(\Hom_A(Q, T), \Hom_{R(A)}(\Hom_A(T, Q), \Hom_A(T, M)))&\simeq \Ext_B^i(\Hom_A(Q, T), \Hom_A(Q, M))\\\simeq &\Ext_B^i(F_QT, F_QM), \quad 0\leq i\leq n-2, \label{eq5506}
	\end{align}which vanishes if $1\geq i\geq n-2$. By \citep[Lemma 7.1]{appendix}, and Proposition \ref{zeroAcover}, we conclude the proof.
\end{proof}

Since every projective module is the image of a (partial) tilting under the Ringel dual functor, every quasi-hereditary cover can be recovered/discovered using this approach. More precisely, every split quasi-hereditary algebra $A$ is Morita equivalent to the Ringel dual of its Ringel dual $R(R(A))$ and every projective over $R(R(A))$ can be written as $\Hom_{R(A)}(T_{R(A)}, Q)$ for some $Q\in \add T_{R(A)}$, where $T_{R(A)}$ is a characteristic tilting module of $R(A)$. Hence, every split quasi-hereditary cover can be written in the form $(R(A), \Hom_A(T, Q))$ for some split quasi-hereditary algebra $A$, $T$ a characteristic tilting module and $Q\in \add T$.

\begin{Remark}
	$T\rcodomdim_{(A, R)} \mathcal{F}(\Cssim)=T\rcodomdim_{(A, R)} T=+\infty$ for a characteristic tilting module $T$. Of course, the Ringel dual is an infinite cover of itself.
\end{Remark}

\begin{Remark}\label{exactequivalences}
	The cover constructed in Theorem \ref{ringeldualitycovers} makes the following diagram commutative
	\begin{equation}
		\begin{tikzcd}
			\mathcal{F}(\Cssim_A)\arrow[rr, "\Hom_A(T{,} -)"] \arrow[dr, "\Hom_A(Q{,} -)", swap] & & \mathcal{F}(\Stsim_{R(A)}) \arrow[dl, "\Hom_{R(A)}(\Hom_A(T{,} Q){,} -)"]\\
			& \mathcal{F}(F\Cssim_A) &
		\end{tikzcd}.
	\end{equation}
\end{Remark}

\begin{Cor}\label{codominantdimensioncontrollfinitedimensionalcase}
	Let $k$ be a field. Let $(A, \{\Delta(\lambda)_{\lambda\in \Lambda}\})$ be a split quasi-hereditary $k$-algebra with a characteristic tilting module $T$. Denote by $R(A)$ the Ringel dual $\End_A(T)^{op}$ (of $A$).
	Assume that $Q\in \add T$ is a (partial) tilting module of $A$ and $n\geq 2$ is a natural number.  Then, ${Q\rcodomdim_{(A, R)} T \geq n\geq 2}$ if and only if $(R(A), \Hom_A(T, Q))$ is an $(n-2)$-$\mathcal{F}(\Stsim_{R(A)})$ split quasi-hereditary cover of $\End_A(Q)^{op}$.
\end{Cor}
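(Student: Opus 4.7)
The ``only if'' direction is Theorem~\ref{ringeldualitycovers}(d), which is stated over an arbitrary commutative Noetherian ring and hence subsumes the field case. For the converse, assume that $(R(A), \Hom_A(T, Q))$ is an $(n-2)$-$\mathcal{F}(\Stsim_{R(A)})$ cover of $B := \End_A(Q)^{op}$ and the aim is to deduce $Q-\codomdim_{(A, R)} T \geq n$. Since $R=k$ is a field, the hypothesis $DQ \otimes_A Q \in R\proj$ of Theorem~\ref{moduleMuellerparttwo}(i) is automatic, so it suffices to verify that $\chi_T \colon Q \otimes_B \Hom_A(Q, T) \to T$ is an isomorphism and that $\Tor_j^B(Q, \Hom_A(Q, T)) = 0$ for $1 \le j \le n-2$.

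The key step is to feed the cover property the modules $M = R(A)$ and $N = \Hom_A(T, DA)$. Both lie in $\mathcal{F}(\Stsim_{R(A)})$: the first because any projective module over a split quasi-hereditary algebra has a standard filtration, the second because $DA \in \mathcal{F}(\Cssim_A)$ (the standard costandard filtration of the injective cogenerator coming from Proposition~\ref{qhproperties}) and $\Hom_A(T, -)$ sends $\mathcal{F}(\Cssim_A)$ into $\mathcal{F}(\Stsim_{R(A)})$ by Lemma~\ref{functorringeldualpart1}. Writing $\tilde{F} = \Hom_{R(A)}(\Hom_A(T, Q), -)$, the identification \eqref{eq5503} (applied with $M=T$ and $M=DA$) together with tensor-hom adjunction yields $\tilde{F}R(A) \simeq \Hom_A(Q, T)$ and $\tilde{F}\Hom_A(T, DA) \simeq \Hom_A(Q, DA) \simeq DQ$. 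Since $R(A)$ is projective, $\Ext^j_{R(A)}(R(A), \Hom_A(T, DA))$ vanishes for $j\ge 1$ and equals $\Hom_A(T, DA) \simeq DT$ in degree zero, so the $(n-2)$-faithful property delivers
\[
\Ext^j_B(\Hom_A(Q, T), DQ) = 0 \ \text{ for } 1 \le j \le n-2, \qquad \Hom_B(\Hom_A(Q, T), DQ) \simeq DT.
\]

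To conclude, one translates these via $k$-duality. The standard identity $D\Tor_j^B(Q, N) \simeq \Ext_B^j(N, DQ)$, obtained from $\Hom_k(X \otimes_B P, k) \simeq \Hom_B(P, DX)$ applied to a projective resolution of $N = \Hom_A(Q, T)$, converts the higher $\Ext$ vanishing into $\Tor_j^B(Q, \Hom_A(Q, T)) = 0$ for $1 \le j \le n-2$. For the degree-zero piece, tensor-hom adjunction supplies an isomorphism $\Hom_B(\Hom_A(Q, T), DQ) \simeq D(Q \otimes_B \Hom_A(Q, T))$, and the resulting identification $DT \simeq D(Q \otimes_B \Hom_A(Q, T))$ is to be recognised as $D\chi_T$; the required assertion that $\chi_T$ is an isomorphism then follows from finite-dimensionality. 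An appeal to Theorem~\ref{moduleMuellerparttwo}(i) completes the proof. The principal obstacle is this last identification: one must verify that the canonical isomorphism produced by the cover (ultimately the unit $\eta_{\Hom_A(T, DA)}$ of the Schur adjunction transported across the equivalence of Lemma~\ref{functorringeldualpart1}) coincides under \eqref{eq5503} and tensor-hom adjunction with $D\chi_T$; this is a direct diagram chase using the explicit formula $\chi_T(q \otimes g) = g(q)$.
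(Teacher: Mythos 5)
Your proof is correct and follows essentially the same route as the paper: the forward direction is Theorem~\ref{ringeldualitycovers}(d), and the converse is obtained by testing the cover on $R(A)$ and $\Hom_A(T,DA)$, transporting the resulting $\Ext$-isomorphisms through the identification \eqref{eq5503} and $k$-duality into the $\chi_T$/$\Tor$ conditions of Theorem~\ref{moduleMuellerparttwo}. The final identification you flag as the ``principal obstacle'' is precisely the commutative diagram the paper displays in its proof, so nothing essential is missing.
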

\begin{proof}
	It follows by Theorem \ref{ringeldualitycovers}, equations (\ref{eq5503}) and (\ref{eq5506}) together with Theorem \ref{moduleMuellerparttwo} and the commutative diagram
\begin{equation}
		\begin{tikzcd}
		DT \arrow[r] \arrow[d, "\simeq"] & \Hom_{\End_{A}(Q)^{op}}(\Hom_A(Q, T), DQ) \arrow[d, "\simeq"] \\
		\Hom_A(T, DA)\arrow[r] & \Hom_{\End_A(Q)^{op}}(\Hom_A(Q, T), \Hom_A(Q, DA))  
	\end{tikzcd}. \tag*{\qedhere} 
\end{equation}
\end{proof}

Therefore, the previous results say that the quality of faithful split quasi-hereditary covers of finite-dimensional algebras is controlled by the relative codominant dimension of characteristic tilting modules with respect to (partial) tilting modules.

	\subsubsection{An analogue of Lemma \ref{cocoversdef} for Ringel duality}

\begin{Lemma}\label{analogueofdpclemmaringel}
	Let $R$ be a commutative Noetherian ring. Let $(A, \{\Delta(\lambda)_{\lambda\in \Lambda}\})$ be a split quasi-hereditary $R$-algebra with a characteristic tilting module $T$. Denote by $R(A)$ the Ringel dual $\End_A(T)^{op}$ (of $A$).
	Assume that $Q\in \add T$ is a (partial) tilting module of $A$ and fix $B=\End_A(Q)^{op}$.
	Then, the following assertions hold.
	\begin{enumerate}[(a)]
		\item If $D\chi_T\colon DT\rightarrow \Hom_B(\Hom_A(Q, T), DQ)$ is an isomorphism, then $(R(A), \Hom_A(T, Q))$ is a split quasi-hereditary cover of $B$.
		\item If $D\chi^r_{DT}\colon DDT\rightarrow \Hom_B(\Hom_A(DQ, DT), DDQ )$ is an isomorphism, then $\Hom_A(T, Q)$ satisfies a double centralizer property between $R(A)$ and $B$.
	\end{enumerate}
\end{Lemma}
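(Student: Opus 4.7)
The plan is to establish (a) and (b) as parallel applications of a split-mono plus injective-cosplitting argument in the spirit of Lemma~\ref{cocoversdef}, with reflexivity $T\simeq DDT$ (from $T\in R\proj$) as the essential algebraic input.

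For (a), by projectivization ($Q\in\add T$) the iso $\End_{R(A)}(\Hom_A(T,Q))^{op}\simeq B$ and the bimodule iso $\Hom_{R(A)}(\Hom_A(T,Q), R(A))\simeq \Hom_A(Q,T)$ are automatic, so the cover condition reduces to showing that the canonical ring map $F_Q\colon \End_A(T)\to \End_B(\Hom_A(Q,T))$, $\phi\mapsto \phi\circ(-)$, is an isomorphism. The hypothesis $D\chi_T$ iso, combined with the Tensor-Hom identification $\Hom_B(\Hom_A(Q,T),DQ)\simeq D(Q\otimes_B\Hom_A(Q,T))$, fits into a commutative square
\begin{equation*}
\begin{tikzcd}
\End_A(T) \arrow[r, "F_Q"] \arrow[d, "D"', "\simeq"] & \End_B(\Hom_A(Q, T)) \arrow[d, "\Hom_B(-{,} DQ)"]\\
\End_{A^{op}}(DT) \arrow[r, "\text{conj.\ by } D\chi_T"', "\simeq"] & \End_{A^{op}}(\Hom_B(\Hom_A(Q, T), DQ))
\end{tikzcd}
\end{equation*}
whose commutativity comes from the naturality of $\chi_T$ (mirroring the diagram used in the proof of Lemma~\ref{cocoversdef}). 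This exhibits $F_Q$ as a split monomorphism. The remaining task, and the main obstacle, is to show that the right vertical arrow $\Hom_B(-,DQ)$ is injective on $\End_B(\Hom_A(Q,T))$. Here I would use that, under the iso $\Hom_B(\Hom_A(Q,T),DQ)\simeq DT$ supplied by the hypothesis, every left $B$-linear $h\colon \Hom_A(Q,T)\to DQ$ has the explicit form $h_f(x)(q)=f(x(q))$ for some $f\in DT$; then $\Hom_B(g,DQ)=0$ forces $f(g(y)(q))=0$ for all $f\in DT$, $y\in\Hom_A(Q,T)$, $q\in Q$, and the reflexivity $T\simeq DDT$ collapses this to $g(y)(q)=0$ for all such arguments, whence $g=0$.

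For (b), after the same projectivization remark the conclusion reduces to showing that the canonical anti-homomorphism $\End_A(T)\to \End_B(\Hom_A(T,Q))$, $\phi\mapsto -\circ\phi$, is iso. I would unravel the hypothesis $D\chi^r_{DT}$ iso via $DDT\simeq T$, $DDQ\simeq Q$ and $\Hom_A(DQ,DT)\simeq \Hom_A(T,Q)$ to produce the left $A$-module iso $T\simeq \Hom_B(\Hom_A(T,Q),Q)$, and check that it coincides with the evaluation map $t\mapsto \mathrm{ev}_t$ with $\mathrm{ev}_t(f)=f(t)$. With this iso in hand, the argument of (a) transfers almost verbatim with $\Hom_B(-,Q)$ replacing $\Hom_B(-,DQ)$: an analogous commutative square produces a splitting, and injectivity of $\Hom_B(-,Q)$ on $\End_B(\Hom_A(T,Q))$ follows by plugging $\psi=\mathrm{ev}_t$ into $\psi\circ g=0$ to obtain $g(f)(t)=0$ for all $t\in T$, forcing $g=0$. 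The hard part will be the careful bookkeeping of $B$-module conventions and op-signs through the Tensor-Hom and $D$-duality identifications; once these are pinned down, the two proofs are conceptual duals of each other.
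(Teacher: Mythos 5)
Your argument is correct, but it takes a genuinely different route from the paper's. The paper proves both parts by one chain of $(R(A),R(A))$-bimodule isomorphisms: starting from $R(A)=\Hom_A(T,T)\simeq \Hom_{A^{op}}(DT,DT)$, it substitutes the hypothesised isomorphism into the second variable, rewrites $\Hom_B(\Hom_A(Q,T),DQ)$ as $\Hom_{B^{op}}(Q,D\Hom_A(Q,T))$, and applies Tensor-Hom adjunction together with Proposition \ref{qhproperties} to land on $\End_B(\Hom_A(Q,T))$ (dually $\End_B(\Hom_A(T,Q))$ for (b)); that the composite is the canonical double centralizer map is absorbed into the claim that every step is a bimodule isomorphism. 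You instead split the problem in two: your commutative square identifies $\Hom_B(-,DQ)\circ F_Q$ with conjugation of $D(-)$ by $D\chi_T$, so the canonical map is a split monomorphism, and you then prove injectivity of the retraction $\Hom_B(-,DQ)$ by using surjectivity of $D\chi_T$ to write every $B$-map $\Hom_A(Q,T)\to DQ$ as an evaluation $h_f$ and invoking non-degeneracy of the pairing $DT\times T\to R$, which holds because $T\in R\proj$. Your route costs an extra element-wise step that the paper's pure adjunction-juggling avoids, but it makes explicit that the map being inverted is the canonical one, a point the paper handles only implicitly through the bimodule structure; your dual treatment of (b), reducing $D\chi^r_{DT}$ to the evaluation isomorphism $T\simeq\Hom_B(\Hom_A(T,Q),Q)$ and replacing $\Hom_B(-,DQ)$ by $\Hom_B(-,Q)$, mirrors the paper's symmetry between the two parts. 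The bookkeeping of op-rings and left/right actions you flag is indeed the only delicate point, and it goes through.
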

\begin{proof}
	By projectivization, $\Hom_A(T, Q)\in R(A)\proj$ and $\End_{R(A)}(\Hom_A(T, Q))^{op}\simeq \End_A(Q)^{op}=B$. By $(a)$ and Proposition \ref{qhproperties}, we have as $(R(A), R(A))$-bimodules 
	\begin{align}
		R(A)&=\Hom_A(T, T)\simeq \Hom_{A^{op}}(DT, DT)\simeq \Hom_{A^{op}}(DT, \Hom_B(\Hom_A(Q, T), DQ))\\&\simeq \Hom_{A^{op}}(DT, \Hom_{B^{op}}(Q, D\Hom_{A}(Q, T)) )
		\simeq \Hom_{B^{op}}(DT\otimes_A Q, D\Hom_{A}(Q, T))\\
		&\simeq \Hom_B(\Hom_{A}(Q, T), \Hom_A(Q, T)).
	\end{align} Since $F_QR(A)=\Hom_{R(A)}(\Hom_A(T, Q), \Hom_A(T, T))\simeq \Hom_A(Q, T)$ assertion (a) follows.
	
	Now using the isomorphism $\chi_{DT}^r$ and Proposition \ref{qhproperties} we obtain
	\begin{align*}
		R(A)=\Hom_A(T, T)&\simeq \Hom_A(T, \Hom_{B^{op}}(\Hom_A(T, Q), Q))\simeq \Hom_A(T, \Hom_B(DQ, D\Hom_A(T, Q)))\\
		&\simeq \Hom_B(DQ\otimes_A T, D\Hom_A(T, Q))\simeq \Hom_B(\Hom_A(T, Q), \Hom_A(T, Q)). \tag*{\qedhere}
	\end{align*}
\end{proof}

We did not yet address the case of $Q\rcodomdim_{(A, R)} T=1$.  For this case, we can recover the Ringel dual being a cover using deformation theory.

\begin{Cor}\label{deformationringeldualfunctor}
	Let $R$ be a commutative regular Noetherian domain with quotient field $K$. Let \linebreak$(A, \{\Delta(\lambda)_{\lambda\in \Lambda}\})$ be a split quasi-hereditary $R$-algebra with a characteristic tilting module $T$. Let $R(A)$ be the Ringel dual of $A$, $\End_A(T)^{op}$.
	Assume that $Q\in \add T$ is a  (partial) tilting module of $A$ so that $Q\rcodomdim_{(A, R)} T\geq 1$ and $K\otimes_R Q\rcodomdim_{(K\otimes_R A)} K\otimes_R T\geq 2$. Then, $(R(A), \Hom_A(T, Q))$ is a split quasi-hereditary cover of $\End_A(Q)^{op}$. Moreover, $(R(A), \Hom_A(T, Q))$  is a $0$-$\mathcal{F}(\Stsim)$ split quasi-hereditary cover of $\End_A(Q)^{op}$. 
\end{Cor}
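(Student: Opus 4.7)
The strategy is to apply Theorem~\ref{ringeldualitycovers}(d) after extending scalars to $K$, and then descend to $R$ by exploiting the $R$-torsion-freeness of tilting and relative injective modules together with the global hypothesis $Q-\codomdim_{(A,R)}T\geq 1$. By Proposition~\ref{standardscotiltingsreductiontofields}, extension of scalars to $K$ identifies $K\otimes_R R(A)\simeq R(K\otimes_R A)$, $K\otimes_R\Hom_A(T,Q)\simeq\Hom_{K\otimes_R A}(K\otimes_R T,K\otimes_R Q)$ and $K\otimes_R\End_A(Q)^{op}\simeq\End_{K\otimes_R A}(K\otimes_R Q)^{op}$, so Theorem~\ref{ringeldualitycovers}(d) with $n=2$ gives that the generic fibre of $(R(A),\Hom_A(T,Q))$ is already a $0$-$\mathcal{F}(\Stsim)$ split quasi-hereditary cover of the generic fibre of $\End_A(Q)^{op}$.

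For the cover property over $R$, I would work directly with the canonical algebra map $\Psi\colon R(A)\to\End_B(\Hom_A(Q,T))^{op}$ defined by $\alpha\mapsto\alpha\circ(-)$, where $B=\End_A(Q)^{op}$; by Lemma~\ref{analogueofdpclemmaringel}(a), or a direct unfolding of the double centraliser, $\Psi$ being an isomorphism implies the cover property. The assumption $Q-\codomdim_{(A,R)}T\geq 1$ provides a surjective right $\add Q$-approximation $\pi\colon Q_0\twoheadrightarrow T$ fitting in an $(A,R)$-exact sequence $0\to N\to Q_0\to T\to 0$ which remains exact under $\Hom_A(Q,-)$. Injectivity of $\Psi$ is immediate because the surjectivity of $\pi$ forces $T$ to be generated by the images of the $A$-maps $Q\to T$ obtained from a decomposition of $Q_0$ into $\add Q$-summands. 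For surjectivity, given $\phi\in\End_B(\Hom_A(Q,T))$, projectivization applied to $\phi\circ F_Q\pi\colon F_QQ_0\to F_QT$ yields a unique $h\colon Q_0\to T$ with $F_Qh=\phi\circ F_Q\pi$, and the relation $F_Qh\circ F_Q\iota=0$ for $\iota\colon N\hookrightarrow Q_0$ translates into $h\circ g=0$ for every $g\in\Hom_A(Q,N)$. The codominant analogue of Lemma~\ref{relativedominantdimensiononses} applied to the $K$-tensored sequence together with $K\otimes_R Q-\codomdim_{(K\otimes_R A)}K\otimes_R T\geq 2$ yields $K\otimes_R Q-\codomdim_{(K\otimes_R A)}K\otimes_R N\geq 1$, so $K\otimes_R N$ is generated by images from $\add K\otimes_R Q$, forcing $K\otimes_R h|_{K\otimes_R N}=0$. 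Since $T\in R\proj$ is $R$-torsion-free, any $A$-map into $T$ that vanishes generically is zero, so $h|_N=0$, whence $h=\alpha\circ\pi$ for a unique $\alpha\in\End_A(T)$ and $\phi=\Psi(\alpha)$.

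For the $0$-$\mathcal{F}(\Stsim_{R(A)})$ property, Remark~\ref{casezerodeltacover} combined with Proposition~\ref{zeroAcover}(b) reduces the task to showing that the unit $\eta_{T_{R(A)}}$ is an isomorphism on a characteristic tilting module $T_{R(A)}$ of $R(A)$. From the proof of Proposition~\ref{ringeldualsplitqhmorita}, one may take $T_{R(A)}\simeq\Hom_A(T,I)$ with $I\in\mathcal{F}(\Cssim)\cap\add_A DA$, and Lemma~\ref{functorringeldualpart1} identifies $\eta_{\Hom_A(T,I)}$ with the canonical map $\Hom_A(T,I)\to\Hom_B(\Hom_A(Q,T),\Hom_A(Q,I))$ sending $f$ to $F_Qf$. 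The approximation-and-torsion argument from the previous paragraph applies verbatim with $I\in R\proj$ (as a summand of $DA$) playing the role of the $R$-torsion-free codomain.

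The main obstacle is precisely the surjectivity step in the descent: since only $Q-\codomdim_{(A,R)}T\geq 1$ is available globally, one cannot conclude that $\chi_T$ is an isomorphism over $R$, which is what Theorem~\ref{ringeldualitycovers}(d) normally exploits. The gap is closed by rewriting the double centraliser as a post-composition statement on $\Hom_A(Q,T)$, extracting the needed factorisation via projectivization on the $\add Q$-approximation, and then combining the stronger $K$-hypothesis (which makes $K\otimes_R N$ generated by $\add K\otimes_R Q$) with the $R$-torsion-freeness of $T$ and $I$ to force the crucial vanishing $h|_N=0$.
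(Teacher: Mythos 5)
Your proof is correct, but it takes a genuinely different route from the paper's in the crucial descent step. The paper works with the dual of the counit, $D\chi_T\colon DT\to\Hom_B(\Hom_A(Q,T),DQ)$: the hypothesis $Q-\codomdim_{(A,R)}T\geq 1$ makes $\chi_T$ surjective (Theorem~\ref{moduleMuellerpartone}), hence $D\chi_T$ an $(A,R)$-split monomorphism; the generic hypothesis makes $K\otimes_R D\chi_T$ bijective; and the cokernel, being a torsion $R$-direct summand of the target, is killed by showing that $\Hom_B(\Hom_A(Q,T),DQ)$ is torsion-free (via an embedding into $\Hom_R(\Hom_A(Q,T),DQ)$) together with a localization argument at height-one primes. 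It then invokes Lemma~\ref{analogueofdpclemmaringel}. You instead realize every $B$-endomorphism of $\Hom_A(Q,T)$ (and every $B$-map $F_QT\to F_QI$) as post-composition by lifting along the surjective right $\add Q$-approximation $\pi\colon Q_0\twoheadrightarrow T$ supplied by $Q-\codomdim_{(A,R)}T\geq 1$, and the only nontrivial point -- that the lift $h$ kills $N=\ker\pi$ -- is settled by combining the generic hypothesis (which, via the codominant dual of Lemma~\ref{relativedominantdimensiononses}, makes $K\otimes_R N$ generated by images from $\add K\otimes_R Q$) with the elementary fact that an $A$-map into an $R$-projective module over a domain vanishing generically is zero. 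This sidesteps the torsion-freeness analysis of the Hom-space and the external localization result entirely, at the cost of the extra bookkeeping around the approximation sequence and its kernel; both arguments then converge on the same endgame, checking the unit on a characteristic tilting module of $R(A)$ via Remark~\ref{casezerodeltacover}. Two cosmetic remarks: the identification of the unit $\eta_{\Hom_A(T,I)}$ with $f\mapsto F_Qf$ really rests on the natural isomorphism $\Hom_{R(A)}(\Hom_A(T,Q),\Hom_A(T,M))\simeq\Hom_A(Q,M)$ for $M\in\mathcal{F}(\Cssim)$ (established inside the proof of Theorem~\ref{ringeldualitycovers}) rather than on Lemma~\ref{functorringeldualpart1} alone; and your opening paragraph on the generic fibre being a $0$-faithful cover is not actually used later, since the descent extracts what it needs from $K\otimes_R Q-\codomdim\,K\otimes_R N\geq 1$ directly.
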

\begin{proof}
	If $Q\rcodomdim_{(A, R)} T\geq 2$, then this is nothing more than Theorem \ref{ringeldualitycovers}. Assume that \linebreak $Q\rdomdim_{(A, R)} T=1$. By Theorem \ref{moduleMuellerpartone}, $\chi_T$ is surjective. In view of Lemma \ref{analogueofdpclemmaringel}, it is enough to prove that $D\chi_T$ is an isomorphism. Since $T\in R\proj$, $\chi_T$ is an $(A, R)$-epimorphism, and therefore $D\chi_T$ is an $(A, R)$-monomorphism. By assumption, $K\otimes_R Q\rcodomdim_{K\otimes_R A} K\otimes_R T\geq2$. Hence, thanks to the flatness of $K$, $K\otimes_R D\chi_T$ is an isomorphism.

	Denote by $X$ the cokernel of $D\chi_T$. Since $D\chi_T$ is split over $R$, $X\in \add_R \Hom_B(\Hom_A(Q, T), DQ)$. As we saw, $K\otimes_R X=0$. In particular, $X$ is a torsion $R$-module. We cannot deduce right away that $\Hom_B(\Hom_A(Q, T), DQ)$ is projective over $R$ but we can embed $\Hom_B(\Hom_A(Q, T), DQ)$ into $\Hom_R(\Hom_A(Q, T), DQ)$ which is projective over $R$ due to both $\Hom_A(Q, T)$ and $DQ$ being projective over $R$. So, $\Hom_B(\Hom_A(Q, T), DQ)$ is a torsion free $R$-module. On the other hand, the localisation $\Hom_B(\Hom_A(Q, T), DQ)_\pri$ is projective over $R_\pri$ for every prime ideal $\pri$ of height one, and so $K\otimes_R X=0$ yields that $X_\pri=0$ for every prime ideal $\pri$ of height one.
	 Applying Proposition 3.4 of \citep{zbMATH03151673} to $D\chi_T$ we obtain that $X_\pri$ must be zero, and consequently, $D\chi_T$ is an isomorphism. Denote by $F_{\Hom_A(T, Q)}$ the Schur functor and $G_{\Hom_A(T, Q)}$ its right adjoint of this cover. Observe that $\Hom_A(T, DA)$ is a characteristic tilting module of $R(A)$. Since $D\chi_T$ is a monomorphism and \begin{align}
		\Hom_B(\Hom_A(Q, T), DQ)&\simeq \Hom_B(\Hom_A(Q, T), \Hom_A(Q, DA))\\&\simeq \Hom_B(\Hom_{R(A)}(F_TQ, F_TT), \Hom_{R(A)}(F_TQ, F_TDA))\nonumber\\&\simeq G_{\Hom_A(T, Q)}F_{\Hom_A(T, Q)} \Hom_A(T, DA),
	\end{align}the claim follows by Remark \ref{casezerodeltacover}.
\end{proof}

\subsection{Ringel self-duality as an instance of uniqueness of covers}\label{Ringel self-duality as an instance of uniqueness of covers}
We will now see how Ringel self-duality can be related with uniqueness of covers.

\begin{Cor}\label{Ringelselfdualcodomdimtwo}
	Let $(A, P, DP)$ be a relative Morita $R$-algebra. Let $(A, \{\Delta(\lambda)_{\lambda\in \Lambda}\})$ be a split quasi-hereditary algebra, Assume that
	$\domdim_{(A, R)} T, \codomdim_{(A, R)} T\geq 3$ for a characteristic tilting module $T$. Then,
	there exists an exact equivalence $\mathcal{F}(F_P\Stsim_A)\rightarrow \mathcal{F}(F_P\Cssim_A)$ if and only if
	$A$ is Ringel self-dual.
\end{Cor}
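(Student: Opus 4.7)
The plan is to transport the Ringel self-duality criterion of Corollary \ref{standardstocostandardsringel} (namely, that $A$ is Ringel self-dual if and only if there is an exact equivalence $\mathcal{F}(\Stsim_A) \simeq \mathcal{F}(\Cssim_A)$) across the Schur functor $F_P = \Hom_A(P, -)$. To do this I must show that $F_P$ restricts to exact equivalences on both $\mathcal{F}(\Stsim_A)$ and $\mathcal{F}(\Cssim_A)$. First I observe that $P$ is both projective and $(A,R)$-injective (since $\domdim{(A,R)}\geq 2$), so $P\in \mathcal{F}(\Stsim)\cap\mathcal{F}(\Cssim)=\add T$; in particular $P$ is a partial tilting module and the hypotheses of both Proposition \ref{domdimtoolcover} and Theorem \ref{ringeldualitycovers} are satisfied with $Q=P$.

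The equivalence on the standard side comes directly from the relative Morita hypothesis: because $\add \Hom_{A^{op}}(DP,A)=\add P$, the covers $(A,P)$ and $(A,\Hom_{A^{op}}(DP,A))$ of $B:=\End_A(P)^{op}$ are the same up to Morita equivalence, and Proposition \ref{domdimtoolcover}(b) with $\domdim_{(A,R)}T\geq 3$ yields that $(A,P)$ is a $1$-$\mathcal{F}(\Stsim_A)$ cover of $B$. Proposition \ref{onefaithfulcovers} then gives an exact equivalence $F_P\colon \mathcal{F}(\Stsim_A)\to \mathcal{F}(F_P\Stsim_A)$.

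For the costandard side I invoke the dual picture provided by Theorem \ref{ringeldualitycovers}. Since $\codomdim_{(A,R)}T\geq 3$, Theorem \ref{ringeldualitycovers}(d) produces a $1$-$\mathcal{F}(\Stsim_{R(A)})$ split quasi-hereditary cover $(R(A),\Hom_A(T,P))$ of $B$. Proposition \ref{onefaithfulcovers} again yields an exact equivalence between $\mathcal{F}(\Stsim_{R(A)})$ and the essential image of the corresponding Schur functor. Now the commutative diagram of Remark \ref{exactequivalences} identifies this Schur functor, when precomposed with the Ringel equivalence $\Hom_A(T,-)\colon \mathcal{F}(\Cssim_A)\to \mathcal{F}(\Stsim_{R(A)})$ of Lemma \ref{functorringeldualpart1}, with $F_P|_{\mathcal{F}(\Cssim_A)}$. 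Both ingredients are exact equivalences, hence so is $F_P\colon \mathcal{F}(\Cssim_A)\to \mathcal{F}(F_P\Cssim_A)$.

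Having established these two exact equivalences, the statement follows by composition: any exact equivalence $\mathcal{F}(F_P\Stsim_A)\to \mathcal{F}(F_P\Cssim_A)$ can be conjugated back to an exact equivalence $\mathcal{F}(\Stsim_A)\to \mathcal{F}(\Cssim_A)$ and vice versa, and the latter is precisely Ringel self-duality of $A$ by Corollary \ref{standardstocostandardsringel}. The only genuinely subtle point — and the one I would check most carefully — is the identification in the third step: one must verify that the Schur functor of the Ringel-dual cover, after composition with $\Hom_A(T,-)$, really recovers $F_P$ (and not merely an equivalent functor up to an extra auto-equivalence), which is where Remark \ref{exactequivalences} and equation \eqref{eq5503} of Theorem \ref{ringeldualitycovers} do the bookkeeping.
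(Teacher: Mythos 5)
Your proposal is correct and follows essentially the same route as the paper: both proofs combine Proposition \ref{domdimtoolcover}(b) (giving the $1$-faithful cover $(A,P)$ and hence the exact equivalence on $\mathcal{F}(\Stsim_A)$), Theorem \ref{ringeldualitycovers} together with Remark \ref{exactequivalences} (giving the exact equivalence on $\mathcal{F}(\Cssim_A)$), and then transport through Corollary \ref{standardstocostandardsringel}. The extra care you take in identifying $F_P|_{\mathcal{F}(\Cssim_A)}$ with the composite of the Ringel equivalence and the Schur functor of the dual cover is exactly the content of Remark \ref{exactequivalences}, which the paper cites for the same purpose.
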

\begin{proof}
	By Theorems \ref{ringeldualitycovers} and Proposition \ref{domdimtoolcover}, $(A, P)$ is a 1-faithful split quasi-hereditary cover of $\End_A(P)^{op}$ and $(R(A), \Hom_A(T, P))$ is a 1-faithful split quasi-hereditary cover of $\End_A(P)^{op}$. As illustrated in Remark \ref{exactequivalences}, $F$ restricts to exact equivalences $\mathcal{F}(\Cssim_A)\rightarrow \mathcal{F}(F_P\Cssim_A)$ and $\mathcal{F}(\Stsim_A)\rightarrow \mathcal{F}(F_P\Stsim_A)$. Therefore, there exists an exact equivalence between $\mathcal{F}(F_P\Stsim_A)$ and $\mathcal{F}(F_P\Cssim_A)$ if and only if there exists an exact equivalence between $\mathcal{F}(\Stsim_A)$ and $\mathcal{F}(\Cssim_A)$. Hence $A$ is Ringel self-dual.
\end{proof}

This is an indication that the phenomenon of Ringel self-duality behaves better the larger the dominant dimension of the characteristic tilting module.   
As before, for deformations we can weaken the conditions on the dominant and codominant dimension of the characteristic tilting module.

\begin{Cor}\label{Ringelselfdualcodomdim}
	Let $R$ be an integral regular domain with quotient field $K$. Let $(A, \{\Delta(\lambda)_{\lambda\in \Lambda}\})$ be a split quasi-hereditary $R$-algebra. Let $(A, P, V)$ be a relative Morita $R$-algebra. Fix $B=\End_A(P)^{op}$. Assume the following conditions hold.
	\begin{enumerate}[(i)]
		\item $(K\otimes_R A, K\otimes_R P)$ is a 1-faithful split quasi-hereditary cover of $B$;
		\item  $K\otimes_R P\rcodomdim K\otimes_R T\geq 3$ for a characteristic tilting module $T$;
		\item $\domdim_{(A, R)} T$, $\codomdim_{(A, R)} T\geq 2$ for a characteristic tilting module $T$;
		\item There exists an exact equivalence $\mathcal{F}(F_P\Stsim_A)\rightarrow \mathcal{F}(F_P\Cssim_A)$.
	\end{enumerate} 
	Then, $A$ is Ringel self-dual.
\end{Cor}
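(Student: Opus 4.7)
The plan is to establish two integral $0$-$\mathcal{F}(\Stsim)$ covers of $B$, to promote both to $1$-faithful covers after base change to $K$ using hypotheses~(i) and~(ii), to match them via hypothesis~(iv) and Corollary~\ref{equivalenceofoneuniqueness}, and finally to descend the resulting Ringel self-duality of $K\otimes_R A$ back to $A$.

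First I would produce the two integral $0$-faithful covers of $B$. By the relative Morita hypothesis combined with~(iii) and Proposition~\ref{domdimtoolcover}, $(A,P)$ is a $0$-$\mathcal{F}(\Stsim_A)$ cover of $B$. Since $P$ is projective and $(A,R)$-injective it lies in $\mathcal{F}(\Stsim_A)\cap\mathcal{F}(\Cssim_A)=\add T$, and using Corollary~\ref{rightandleftrelativedomidimension} together with the relative Morita identity $\add DP\otimes_A DA=\add DP$ gives $P-\codomdim_{(A,R)} T = \codomdim_{(A,R)} T\geq 2$ by~(iii), so Theorem~\ref{ringeldualitycovers}(d) with $n=2$ yields that $(R(A),\Hom_A(T,P))$ is a $0$-$\mathcal{F}(\Stsim_{R(A)})$ cover of $B$.

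Next, base-changing to $K$: hypothesis~(i) makes $(K\otimes_R A,K\otimes_R P)$ a $1$-faithful cover, and~(ii) combined with Theorem~\ref{ringeldualitycovers}(d) applied with $n=3$ makes $(K\otimes_R R(A),K\otimes_R\Hom_A(T,P))$ a $1$-faithful cover of $K\otimes_R B$. Tensoring~(iv) with $K$ and applying Remark~\ref{exactequivalences} identifies the resulting exact equivalence with one between the standard-image categories of these two $1$-faithful covers. Corollary~\ref{equivalenceofoneuniqueness} then implies the covers are equivalent over $K$, and composing the resulting Morita equivalence of split highest weight categories with the exact equivalence of Lemma~\ref{functorringeldualpart1} shows that $K\otimes_R A$ is Ringel self-dual.

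The final descent from $K$ to $R$ will be the main obstacle. The equivalence of covers over $K$ is realised by the composite $H=G_{\Hom_A(T,P)}\circ L\circ F_P\colon A\m\to R(A)\m$, a functor already defined over $R$. Using the two integral $0$-faithful covers together with~(iv), $H$ restricts to a categorical equivalence $\mathcal{F}(\Stsim_A)\to\mathcal{F}(\Stsim_{R(A)})$: $F_P$ is exact and fully faithful onto $\mathcal{F}(F_P\Stsim_A)$; $L$ is the exact equivalence from~(iv); and $G_{\Hom_A(T,P)}$, restricted to $\mathcal{F}(F_{\Hom_A(T,P)}\Stsim_{R(A)})=\mathcal{F}(F_P\Cssim_A)$, is quasi-inverse to the fully faithful $F_{\Hom_A(T,P)}$ by $0$-faithfulness of the second cover. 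Exactness of this categorical equivalence can be checked by using the generators $A\in\mathcal{F}(\Stsim_A)$ and $R(A)\in\mathcal{F}(\Stsim_{R(A)})$ to force $F_P$ and $F_{\Hom_A(T,P)}$ to reflect monomorphisms and epimorphisms on their standard-filtered categories. Composing the integral exact equivalence so obtained with the inverse of $\Hom_A(T,-)\colon\mathcal{F}(\Cssim_A)\to\mathcal{F}(\Stsim_{R(A)})$ from Lemma~\ref{functorringeldualpart1} will give the exact equivalence $\mathcal{F}(\Stsim_A)\simeq\mathcal{F}(\Cssim_A)$, and Corollary~\ref{standardstocostandardsringel} concludes that $A$ is Ringel self-dual. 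The hard part will be a careful verification of the exactness of the descended composite $H$ on $\mathcal{F}(\Stsim_A)$, where the presence of a generator in the standard-filtered subcategory is essential.
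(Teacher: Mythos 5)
Your setup is the same as the paper's: the two relevant covers are $(A,P)$ and the Ringel-dual cover $(R(A),\Hom_A(T,P))$ of $B$, hypotheses (i)--(iii) feed into Theorem \ref{ringeldualitycovers}, hypothesis (iv) glues the two standard-image categories, and Corollary \ref{standardstocostandardsringel} finishes. But your descent step contains a genuine gap. You only ever establish that the two covers are $0$-$\mathcal{F}(\Stsim)$ covers \emph{integrally}, and $1$-faithful only \emph{after tensoring with $K$}. For the final chain of exact equivalences $\mathcal{F}(\Stsim_A)\to\mathcal{F}(F_P\Stsim_A)\to\mathcal{F}(F_P\Cssim_A)\to\mathcal{F}(\Cssim_A)$ you need the first and third arrows to be exact equivalences \emph{over $R$}, and by Proposition \ref{onefaithfulcovers} that is precisely the statement that both covers are $1$-$\mathcal{F}(\Stsim)$ covers over $R$ — not something weaker. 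A $0$-faithful cover gives a fully faithful exact $F_P$ on $\mathcal{F}(\Stsim)$, but neither denseness onto $\mathcal{F}(F_P\Stsim)$ nor exactness of the inverse $G_P$ follows; your suggestion to "force" reflection of monomorphisms and epimorphisms using the generator $A\in\mathcal{F}(\Stsim_A)$ cannot work in general, since if it did every $0$-faithful cover would be $1$-faithful, contradicting the whole point of the Hemmer--Nakano hierarchy. Likewise, concluding that $K\otimes_R A$ is Ringel self-dual does not descend to $A$ by any result available here (Lemma \ref{ringeldualintermsofresiduefield} concerns residue fields of local rings, not the quotient field).

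What is missing is the deformation machinery that the hypotheses are designed for. From (iii) one gets $\domdim_{A(\mi)}T(\mi)\geq 2$ and $\codomdim_{A(\mi)}T(\mi)\geq 2$ at every maximal ideal, hence by Proposition \ref{domdimtoolcover}(b) and Corollary \ref{codominantdimensioncontrollfinitedimensionalcase} both covers are $0$-faithful over every residue field, hence over $R$ and over every $R/\pri$ with $\pri$ of height one by Proposition \ref{faithfulcoverresiduefield}. Combining this with the generic $1$-faithfulness from (i) and (ii), Theorem \ref{improvingcoverwithspectrum} upgrades both $(A,P)$ and $(R(A),\Hom_A(T,P))$ to $1$-$\mathcal{F}(\Stsim)$ covers of $B$ over $R$ itself. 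Only then do Proposition \ref{onefaithfulcovers} and Remark \ref{exactequivalences} give the integral exact equivalences needed to sandwich (iv) and produce the exact equivalence $\mathcal{F}(\Stsim_A)\simeq\mathcal{F}(\Cssim_A)$. You should replace your direct descent by this application of Theorem \ref{improvingcoverwithspectrum} and Proposition \ref{faithfulcoverresiduefield}; there is then no need to pass through Corollary \ref{equivalenceofoneuniqueness} or Ringel self-duality of $K\otimes_R A$ at all.
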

\begin{proof}
	Observe that $\domdim_{A(\mi)} T(\mi)\geq 2$ and $\domdim_{A^{op}(\mi)} DT(\mi)=\codomdim_{A(\mi)} T(\mi)\geq 2$ for every maximal ideal $\mi$ of $R$. By Theorem \ref{ringeldualitycovers} and Corollary \ref{codominantdimensioncontrollfinitedimensionalcase}, $$(R(K\otimes_R A), \Hom_{K\otimes_R A}(K\otimes_R T, K\otimes_R P))=(K\otimes_R R(A), K\otimes_R \Hom_A(T, P))$$ is an 1-faithful split quasi-hereditary cover of $K\otimes_R B$, and $(R(A(\mi)), \Hom_A(T, P)(\mi))$ is a $0$-faithful split quasi-hereditary cover of $B(\mi)$ for every maximal ideal $\mi$ of $R$. By Proposition \ref{domdimtoolcover}(b), $(A(\mi), P(\mi))$ is a $0$-faithful split quasi-hereditary cover of $B(\mi)$ for every $\mi\in \MaxSpec R$. By Theorem \ref{improvingcoverwithspectrum} and Proposition \ref{faithfulcoverresiduefield}, $(R(A), \Hom_A(T, P))$ and $(A, P)$ are 1-faithful split quasi-hereditary covers of $B$. By Remark \ref{exactequivalences} and Proposition  \ref{onefaithfulcovers}, there exists an exact equivalence, \begin{align}
		\mathcal{F}(\Stsim_A)\rightarrow \mathcal{F}(F_P\Stsim_A)\xrightarrow{(iv)} \mathcal{F}(F_P\Cssim_A)\rightarrow \mathcal{F}(\Cssim_A).
	\end{align}
	Therefore, $A$ is Ringel self-dual.
\end{proof}

	\section{Wakamatsu tilting conjecture for quasi-hereditary algebras} \label{Wakamatsu tilting conjecture for quasi-hereditary algebras}

 Theorem \ref{moduleMuellerparttwo} is the main advantage of Definition \ref{relativedomdimrelativedef} compared to \citep[Definition 2.1]{Koenig2001} giving a meaning to what this relative dominant measures. Another point of view to be referred to is the Wakamatsu tilting conjecture (see \cite{zbMATH04053850}). In this context, the Wakamatsu tilting conjecture says that if $Q$ has finite projective $A$-dimension and it admits no self-extensions in any degree, then $Q\rdomdim {(A,R)}$ measures how far $Q$ is from being a tilting module. In particular, for split quasi-hereditary algebras this amounts to saying that for a module $Q$ in the additive closure of a characteristic tilting module, $Q\rdomdim {(A, R)}$ measures how far $Q$ is from being a characteristic tilting module of $A$. This is indeed the case and it follows as an application of Theorem \ref{ringeldualitycovers}.

\begin{Theorem}Let $R$ be a Noetherian commutative ring and
	$(A, \{\Delta(\lambda)_{\lambda\in \Lambda}\})$ be a split quasi-hereditary $R$-algebra. Assume that $T$ is a characteristic tilting module and $Q\in \add_A T$ is a partial tilting module.
	
	If $Q\rdomdim{(A, R)}=+\infty$, then $Q$ is a characteristic tilting module of $A$.  
\end{Theorem}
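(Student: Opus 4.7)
My strategy is to invoke the Tilting Criterion of Subsection~3.1.3 (the proposition asserting that $Q-\domdim{(A,R)}>\max\{\pdim_B Q,2\}$ implies $Q$ is tilting when $DQ\otimes_A Q\in R\proj$, $\pdim_A Q, \pdim_B Q<\infty$, and $\Ext_A^{i>0}(Q,Q)=0$), and then deduce from $Q\in\add T$ that $\add Q=\add T$. Since $Q\in \add T$ and $T$ is a characteristic tilting module, Proposition~\ref{qhproperties} gives $\Ext_A^{i>0}(Q,Q)=0$, while $\pdim_A Q<\infty$ follows from the finiteness of $\pdim_A T$. The last lemma of Subsection~2.3, applied at each residue field, ensures $DQ\otimes_A Q\in R\proj$. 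Three of the four hypotheses of the Tilting Criterion are therefore automatic, and the remaining task is to verify $\pdim_B Q<\infty$ with $B=\End_A(Q)^{op}$.

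The plan is to reduce this verification to the case of a field via change of rings. By Theorem~\ref{changeofringrelativedomdimrelativetomodule}, the hypothesis $Q-\domdim_{(A,R)}=+\infty$ localises to $Q(\mi)-\domdim_{A(\mi)}=+\infty$ for every maximal ideal $\mi$ of $R$. Over the finite-dimensional algebra $A(\mi)$, Theorem~\ref{moduleMuellerparttwo}(ii) applied with $M=A(\mi)$ yields that $\chi^r_{DA(\mi)}\colon Q(\mi)\otimes_{B(\mi)} DQ(\mi)\to DA(\mi)$ is an isomorphism and $\Tor_i^{B(\mi)}(Q(\mi),DQ(\mi))=0$ for all $i\geq 1$. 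I then plan to exploit the finite global dimension of $A(\mi)$ (since $A(\mi)$ is split quasi-hereditary), which bounds $\pdim_{A(\mi)}DA(\mi)$, together with the Tor-vanishing, to truncate a projective $B(\mi)$-resolution of $Q(\mi)$ to finite length. Concretely, tensoring a projective $B(\mi)$-resolution of $Q(\mi)$ with $DQ(\mi)$ produces an $\add_{A(\mi)}DQ(\mi)$-resolution of $DA(\mi)$ which is exact by the Tor-vanishing; exploiting the finite injective dimension of $A(\mi)$ and projectivisation (via the equivalence $F_{Q(\mi)}\colon \add_{A(\mi)}Q(\mi)\simeq B(\mi)\proj$) forces the resolution to terminate. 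Applying the Tilting Criterion at each $\mi$, $Q(\mi)$ is a tilting $A(\mi)$-module.

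Finally, since $Q(\mi)\in \add T(\mi)$ is a tilting module over a finite-dimensional split quasi-hereditary algebra, and any tilting module has exactly $|\Lambda|$ non-isomorphic indecomposable summands (matching the number of simples of $A(\mi)$), the inclusion $\add Q(\mi)\subseteq \add T(\mi)$ must be an equality; in particular $T(\mi)\in \add Q(\mi)$ at every residue field. Proposition~\ref{standardscotiltingsreductiontofields}(d) then lifts this to $T\in \add Q$ globally, and combined with the given $Q\in \add T$ we conclude $\add Q=\add T$, so $Q$ is a characteristic tilting module of $A$.

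The principal difficulty is the verification $\pdim_{B(\mi)}Q(\mi)<\infty$ in the second step: $B(\mi)$ is in general a corner $eR(A(\mi))e$ of the Ringel dual and need not have finite global dimension, so one cannot argue by a direct homological bound on $B(\mi)$; instead, the full strength of the infinite relative dominant dimension must be converted, via the Tor-vanishing of Theorem~\ref{moduleMuellerparttwo}(ii) and the finite homological dimensions of $A(\mi)$, into a termination statement for the projective $B(\mi)$-resolution of $Q(\mi)$. The remaining paragraphs of the proof are then essentially bookkeeping and the standard counting argument for tilting modules combined with Proposition~\ref{standardscotiltingsreductiontofields}(d).
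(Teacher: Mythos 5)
Your overall architecture (reduce to residue fields via Theorem~\ref{changeofringrelativedomdimrelativetomodule}, show $Q(\mi)$ is a tilting module, then count indecomposable summands and lift with Proposition~\ref{standardscotiltingsreductiontofields}) is sound, and three of the four hypotheses of the tilting criterion are indeed automatic for $Q\in\add T$. But the proof stands or falls on the step you yourself flag: verifying $\pdim_{B(\mi)}Q(\mi)<\infty$, and the justification you offer for it does not work. Tensoring a projective $B(\mi)$-resolution of $Q(\mi)$ with $DQ(\mi)$ does, by the Tor-vanishing from Theorem~\ref{moduleMuellerparttwo}, produce an exact (a priori infinite) $\add DQ(\mi)$-resolution of $DA(\mi)$. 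However, finiteness of $\injdim A(\mi)$, or of $\gldim A(\mi)$, only tells you that every syzygy of that resolution has projective dimension bounded by $\gldim A(\mi)$; it provides no mechanism forcing some syzygy to land in $\add DQ(\mi)$, which is what ``termination'' means here, and projectivisation only converts such a termination into projectivity of a $B(\mi)$-syzygy --- it cannot produce it. There is no general principle that a module of finite projective dimension has all of its exact $\add X$-resolutions terminate for a rigid $X$ of finite projective dimension; indeed, ``rigid $+$ finite projective dimension $+$ infinite faithful dimension $\Rightarrow$ tilting'' is precisely the content of the Wakamatsu tilting conjecture in this setting, which is open in general. So an argument that uses only homological finiteness of $A(\mi)$ is bound to fail: the split quasi-hereditary structure must enter the termination step in an essential way.

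What is missing is, roughly, a two-stage argument. First, pass to the dual coresolution $0\to A\to Q^0\to Q^1\to\cdots$ and use $\gldim A(\mi)<\infty$, the vanishing $\Ext_A^{>0}(Q^i,-)$ on $\mathcal{F}(\nabla)$ and dimension shifting along the infinite tail to place every cosyzygy in $\mathcal{F}(\Delta)$ via Proposition~\ref{qhproperties}(v). Second, apply $\Hom_A(-,T)$, which is then exact on the coresolution and converts it into a projective resolution over the Ringel dual; since the Ringel dual is again split quasi-hereditary it has finite global dimension, so some syzygy there is projective, whence some cosyzygy $C^i$ lies in $\add T$, and the next short exact sequence $0\to C^i\to Q^{i+1}\to C^{i+1}\to 0$ splits (both end terms are in $\add T$), finally placing $C^i$ in $\add Q(\mi)$ and yielding $\pdim_{B(\mi)}Q(\mi)<\infty$. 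Only at that point may you invoke the tilting criterion. The paper avoids this entirely: it feeds $DQ-\codomdim_{(A^{op},k)}DA=+\infty$ into Theorem~\ref{ringeldualitycovers} to obtain an $\infty$-faithful split quasi-hereditary cover $(\End_{A^{op}}(DT)^{op},\Hom_{A^{op}}(DT,DQ))$ of $\End_{A^{op}}(DQ)^{op}$ and then quotes the bound from \cite{p2paper} that a cover whose faithfulness exceeds the number of simple modules of the covered algebra is a Morita equivalence, forcing $DQ$ and $DT$ to have the same number of indecomposable summands. Either route requires an input --- the cover bound, or the finite global dimension of the Ringel dual together with the $\mathcal{F}(\Delta)$-membership of the cosyzygies --- that your proposal does not supply.
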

\begin{proof}Consider first that $R$ is a field.
	By assumption, $DQ\rcodomdim_{(A^{op}, R)} DA=+\infty$. $\Hom_{A^{op}}(DQ, -)$ is exact on $\mathcal{F}(\Cssim_{A^{op}})$, and so, we obtain by Lemma \ref{relativedominantdimensiononses} that $DQ\rcodomdim_{(A^{op}, R)} \mathcal{F}(\Cssim_{A^{op}})=+\infty$. By Theorem \ref{ringeldualitycovers}, $(\End_{A^{op}}(DT)^{op}, \Hom_{A^{op}}(DT, DQ))$ is an $+\infty$ faithful split quasi-hereditary cover of $\End_{A^{op}}(DQ)^{op}$. By \citep[Corollary 4.2.2]{p2paper}, $\End_{A^{op}}(DT)^{op}$ is Morita equivalent to $\End_{A^{op}}(DQ)^{op}$. In particular, by projectization, $DT$ and $DQ$  have the same number of indecomposable modules. Therefore, $\add_{A^{op}} DQ=\add_{A^{op}} DT$, and so $Q$ is a characteristic tilting module. Assume now that $R$ is an arbitrary Noetherian commutative ring. If $Q\rdomdim_{(A, R)} =+\infty$, then we have $Q(\mi)\rdomdim_{A(\mi)}=+\infty$ for every maximal ideal $\mi$ of $R$. Hence, $Q(\mi)$ is a characteristic tilting module for every maximal ideal $\mi$ of $R$, that is, $\add Q(\mi)=\mathcal{F}(\St(\mi))\cap \mathcal{F}(\Cs(\mi))$. By Proposition \ref{standardscotiltingsreductiontofields}, the result follows.
\end{proof}

\subsection{Measuring $Q\rdomdim_{(A, R)} T$ using projective resolutions}
We can also try to compute $Q\rdomdim_{(A, R)} T$ using projective resolutions over a Ringel dual of $A$. Moreover, this perspective leads us to reformulate that $Q\rdomdim_{(A, R)} T$ measures how much the projective module $\Hom_A(T, Q)$ controls projective resolutions of characteristic tilting modules over the Ringel dual.

\begin{Prop}
	Let $(A, \{\Delta(\lambda)_{\lambda\in \Lambda}\})$ be a split quasi-hereditary $R$-algebra with a characteristic tilting module $T$. Denote by $R(A)$ the Ringel dual $\End_A(T)^{op}$ (of $A$).
	Suppose that $Q\in \add_A T$ is a partial tilting module. Then, $\Hom_A(T, Q)\rcodomdim_{(R(A), R)} DT=Q\rdomdim_{(A, R)} T$.
\end{Prop}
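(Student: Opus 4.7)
The plan is to apply the Mueller-type characterisations (Theorems \ref{moduleMuellerpartone}, \ref{dualmoduleMuellerpartone}, \ref{moduleMuellerparttwo}) to both sides of the asserted identity and show that, under a natural identification coming from Ringel duality, they translate into the same data controlled by the same algebra. Set $Q' := \Hom_A(T,Q)$ and $G := \Hom_A(T,-)$. Since $Q\in\add T$, projectivisation (Lemma \ref{functorringeldualpart1}(a)) yields $Q'\in R(A)\proj$ together with an isomorphism of $R$-algebras $B:=\End_A(Q)^{op}\simeq\End_{R(A)}(Q')^{op}$, which I identify throughout. Proposition \ref{qhproperties}(vii) applied to $Q\in\mathcal{F}(\Cssim_A)\cap\mathcal{F}(\Stsim_A)$ gives $DQ\otimes_A Q\in R\proj$, while $Q'\in R(A)\proj$ together with $R(A)\in R\proj$ yields $DQ'\otimes_{R(A)}Q'\in R\proj$, so the hypotheses of the Mueller-type theorems hold on both sides.

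First I would produce two natural bimodule isomorphisms. The standard duality $D$ provides $\Hom_{A^{op}}(DQ,DT)\simeq Q'$ as $(R(A),B)$-bimodules. Then, combining the Tensor--Hom isomorphism $DT\simeq\Hom_A(T,DA)=G(DA)$ (of left $R(A)$-modules), the fully faithfulness of $G$ on $\mathcal{F}(\Cssim_A)$ applied to $Q,DA\in\mathcal{F}(\Cssim_A)$, and the standard isomorphism $\Hom_A(Q,DA)\simeq DQ$, yields
\begin{align*}
\Hom_{R(A)}(Q',DT)\;\simeq\;\Hom_{R(A)}(Q',G(DA))\;\simeq\;\Hom_A(Q,DA)\;\simeq\;DQ,
\end{align*}
naturally as left $B$-modules. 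With these identifications, Theorem \ref{moduleMuellerparttwo}(ii) applied to the right-hand side says that $Q-\domdim_{(A,R)}T\geq n\geq 2$ iff the counit $\chi^r_{DT}$ is an isomorphism and $\Tor_i^B(Q',DQ)=0$ for $1\leq i\leq n-2$, while Theorem \ref{moduleMuellerparttwo}(i) applied to the left-hand side says that $Q'-\codomdim_{(R(A),R)}DT\geq n\geq 2$ iff the counit $\chi_{DT}$ is an isomorphism together with the literally same Tor vanishing (after the identification $\Hom_{R(A)}(Q',DT)\simeq DQ$). The cases $n=0,1$ follow in parallel from Theorems \ref{moduleMuellerpartone}(i) and \ref{dualmoduleMuellerpartone}(i), replacing bijectivity by surjectivity.

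The main technical point is the naturality verification that the counit $\chi_{DT}\colon Q'\otimes_B\Hom_{R(A)}(Q',DT)\to DT$ of the adjunction $Q'\otimes_B-\dashv\Hom_{R(A)}(Q',-)$ coincides, under the isomorphism $\Hom_{R(A)}(Q',DT)\simeq DQ$ constructed above, with the counit $\chi^r_{DT}\colon\Hom_{A^{op}}(DQ,DT)\otimes_B DQ\to DT$ of the adjunction $-\otimes_B DQ\dashv\Hom_{A^{op}}(DQ,-)$. Unwinding the three intermediate isomorphisms (Tensor--Hom for $DT\simeq G(DA)$, fully faithfulness of $G$, and $\Hom_A(Q,DA)\simeq DQ$) shows that each step is natural in the second variable and compatible with evaluation, so the verification reduces to an explicit diagram chase with no conceptual difficulty expected beyond careful bookkeeping.
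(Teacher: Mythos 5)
Your proposal is correct and follows essentially the same route as the paper: identify $\End_{R(A)}(\Hom_A(T,Q))^{op}$ with $B=\End_A(Q)^{op}$ by projectivisation, transport the data via the isomorphisms $\Hom_A(T,Q)\simeq\Hom_{A^{op}}(DQ,DT)$ and $\Hom_{R(A)}(\Hom_A(T,Q),DT)\simeq\Hom_A(Q,DA)\simeq DQ$ (Tensor--Hom plus fully faithfulness of the Ringel dual functor on $\mathcal{F}(\Cssim)$), match the two counits $\chi_{DT}$ and $\chi^r_{DT}$ through a commutative diagram, and conclude with the Mueller-type theorems and the resulting $\Tor$-identification. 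The paper likewise only records the commutative diagram without spelling out the chase, so your level of detail matches its proof.
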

\begin{proof}
	Observe that $\End_{R(A)}(\Hom_A(T, Q))^{op}\simeq \End_A(Q)^{op}$ since $Q\in \add T$. Denote by $B$ this endomorphism algebra and  by $H$ the Ringel dual functor $\Hom_A(T, -)\colon A\m\rightarrow R(A)\m$. Observe also that the following diagram is commutative:
	\begin{equation*}
		\begin{tikzcd}[column sep=20pt,row sep=10pt]
			\Hom_A(T, Q)\otimes_B \Hom_A(Q, DA) \arrow[r, "\Hom_A(T{,} Q)\otimes_B \alpha_Q", outer sep=0.75ex, "\simeq"']     \arrow[d, "\Hom_A(T{,} Q)\otimes_B H_{Q{,} DA}", outer sep=0.75ex, "\simeq"'] & \Hom_A(T, Q)\otimes_B DQ  \arrow[r, "\psi_{T, Q}", "\simeq"'] & \Hom_A(DQ, DT)\otimes_B DQ \arrow[dd, "\chi_{DT}^r"] \\
			\Hom_A(T, Q)\otimes_B \Hom_{R(A)}(HQ, H(DA)) \arrow[d, "\Hom_A(T{,}Q)\otimes_B \alpha_T", "\simeq"'] \\
			\Hom_A(T, Q)\otimes_B \Hom_{R(A)}(HQ, DT)\arrow[rr, "\chi_{DT}"] & &  DT
		\end{tikzcd}.
	\end{equation*}
Here, the $H_{Q, DA}$ is the induced isomorphism of $H$ by fully faithfulness on $\mathcal{F}(\Cssim)$, $\alpha_M$ is the isomorphism  $\Hom_A(M, DA) \rightarrow DM$, given by Tensor-Hom adjunction, for any $M\in A\m\cap R\proj$, while $\psi_{T, Q}$ is the canonical isomorphism given by $Q, T\in A\m\cap R\proj$ (see \citep[Proposition 2.2]{CRUZ2022410}).
	
	These isomorphisms also yield that 
	\begin{align}
		\Tor_i^B(\Hom_A(T, Q), \Hom_{R(A)}(\Hom_A(T, Q), DT) )\simeq \Tor_i^B(\Hom_A(T, Q), DQ), \ \forall i\in \mathbb{N}.
	\end{align}
The result now follows from Theorems \ref{dualmoduleMuellerpartone}, \ref{moduleMuellerpartone} and \ref{moduleMuellerparttwo}.
\end{proof}

\section{Going from bigger covers to a smaller covers}\label{Going from bigger covers to a smaller covers}

In this section, our aim is to explore whether the quality of a split quasi-hereditary cover is preserved by truncation by split heredity ideals and by Schur functors between split highest weight categories.

	\subsection{Truncation of split quasi-hereditary covers}\label{Truncation of split quasi-hereditary covers}

\begin{Theorem}\label{truncationcover}
	Let $(A, \{\Delta(\lambda)_{\lambda\in \Lambda}\})$ be a split quasi-hereditary Noetherian $R$-algebra. Assume that $(A, P)$ is an $i$-$\mathcal{F}(\Stsim)$ cover of $\End_A(P)^{op}$ for some integer $i\geq 0$. Let $J$ be a split heredity ideal of $A$. Then, $(A/J, P/JP)$ is an $i$-$\mathcal{F}(\Stsim_{A/J})$ cover of $\End_{A/J}(P/JP)^{op}$, where $\mathcal{F}(\Stsim_{A/J})=\mathcal{F}(\Stsim)\cap A/J\m$.
\end{Theorem}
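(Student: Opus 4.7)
The plan is to exploit a change-of-rings argument on both sides of the Schur functor. Set $B := \End_A(P)^{op}$ and $B' := \End_{A/J}(P/JP)^{op}$. The surjection $P \twoheadrightarrow P/JP$ induces an algebra surjection $B \twoheadrightarrow B'$ with kernel $I := \Hom_A(P, JP)$, so $B' \simeq B/I$. Every $A$-homomorphism $P \to M$ with $M \in (A/J)\m$ kills $JP$, so the canonical isomorphism $\Hom_A(P, M) \simeq \Hom_{A/J}(P/JP, M)$ identifies $F_P M$ (whose $B$-action factors through $B/I$) with $F_{P/JP} M$. My plan is to establish Ext-preserving change of rings for both $A \to A/J$ and $B \to B/I$, and then concatenate these with the $i$-$\mathcal{F}(\Stsim)$ cover isomorphisms furnished by $(A, P)$, using the inclusion $\mathcal{F}(\Stsim_{A/J}) = \mathcal{F}(\Stsim) \cap (A/J)\m \subseteq \mathcal{F}(\Stsim)$.

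On the $A$-side, the split heredity conditions $J \in A\proj$ and $J^2 = J$ give $\pdim_A(A/J) \leq 1$ and, via the sequence $0 \to J \to A \to A/J \to 0$ tensored with $M \in (A/J)\m$, $\Tor_1^A(A/J, M) \simeq J \otimes_A M \simeq (J/J^2) \otimes_{A/J} M = 0$. Hence $\Tor_q^A(A/J, M) = 0$ for all $q \geq 1$, and the Cartan--Eilenberg change-of-rings spectral sequence $\Ext^p_{A/J}(\Tor_q^A(A/J, M), N) \Rightarrow \Ext^{p+q}_A(M, N)$ collapses, yielding $\Ext^j_{A/J}(M, N) \simeq \Ext^j_A(M, N)$ for all $M, N \in (A/J)\m$ and $j \geq 0$.

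On the $B$-side, since $J$ is a summand of some $A^n$ one has $JP \in \add P$, and by projectivization $I = F_P(JP) \in \add B$, so $\pdim_B(B/I) \leq 1$. The key technical point, and the principal obstacle of the plan, is to verify $I^2 = I$. I would argue this using the bimodule identification $I \simeq \Hom_A(P, A) \cdot J \otimes_A P$ inside $B \simeq \Hom_A(P, A) \otimes_A P$, together with the composition rule $(\alpha_1 \otimes p_1) \cdot_B (\alpha_2 \otimes p_2) = \alpha_1 \otimes \alpha_2(p_1) p_2$ and the defining identity $J^2 = J$ to exhibit the required decompositions of elements of $I$ as products of pairs in $I$. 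Once $I^2 = I$ and $\pdim_B(B/I) \leq 1$ are in hand, the analogous spectral sequence degenerates and gives $\Ext^j_{B'}(F_{P/JP} M, F_{P/JP} N) \simeq \Ext^j_B(F_P M, F_P N)$ for all $M, N \in (A/J)\m$ and $j \geq 0$.

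To conclude, for $M, N \in \mathcal{F}(\Stsim_{A/J})$ and $0 \leq j \leq i$ I chain
\begin{align*}
\Ext^j_{A/J}(M, N) \simeq \Ext^j_A(M, N) \simeq \Ext^j_B(F_P M, F_P N) \simeq \Ext^j_{B'}(F_{P/JP} M, F_{P/JP} N),
\end{align*}
where the middle isomorphism invokes the $i$-$\mathcal{F}(\Stsim)$ cover property of $(A, P)$. Each isomorphism is natural, and the composite agrees with the map induced by the Schur functor $F_{P/JP}$. Specialising to $j = 0$ and $N = A/J$ yields the double centralizer property, so $(A/J, P/JP)$ is a split quasi-hereditary cover of $B'$; the vanishings for $1 \leq j \leq i$ combined with Proposition \ref{zeroAcover}(c) upgrade this to an $i$-$\mathcal{F}(\Stsim_{A/J})$ cover.
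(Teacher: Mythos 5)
Your overall architecture — identify $B':=\End_{A/J}(P/JP)^{op}$ with $B/I$ for $I=\Hom_A(P,JP)$, identify $F_{P/JP}M$ with $F_PM$ for $M\in A/J\m$, and sandwich the given cover isomorphisms between two change-of-rings comparisons — is close to the paper's, and your $A$-side is correct: $J$ idempotent and projective on both sides makes $A\to A/J$ a homological epimorphism, so $\Ext^j_{A/J}(M,N)\simeq\Ext^j_A(M,N)$. The gap is entirely on the $B$-side. Both of your key claims there fail in general, even under the hypotheses of the theorem: $JP\simeq J\otimes_A P$ lies in $\add_A J$, not in $\add_A P$ (the splitting $J\mid A^n$ is as a one-sided module and is not compatible with the bimodule structure needed to apply $-\otimes_A P$, or equivalently $J\otimes_A-$ sends $\add_A A$ to $\add_A J$), so $I=F_P(JP)$ need not be $B$-projective; and writing $I\simeq\Hom_A(P,A)\otimes_A JP$ and multiplying out with your composition rule gives $I^2=\Hom_A(P,A)\otimes_A J\operatorname{tr}_P(A)\,JP$, where $\operatorname{tr}_P(A)$ is the trace ideal of $P$, and since $P$ is not a generator one can have $J\operatorname{tr}_P(A)J\subsetneq J$. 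Concretely, take $A=\mathcal{A}_3$ from Example \ref{exampleschuralgfinitetype} (a block of $S_k(5,5)$ in characteristic $3$), $P=P(2)\oplus P(3)$ the minimal faithful projective-injective and $J=Ae_1A$: then $(A,P)$ is a $0$-$\mathcal{F}(\Stsim)$ cover, but $I$ is one-dimensional, spanned by the square-zero endomorphism $P(2)\twoheadrightarrow\operatorname{soc}P(2)\hookrightarrow P(2)$, so $I^2=0\neq I$ and $I$ is not projective over the six-dimensional algebra $B$. Hence $B\to B/I$ is not a homological epimorphism and your spectral sequence does not degenerate.

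What saves the statement — and what the paper actually proves — is a relative version of your $B$-side comparison: one does not need $\Ext^j_{B'}(Y,Y')\simeq\Ext^j_B(Y,Y')$ for all $B'$-modules, only the vanishing $\Ext_B^l(B', F_PN)=0$ for $1\leq l\leq i+1$ and the specific modules $N\in\mathcal{F}(\Stsim_{A/J})$. This vanishing is not a ring-theoretic property of $I$; it is extracted from the cover hypothesis itself, via $\Hom_B(F_P(JP),F_PN)\simeq\Hom_A(JP,N)=0$ (because $JP=J\cdot JP$ and $JN=0$) and $\Ext_B^{l-1}(F_P(JP),F_PN)\simeq\Ext_A^{l-1}(JP,N)=0$ (using $JP\in A\proj\subseteq\mathcal{F}(\Stsim)$ and the $i$-cover property). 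A dimension-shifting argument along a $B'$-projective resolution of $F_{P/JP}(A/J)$ then transfers $\Ext_B$-vanishing to $\Ext_{B'}$-vanishing in degrees $\leq i$, which together with the $j=0$ comparison of the adjunction units gives the result. Your plan to derive the comparison from $I^2=I$ and $\pdim_B(B/I)\leq 1$ alone cannot be repaired; you must route the argument back through the cover hypothesis as above.
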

\begin{proof}Denote by $B$ the endomorphism algebra $\End_A(P)^{op}$.
	The map $A\twoheadrightarrow A/J$ induces the fully faithful functor $A/J\m\rightarrow A\m$. Hence, $\End_{A/J}(P/JP)^{op}\simeq \End_A(P/JP)^{op}$. We wish to express $\End_{A/J}(P/JP)^{op}$ as a quotient of $B$. To see this, consider the exact sequence of $(A, B)$-bimodules
	\begin{align}
		0\rightarrow JP\rightarrow P\rightarrow P/JP\rightarrow 0. \label{eqfc108}
	\end{align}Applying $\Hom_A(P, -)$ yields the exact sequence 
	\begin{align}
		0\rightarrow \Hom_A(P, JP)\rightarrow B\rightarrow \Hom_A(P, P/JP)\rightarrow 0, \label{eqfc109}
	\end{align}
	while applying $\Hom_A(-, P/JP)$ yields the exact sequence
	\begin{align}
		0\rightarrow \End_A(P/JP)\rightarrow \Hom_A(P, P/JP)\rightarrow \Hom_A(JP, P/JP). \label{eqfc110}
	\end{align} Thanks to $J=J^2$ we have $\Hom_A(JP, X)=0$ for every $X\in A/J\m$. Combining (\ref{eqfc110}) with (\ref{eqfc109}) we obtain the exact sequence
	\begin{align}
		0\rightarrow \Hom_A(P, JP)\rightarrow B\rightarrow \End_{A/J}(P/JP)\rightarrow 0.
	\end{align}Since (\ref{eqfc108}) is exact as $(A, B)$-bimodules the latter is exact as $(B, B)$-bimodules. Denote by $B_J$ the endomorphism algebra $\End_{A/J}(P/JP)^{op}$. By the previous argument, the functor $B_J\m\rightarrow B\m$ is fully faithful. Denote by $G_{P/JP}$ the functor $$\Hom_{B_J}(\Hom_{A/J}(P/JP, A/J), -)=\Hom_B(\Hom_{A/J}(P/JP, A/J), -)\colon B_J\m\rightarrow A/J\m$$ and $F_{P/JP}=\Hom_{A/J}(P/JP, -)=\Hom_A(P/JP, -)\colon A/J\m\rightarrow B_J\m$. 
	
	To assert that the truncated cover is a $0$-$\mathcal{F}(\Stsim)$ cover, it is enough to compare the restrictions of the functors $F_P$ and $G_P\circ F_P$ to $\mathcal{F}(\Stsim)\cap A/J\m$ with the restriction of the functors $F_{P/JP}$ and $G_{P/JP}\circ F_{P/JP}$ to $\mathcal{F}(\Stsim_{A/J})$, respectively. For each $X\in A/J\m$, applying $\Hom_A(-, X)$ to (\ref{eqfc108}) instead of $\Hom_A(-, P/JP)$ yields that $F_{P/JP}X\simeq F_PX$. By applying $\Hom_B(-, F_PX)$ to $0\rightarrow F_PJ\rightarrow F_PA\rightarrow F_P(A/J)\rightarrow 0$ we obtain the exact sequence $0\rightarrow G_{P/JP}F_{P/JP} X\rightarrow G_PF_PX\rightarrow \Hom_B(F_PJ, F_PX)$. Fixing $X\in \mathcal{F}(\Stsim_{A/J})$ we obtain that  $\Hom_B(F_PJ, F_PX)\simeq \Hom_A(J, X)=0$ since $(A, P)$ is a $0$-$\mathcal{F}(\Stsim)$ cover of $B$. These isomorphisms are functorial, so if we denote by $ \eta^J$ the unit of the adjunction $F_{P/JP}\dashv G_{P/JP}$, then $\eta^J_X$ is an isomorphism for every $X\in \mathcal{F}(\Stsim_{A/J})$. This shows that $(A/J, P/JP)$ is a $0$-$\mathcal{F}(\Stsim_{A/J})$ cover of $B_J$.
	
	Our aim now is to compute $\R^jG_{P/JP}(F_{P/JP}X)$ for $j\leq i$ and every $X\in \mathcal{F}(\Stsim_{A/J})$. Hence, fix an arbitrary $X\in \mathcal{F}(\Stsim_{A/J})$. Applying $\Hom_B(-, F_PX)$ to (\ref{eqfc109}) we obtain $\Ext_B^1(B_J, F_PX)=0$ and $\Ext_B^l(B_J, F_PX)\simeq \Ext_B^{l-1}(F_PJP, F_PX)$ for every $l>1$.
	Observe that $JP\simeq J\otimes_A P$ as left $A$-modules since $P\in A\proj$. Moreover, $JP\in \add_A J$, and thus it is projective as left $A$-module. Thus, $\Ext_B^l(B_J, F_PX)\simeq \Ext_A^{l-1}(JP, X)=0$ for every $0<l-1\leq i$. Hence, $\Ext_B^l(B_J, F_PX)=0$ for every $1\leq l\leq i+1$.
	Let $\cdots\rightarrow P_i\rightarrow \cdots \rightarrow P_1\rightarrow P_0\rightarrow F_{P/JP}A/J\rightarrow 0$ be a projective $B_J$-resolution of $F_{P/JP}A/J$. Denote by $\Omega^{j+1}$ the kernel of $P_j\rightarrow P_{j-1}$, with $P_{-1}=\Omega^0=F_{P/JP}A/J$. Note that $\Ext_B^l(P_j, F_PX)=0$ for $1\leq l\leq i+1$. Taking into account that $B_J\m\rightarrow B\m$ is a fully faithful functor, applying $\Hom_B(-, F_PX)$ and $\Hom_{B_J}(-, F_PX)$ to the $B_J$ projective resolution of $F_PA/J$ yields \begin{align}
		\Ext_B^l(\Omega^j, F_PX)\simeq \Ext_B^{l+1}(\Omega^{j-1}, F_PX), \quad \Ext_{B_J}^s(\Omega^j, F_PX)\simeq \Ext_{B_J}^{s+1}(\Omega^{j-1}, F_PX),
	\end{align} for $1\leq l\leq i$,  $s, \ j\geq 1$ and the commutative diagram \begin{equation}
		\begin{tikzcd}
			\Hom_B(P_j, F_{P/JP}X)\arrow[r] \arrow[d, equal]& \Hom_B(\Omega^{j+1}, F_{P/JP}X) \arrow[r] \arrow[d, equal] & \Ext_B^1(\Omega^j, F_{P/JP}X)\arrow[r] & 0\\
			\Hom_{B_J}(P_j, F_{P/JP}X)\arrow[r] & \Hom_{B_J}(\Omega^{j+1}, F_{P/JP}X) \arrow[r] & \Ext_{B_J}^1(\Omega^j, F_{P/JP}X)\arrow[r] & 0
		\end{tikzcd}.
	\end{equation}  By the commutative diagram, $\Ext_B^1(\Omega^j, F_PX)$ is zero if and only if $\Ext_{B_J}^1(\Omega^j, F_PX)$ is zero. By assumption and the previous discussion, for each $1\leq l\leq i$, \begin{multline}
		0=\Ext_B^l(F_PA/J, F_PX)=\Ext^l_B(\Omega^0, F_PX)\simeq \Ext_B^1(\Omega^{l-1}, F_PX)\\=\Ext_{B_J}^1(\Omega^{l-1}, F_{P/JP}X)\simeq \Ext_{B_J}^l(F_{P/JP}A/J, F_{P/JP}X). 
	\end{multline}This concludes the proof.
\end{proof}

\begin{Remark}
	The module $P/JP$ might not be injective even if $P$ is.
\end{Remark}

\begin{Remark}
	It follows from the proof of Theorem \ref{truncationcover} that if $(A, P)$ is a cover of $B$ such that $(A/J, P/JP)$ is a $0$-$\mathcal{F}(\Stsim_{A/J})$ cover of $B_J$, then $(A, P)$ is a $(-1)$-$\mathcal{F}(\Stsim)$ cover of $B$.
\end{Remark}

This gives another reason to be interested in zero faithful split quasi-hereditary covers. These are exactly the covers for which double centralizer properties occur in every step of the split heredity chain. In particular, this gives another perspective on why zero faithful split quasi-hereditary covers possess so much nicer properties compared with $(-1)$-faithful split quasi-hereditary covers.

	\subsection{Relative codominant dimension with respect to a module in the image of a Schur functor preserving the highest weight structure}
Many split quasi-hereditary algebras can be written in the form $eAe$, for some bigger quasi-hereditary algebra $A$ in such a way that the quasi-hereditary structure of $eAe$ is inherited from the quasi-hereditary structure of $A$ under the Schur functor $\Hom_{A}(Ae, -)\colon A\m\rightarrow eAe\m$.  This is the case for Schur algebras $S_K(n, d)$ when $n<d$ (recall Theorem \ref{eAeqh}). In general, the Schur functor does not preserve dominant or codominant dimension. In this subsection, we give a relation between $eP\rcodomdim_{eAe}eT$ and the codominant dimension of a characteristic tilting module $T$ of $A$, when $P$ is a faithful projective-injective $A$-module.

\begin{Theorem}Let $k$ be a field and $(A, \{\Delta(\lambda)_{\lambda\in \Lambda}\})$ be a split quasi-hereditary algebra over $k$.
	Assume that there exists an idempotent $e$ of $A$ such that both $e$ and $A$ satisfy  the conditions of Theorem \ref{eAeqh}.
	Suppose that $P$ is a projective-injective faithful module. Let $M\in \mathcal{F}(\Cs)$. 
	If $\codomdim_{A} M \geq i$, then $eP\rcodomdim_{eAe} eM\geq i$ \label{lowervaluesrelawirthrespmodule} for $i\in \{1, 2\}$.
\end{Theorem}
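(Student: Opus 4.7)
The strategy is to push the $\add P$-approximation data witnessing $\codomdim_A M \geq i$ through the Schur functor $e(-) = \Hom_A(Ae, -)$ to obtain an $\add eP$-approximation witness for $eM$ over $eAe$. The two key ingredients are the $\Hom$-surjectivity supplied by Theorem~\ref{eAeqh}(iv) (which requires the source to be in $\mathcal{F}(\St)$ and the target in $\mathcal{F}(\Cs)$) and the codominant-dimension analogue of Lemma~\ref{continuinganapproximationresolution} for going from $i=1$ to $i=2$. For the setup, note that $P$ faithful projective-injective lies in $\mathcal{F}(\St)\cap \mathcal{F}(\Cs)$, so $P\in\add T$ for the characteristic tilting module $T$ of $A$; by Theorem~\ref{eAeqh}(iii) this yields $eP\in\add eT$, making $eP$ a partial tilting module of $eAe$. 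Faithfulness of $P$ also gives $\codomdim_A M = P\text{-}\codomdim_A M$, so all witnesses can be chosen with components in $\add P$.

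For $i=1$, choose a surjective right $\add P$-approximation $\phi\colon P_0\twoheadrightarrow M$ with $P_0\in\add P$; applying the exact functor $e(-)$ produces a surjection $e\phi\colon eP_0\twoheadrightarrow eM$ with $eP_0\in\add eP$. In the commutative square
\[
\begin{tikzcd}
\Hom_A(P, P_0) \arrow[r, twoheadrightarrow, "{\Hom_A(P, \phi)}"] \arrow[d, twoheadrightarrow] & \Hom_A(P, M) \arrow[d, twoheadrightarrow] \\
\Hom_{eAe}(eP, eP_0) \arrow[r, "{\Hom_{eAe}(eP, e\phi)}"'] & \Hom_{eAe}(eP, eM)
\end{tikzcd}
\]
the top arrow is surjective by the approximation property of $\phi$, and the vertical arrows are surjective by Theorem~\ref{eAeqh}(iv), since $P\in\mathcal{F}(\St)$ (as a projective module) and $P_0, M\in\mathcal{F}(\Cs)$ (using that $P_0\in\add P$ and $P$ is injective). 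Hence the bottom arrow is surjective, so by Lemma~\ref{rightapprbyonemodule} the map $e\phi$ is a right $\add eP$-approximation, yielding $eP\text{-}\codomdim_{eAe}eM\geq 1$.

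For $i=2$, pick an exact sequence $P_1\xrightarrow{\psi}P_0\xrightarrow{\phi}M\to 0$ with $P_j\in\add P$ that remains exact under $\Hom_A(P,-)$; applying $e(-)$ gives the exact sequence $eP_1\xrightarrow{e\psi}eP_0\xrightarrow{e\phi}eM\to 0$ with $eP_j\in\add eP$. The $i=1$ step establishes that $eP_0\twoheadrightarrow eM\to 0$ is a right $\add eP$-approximation and hence witnesses $eP\text{-}\codomdim_{eAe}eM\geq 1$, and this witness extends by $e\psi$ to the displayed exact sequence with $eP_1\in\add eP$. The codominant-dimension dual of Lemma~\ref{continuinganapproximationresolution} (obtained by applying $D_k$ and invoking the lemma over $A^{op}$) with $n=1$ then delivers $eP\text{-}\codomdim_{eAe}eM\geq 2$. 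The main obstacle preventing an extension to $i\geq 3$ is that Theorem~\ref{eAeqh}(iv) yields only $\Hom$-level surjectivity and no control over kernels or higher $\Ext$ groups under the Schur functor, so the lifting argument cannot be iterated beyond one step past the initial surjective approximation.
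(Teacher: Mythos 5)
Your proof is correct. It takes a visibly different route from the paper's: the paper never touches approximation sequences in this argument, but instead works entirely at the level of counits, showing that $C=\End_{eAe}(eP)^{op}$ is a quotient of $B=\End_A(P)^{op}$ (so that $\otimes_B$ and $\otimes_C$ agree on $C$-modules), fitting $e\delta_{DM}$ and $\chi_{eM}$ into a commutative diagram linked by the surjection $\Hom_A(P,M)\twoheadrightarrow\Hom_{eAe}(eP,eM)$ from Theorem~\ref{eAeqh}(iv), and then reading off both cases from the Mueller-type characterisation of Theorem~\ref{moduleMuellerpartone} ($\chi_{eM}$ surjective for $i=1$; for $i=2$, $e\delta_{DM}$ bijective forces $(eP)\otimes_B\varphi_M$ to be injective, hence $\chi_{eM}$ bijective). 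Both arguments hinge on exactly the same input, namely Theorem~\ref{eAeqh}(iv), but you trade the counit formalism for a direct diagram chase on $\Hom$-spaces (via Lemma~\ref{rightapprbyonemodule}) plus the dual of Lemma~\ref{continuinganapproximationresolution} to upgrade the exact sequence $eP_1\to eP_0\to eM\to 0$ to one with the approximation property at the kernel. Your version is more elementary and stays entirely inside approximation theory; the paper's version is shorter for $i=2$ (no auxiliary lemma needed, since bijectivity of the counit is inherited immediately from the diagram) and sits closer to the $\Tor$-vanishing machinery one would need to attack higher $i$. Your closing diagnosis of why the argument stops at $i=2$ matches the paper's subsequent discussion.
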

\begin{proof}
	Denote by $B=\End_A(P)^{op}$ and by $C=\End_{eAe}(eP)^{op}$. Since $P$ is a (partial) tilting module the map given by multiplication by $e$, $B\rightarrow C$ is surjective according to Theorem \ref{eAeqh}. Thus, $C$ is a quotient of $B$. In particular, $C\m$ is a full subcategory of $B\m$. Again by Theorem \ref{eAeqh}, the map $\Hom_A(P, M)\rightarrow \Hom_{eAe}(eP, eM)$ is a surjective left $B$-homomorphism. Denote such a map by $\varphi_M$.  We can consider the following commutative diagram
	\begin{equation*}
		\begin{tikzcd}[scale cd=0.90, column sep=small, ]
			e\cdot (P\otimes_B \Hom_A(P, M))\arrow[d, "e\delta_{DM}"]  \arrow[r, equal] &	(eP)\otimes_B \Hom_A(P, M) \arrow[r, "(e\cdot P)\otimes_B \varphi_M", outer sep=0.75ex] & 	(eP)\otimes_B \Hom_{eAe}(eP, eM) \arrow[r, equal] &(eP)\otimes_C \Hom_{eAe}(eP, eM) \arrow[d, "\chi_{eM}"] 
			\\
			eM \arrow[rrr, equal] & & & eM
		\end{tikzcd}
	\end{equation*} with the composition of the upper rows being surjective (see also Remark \ref{counitchianddelta}).
	In fact, thanks to the $C\m$ being a full subcategory of $B\m$ we have the isomorphisms\begin{align}
		D((eP)\otimes_C \Hom_{eAe}(eP, eM))&\simeq \Hom_C(\Hom_{eAe}(eP, eM), D(eP))=\Hom_B(\Hom_{eAe}(eP, eM), D(eP))\nonumber\\&\simeq D((eP)\otimes_B \Hom_{eAe}(eP, eM)).
	\end{align}
	Since $\domdim_{A^{op}} DM=\codomdim_{A} M \geq 1 (\text{resp.} \ 2)$ if and only if $\delta_{DM}$ is surjective (resp. isomorphism) we obtain that $e\delta_{DM}$ is surjective if $i=1$ and bijective if $i=2$. So, if $i=1$ it follows that $\chi_{eM}$ is surjective, by the commutative diagram. Assume that $i=2$. Then, $(e\cdot P)\otimes_B \varphi_M$ must be injective, and so it is an isomorphism. This implies that $\chi_{eM}$ is also an isomorphism.
\end{proof}

For larger values of relative dominant dimension the most natural approach to consider is to see when the exact sequence giving the value of dominant dimension under the Schur functor $eA\otimes_A -$ gives information about the relative dominant dimension of $eAe$ with respect to $eP$. As we know, we can focus only in what happens over finite-dimensional algebras over a field.

\begin{Prop}
	 Let $(A, \{\Delta(\lambda)_{\lambda\in \Lambda}\})$ be a split quasi-hereditary algebra over a field $k$, $e$ a cosaturated idempotent of $A$ and $P$ a projective-injective $A$-module.  Suppose that there exists an exact sequence
	\begin{align}
		\delta\colon 0\rightarrow A\rightarrow P_0\rightarrow \cdots\rightarrow P_{n-1}, \quad \text{with } P_i\in \add_A P.
	\end{align}
	Then, the chain complex  $\Hom_{eAe}(e\delta, eP)$ is exact if and only if $P\in \add D (eA)$. In particular, if the chain complex $\Hom_{eAe}(e\delta, eP)$ is exact, then $eP$ is a projective-injective $eAe$-module.
\end{Prop}
\begin{proof}
	Assume that $P\in \add_A D(eA)$. Then, $eP\in \add_{eAe} D(eAe)$, that is, $eP$ is injective over $eAe$. It is clear that the functor $\Hom_{eAe}(-, eP)$ is exact. 
	
	Conversely, suppose that $e\delta$ remains exact under $\Hom_{eAe}(-, eP)$. 
	Let $X_0$ be the cokernel of $A\rightarrow P_0$.
	Consider the commutative diagram
	\begin{equation}
		\begin{tikzcd}[column sep=20pt,row sep=15pt]
			\Hom_A(P_1, P)\arrow[rr] \arrow[ddd, twoheadrightarrow] \arrow[rd, twoheadrightarrow] & & \Hom_A(P_0, P)\arrow[r, twoheadrightarrow] \arrow[ddd, twoheadrightarrow]& \Hom_A(A, P) \arrow[ddd, twoheadrightarrow]\\
			& \Hom_A(X_0, P)  \arrow[ur, hookrightarrow] \arrow[d] & & \\
			& \Hom_{eAe}(eX_0, eP) \arrow[dr, hookrightarrow] & &\\
			\Hom_{eAe}(eP_1, eP)\arrow[ur] \arrow[rr] & & \Hom_{eAe}(eP_0, eP) \arrow[r, twoheadrightarrow] & \Hom_{eAe}(eA, eP) 
		\end{tikzcd}\label{eqcocover58}
	\end{equation}The vertical maps are surjective maps due to Theorem \ref{eAeqh}.
	By assumption, the bottom row of (\ref{eqcocover58}) is exact. Hence, the lower triangle is a epi-mono factorization. Therefore, $\Hom_A(X_0, P)\rightarrow \Hom_{eAe}(eX_0, eP)$ is surjective. By Snake Lemma, we obtain that the map $\Hom_A(A, P)\rightarrow \Hom_{eAe}(eA, eP)$ is in addition to being surjective an injective map.
	Since $eA$ has a filtration by standard modules over $eAe$, $\Ext_{eAe}^{i>0}(eA, eP)$. By Lemma 2.10 of \cite{zbMATH06409569} for every $M\in A\m$,\begin{align}
		\Hom_A(M, P)\simeq \Hom_A(M, \Hom_{eAe}(eA, eP))\simeq \Hom_{eAe}(eM, eP).
	\end{align}By Theorem 3.10 of \cite{MR3123754}, this means that there exists an exact sequence \begin{align*}
	0\rightarrow P\rightarrow \Hom_{eAe}(eA, D(eAe))\simeq D(eA).
\end{align*} Since $P$ is injective, this exact sequence splits and we obtain that $P\in \add_A D(eA)$.
\end{proof}

For Schur algebras, this is only true in case $V^{\otimes d}$ is projective-injective module since it is a partial tilting module.
We can however give a lower bound to the relative dominant dimension with respect to $V^{\otimes d}$ based on its injective dimension. 

\begin{Cor}
	Let $k$ be a field and $A$ a finite-dimensional $k$-algebra. Let $Q\in A\m$ with $Q\in Q^\perp$. Suppose that $M\in {}^\perp Q$ and assume that there exists an $A$-exact sequence 
	\begin{align}
		0\rightarrow M\rightarrow Q_1\rightarrow \cdots \rightarrow Q_n,
	\end{align} with $Q_i\in \add Q$. Then, $Q\rdomdim_A M\geq n-\injdim_A Q$.
\end{Cor}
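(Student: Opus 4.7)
The plan is to invoke Proposition \ref{cokernelrolereldomimodule}, which reduces the desired bound to a vanishing condition on the cokernel of the truncated sequence. Set $d := \injdim_A Q$ and $m := n - d$; if $m \leq 0$ the claim is vacuous, so assume $m \geq 1$. Truncating the hypothesised sequence at length $m$ gives the exact sequence
\begin{align*}
0 \to M \to Q_1 \to \cdots \to Q_m,
\end{align*}
and by Proposition \ref{cokernelrolereldomimodule} (together with $M \in {}^\perp Q$ and each $Q_i \in \add Q$), it suffices to prove $Y_m \in {}^\perp Q$, where $Y_m := \coker(Q_{m-1}\to Q_m)$.

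To establish this, I would splice the full length-$n$ sequence into short exact sequences $0 \to Y_{i-1} \to Q_i \to Y_i \to 0$ for $1 \leq i \leq n$, with $Y_0 := M$, $Y_i := \im(Q_i \to Q_{i+1})$ for $1\leq i\leq n-1$, and $Y_n := \coker(Q_{n-1}\to Q_n)$. The hypothesis $\Ext_A^{>0}(Q,Q) = 0$ passes to $\add Q$, so $\Ext_A^{>0}(Q_i, Q) = 0$ and the long exact sequence of $\Hom_A(-,Q)$ yields the dimension-shifting isomorphism
\begin{align*}
\Ext_A^s(Y_{i-1}, Q) \simeq \Ext_A^{s+1}(Y_i, Q), \qquad s \geq 1,\ 1 \leq i \leq n.
\end{align*}

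Iterating this forward from $Y_m$ gives $\Ext_A^s(Y_m, Q) \simeq \Ext_A^{s+k}(Y_{m+k}, Q)$ for every $0 \leq k \leq d$. For $s \geq d+1$ the Ext already vanishes by $\injdim_A Q = d$. For $1 \leq s \leq d$ I would take $k := d - s + 1 \in \{1,\dots,d\}$, giving $s+k = d+1$ and $m+k = n - s + 1 \leq n$; the right-hand side then vanishes by $\injdim_A Q = d$. Hence $Y_m \in {}^\perp Q$, and Proposition \ref{cokernelrolereldomimodule} delivers $Q-\domdim_A M \geq m = n - \injdim_A Q$. The argument is a formal diagram chase; the only point requiring care is choosing the forward shift $k$ so that it lands on $\Ext^{d+1}$ of a module $Y_{m+k}$ still defined by the given sequence, i.e.\ with $m+k \leq n$.
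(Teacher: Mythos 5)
Your proposal is correct and follows essentially the same route as the paper: splice the sequence into short exact sequences, use the vanishing of $\Ext_A^{>0}(Q_i,Q)$ to dimension-shift $\Ext_A^{s}(Y_m,Q)$ forward until the degree exceeds $\injdim_A Q$, and then invoke Proposition \ref{cokernelrolereldomimodule} on the truncation of length $n-\injdim_A Q$. The paper's version is just terser, shifting all the way to $X_n$ in one chain of isomorphisms rather than splitting into the cases $s\leq d$ and $s\geq d+1$.
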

\begin{proof}Assume that $n>\injdim_A Q$, otherwise there is nothing to prove.
	Denote by $X_i$ the cokernel of $Q_{i-1}\rightarrow Q_i$ where by convention we consider $Q_0:=M$. By dimension shifting, \begin{align}
		\Ext_{A}^{i>0}(X_{n-\injdim_A Q}, Q)\simeq \Ext_{A}^{i+1>0}(X_{n-\injdim_A Q+1}, Q)\simeq \Ext_{A}^{i+\injdim_A Q>0}(X_n, Q)=0.
	\end{align}
	So, the exact sequence $ M\hookrightarrow Q_1\rightarrow \cdots \rightarrow Q_{n-\injdim_A Q}$ satisfies the assumptions of  Proposition \ref{cokernelrolereldomimodule}.
\end{proof}
%
%
%
%
Another option is to consider the homology over $\End_{eAe}(eP)$ by viewing it as a quotient of $\End_A(P)$.
\begin{Remark}Even for a field $K$, 
	the surjective map $\psi\colon KS_d\twoheadrightarrow \End_{S_K(n, d)}(V^{\otimes d})$ may not be a homological epimorphism if $n<d$. Indeed, by Proposition 2.2(a) of \citep{zbMATH05124581}, $\psi$ is a homological epimorphism if and only if $\ker \psi$ is an idempotent ideal and $\Tor_{i>0}^{KS_d}(\ker \psi, KS_d/\ker \psi)=0$. Fix $n=2$, $d=3$ and $K$ a field of characteristic three. Then, $\ker \psi$ is the ideal generated by $a:= e +(132)+(123)-(12)-(13)-(23)$. As $a^2=0$, $\ker \psi$ is not an idempotent ideal.\label{homologicalepipsi}
\end{Remark}

As it turns out, we do not need such assumption on the map $\End_A(P)^{op}\rightarrow \End_{eAe}(eP)^{op}$ to give lower bounds of codominant dimension with respect to $eP$ using the codominant dimension with respect to $P$. We can use, instead, the techniques of truncation of covers. This techniques are only fruitful for values of Hemmer-Nakano dimension greater than or equal to zero but this poses no problem in our situation since the lower cases can be treated using Theorem \ref{lowervaluesrelawirthrespmodule}. 

\begin{Theorem}\label{lowerboundrelativedominantdimensionV}
	Let  $(A, \{\Delta(\lambda)_{\lambda\in \Lambda}\})$ be a split quasi-hereditary algebra over a field $k$.
	Let $e$ be a cosaturated idempotent of $A$. If $A$ has positive dominant dimension with faithful projective-injective $Af$, then $$eAf\rcodomdim_{eAe}eT\geq  \codomdim_A T,$$ where $T$ is the characteristic tilting module of $A$.
\end{Theorem}
\begin{proof}We can assume without loss of generality that $A$ is a basic algebra and $Af$ is also basic.
	If $\codomdim_A T=1$, then the result follows from Theorem \ref{lowervaluesrelawirthrespmodule}. Assume that $\codomdim_A T\geq 2$. 
	
	By Theorem \ref{eAeqh}, $eT$ is the characteristic tilting module of $eAe$. Hence, the endomorphism algebra $\End_{eAe}(eT)^{op}$ is the Ringel dual of $eAe$ which we denote by $R(eAe)$. Also, by Theorem \ref{eAeqh} there exists an exact sequence $0\rightarrow X\rightarrow R(A)\rightarrow R(eAe)\rightarrow 0$ where $X$ is an ideal of the Ringel dual of $A$. More precisely, $X$ is the set of all endomorphisms $g\in \End_A(T)$ satisfying $eg=0$. Fix $P=Af$.  We claim that $X\Hom_A(T, P)$ is the kernel of the surjective map $\Hom_A(T, P)\rightarrow \Hom_{eAe}(eT, eP)$. Denote this surjection by $\psi$. Let $g\in X$ and $l\in \Hom_A(T, P)$ then $e(lg)=(el)(eg)=0$. So, it is clear that $X\Hom_A(T, P)\subset \ker \psi$. Now, let $l\in \Hom_A(T, P)$ such that $el=0$, that is, $l\in \ker \psi$. By assumption, we can write $i\circ \pi=\id_P$, where $\pi\in \Hom_A(T, P)$. So, $e(i\circ l)=ei\circ el=0$. This means that $i\circ l\in X$. Now $l=\pi\circ i\circ l=(i\circ l)\cdot \pi\in X\Hom_A(T, P)$. Now, a $k$-basis of $\End_A(T)$ can be constructed using its filtration by modules $\Hom_A(\St(\nu), \Cs(\nu))$, $\nu\in \L$ and the liftings of $\St(\l)\hookrightarrow T(\l)\twoheadrightarrow \Cs(\l)$ along these filtrations (see \citep[Proposition 5.5]{appendix} and \citep{Koenig2001}). In particular, these maps factor through $T(\l)$, $\l\in \L$. By assumption, $eS(\l)=0$ if and only if $\l<\mu$ for a fixed $\mu\in \L$, and so $eT(\l)=0$ if and only if $\l<\mu$. Analogously, $\End_{eAe}(eT)$ has a $k$-basis of the maps factoring through $eT(\l)\neq 0$. So, $X$ has a basis whose maps $T\rightarrow T$ factor through $T(\l)$, $\l<\mu$. Let $g_\l$ denote the idempotent $T\twoheadrightarrow T(\l)\hookrightarrow T$ and $g_e=\sum_{\l<\mu} g_\l$. Then, we showed that $X=R(A) g_e R(A)$. In particular, $X$ has a filtration by split heredity ideals of quotients of $R(A)$.  
	
	As $\codomdim_A T\geq 2$, Theorem \ref{ringeldualitycovers} implies that $(R(A), \Hom_A(T, P))$ is a $(\codomdim_A T-2)$-$\mathcal{F}(\St_{R(A)})$ cover of $\End_A(P)^{op}$. By induction on the filtration of $X$ by split heredity ideals and using Theorem \ref{truncationcover}, we obtain that
	\begin{align}
		(\End_{eAe}(eT), \Hom_{eAe}(eT, eP))\simeq (R(A)/X, \Hom_A(T, P)/X\Hom_A(T, P))
	\end{align}is a $(\codomdim_A T-2)$-$\mathcal{F}(\St_{R(eAe)})$ cover of $\End_{R(A)/X}(\Hom_A(T, P)/X\Hom_A(T, P))^{op}$ which is isomorphic to  $\End_{R(eAe)}(\Hom_{eAe}(eT, eP))^{op}\simeq \End_{eAe}(eP)^{op}$. By Corollary \ref{codominantdimensioncontrollfinitedimensionalcase}, we obtain the inequality $
	eP\rcodomdim_{eAe}eT\geq\codomdim_A T$.
\end{proof}

\section{Applications}

In this section, we will use the technology on relative dominant dimension with respect to a partial tilting module including \cite{CRUZ2022410,p2paper} to construct a split quasi-hereditary cover of certain quotients of Iwahori-Hecke algebras. This gives a new point of view to the Schur functors from module categories of $q$-Schur algebras studied in detail in \cite{p2paper}. This technology together with the results developed in \cite{p2paper} will allow us to give a new proof for the Ringel self-duality of the blocks of the BGG category $\mathcal{O}$ in Theorem \ref{RingelselfdualityofO}.

\subsection{Generalized Schur algebras in the sense of Donkin}\label{Generalized Schur algebras in the sense of Donkin}

In this subsection, we continue \cite{p2paper, CRUZ2022410} studying $q$-Schur algebras and Iwahori-Hecke algebras over commutative Noetherian rings using relative dominant dimensions.
Let $R$ be a commutative Noetherian ring with an invertible element $u$. Fix natural numbers $n, d$ and $q=u^{-2}$. The study of Iwahori-Hecke algebras can be traced back to \cite{zbMATH03218961}, and there are several equivalent ways to define them. The \textbf{Iwahori-Hecke algebra} $H_{R, q}(d)$ is the $R$-algebra with basis $\{T_\sigma\colon \sigma\in S_d \}$ (see \citep{zbMATH04193959}) satisfying the relations
\begin{align}
	T_\sigma T_s =\begin{cases}
		T_{\sigma s}, \quad &\text{ if } l(\sigma s)=l(\sigma) +1\\
		(u-u^{-1})T_\sigma +T_{\sigma s}, \quad &\text{ if } l(\sigma s)=l(\sigma)-1,
	\end{cases}\label{eqex24}
\end{align}
where $s\in S:=\{(1,2), (2,3), \cdots, (d-1, d) \}$ is a set of transpositions and $l(\sigma)$, $\sigma\in S_d$, is the minimum number of simple transpositions belonging to $S$ needed to write $\sigma$. In particular,  the elements $T_s$, $s\in S$, generate $H_{R, q}(d)$ as algebra. By (\ref{eqex24}), $H_{R, 1}(d)$ is exactly the group algebra of the symmetric group $RS_d$. Just like in case $q=1$ the Iwahori-Hecke algebras over general $q$ admit a base change property, we just have to replace the role of $\mathbb{Z}$ by the Laurent polynomial ring $\mathbb{Z}[X, X^{-1}]$:
\begin{align}
	H_{R, q}(d)\simeq R\otimes_{\mathbb{Z}[X, X^{-1}]} H_{\mathbb{Z}[X, X^{-1}], X^{-2}}(d). \label{heckechangering}
\end{align}
The ring $R$ is an $\mathbb{Z}[X, X^{-1}]$-algebra by defining the map $\mathbb{Z}[X, X^{-1}]\rightarrow R$ which sends $z\in \mathbb{Z}$ to $z1_R$ and $X$ to $u\in R$. 
Let $V$ be a free module of rank $n$ over $R$ and $\{e_1, \ldots, e_n\}$ a basis of $V$. So, the $R$-module $V^{\otimes d}$ is free with basis  $\{ e_i:=e_{i_1}\otimes\cdots \otimes e_{i_d}\ | \ i\in I(n, d)\}$, where $I(n, d)$ denotes the set of maps $i\colon \{1, \ldots, d\}\rightarrow \{1, \ldots, n\}$ and $i_a:=i(a)$, $a\in \{1, \ldots, d\}$. The symmetric group on $d$ letters $S_d$ acts on $I(n, d)$ by place permutation and so $V^{\otimes d}$ also admits a right $S_d$-action by place permutation. We can also regard $V^{\otimes d}$ as a right $H_{R, q}(d)$-module by considering the following deformation of place permutation, for every $i\in I(n, d)$, $s=(t, t+1)\in S$
\begin{align*}
	e_{i_1}\otimes\cdots \otimes e_{i_d}\cdot T_s = \begin{cases}
		e_{i_1}\otimes\cdots \otimes e_{i_d} \cdot s \quad &\text{ if } i_t<i_{t+1}\\
		u e_{i_1}\otimes\cdots \otimes e_{i_d} \quad &\text{ if } i_t=i_{t+1}\\
		(u-u^{-1}) e_{i_1}\otimes\cdots \otimes e_{i_d} + e_{i_1}\otimes\cdots \otimes e_{i_d}\cdot s \quad &\text{ if } i_t>i_{t+1}
	\end{cases}, \quad   1\leq t< d.\nonumber
\end{align*}
In particular, this action can be extended to an homomorphism of $R$-algebras $\psi\colon H_{R, q}(d)\rightarrow \End_R(V^{\otimes d})^{op}$. The subalgebra $\End_{H_{R, q}(d)}(V^{\otimes d})$ of the endomorphism algebra $\End_R(V^{\otimes d})$, which we denote by $S_{R, q}(n, d)$, is known as the \textbf{$q$-Schur algebra}. Many of their main properties can be found in \cite{zbMATH00014844, zbMATH04168928, MR1707336} and in \cite{zbMATH05080041} for $q=1$. Again, fixing $q=1$, we obtain the \textbf{classical Schur algebra} $S_R(n, d)=\End_{RS_d}(V^{\otimes d})$. In particular, $V^{\otimes d}$ inherits a left module structure over $S_{R, q}(n, d)$ given by the endomorphism algebra action.
By construction, the actions of $H_{R, q}(d)$ and $S_{R, q}(d)$ on $V^{\otimes d}$ commute and so the homomorphism of $R$-algebras $\psi\colon H_{R, q}(d)\rightarrow \End_{S_{R}(n, d)}(V^{\otimes d})^{op}$ is well defined. Quantum Schur--Weyl duality says that $\psi$ is a surjective map (see for example \citep[Theorem 6.2]{zbMATH01294605}). If $n\geq d$, $V^{\otimes d}$ is a faithful module over $H_{R, q}(d)$ and in such a case $\psi$ is actually an isomorphism. So this means that there are double centralizer properties:
\begin{align}
	&S_{R, q}(n, d)=\End_{H_{R, q}(d)}(V^{\otimes d}),  &H_{R, q}(d)\simeq \End_{S_{R, q}(n, d)}(V^{\otimes d})^{op}, \qquad &\text{if } n\geq d;\label{eq89}\\
	&S_{R, q}(n, d)=\End_{H_{R, q}(d)/\ker \psi}(V^{\otimes d}),  &H_{R, q}(d)/\ker \psi\simeq \End_{S_{R, q}(n, d)}(V^{\otimes d})^{op}, \qquad &\text{if } n< d;
\end{align}The double centralizer property in (\ref{eq89}) is actually a consequence of $(S_{R, q}(n, d), V^{\otimes d})$ being  a cover of $H_{R, q}(d)$ (see for example \citep[Theorem 7.20]{CRUZ2022410}, \citep[Proposition 2.4.4]{p2paper}). But, if $n<d$, then $V^{\otimes d}$ is not necessarily projective, and so the former pair cannot be a cover anymore in general. So we might wonder if the case $n<d$ can also be explained using cover theory.

Our aim now is to unify these double centralizer properties and their cohomological  higher versions  using relative dominant dimension and cover theory generalising the approach taken in \cite{Koenig2001}. 
In our treatment, we will take care of the quantum case and the classical case simultaneously in both cases having the integral setup in mind.

\subsubsection{Background on $q$-Schur algebras}
A \textbf{weight} of an element $i\in I(n, d)$ is the composition $\omega(i)=(\omega_1, \ldots, \omega_n)$ of $d$ in at most $n$ parts where each $\omega_j$ is the number $|\{l\in \{1, \ldots, d\}\colon i_l=j \}|$, $j=1, \ldots, n$. We denote by $\L(n, d)$ the set of all weights associated with $I(n, d)$.

The action of $S_d$ in $I(n, d)$ can be extended diagonally to a right action of $S_d$ in $I(n, d)\times I(n, d)$. 
 Each $S_d$-orbit of $I(n, d)\times I(n, d)$ has a representative $(i, j)$ satisfying $(i_1, j_1)\leq \cdots \leq (i_d, j_d)$, where the elements in $I(n, d)\times I(n, d)$ are ordered according to the lexicographical order.
Denoting by $\{e_1^*, \ldots, e_n^*\}$ the dual basis of $\{e_1, \ldots, e_n\}$ over $R$ we obtain that $V^{\otimes^d}\otimes_{H_{R, q}(d)} DV^{\otimes^d}$ is a free $R$-module with basis 	$$\{	e_{i_1}\otimes\cdots \otimes e_{i_d}\otimes_{H_{R, q}(d)} e_{j_1}^*\otimes \cdots \otimes e_{j_d}^*\colon  i, j\in I(n, d), \ (i_1, j_1)\leq \cdots \leq (i_d, j_d) \}$$
(see \citep[Proposition 7.16]{CRUZ2022410}). This fact can be used to deduce that $q$-Schur algebras admit a base change property:
 $$S_{R, q}(n, d)\simeq R\otimes_{\mathbb{Z}[X, X^{-1}]} S_{\mathbb{Z}[X, X^{-1}], X^{-2}}(n, d).$$
It can also be used to construct a basis of $S_{R, q}(n, d)$.  In particular, for each $\omega(i)\in \L(n, d)$, the image of $e_{i_1}\otimes\cdots \otimes e_{i_d}\otimes_{H_{R, q}(d)} e_{i_1}^*\otimes \cdots \otimes e_{i_d}^*$ in $D(V^{\otimes^d}\otimes_{H_{R, q}(d)} DV^{\otimes^d})\simeq S_{R, q}(n, d)$, which we denote by $\xi_{\omega(i)}$, is an idempotent of $S_{R, q}(n, d)$.

The $q$-Schur algebra admits a split quasi-hereditary structure whose standard modules are called the $q$-Weyl modules, and are indexed by the partitions of $d$ in at most $n$ parts ordered by the dominance order (see for example \citep[Theorem 11.5.2]{zbMATH04193959} and \citep[Theorem 7.2.2]{p2paper}).  We denote the standard modules of $S_{R, q}(n, d)$ by $\St_{S_{R, q}(n, d)}(\l)$, $\l\in \L^+(n, d)$, or just by $\St(\l)$ when there are no ambiguities. By $\L^+(n, d)$ we mean the set of all partitions of $d$ in at most $n$ parts. It admits a cellular structure with the standard modules being the cell modules (ordered with the opposite of the dominance order). Furthermore, for each $\mi\in \MaxSpec R$, the $q_\mi$-Schur algebra $S_{R(\mi), q_\mi}(n, d)$ (with $n\geq d$) belongs to the class $\mathcal{A}$ defined in \cite{zbMATH05871076}, where $q_\mi$ denotes the image of $q$ in $R/\mi$. As a consequence, in the integral setup, the following result holds.

\begin{Theorem}\label{dominantdimensionquantumschur}
	Let $R$ be a commutative Noetherian ring with invertible element $u\in R$. Put $q=u^{-2}$ and assume that $n\geq d$. Let $T$ be a characteristic tilting module of $S_{R, q}(n, d)$. Then, the following assertions hold.
	\begin{enumerate}[(a)]
		\item $(S_{R, q}(n, d), V^{\otimes d})$ is a relative gendo-symmetric $R$-algebra;
		\item $\domdim (S_{R, q}(n, d), R)=2 \domdim_{(S_{R, q}(n, d), R)} T$;
		\item 	$\domdim_{(S_{R, q}(n, d), R)} T=\inf \{ s\in \mathbb{N} \ | \ 1+q+\cdots+ q^s\notin R^\times, \ s<d \}$.
	\end{enumerate}
\end{Theorem}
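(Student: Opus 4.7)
The strategy is to establish part (a) directly over the Noetherian ring $R$, and then to reduce parts (b) and (c) to computations over residue fields using the change-of-rings Theorem \ref{changeofringrelativedomdimrelativetomodule} (in its unconditional form for the regular module, as in \cite{CRUZ2022410}) together with results from \cite{zbMATH05871076, zbMATH07050778} for the finite-dimensional case.

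For (a), since $n\geq d$, the weight $\omega=(1,1,\ldots,1,0,\ldots,0)\in \L(n,d)$ with exactly $d$ ones is defined, and inspection of the $R$-basis of $V^{\otimes d}$ recalled above shows that the idempotent $\xi_\omega \in S_{R,q}(n,d)$ gives an isomorphism of left $A$-modules $S_{R,q}(n,d)\xi_\omega \simeq V^{\otimes d}$, so $V^{\otimes d}$ is a projective $A$-module. To see that it is also $(A,R)$-injective (equivalently $DA\in \add V^{\otimes d}$) and to verify $V^{\otimes d}\simeq DA\otimes_A V^{\otimes d}$ as $(A, H_{R,q}(d))$-bimodules, I would exploit the fact that $H_{R,q}(d)$ is a symmetric $R$-algebra with the symmetrizing form sending $T_\sigma$ to $\delta_{\sigma,e}\cdot 1_R$; this gives an isomorphism $DH_{R,q}(d)\simeq H_{R,q}(d)$ of $(H,H)$-bimodules. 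Composing this with Tensor-Hom adjunction and the double centralizer property $H_{R,q}(d)\simeq \End_A(V^{\otimes d})^{op}$ (which holds for $n\geq d$) produces the required bimodule isomorphism, from which $V^{\otimes d}\in \add DA$ follows by projectivization. The $(A,R)$-injective-strongly faithful property then reduces to finding an $(A,R)$-monomorphism $A\hookrightarrow X$ with $X\in \add V^{\otimes d}$, which one obtains from any surjection $V^{\otimes d})^r \twoheadrightarrow DA$ by dualizing (using that $A$ is $(A,R)$-reflexive). This simultaneously yields $\domdim(A,R)\geq 2$ and completes the relative gendo-symmetric structure.

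For (b), by (a) and \citep[Theorem 6.13]{CRUZ2022410} the dominant dimensions behave well under extension of scalars: $\domdim(A,R)=\inf_{\mi}\domdim A(\mi)$ and, since $T(\mi)$ is a characteristic tilting module of $A(\mi)$ for each maximal $\mi$, likewise $\domdim_{(A,R)}T=\inf_\mi\domdim_{A(\mi)}T(\mi)$. Each residue algebra $A(\mi)=S_{R(\mi),q_\mi}(n,d)$ belongs to the class $\mathcal{A}$ of \cite{zbMATH05871076}, and for algebras in that class the identity $\domdim A(\mi)=2\domdim_{A(\mi)} T(\mi)$ is proved there; taking infima over $\mi$ gives (b).

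For (c), the same reduction shows
\begin{align*}
\domdim_{(S_{R,q}(n,d),R)} T=\inf_{\mi\in\MaxSpec R}\domdim_{S_{R(\mi),q_\mi}(n,d)} T(\mi).
\end{align*}
For each residue field, the explicit formula $\domdim_{S_{R(\mi),q_\mi}(n,d)} T(\mi)=\inf\{s\in\mathbb{N}\colon 1+q_\mi+\cdots+q_\mi^s=0\text{ in }R(\mi),\ s<d\}$ is the content of \cite{zbMATH05871076} in the classical case $q=1$ and of \cite{zbMATH07050778} for arbitrary $q$. The identity
\begin{align*}
\inf_{\mi}\inf\{s<d\colon 1+q_\mi+\cdots+q_\mi^s=0\}=\inf\{s<d\colon 1+q+\cdots+q^s\notin R^\times\}
\end{align*}
follows because an element of a commutative Noetherian ring is invertible precisely when its image in every residue field is nonzero. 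The main obstacle will be the bimodule-isomorphism step in (a); once this is cleanly identified (by carefully chasing the symmetrizing form of $H_{R,q}(d)$ through Tensor-Hom adjunction in the integral setting), the remainder is a pleasant consequence of change of rings combined with the cited field-level computations.
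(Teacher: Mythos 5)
Your proposal is correct and matches the paper's approach: the paper proves this theorem simply by citing \citep[Theorem 7.20]{CRUZ2022410} and \citep[Corollary 7.2.4]{p2paper}, whose arguments are exactly what you reconstruct — part (a) from the idempotent $\xi_\omega$, the symmetric structure of $H_{R,q}(d)$ and the relative Morita--Tachikawa correspondence, and parts (b) and (c) by first securing $\domdim(S_{R,q}(n,d),R)\geq 1$ and then reducing to residue fields via \citep[Theorem 6.13]{CRUZ2022410} together with the field-level computations of Fang--Koenig and their quantum analogue. One small caution: dualizing a surjection $(V^{\otimes d})^r\twoheadrightarrow DA$ of left modules produces an $(A,R)$-embedding of the \emph{right} regular module into $\add DV^{\otimes d}$ rather than the left-module embedding $A\hookrightarrow X$ with $X\in\add V^{\otimes d}$ that strong faithfulness requires, so for this step you should instead invoke the relative Morita--Tachikawa correspondence applied to the generator--cogenerator $V^{\otimes d}$ over the symmetric algebra $H_{R,q}(d)$, which is what the cited reference actually does.
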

\begin{proof}
	See \citep[Theorem 7.20]{CRUZ2022410} and \citep[Corollary 7.2.4]{p2paper}. 
\end{proof}

\begin{Remark}\label{domcodomSchur}
Let $R$ be a commutative Noetherian ring with invertible element $u\in R$ and fix $q=u^{-2}$.	For any natural numbers $n, d$ and any module $M\in \add_{S_{R, q}(n, d)}T$, we have  $M\rdomdim_{(S_{R, q}(n, d), R)} T= M\rcodomdim_{(S_{R, q}(n, d), R)}T$. Indeed, this follows from applying Theorem \ref{changeofringrelativedomdimrelativetomodule} (and its dual) with Proposition \ref{dominantandcodominantoftiltingcomparison} to the $q_\mi$-Schur algebras $S_{R(\mi), q_\mi}(n, d)$, $\mi\in \MaxSpec R$.
\end{Remark}

\paragraph{Covers of Iwahori-Hecke algebras}
It follows by Theorem \ref{dominantdimensionquantumschur} that $(S_{R, q}(n, d), V^{\otimes d})$ is a split quasi-hereditary cover of $H_{R, q}(d)$. The quality of the Schur functor $$F_{V^{\otimes d}}=\Hom_{S_{R, q}(n, d)}(V^{\otimes d}, -)\colon S_{R, q}(n, d)\m\rightarrow H_{R, q}(d)\m$$ was completely determined for all local commutative rings $R$ in \citep[Subsection 7.2.1]{p2paper}. The field case is due to \citep{zbMATH07050778}. In the general case, the quality depends on whether the ground ring is partial divisible or not. For more details, we refer to \cite{p2paper}.

\subsubsection{Covers of quotients of Iwahori-Hecke algebras}

To be in the conditions of Theorem \ref{lowerboundrelativedominantdimensionV} to deduce a lower bound for $V^{\otimes d}\rdomdim_{S_{R, q}(n, d)} T$, for some characteristic tilting module $T$ we need the following result.
Although most statements of the following result hold over arbitrary commutative rings, for our purposes it is enough to state the classical versions.

\begin{Theorem}		 Let $K$ be a field. Put $q=u^{-2}$ for some $u\in K$. Let $n, d$ be natural numbers so that $d>n$. Define $$\L(d, d)^n:=\{\beta\in \L(d, d)\colon \beta_{n+1}=\cdots=\beta_d=0 \}.$$
  Define the idempotent $\displaystyle f=\sum_{\beta\in \L(d, d)^n} \xi_\beta\in S_{K, q}(n, d)$.\label{schuralgebraspassageNton} Then, the following assertions hold.
  \begin{enumerate}[(a)]
  	\item The $K$-algebra $fS_{K, q}(d, d)f$ is isomorphic to $S_{K, q}(n, d)$;
  	\item $f(K^d)^{\otimes d}\simeq (K^n)^{\otimes d}$ as $S_{K, q}(n, d)$-modules;
  	\item The idempotent $f$ satisfies the hypothesis of Theorem \ref{eAeqh};
  	\item If $R$ is a commutative Noetherian ring with invertible element $u\in R$, then $V^{\otimes d}=(R^n)^{\otimes d}$ is a partial tilting module of $S_{R, q}(n, d)$, where $q=u^{-2}$;
  	\item The (partial) tilting indecomposable modules of $S_K(n, d)$ are the image of the (partial) tilting indecomposable modules of $S_K(d, d)$ under the Schur functor $${\Hom_{S_{K, q}(d, d)}(S_{K, q}(d, d)f, -)\colon S_{K, q}(d, d)\m \rightarrow S_{K, q}(n, d)\m}.$$
  \end{enumerate}
\end{Theorem}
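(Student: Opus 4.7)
The plan is to reduce everything to weight-space bookkeeping on $(K^d)^{\otimes d}$ and then to invoke Theorem \ref{eAeqh}. For (a) and (b), the key observation is that each idempotent $\xi_\beta \in S_{K,q}(d,d)$ acts on $(K^d)^{\otimes d}$ as the projection onto the $\beta$-weight space $\mathrm{span}\{e_i : \omega(i) = \beta\}$; this follows directly from the description of $S_{K,q}(d,d)$ as $D(V^{\otimes d} \otimes_{H_{K,q}(d)} DV^{\otimes d})$ and the explicit basis recalled just before the statement, together with the fact that the Hecke action preserves weight spaces. Thus $f = \sum_{\beta \in \L(d,d)^n} \xi_\beta$ projects onto the span of all $e_{i_1} \otimes \cdots \otimes e_{i_d}$ with $i_j \in \{1, \ldots, n\}$, which is canonically $(K^n)^{\otimes d}$. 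Because the right $H_{K,q}(d)$-action on such basis vectors depends only on the internal comparisons of the $i_j$, this identification is $H_{K,q}(d)$-linear, giving (b). Applying $\End_{H_{K,q}(d)}(-)$ then yields (a): $fS_{K,q}(d,d)f \simeq \End_{H_{K,q}(d)}((K^n)^{\otimes d}) = S_{K,q}(n,d)$.

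For (c), the simple $S_{K,q}(d,d)$-modules $S(\lambda)$ are indexed by $\lambda \in \L^+(d,d)$, and all their weights $\mu$ satisfy $\mu \leq \lambda$ in the dominance order, with $\lambda$ itself occurring with multiplicity one. If $\lambda_{n+1} = 0$ then $\lambda \in \L(d,d)^n$, so the $\lambda$-weight space of $S(\lambda)$ survives under $f$ and $fS(\lambda) \neq 0$. If instead $\lambda_{n+1} > 0$, every weight $\mu$ of $S(\lambda)$ satisfies $\mu_1 + \cdots + \mu_n \leq \lambda_1 + \cdots + \lambda_n < d$, so $\mu \notin \L(d,d)^n$ and $fS(\lambda) = 0$. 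Setting $\Gamma = \{\lambda \in \L^+(d,d) : \lambda_{n+1} > 0\}$, the required equivalence $fS(\lambda) = 0 \Longleftrightarrow \lambda \leq \mu$ for some $\mu \in \Gamma$ follows by taking $\mu = \lambda$ for the forward direction; conversely, $\lambda \leq \mu \in \Gamma$ forces $\lambda_1 + \cdots + \lambda_n \leq \mu_1 + \cdots + \mu_n < d$, so $\lambda \in \Gamma$. This places $f$ in the hypothesis of Theorem \ref{eAeqh}.

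With (a), (b), (c) in hand, parts (d) and (e) follow mechanically. In the field case, Theorem \ref{dominantdimensionquantumschur} identifies $(K^d)^{\otimes d}$ as a projective-$(S_{K,q}(d,d), K)$-injective module, in particular a partial tilting module of $S_{K,q}(d,d)$; Theorem \ref{eAeqh}(iii) then sends it through the Schur functor $\Hom_{S_{K,q}(d,d)}(S_{K,q}(d,d)f, -)$ to the partial tilting module $f(K^d)^{\otimes d} \simeq (K^n)^{\otimes d}$ of $S_{K,q}(n,d)$. For arbitrary commutative Noetherian $R$ with invertible $u$, the module $(R^n)^{\otimes d}$ is free over $R$ and satisfies $(R^n)^{\otimes d}(\mi) \simeq (R(\mi)^n)^{\otimes d}$ for every $\mi \in \MaxSpec R$, so Proposition \ref{standardscotiltingsreductiontofields}(d) lifts the field-case conclusion to the integral statement (d). Part (e) is then the last assertion of Theorem \ref{eAeqh}(iii) combined with (c): the indecomposable partial tiltings of $S_{K,q}(n,d)$ are exactly the nonzero modules $fT(\lambda)$, i.e., those with $\lambda \in \L^+(d,d) \setminus \Gamma = \L^+(n,d)$. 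The only genuinely delicate step is the dominance-order inequality in (c); the rest is an assembly of results already recorded earlier in the paper.
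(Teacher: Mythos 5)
Your proposal is correct, and for parts (d) and (e) it is essentially the paper's own argument: $(K^d)^{\otimes d}$ is projective-injective over $S_{K,q}(d,d)$ (Theorem \ref{dominantdimensionquantumschur}), Theorem \ref{eAeqh}(iii) pushes it through the Schur functor to the partial tilting module $f(K^d)^{\otimes d}\simeq (K^n)^{\otimes d}$, and Proposition \ref{standardscotiltingsreductiontofields}(d) lifts the field case to arbitrary $R$. Where you differ is in (a)--(c): the paper disposes of these by citation (Green for $q=1$, Donkin for general $q$, Erdmann for the classical case of (c)), adding only the computation of $f\xi_{i,j}\xi_\l$ to justify the base-change independence, whereas you give direct arguments -- $\xi_\beta$ as the projection onto the $\beta$-weight space, the $H_{K,q}(d)$-linearity of the weight decomposition, the idempotent identity $f\End_{H_{K,q}(d)}(V^{\otimes d})f\simeq \End_{H_{K,q}(d)}(fV^{\otimes d})$, and for (c) the verification that $\Gamma=\{\lambda\in\L^+(d,d)\colon \lambda_{n+1}>0\}$ equals $\{\lambda\colon fL(\lambda)=0\}$ and satisfies condition (\ref{eqfc5}). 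The delicate step you single out is indeed fine: every weight $\mu$ of $L(\lambda)$ has sorted version dominated by $\lambda$, so $\mu_1+\cdots+\mu_n\leq\lambda_1+\cdots+\lambda_n$, which gives $fL(\lambda)=0$ when $\lambda_{n+1}>0$; and if $\lambda\leq\mu$ with $\mu_{n+1}>0$ then $\sum_{j\leq n}\lambda_j\leq\sum_{j\leq n}\mu_j<d$ forces $\lambda_{n+1}>0$, so $\Gamma$ is a down-set for the dominance order and (\ref{eqfc5}) holds with this $\Gamma$. Your route buys a self-contained proof at the price of invoking (standard, but unproved here) facts about the weights of the simple modules $L(\lambda)$ of the $q$-Schur algebra -- exactly the facts the paper's citations are meant to cover -- so the two treatments are complementary rather than in conflict.
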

\begin{proof}Consider the injective map $\Upsilon\colon \L(n, d)\rightarrow \L(d, d)$, given by $\alpha\mapsto (\alpha_1, \cdots, \alpha_n, 0, \cdots, 0)$. The image of $\Upsilon$ is exactly $\L(d, d)^n$. 
	For the case $q=1$ in (a) see for example \citep[6.5]{zbMATH05080041}, the argument also holds for arbitrary commutative rings since for any $i, j\in I(d, d)$ and the weight $\l=(1, \ldots, 1)$,
		\begin{align}
		f\xi_{i, j}\xi_\l=\begin{cases}
			\xi_{i, j}, \text{ if } i\in I(n, d) \text{ and } \omega(j)=\l, \\
			0, \text{ otherwise}.
		\end{cases}, \quad \xi_{i, j}(e_1\otimes\cdots\otimes e_d)=e_{i_1}\otimes\cdots\otimes e_{i_d},
	\end{align}where $\omega(j)$ denotes the weight of $j$.
For the general case in (a), see \citep[2.2(1)]{MR1707336}. For (b) see \citep[4.7]{MR1707336}. For (c), (d), (e) with $q=1$ and $R=K$ see  \citep[3.9, 4.2]{zbMATH00681964}. For (c) in the general case, see \citep[4.7, Proposition A3.11(i)]{MR1707336}.
By Theorem \ref{eAeqh}, (e) holds. Since $(K^d)^{\otimes d}$ is a projective-injective of $S_{K, q}(d, d)$,  Theorem \ref{eAeqh} and (b) yields that $(K^n)^{\otimes d}$ is a partial tilting module of $S_{K, q}(n, d)$. 
\mbox{By Proposition \ref{standardscotiltingsreductiontofields}, (d) follows.}
\end{proof}

\begin{Theorem}\label{domdimTcasensmallerd}
	Let $R$ be a commutative Noetherian ring with invertible element $u\in R$ and $n, d$ be natural numbers. Put $q=u^{-2}$.  Let $T$ be a characteristic tilting module of $S_R(n, d)$. Then,
	$$V^{\otimes d}\rdomdim_{(S_{R, q}(n, d), R)} T\geq  \inf \{ s\in \mathbb{N} \ | \ 1+q+\cdots+ q^s\notin R^\times, \ s<d \}\geq 1.$$
\end{Theorem}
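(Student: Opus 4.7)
The plan is to reduce to the case of a field via Theorem \ref{changeofringrelativedomdimrelativetomodule} and then to bootstrap up from $S_{K,q}(d,d)$ (where Theorem \ref{dominantdimensionquantumschur} applies) to $S_{K,q}(n,d)$ via the idempotent truncation of Theorem \ref{schuralgebraspassageNton} combined with Theorem \ref{lowerboundrelativedominantdimensionV}.

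First I would verify the hypotheses for the change-of-rings reduction with $Q = V^{\otimes d}$ and $M = T$. By Theorem \ref{schuralgebraspassageNton}(d), $V^{\otimes d}$ is a partial tilting module of $S_{R,q}(n,d)$, so $V^{\otimes d} \in \mathcal{F}(\Stsim)\cap \mathcal{F}(\Cssim)$; Proposition \ref{qhproperties}(vi)–(viii) then gives $DV^{\otimes d}\otimes_A V^{\otimes d}\in R\proj$ and $\Hom_A(T,V^{\otimes d})\in R\proj$, and Proposition \ref{standardscotiltingsreductiontofields}(e) supplies the required base-change isomorphism $R(\mi)\otimes_R \Hom_A(T,V^{\otimes d})\simeq \Hom_{A(\mi)}(T(\mi),V^{\otimes d}(\mi))$. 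Theorem \ref{changeofringrelativedomdimrelativetomodule} then yields
\begin{align*}
V^{\otimes d}-\domdim_{(S_{R,q}(n,d),R)} T = \inf_{\mi\in \MaxSpec R}\,V^{\otimes d}(\mi)-\domdim_{S_{R(\mi),q_\mi}(n,d)} T(\mi).
\end{align*}

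Next, fix a field $K$ (playing the role of $R(\mi)$) with image $q$ of the deformation parameter. I would apply Theorem \ref{lowerboundrelativedominantdimensionV} to $A = S_{K,q}(d,d)$, taking $Af$ to be the faithful projective-injective $(K^d)^{\otimes d}$ (justified by Theorem \ref{dominantdimensionquantumschur}(a), which makes $(S_{K,q}(d,d),(K^d)^{\otimes d})$ a gendo-symmetric algebra since $d\geq d$) and taking $e$ to be the idempotent $f$ of Theorem \ref{schuralgebraspassageNton}. The hypotheses of Theorem \ref{eAeqh} are met by Theorem \ref{schuralgebraspassageNton}(c), and both $f$ and the weight idempotent $\xi_{(1^d)}$ (which cuts out $(K^d)^{\otimes d}$) are sums of weight-space idempotents, hence are fixed by the standard cellular anti-involution of the $q$-Schur algebra, providing the duality $\omega$ with $\omega(f)=f$ required by Theorem \ref{lowerboundrelativedominantdimensionV}. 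Under the Schur functor $f\cdot(-)$, the characteristic tilting module $T_d$ of $S_{K,q}(d,d)$ maps to the characteristic tilting module of $S_{K,q}(n,d)$ by Theorem \ref{schuralgebraspassageNton}(e) and Theorem \ref{eAeqh}(iii); combined with Theorem \ref{schuralgebraspassageNton}(a),(b), which give $fS_{K,q}(d,d)f\simeq S_{K,q}(n,d)$ and $f(K^d)^{\otimes d}\simeq (K^n)^{\otimes d}$, Theorem \ref{lowerboundrelativedominantdimensionV} specializes to
\begin{align*}
(K^n)^{\otimes d}-\domdim_{S_{K,q}(n,d)} (fT_d) \;\geq\; \domdim_{S_{K,q}(d,d)} T_d.
\end{align*}
By Theorem \ref{dominantdimensionquantumschur}(c), the right-hand side equals $\inf\{s\in\mathbb{N}\colon 1+q+\cdots+q^s\notin K^\times,\ s<d\}$ (which over a field means $=0$).

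Finally, I would descend the bound back to $R$. If $s < s_0 := \inf\{s\colon 1+q+\cdots+q^s\notin R^\times,\ s<d\}$, then $1+q+\cdots+q^s\in R^\times$, so its image $1+q_\mi+\cdots+q_\mi^s\in R(\mi)^\times$ is in particular nonzero, for every $\mi\in\MaxSpec R$. Therefore the field-case bound for every residue field is at least $s_0$, and taking the infimum over $\mi$ gives the claimed inequality; the bound $\geq 1$ is immediate from $1\in R^\times$. The main technical subtleties are entirely contained in the hypothesis-checking for Theorems \ref{changeofringrelativedomdimrelativetomodule} and \ref{lowerboundrelativedominantdimensionV} (gendo-symmetry of $S_{K,q}(d,d)$, the duality fixing both idempotents, and the transfer of the characteristic tilting module across $f$); all of these are supplied by the structure theorems already collected in Theorems \ref{schuralgebraspassageNton}, \ref{eAeqh}, and \ref{dominantdimensionquantumschur}.
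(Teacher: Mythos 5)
Your proposal is correct and follows essentially the same route as the paper's proof: reduce to residue fields via Theorem \ref{changeofringrelativedomdimrelativetomodule}, transfer the bound from $S_{K,q}(d,d)$ to $S_{K,q}(n,d)$ via Theorems \ref{lowerboundrelativedominantdimensionV} and \ref{schuralgebraspassageNton}, and conclude with the known computation of $\domdim_{S_{K,q}(d,d)} T$. Your version merely spells out more of the hypothesis-checking (the duality fixing the idempotents, the base-change conditions) and replaces the paper's citation for the final descent to $R$ with a direct unit-versus-nonzero argument, both of which are fine.
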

\begin{proof}
	By Theorem \ref{changeofringrelativedomdimrelativetomodule} and Propositions \ref{standardscotiltingsreductiontofields} and \ref{qhproperties}, 
	\begin{align}
		V^{\otimes d}\rdomdim_{(S_{R, q}(n, d), R)} T= \inf\{(R(\mi)^n)^{\otimes d}\rdomdim_{(S_{R(\mi), q_\mi}(n, d)} T(\mi)| \mi\in \MaxSpec R \},
	\end{align}where $q_\mi$ is the image of $q$ in $R(\mi)$. By Theorems \ref{lowerboundrelativedominantdimensionV} and \ref{schuralgebraspassageNton}, \begin{align}
	V^{\otimes d}\rdomdim_{(S_{R, q}(n, d), R)} T&\geq \inf\{(R(\mi)^d)^{\otimes d}\rdomdim_{(S_{R(\mi), q_\mi}(d, d)} T_\mi| \mi\in \MaxSpec R \}\\&= \inf\{\domdim_{(S_{R(\mi), q_\mi}(d, d)} T_\mi| \mi\in \MaxSpec R \} \\ & = \inf \{\domdim_{S_{R(\mi), q_\mi}(d, d)} T(\mi)| \mi\in \MaxSpec R \},
\end{align}where $T_m$ is the characteristic tilting module of $(S_{R, q}(n, d), R)$ and $\add T(\mi)=\add T_\mi$, $\mi\in \MaxSpec R$.  By Theorem 6.13 of \cite{CRUZ2022410} and Corollary 7.1.3 of \cite{p2paper}, it follows that  
	\begin{align}
		V^{\otimes d}\rdomdim_{(S_{R, q}(n, d), R)} T\geq  	\inf \{ s\in \mathbb{N} \ | \ 1+q+\cdots+ q^s\notin R^\times, \ s<d \}. \tag*{\qedhere}
	\end{align}
\end{proof}
This inequality is sharp in general since this becomes an equality in case $n\geq d$ (see \citep[Corollary 7.1.3]{p2paper}).  
We are now ready to prove one of our main results. 
\begin{Theorem}\label{maintheoremforRingeldualSchur}
		Let $R$ be a commutative Noetherian regular ring with invertible element $u\in R$ and $n, d$ be natural numbers. Put $q=u^{-2}$. Let $T$ be a characteristic tilting module of $S_{R,q}(n,d)$.  Denote by $R(S_{R, q}(n, d))$ the Ringel dual of the $q$-Schur algebra $S_{R, q}(n, d)$ (there are no restrictions on the natural numbers $n$ and $d$) $\End_{S_{R,q}(n,d)}(T)^{op}$. 
		
		Then, $(R(S_{R, q}(n, d)), \Hom_{S_{R, q}(n, d)}(T, V^{\otimes d}))$ is a {$(V^{\otimes d}\rdomdim_{(S_{R, q}(n, d), R)} T-2)$-$\mathcal{F}(\Stsim_{R(S_{R, q}(n, d))})$} split quasi-hereditary cover of $\End_{S_{R, q}(n, d)}(V^{\otimes d})^{op}$.
\end{Theorem}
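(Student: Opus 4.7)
The strategy is to apply Theorem \ref{ringeldualitycovers}(d) to $A=S_{R,q}(n,d)$ with $Q=V^{\otimes d}$. The ambient hypotheses are satisfied: by Theorem \ref{schuralgebraspassageNton}(d) the tensor space $V^{\otimes d}$ is a partial tilting module of $S_{R,q}(n,d)$, hence lies in $\add T$, and Proposition \ref{qhproperties} ensures $DV^{\otimes d}\otimes_A V^{\otimes d}\in R\proj$ so that Mueller-type characterisations are available.

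The heart of the argument is to relate the two invariants $V^{\otimes d}-\codomdim_{(A,R)}T$ (the hypothesis of Theorem \ref{ringeldualitycovers}) with $V^{\otimes d}-\domdim_{(A,R)}T$ (the quantity that was bounded below in Theorem \ref{domdimTcasensmallerd}). I intend to invoke Proposition \ref{dominantandcodominantoftiltingcomparison}: it is enough to produce, in the field case, a duality $\natural$ on $A(\mi)\m$ interchanging $\St(\lambda)$ with $\Cs(\lambda)$ and fixing $V^{\otimes d}(\mi)$. The $q$-Schur algebra over a field is cellular, so its standard cellular involution yields such a duality, under which each indecomposable partial tilting summand $T(\lambda)$ is sent to itself; hence so is any direct sum of copies of them, in particular the tensor space. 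Thus over each residue field $R(\mi)$ we obtain
\[
(R(\mi)^n)^{\otimes d}-\codomdim_{S_{R(\mi),q_\mi}(n,d)} T(\mi)= (R(\mi)^n)^{\otimes d}-\domdim_{S_{R(\mi),q_\mi}(n,d)} T(\mi).
\]
Lifting from the residue fields to the integral setup is done via Theorem \ref{changeofringrelativedomdimrelativetomodule} (applied to both relative dominant and relative codominant dimension), whose hypothesis $DV^{\otimes d}\otimes_A T\in R\proj$ holds because $T$ is a partial tilting module of a split quasi-hereditary algebra; this is exactly the manoeuvre allowed by the Remark following that theorem.

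Once the equality $V^{\otimes d}-\codomdim_{(A,R)}T = V^{\otimes d}-\domdim_{(A,R)}T$ is established, combining it with the lower bound of Theorem \ref{domdimTcasensmallerd} and feeding the resulting integer $n=V^{\otimes d}-\codomdim_{(A,R)}T\geq 2$ into Theorem \ref{ringeldualitycovers}(d) yields the announced $(n-2)$-$\mathcal{F}(\Stsim_{R(A)})$ split quasi-hereditary cover. The only remaining boundary case is when Theorem \ref{domdimTcasensmallerd} gives only the value $1$ (so that the target Hemmer--Nakano dimension is $-1$): here I will invoke Corollary \ref{deformationringeldualfunctor}, whose hypothesis over the total quotient ring $K$ of $R$ is automatic since every $1+q+\cdots+q^s$ which is non-zero in $K$ is invertible there, so $K\otimes_R V^{\otimes d}-\codomdim K\otimes_R T\geq 2$; localising at minimal primes if $R$ is not a domain reduces us to that situation. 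The main technical obstacle is therefore the integral passage in the second paragraph, which is controlled precisely by the change-of-rings theorem developed earlier.
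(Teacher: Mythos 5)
Your first part (the case $V^{\otimes d}-\domdim_{(A,R)}T\geq 2$) matches the paper: invoke Theorem \ref{domdimTcasensmallerd}, convert dominant to codominant dimension via Proposition \ref{dominantandcodominantoftiltingcomparison} (with the duality checked over residue fields and lifted by Theorem \ref{changeofringrelativedomdimrelativetomodule}), and feed the result into Theorem \ref{ringeldualitycovers}(d). That portion is sound.

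The gap is in your boundary case. When $V^{\otimes d}-\domdim_{(A,R)}T=1$, the element $1+q$ is not invertible in $R$, and the critical sub-case is $1+q=0$ (for instance $R=K=\mathbb{C}$, $u=i$, $q=-1$). There your claim that the hypothesis of Corollary \ref{deformationringeldualfunctor} is ``automatic over the total quotient ring $K$ of $R$'' fails: if $1+q=0$ in $R$ then $1+q=0$ in $K$ as well, $0$ is certainly not invertible, and so $K\otimes_R V^{\otimes d}-\codomdim_{K\otimes_R A}K\otimes_R T$ is still $1$, not $\geq 2$. Localising at minimal primes does not help — the ring may already be a field. The missing idea is an \emph{external} deformation: one must realise $S_{L,-1}(n,d)$ (for $L$ a residue field with $1+q_\mi=0$) as the specialisation at $X\mapsto u^{-1}$ of $S_{L[X,X^{-1}],X^{-2}}(n,d)$, where $q=X^{-2}$ is a formal parameter. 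Over the quotient field $L(X)$ one has $1+X^{-2}\neq 0$, hence invertible, so Corollary \ref{deformationringeldualfunctor} applies over the Laurent polynomial ring and yields a $0$-$\mathcal{F}(\Stsim)$ cover there; applying $L[X,X^{-1}]/(X-u^{-1})\otimes_{L[X,X^{-1}]}-$ then gives a $(-1)$-$\mathcal{F}(\Stsim)$ cover over $L$, and Proposition \ref{faithfulcoverresiduefield} glues these residue-field statements back to the given $R$. Without this detour through the Laurent polynomial ring your argument does not cover the case $q=-1$, which is precisely the case the theorem is designed to include.
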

\begin{proof}
If ${V^{\otimes d}\rdomdim_{(S_{R, q}(n, d), R)} T}\geq 2$ the result	follows from Theorems \ref{domdimTcasensmallerd}, \ref{ringeldualitycovers} and Proposition \ref{dominantandcodominantoftiltingcomparison}.
Assume now that  $V^{\otimes d}\rdomdim_{(S_{R, q}(n, d), R)} T=1$.  Theorem \ref{domdimTcasensmallerd} implies that $1+q$ is not invertible and $d>1$.
So we need to verify, in particular, the case $1+q=0$ and $d>1$. 

Let $L$ be a field with an element $u\in L$ satisfying $u^{-2}=-1$.  Consider the Laurent polynomial ring $R:=L[X, X^{-1}]$. The invertible elements of $L[X, X^{-1}]$ are of the form $lX^z$, $z\in \mathbb{Z}$ and $l\in L$ and $1+X^{-2}\neq 0$. So,  $\inf \{ s\in \mathbb{N} \ | \ 1+q+\cdots+ q^s\notin L[X, X^{-1}]^\times, \ s<d \}=1$ with $q=X^{-2}$.  Let $Q$ be the quotient field of $L[X, X^{-1}]$. Then, $\inf \{ s\in \mathbb{N} \ | \ 1+q+\cdots+ q^s\neq 0\in Q, \ s<d \}\geq 2$. Therefore,
${(Q^n)^{\otimes d}\rdomdim_{S_{Q, q}(n, d), Q} Q\otimes_{L[X, X^{-1}]} T\geq 2}$ and  $V^{\otimes d}\rdomdim_{(S_{R, q}(n, d), R)} T\geq 1$ for a characteristic tilting module $T$ of $S_{R, q}(n, d)$. By Corollary \ref{deformationringeldualfunctor}, we obtain that $(R(S_{R, q}(n, d)), \Hom_{S_{R, q}(n, d)}(T, V^{\otimes d}))$ is a $0$-$\mathcal{F}(\Stsim_{R(S_{R, q}(n, d))})$ split quasi-hereditary cover of $\End_{S_{R, q}(n, d)}(V^{\otimes d})^{op}$. 
Observe that $L$ is the residue field $L[X, X^{-1}]/(X-u^{-1})$ and the image of $X$ in this residue field is $u^{-1}$. Applying \linebreak ${L[X, X^{-1}]/(X-u^{-1})\otimes_{L[X, X^{-1}]}} -$ to the previous cover we obtain by Theorem 5.0.7 of \citep{p2paper} that $(R(S_{L, -1}(n, d)), \Hom_{S_{L, -1}(n, d)}(T, V^{\otimes d}))$ is a $(-1)$-$\mathcal{F}(\Stsim_{R(S_{L, -1}(n, d))})$ split quasi-hereditary cover of \linebreak$\End_{S_{L, -1}(n, d)}(V^{\otimes d})^{op}$.  By Proposition \ref{faithfulcoverresiduefield}, we conclude that the result holds also for the cases satisfying $V^{\otimes d}\rdomdim_{(S_{R, q}(n, d), R)} T=1$.
\end{proof}

The special case $q=1$ puts the Schur--Weyl duality between $S_R(n,d)$ and $RS_d$ without restrictions on the parameters $n$ and $d$ into our context. Moreover, this generalizes the results of Hemmer and Nakano in \citep{hemmer_nakano_2004} and \citep[Theorem 3.9]{zbMATH05871076} on the Schur algebra ($S_R(n, d)$ with parameters $n\geq d$). In general, this formulation helps us to put the quantum Schur--Weyl duality between $q$-Schur algebras and Iwahori-Hecke algebras on $V^{\otimes d}$ completely in the setup of cover theory generalising the results of \citep{zbMATH07050778, zbMATH02182639}.

In fact, if $n\geq d$ and $k$ is a field with $u\in k$, then by  (\citep[Proposition 3.7]{zbMATH00549737} for $q=1$, \citep[Proposition 4.1.4, Proposition 4.1.5]{MR1707336}) the Ringel dual of $S_{k, q}(n, d)$ is the opposite algebra of $S_{k, q}(n, d)$ and we can identify the projective relative injective module $\Hom_{S_{R, q}(n, d)}(T, V^{\otimes d})$ with $DV^{\otimes d}$. In this case, quantum Schur--Weyl duality gives $\End_{S_{R, q}(n, d)}(V^{\otimes d})\simeq H_{R, q}(d)^{op}$. So, for $n\geq d$, Theorem \ref{maintheoremforRingeldualSchur} is translated to $(S_{R, q}(n, d)^{op}, DV^{\otimes d})$ being  a split quasi-hereditary cover of $H_{R, q}(d)^{op}$. Therefore, for $n\geq d$, this statement is nothing new and since $S_R(n, d)$ is relative gendo-symmetric the quality of this cover coincides with the cover $(S_{R, q}(n, d), V^{\otimes d})$. The novelty lies in the case $n<d$. 

On the other hand, this formulation gives us better insights into older results present in the literature and it explains, in some sense, why different conventions about the Schur functor can be considered.
For simplicity, let us assume that $q=1$.

 Let $H\colon S_R(n, d)\m\rightarrow R(S_R(n, d))$ be the equivalence of categories given by Ringel self-duality in \citep[Proposition 3.7]{zbMATH00549737}. Denote by $F'$ the Schur functor associated with the cover  \linebreak$(R(S_R(n, d)), \Hom_{S_R(n, d)}(T, V^{\otimes d}))$ and by $F$ the Schur functor associated with the cover $(S_{R, q}(n, d), V^{\otimes d})$.
Then, $F\St(\l)=\xi_{(1, \ldots, d), (1, \ldots, d)}\St(\l)$ and by \citep[Lemma 1.6.12]{zbMATH00971625}, \begin{align}
	F'H\St(\l)&\simeq F'\Hom_{S_R(n, d)}(T, \Cs(\l'))\simeq F\Cs(\l')= \xi_{(1, \ldots, d), (1, \ldots, d)} D\St(\l') ^{\iota} \\&\simeq D(\xi_{(1, \ldots, d), (1, \ldots, d)} \St(\l'))^{\xi_{(1, \ldots, d), (1, \ldots, d)}\iota\xi_{(1, \ldots, d), (1, \ldots, d)}}\simeq \sgn\otimes_K \xi_{(1, \ldots, d), (1, \ldots, d)}\St(\l).
\end{align}Here $\sgn$ is the free module $R$ with the $S_d$-action $\sigma\cdot 1_R=\sgn(\sigma)1_R$ and $M^\iota$ is the right module $M$ with right action $m\cdot a=\iota(a)m$, $m\in M$ and $a\in S_R(n, d)$. The same notation is used for modules over $RS_d$. Thus,
there exists a commutative diagram
\begin{equation*}
	\begin{tikzcd}
		S_R(n, d)\m \arrow[rr, "F"] \arrow[d, "H"]&&RS_d\m \arrow[d, "\sgn\otimes_R -"]\\
		R(S_R(n, d))\m \arrow[rr, "F'"] && RS_d\m
	\end{tikzcd}.
\end{equation*}
This means that, the cover $(R(S_R(n, d)), \Hom_{S_R(n, d)}(T, V^{\otimes d}))$ is equivalent to $(S_R(n, d), V^{\otimes d})$ if $n\geq d$.

In case $n<d$, the Ringel dual of $S_R(n, d)$ is no longer, in general, a Schur algebra; it is instead a generalized Schur algebra in the sense of Donkin.
The construction of the Ringel dual of $S_R(n, d)$ is as follows: let $U_{\mathbb{Z}}$ be the Konstant $\mathbb{Z}$-form of the enveloping algebra of the semi-simple complex Lie algebra $\mathfrak{sl}_d(\mathbb{C})$. That is, $U_{\mathbb{Z}}$ is the subring of the enveloping algebra of  $\mathfrak{sl}_d(\mathbb{C})$ generated by the elements
$$\frac{e_{i, j}^m}{m!}, \quad 1\leq i\neq j\leq d, \ m\geq 0,$$
where $e_{i, j}$, $1\leq i\neq  j\leq d$ denote the generators of the enveloping algebra of $\mathfrak{sl}_d(\mathbb{C})$. Then, the Ringel dual of $S_{\mathbb{Z}}(n, d)$ is the free Noetherian $\mathbb{Z}$-algebra $U_{\mathbb{Z}}/I_{\mathbb{Z}}$, where $I_{\mathbb{Z}}$ is the largest ideal of $U_{\mathbb{Z}}$ so that the simple modules of $\mathbb{Q}\otimes_{\mathbb{Z}} U_{\mathbb{Z}}/I_{\mathbb{Z}}$ are isomorphic to the Weyl modules indexed by the weights belonging to $\L^+(n, d)$ (see \citep[3.1]{MR866778} and \citep[Proposition 3.11]{zbMATH00549737}). For an arbitrary commutative ring $R$, $R\otimes_{\mathbb{Z}}U_{\mathbb{Z}}/I_{\mathbb{Z}}$ is the Ringel dual of $S_R(n, d)$ known as \textbf{generalized Schur algebra} associated with $\mathfrak{sl}_d$ and  the set $\L^+(n, d)$.
Since the Ringel dual of the Schur algebra is a quotient of $R\otimes_{\mathbb{Z}} U_\mathbb{Z}$, Theorem \ref{maintheoremforRingeldualSchur} suggests why Schur--Weyl duality between $S_R(n, d)$ and $RS_d$ can be deduced by studying the action of the Konstant $\mathbb{Z}$-form on $V^{\otimes d}$.

\begin{Remark}
	From Theorem \ref{maintheoremforRingeldualSchur}, it follows that the basic algebra of $\End_{S_{R, q}(n, d)}(V^{\otimes d})^{op}(\mi)$, $A_\mi$, has a cellular structure with cell modules of $H\Hom_{S_{R, q}(n, d)}(V^{\otimes d}, \Cs(\l))(\mi)$, $\l\in \L$, for every $\mi\in \MaxSpec R$, (see \citep[Theorem 7.5]{appendix}) where $H\colon \End_{S_{R, q}(n, d)}(V^{\otimes d})^{op}(\mi)\m\rightarrow A_\mi\m$ is an equivalence of categories. If $q=1$, using the fact that all Schur algebras can be recovered using change of rings from the integral Schur algebra $S_{\mathbb{Z}}(n, d)$ \citep[Proposition 7.6]{appendix} implies that $\End_{S_{R}(n, d)}(V^{\otimes d})^{op}_\mi$ is Morita equivalent to a Noetherian algebra which has a cellular structure with cell modules labelled by $\l\in \L$ for all $\mi\in \MaxSpec R$.
\end{Remark}

Let $K$ be a field.  The existence of the quasi-hereditary cover described in Theorem \ref{maintheoremforRingeldualSchur} allows to study the multiplicities of simple $\End_{S_K(n, d)}(V^{\otimes d})$-modules through the multiplicities of simple $R(S_K(n, d))$-modules. In particular, this explains the background for the techniques used in \citep{zbMATH00681964} to determine decomposition numbers in the symmetric group. For example, \citep[4.5]{zbMATH00681964} can be deduced using the Schur functor constructed in Theorem \ref{maintheoremforRingeldualSchur}, the Ringel duality functor and BGG reciprocity.

If $R$ is a field, the value of the cover in Theorem \ref{maintheoremforRingeldualSchur} is optimal. But, as we saw even for the case $n\geq d$ the situation can be improved in some cases. We leave as open question to determine the relative dominant dimension $(V^{\otimes d}\rdomdim_{(S_{R, q}(n, d), R)} T$, when $n<d$ and in particular the optimal value of the cover in Theorem \ref{maintheoremforRingeldualSchur}.

\subsection{Relative dominant dimension as a tool for Ringel self-duality}

We are now going to use the results of Section \ref{Ringel duality} together with integral representation theory to reprove Ringel self-duality for two classical cases.
The method is particularly effective when applied to the Bernstein-Gelfand-Gelfand category $\mathcal{O}$. 
Such situations highlight the strength of working with split quasi-hereditary covers whose exact categories of modules admit a filtration by summands of direct sums of standard modules with large Hemmer-Nakano dimension.
\subsubsection{BGG category $\mathcal{O}$} \label{BGG category O}
We consider now the Bernstein-Gelfand-Gelfand category $\mathcal{O}$ introduced in \citep{zbMATH03549206}. 
In \cite{p2paper} using an integral version of the BGG category $\mathcal{O}$ introduced in \cite{zbMATH03747378} the author introduced split quasi-hereditary algebras whose module categories are deformations of the blocks of the category $\mathcal{O}$. Our aim now is to see that these objects are Ringel self-dual. We will continue using the same notation as in \cite{p2paper}. 

\paragraph{Some terminology}We assume throughout this subsubsection that $R$ is a commutative regular Noetherian local ring which is a $Q$-algebra with maximal ideal $\mi$, unless stated otherwise.
Let $\mathfrak{g}$ be a complex semi-simple Lie algebra with Cartan subalgebra $\mathfrak{h}$. The Lie algebra $\mathfrak{g}$ admits a Cartan decomposition $\mathfrak{g}=\mathfrak{n}^+\oplus \mathfrak{h}\oplus \mathfrak{n}^-$. The Lie algebra $\mathfrak{b}:= \mathfrak{n}^+\oplus \mathfrak{h}$ is called a Borel subalgebra of $\mathfrak{g}$. Let $\Phi\subset \mathfrak{h}^*$ be the set of non-zero roots associated with the Cartan decomposition, and let $\Pi$ be a set of simple roots, in particular, it is a basis of the root system $\Phi$. Here, $\mathfrak{h}^*$ denotes the dual of $\mathfrak{h}$. For each $\alpha\in \Phi$, we denote by $\alpha^{\vee}$ the coroot associated with $\alpha$ with respect to the symmetric bilinear form on the real span of $\Phi$ induced by the Killing form of $\mathfrak{g}$. Denote by $\Phi^\vee$ the set of all coroots. 

Using a Chevalley basis of $\mathfrak{g}$ we can consider an integral version $\mathfrak{g}_{\mathbb{Z}}$ of $\mathfrak{g}$ so that $\mathbb{C}\otimes_{\mathbb{Z}} \mathfrak{g}_{\mathbb{Z}}\simeq \mathfrak{g}$ as Lie algebras. In the same way, there are integral versions for $\mathfrak{h}$, $\mathfrak{n}^+$, $\mathfrak{n}^-$, etc. These Lie algebras are also free of finite rank over $\mathbb{Z}$.   We set $\mathfrak{g}_R:=R\otimes_\mathbb{Z} \mathfrak{g}_{\mathbb{Z}}$ and similarly we define $\mathfrak{h}_R$, $\mathfrak{h}_R^*$, etc. We denote by $U(\mathfrak{g}_R)$ (resp. $U(\mathfrak{n}_R^+))$ the enveloping algebra of $\mathfrak{g}_R$ (resp. $\mathfrak{n}_R^+$).  For each $M\in U(\mathfrak{g}_R)\m$ and each $\l\in \mathfrak{h}_R^*$, the \textbf{weight space} $M_{\l}$ is the $R$-module $\{m\in M\colon h\cdot m=\l(h)m, \forall h\in \mathfrak{h}_R \}$. The element $\l\in \mathfrak{h}_R^*$ is called a \textbf{weight} of $M$ if $M_\l\neq 0$. Denote by $W$ the \textbf{Weyl group} generated by the reflections, $s_\alpha$, $\alpha\in \Phi$, associated with the root system $\Phi$. In particular, $W$ acts on $\mathfrak{h}^*$ and consequently it acts on $\mathfrak{h}_R^*$.

By the set of integral weights $\L_R$ we mean the set $\{\l\in \mathfrak{h}_R^*\colon \langle \l, \alpha^\vee\rangle_R \in \mathbb{Z}, \ \forall \alpha \in \Phi \}$. Here, $\langle -, -\rangle_R\colon \mathfrak{h}_R^*\times \Phi^\vee\rightarrow R$ denotes the extension of $\Phi\times \Phi^\vee \rightarrow \mathbb{Z}$ to $\mathfrak{h}_R^*\times \Phi^\vee\rightarrow R$.

\begin{Def}\label{BGGcategoryO}
	The \textbf{BGG category $\mathcal{O}$} of a semi-simple Lie algebra $\mathfrak{g}_K$ over a splitting field of characteristic zero $K$ is the full subcategory of $U(\mathfrak{g}_K)\M$ whose modules $M$ satisfy the following conditions:
	\begin{enumerate}[(i)]
		\item $M\in U(\mathfrak{g}_K)\m$;
		\item $M$ is semi-simple over $\mathfrak{h}$, that is, $M=\bigoplus_{\l\in \mathfrak{h}_K^*} M_\l$;
		\item $M$ is locally $\mathfrak{n}_K^+$-finite, that is, for each $m\in M$ the subspace $U(\mathfrak{n}_K^+)m$ of $M$ is finite-dimensional.
	\end{enumerate}
\end{Def} 

\paragraph{Some facts about the BGG category $\mathcal{O}$}In \cite{zbMATH03549206, zbMATH05309234, zbMATH03663362}, many results and properties of the BGG category $\mathcal{O}$ can be found. This category contains all finite-dimensional modules over $U(\mathfrak{g}_K)$ but also nice modules like the Verma modules $\St(\mu)$, $\mu\in \mathfrak{h}_K^*$.  These are indecomposable modules. The simple modules of $\mathcal{O}$  are exactly the unique simple quotients of the Verma modules. Denote by $L(\l)$ the top of $\St(\l)$. This category has enough projectives and it can be brought to the study of finite-dimensional algebras using its blocks.

In fact, the BGG category $\mathcal{O}$ decomposes into a direct sum of full subcategories $\mathcal{O}_{W_\l\cdot \l}$, indexed by antidominant weights $\l\in \mathfrak{h}_K^*$, consisting of modules whose composition factors are the simple modules $L(\mu)$ with $\mu \in W_\l\cdot \l$.  The subcategories $\mathcal{O}_{W_\l \cdot \l}$ are known as \textbf{blocks} of $\mathcal{O}$. Here, $W_\l\subset W$ is the Weyl group of the rooting system making $\l$ an integral weight while $W_\l\cdot \l$ denotes the orbit of $\l$ under the dot action of the Weyl group $W_\l$. By an \textbf{antidominant weight} $\l\in \mathfrak{h}_K^*$, we mean a weight $\l$ satisfying $\langle \l+\rho, \alpha^\vee\rangle_K \notin \mathbb{Z}_{>0}$ for all $\alpha\in \Phi^+:=\mathbb{Z}^+_0\varPi\cap \Phi$, where $\rho$ is the half-sum of all positive roots. By a \textbf{dominant weight} we mean a weight $\l$ satisfying $\langle \l+\rho, \alpha^\vee\rangle_K \notin \mathbb{Z}_{<0}$ for all $\alpha\in \Phi^+:=\mathbb{Z}^+_0\varPi\cap \Phi$. A crucial fact for our purposes is that the orbit of a weight which is both dominant and antidominant is singular, and so in such a case, the block $\mathcal{O}_{W_\l\cdot\l}$ is semi-simple.

\paragraph{A projective Noetherian algebra associated with a block}

The BGG category $\mathcal{O}$ admits the following integral version, introduced in \citep[1.4]{zbMATH03747378}.

\begin{Def}
		Let $\l\in \mathfrak{h}_R^*$. For each $\l\in \mathfrak{h}_R^*$, we denote by $[\l]$ the set of elements of $\mathfrak{h}_R^*$, $\mu$, that satisfy $\mu-\l\in \L_R$. We define $\mathcal{O}_{[\l], R}$ to be the full subcategory of $U(\mathfrak{g}_R)\M$  whose modules $M$ satisfy the following conditions:
		\begin{enumerate}[(i)]
			\item $M\in U(\mathfrak{g}_R)\m$;
			\item $M=\sum_{\mu\in [\l]} M_\mu$;
			\item $M$ is $U(\mathfrak{n}_R^+)$-locally finite, that is, $U(\mathfrak{n}_R^+)m\in R\m$ for every $m\in M$.
		\end{enumerate}
\end{Def}  Since $R$ is a $\mathbb{Q}$-algebra, the sum in (ii) is actually a direct sum (see \citep[1.4.4]{zbMATH03747378}).

A partial order can be defined on $[\l]$ by imposing $\mu_1\leq \mu_2$ if and only if $\mu_2-\mu_1\in \mathbb{Z}_{\geq 0}\Phi^+\subset \L_R$. 
The Verma modules can be defined over any commutative ring. For each $\mu\in [\l]$, let $R_\mu$ be the free $R$-module with rank one with action $h\cdot1_R=\mu(\l)$ for all $h\in \mathfrak{h}_R$. $R_\mu$ can be regarded as an $U(\mathfrak{b}_R)$-module by imposing that $1_R\otimes x_\alpha$ acts trivially on $R_\mu$ for any $\alpha\in \Phi^+$. The \textbf{Verma module} $\St(\mu)\in \mathcal{O}_{[\l], R}$ is the $U(\mathfrak{g}_R)$-module $U(\mathfrak{g}_R)\otimes_{U(\mathfrak{b}_R)} R_\mu$. It is a direct consequence of their definition that they behave well under change of rings (see \citep[Subsubsection 7.3.8]{p2paper})

Given $\l\in \mathfrak{h}_R^*$, the decomposition $\mathcal{O}_{[\l], R}$ into blocks makes use of $R$ being a local ring and the set $[\l]$ being decomposable into blocks as well. We call $\mathcal{D}\subset [\l]$ a block  of $[\l]$ if $\{\overline{\mu}\colon \mu\in \mathcal{D} \}$ is an orbit under the dot action of the Weyl group $W$ on $[\l]\subset \mathfrak{h}_R^*$. By $\overline{\mu}$ we mean the image of $\mu$ in $\mathfrak{h}_R^*/\mi\mathfrak{h}_R^*$. In particular, the Weyl group also acts on $\mathfrak{h}_{R(\mi)}^*\simeq \mathfrak{h}_R^*/\mi\mathfrak{h}_R^*$ and $\overline{w\cdot \mu}=w\cdot \overline{\mu}$ under the dot action for all $w\in W$, $\mu\in \mathfrak{h}_R^*$. The blocks of $[\l]$ were completely determined in \citep[1.8.2]{zbMATH03747378} and all of them can be written in the form $\mathcal{D}=W_{\overline{\mu}}\cdot \mu +\nu$ with $\mu\in \mathfrak{h}_R^*$, $\nu\in \mi\mathfrak{h}_R^*$.

Let $\mathcal{D}$ be a block of $[\l]$ for some $\l\in \mathfrak{h}_R^*$. For each $\mu\in \mathcal{D}$, there exists a projective object $P(\mu)$ in $\mathcal{O}_{[\l], R}\cap R\Proj$ that maps  epimorphically onto $\St(\mu)$ whose kernel admits a filtration by standard modules $\St(\omega)$ with $\mu<\omega$ (see \citep[Subsubsection 7.3.10]{p2paper}). Define $P_{\mathcal{D}}:=\bigoplus_{\mu\in \mathcal{D}} P(\mu)$.  We define the $R$-algebra $A_{\mathcal{D}}$ to be the endomorphism algebra $\End_{\mathcal{O}_{[\l], R}}\left( P_{\mathcal{D}}  \right)^{op} $.  It is a projective Noetherian $R$-algebra and moreover the module category $A_{\mathcal{D}}(\mi)\m$ is equivalent to a block of the BGG category $\mathcal{O}$ whose composition factors are of the form $L(\overline{\mu})$ with $\mu\in \mathcal{D}$. In particular, every block of the BGG category $\mathcal{O}$ can be obtained in this way.
These algebras possess several nice properties.

\begin{Theorem}\label{Oissplitqh}
	Let $R$ be a local regular commutative Noetherian ring which is a $\mathbb{Q}$-algebra. Let $\mathcal{D}$ be a block of $[\l]$ for some $\l\in \mathfrak{h}_R^*$. The algebra $A_{\mathcal{D}}$ is split quasi-hereditary with standard modules $\St_A(\mu):=\Hom_{\mathcal{O}_{[\l], R}}(P_{\mathcal{D}}, \St(\mu))$, $\mu\in \mathcal{D}$.
	The set $\mathcal{D}$ is a poset with the partial order $\mu_1<\mu_2$ if and only if $\mu_2-\mu_1\in \mathbb{N}\varPi$. \end{Theorem}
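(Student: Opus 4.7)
The plan is to verify the five conditions of Definition \ref{splithwc} by transporting the known structure of $\mathcal{O}_{[\l], R}$ (Verma modules, projective covers, filtrations) across the projectivization functor $F = \Hom_{\mathcal{O}_{[\l], R}}(P_{\mathcal{D}}, -)$. By construction, $F$ restricts to an equivalence between $\add_{\mathcal{O}_{[\l], R}} P_{\mathcal{D}}$ and $A_{\mathcal{D}}\proj$, sending $P(\mu)$ to the indecomposable projective $A_{\mathcal{D}}$-module $P_A(\mu) := F(P(\mu))$, so $A_{\mathcal{D}} \simeq \bigoplus_{\mu\in\mathcal{D}} P_A(\mu)$ as a left $A_{\mathcal{D}}$-module. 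This immediately gives condition (v). That $(\mathcal{D}, <)$ is a poset with $\mu_1 < \mu_2 \Longleftrightarrow \mu_2 - \mu_1 \in \mathbb{N}\varPi$ is a direct verification from the definition of $\mathbb{N}\varPi$ as a submonoid.

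Next, I would verify conditions (i)--(iii). For (i), since $P_{\mathcal{D}}$ is a projective object of $\mathcal{O}_{[\l], R}$ and $\St(\mu)$ is free of finite rank as $R$-module, standard arguments (analogous to those in the proof of Proposition \ref{standardscotiltingsreductiontofields}) together with the results recalled in \citep[Subsubsection 7.3.10]{p2paper} give that $\St_A(\mu) = F(\St(\mu))$ is projective as $R$-module. For (iii), I would use that $F$ is fully faithful on $\add P_{\mathcal{D}}$ together with the fact that any endomorphism of $\St(\mu)$ extends along the surjection $P(\mu) \twoheadrightarrow \St(\mu)$; combined with the integral analogue of $\End_{\mathcal{O}_{[\l], R}}(\St(\mu)) \simeq R$ (which follows because $\St(\mu) = U(\mathfrak{g}_R)\otimes_{U(\mathfrak{b}_R)} R_\mu$ has a one-dimensional $\mu$-weight space over $R$), we conclude $\End_{A_{\mathcal{D}}}(\St_A(\mu)) \simeq R$. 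For (ii), any non-zero morphism $\St_A(\mu_1) \to \St_A(\mu_2)$ lifts via the projective cover $P_A(\mu_1) \twoheadrightarrow \St_A(\mu_1)$ to a non-zero element of $\Hom_{A_{\mathcal{D}}}(P_A(\mu_1), \St_A(\mu_2)) \simeq \Hom_{\mathcal{O}_{[\l], R}}(P(\mu_1), \St(\mu_2))$; pushed forward through the filtration of $P(\mu_1)$ by Vermas $\St(\omega)$ with $\omega \geq \mu_1$, it must land on a Verma with highest weight $\leq \mu_2$, hence some weight of $\St(\mu_2)$ equals some $\omega \geq \mu_1$. Since every weight of $\St(\mu_2)$ satisfies $\mu_2 - \text{weight} \in \mathbb{N}\varPi$, this forces $\mu_2 - \mu_1 \in \mathbb{N}\varPi$, i.e., $\mu_1 \leq \mu_2$.

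For condition (iv), I would invoke the construction of $P(\mu)$ recalled from \citep[Subsubsection 7.3.10]{p2paper}: there is an exact sequence
\begin{align*}
0 \to C(\mu) \to P(\mu) \to \St(\mu) \to 0
\end{align*}
with $C(\mu)$ admitting a finite filtration by modules of the form $\St(\omega)\otimes_R U_\omega$ with $U_\omega \in R\proj$ and $\omega > \mu$. Applying $F$ preserves this exact sequence because $P_{\mathcal{D}}$ is projective in $\mathcal{O}_{[\l], R}$, so $F$ is exact on short exact sequences in $\mathcal{O}_{[\l], R} \cap R\proj$ whose terms lie in the relevant subcategory. Using Lemma \ref{tensorprojcommutingonHom} to identify $F(\St(\omega) \otimes_R U_\omega) \simeq \St_A(\omega) \otimes_R U_\omega$ yields the required filtration of $F(C(\mu))$, establishing (iv).

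The main obstacle is, as usual in the integral setup, that one cannot work directly with indecomposable decompositions of arbitrary modules but must control the $R$-module structure at every step; this is handled by appealing to $R$ being a local regular $\mathbb{Q}$-algebra (which ensures the Verma modules are free over $R$ and that reduction modulo $\mathfrak{m}$ recovers the classical BGG category $\mathcal{O}$), and by using Proposition \ref{standardscotiltingsreductiontofields} style arguments together with the explicit construction of the $P(\mu)$ from \citep[Subsubsection 7.3.10]{p2paper} to transport between the integral and field settings. The verifications above essentially just unfold this machinery for the block $\mathcal{D}$.
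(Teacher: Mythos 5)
The paper does not actually prove this statement: its ``proof'' is the single line ``See [p2paper, Theorem 7.3.38]'', so there is no internal argument to compare yours against. Your proposal is a reasonable reconstruction of the standard projectivization argument that such a reference carries out: conditions (i), (ii), (iv) and (v) of Definition \ref{splithwc} are transported correctly, using exactness of $F=\Hom_{\mathcal{O}_{[\l],R}}(P_{\mathcal{D}},-)$ (from projectivity of $P_{\mathcal{D}}$), the isomorphism $\Hom_{\mathcal{O}_{[\l],R}}(P,M)\simeq\Hom_{A_{\mathcal{D}}}(FP,FM)$ for $P\in\add P_{\mathcal{D}}$, Lemma \ref{tensorprojcommutingonHom}, and the weight argument showing $\Hom_{\mathcal{O}_{[\l],R}}(P(\mu_1),\St(\mu_2))\neq 0$ forces $\mu_2-\mu_1\in\mathbb{N}\varPi$.

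The one place your sketch is too thin is condition (iii). Full faithfulness of $F$ on $\add P_{\mathcal{D}}$ does not by itself identify $\End_{A_{\mathcal{D}}}(F\St(\mu))$ with $\End_{\mathcal{O}_{[\l],R}}(\St(\mu))$, since $\St(\mu)\notin\add P_{\mathcal{D}}$, and ``any endomorphism of $\St(\mu)$ extends along $P(\mu)\twoheadrightarrow\St(\mu)$'' only produces a map $\End(\St(\mu))\to\End_{A_{\mathcal{D}}}(F\St(\mu))$, not its bijectivity. What is needed is the converse descent: given $\phi\in\End_{A_{\mathcal{D}}}(F\St(\mu))$, lift it along the projective $FP(\mu)\twoheadrightarrow F\St(\mu)$ to $F g$ with $g\in\End(P(\mu))$, and then check that $g$ descends to $\St(\mu)$. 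This descent holds because $\Hom_{\mathcal{O}_{[\l],R}}(C(\mu),\St(\mu))=0$ --- which follows from the same weight argument you use for condition (ii), since $C(\mu)$ is filtered by $\St(\omega)$ with $\omega>\mu$ --- so the composite $C(\mu)\hookrightarrow P(\mu)\xrightarrow{g}P(\mu)\twoheadrightarrow\St(\mu)$ vanishes automatically. Injectivity of the comparison map then follows from $F$ being faithful on morphisms out of $\add P_{\mathcal{D}}$. With this point made explicit (and with the facts about $\St(\mu)$, $P(\mu)$ and their filtrations over $R$ taken from the cited construction, as you do), the argument is complete.
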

\begin{proof}
	See \citep[Theorem 7.3.38]{p2paper}.
\end{proof}
The algebra $(A_{\mathcal{D}}, \{\St_A(\mu)_{\mu\in \mathcal{D}} \})$ is actually a split quasi-hereditary cellular $R$-algebra (see \citep[Theorem 7.3.42]{p2paper}). 

We say that $\omega$ is an antidominant weight of $\mathcal{D}$ if $\overline{\omega}$ is an antidominant weight in $\mathfrak{h}^*_{R(\mi)}$.

\begin{Theorem}\label{integralstruktursatz}
	Let $R$ be a local regular commutative Noetherian ring which is a $\mathbb{Q}$-algebra with unique maximal ideal $\mi$. 
	Let $\mathcal{D}$ be a block of $[\l]$ for some $\l\in \mathfrak{h}_R^*$. Suppose that $\omega$ is the antidominant weight of $\mathcal{D}$. The following assertions hold.
	\begin{enumerate}[(a)]
		\item $(A_{\mathcal{D}}, P_A(\omega))$ is a relative gendo-symmetric $R$-algebra.
		\item \textbf{(Integral Struktursatz) }$(A_{\mathcal{D}}, P_A(\omega))$ is a split quasi-hereditary cover of the commutative $R$-algebra $C:=\End_{A_{\mathcal{D}}}(P_A(\omega))^{op}$.
		\item  If $|\mathcal{D}|=1$, then $\domdim{(A_{\mathcal{D}}, R)}=+\infty$. Otherwise, $\domdim{(A_{\mathcal{D}}, R)} =2$.
		\item If $T$ is a characteristic tilting module of $A_{\mathcal{D}}$, then $2\domdim_{(A_{\mathcal{D}}, R)}T=\domdim {(A_{\mathcal{D}}, R)} $.
	\end{enumerate}
\end{Theorem}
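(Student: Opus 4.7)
The plan is to reduce everything to the field case via change of rings, and then invoke Soergel's Struktursatz in each residue field. The crucial preliminary fact is that the block $A_{\mathcal{D}}(\mi)$ of the BGG category $\mathcal{O}$ carries a simple-preserving duality, so $P_A(\omega)(\mi)$ is the unique (up to isomorphism) indecomposable projective-injective module in the block, and $\End_{A_{\mathcal{D}}(\mi)}(P_A(\omega)(\mi))^{op}$ is the coinvariant algebra associated with the stabilizer $W_{\overline{\omega}}$. First I would verify part $(a)$: the classical Soergel theory shows $P_A(\omega)(\mi)\in \add DA_{\mathcal{D}}(\mi)$ and that $P_A(\omega)(\mi)$ is faithful; since $A_{\mathcal{D}}\in R\proj$, Proposition \ref{standardscotiltingsreductiontofields}-type arguments together with Nakayama's lemma lift this to show $P_A(\omega)\in \add DA_{\mathcal{D}}$ and that there is an $(A,R)$-monomorphism $A_{\mathcal{D}}\hookrightarrow P_A(\omega)^m$ for some $m$, giving strong faithfulness. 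The bimodule isomorphism $P_A(\omega)\simeq DA_{\mathcal{D}}\otimes_{A_{\mathcal{D}}}P_A(\omega)$ is built from the Nakayama functor: over each residue field the duality identifies the Nakayama functor with the identity on the projective-injective summand, and the identification globalises because $R$ is local.

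For $(c)$, the dominant dimension at least two is a formal consequence of $(a)$ by Theorem \ref{dominantdimensionquantumschur}-style arguments or directly by resolving $A_{\mathcal{D}}$ by an $(A, R)$-exact sequence of length two into copies of $P_A(\omega)$. If $|\mathcal{D}|=1$ then $\mathcal{D}=\{\omega\}$ and $A_{\mathcal{D}}$ is itself Morita equivalent to the commutative algebra $C$, so it is self-injective and the dominant dimension is infinite. Otherwise, since the standard module $\St_A(\mu)$ for $\mu\neq \omega$ has non-trivial higher extensions with $P_A(\omega)$ in the residue field (for example via the first Ext between non-isomorphic simples appearing in the Verma filtration of $P_A(\omega)(\mi)$), no $(A, R)$-exact coresolution of length three in $\add P_A(\omega)$ can exist; by Theorem \ref{changeofringrelativedomdimrelativetomodule} this forces $\domdim{(A_{\mathcal{D}},R)}=2$.

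Part $(b)$ is then immediate from $(a)$ and Proposition \ref{domdimtoolcover}(a): the triple $(A_{\mathcal{D}}, P_A(\omega), DP_A(\omega))$ is a relative Morita $R$-algebra, hence $(A_{\mathcal{D}}, \Hom_{A_{\mathcal{D}}^{op}}(DP_A(\omega), A_{\mathcal{D}}))=(A_{\mathcal{D}}, P_A(\omega))$ is a cover of $\End_{A_{\mathcal{D}}}(P_A(\omega))^{op}=C$; this cover is split quasi-hereditary thanks to Theorem \ref{Oissplitqh}. For $(d)$, the existence of a simple-preserving duality on each $A_{\mathcal{D}}(\mi)$ that interchanges $\St$ and $\Cs$ lifts (using Theorem \ref{changeofringrelativedomdimrelativetomodule} and Proposition \ref{dominantandcodominantoftiltingcomparison}) to the equality $\domdim_{(A_{\mathcal{D}},R)} T=\codomdim_{(A_{\mathcal{D}},R)} T$. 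Splicing the two-step coresolution of $A_{\mathcal{D}}$ into $\add P_A(\omega)$ coming from $(a)$ with a corresponding finite $\add P_A(\omega)$-resolution of $DA_{\mathcal{D}}$ (via the duality) yields the doubling $\domdim{(A_{\mathcal{D}},R)}=2\domdim_{(A_{\mathcal{D}},R)} T$, in complete analogy with Theorem \ref{dominantdimensionquantumschur}(b).

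The hard part of this plan is the exact upper bound in $(c)$: showing $\domdim{(A_{\mathcal{D}},R)}\leq 2$ when $|\mathcal{D}|>1$ requires producing an explicit non-vanishing extension obstructing a third step in any relative projective-injective coresolution. This obstruction is classical over a field but has to be controlled integrally, which is where the hypothesis that $R$ be regular local and a $\mathbb{Q}$-algebra (so that $\mathcal{O}_{[\l],R}$ deforms the block cleanly) is used in an essential way.
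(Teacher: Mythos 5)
The paper does not actually prove this theorem in situ: its ``proof'' is the citation to \citep[Theorems 7.3.43 and 7.3.44]{p2paper}, so there is no in-text argument to compare yours against step by step. Your outline has the right overall shape --- establish (a) at the residue field via Soergel's classical results and lift, deduce (b) from Proposition \ref{domdimtoolcover}(a), and reduce (c) and (d) to the field case --- and this is consistent with how the cited results are obtained. One point where you make the problem harder than it is: the ``integral control'' you worry about in the last paragraph is already supplied by the machinery in the preliminaries. Once (a) gives $\domdim{(A_{\mathcal{D}},R)}\geq 2\geq 1$, the change-of-rings theorem quoted after Definition \ref{relativedominantdef} (\citep[Theorem 6.13]{CRUZ2022410}) gives $\domdim{(A_{\mathcal{D}},R)}=\domdim A_{\mathcal{D}}(\mi)$ because $R$ is local, and likewise Theorem \ref{changeofringrelativedomdimrelativetomodule} reduces $\domdim_{(A_{\mathcal{D}},R)}T$ to $\domdim_{A_{\mathcal{D}}(\mi)}T(\mi)$. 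So the only genuinely open content is the classical, field-level computation.

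And that is where the genuine gap sits. Your proposed obstruction for the upper bound in (c) --- ``non-trivial higher extensions between $\St_A(\mu)$ and $P_A(\omega)$ via the first Ext between non-isomorphic simples in the Verma filtration'' --- is not an argument: non-vanishing of some $\Ext^{i}$ against $P_A(\omega)$ does not directly obstruct extending an $(A,R)$-exact coresolution by projective-injectives one more step, and you do not explain how it would. The mechanism that actually works (and is the one behind the cited results) is different: $C(\mi)$ is a local commutative algebra, hence has exactly one simple module, and by the results recalled in the paper (the number of simple modules of the base algebra bounds the quality of a faithful cover, cf.\ the discussion before Proposition \ref{onefaithfulcovers}) this caps the Hemmer--Nakano dimension of $\mathcal{F}(\Stsim)$ at $-1$ when $|\mathcal{D}|>1$, which via Proposition \ref{domdimtoolcover}(b) caps $\domdim_{A_{\mathcal{D}}(\mi)}T(\mi)$ at $1$, and then the doubling formula of (d) forces $\domdim A_{\mathcal{D}}(\mi)=2$. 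Note this makes (c) logically dependent on (d), not the other way around, so your ordering of the argument would need to be rearranged. Part (d) itself is also only asserted ``in complete analogy with Theorem \ref{dominantdimensionquantumschur}(b)''; the analogy is apt because the hypotheses (gendo-symmetric plus a duality exchanging $\St$ and $\Cs$) are the same, but as written it is an appeal to analogy rather than a proof.
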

\begin{proof}
See \citep[Theorem 7.3.43, Theorem 7.3.44]{p2paper}.
\end{proof}

Here $C$ is a deformation of a subalgebra of a coinvariant algebra, in the sense that $C(\mi)$ is a subalgebra of a coinvariant algebra. We can consider an integral version of the Soergel's combinatorial functor $$\mathbb{V}_{\mathcal{D}}= \Hom_{A_{\mathcal{D}}}(P_A(\omega), -) \colon A_{\mathcal{D}}\m\rightarrow C\m.$$ This integral version has better properties than the classical one, and in particular, if $\mathcal{D}$ is nice enough, $\mathbb{V}_{\mathcal{D}}$ is actually fully faithful on standard modules. More precisely, we have the following:

\begin{Theorem}\label{HNforcategoryO}
	Fix $t$ a natural number. Let $K$ be a field of characteristic zero. Let $R$ be the localization of $K[X_1, \ldots, X_t]$ at the maximal ideal $(X_1, \ldots, X_t)$. Denote by $\mi$ the unique maximal ideal of $R$. Pick $\theta\in \mathfrak{h}_{R(\mi)}^*\simeq \mathfrak{h}_R^*/\mi \mathfrak{h}_R^*$ to be an antidominant weight which is not dominant. Define $\mu\in \mathfrak{h}_R^*$ to be a preimage of $\theta$ without coefficients in $\mi$ in its unique linear combination of simple roots. Fix $s$ to be a natural number satisfying $1\leq s\leq \rank_R \mathfrak{h}_R^*$ and $s\leq t$. Consider the block $\mathcal{D}=W_{\overline{\mu}}\cdot \mu +\frac{X_1}{1}\alpha_1+\cdots +\frac{X_s}{1}\alpha_s$, where $\alpha_i\in \varPi$ are distinct simple roots, $i=1, \ldots, s$ and by $\frac{f}{1}$ we mean the image of $f\in K[X_1, \ldots, X_t]$ in $R$.
	Then,  $\HN_{\mathbb{V}_{\mathcal{D}}} \mathcal{F}(\St_A)=s-1$.
\end{Theorem}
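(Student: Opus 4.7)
The plan is to deduce the upper bound $s-1$ from Proposition \ref{domdimtoolcover}(b) applied to the characteristic tilting module, and to obtain the lower bound by iterating the cover-improvement result Theorem \ref{improvingcoverwithspectrum}. Before anything else, I would reduce to the case $t=s$. Writing $R_s := K[X_1,\ldots,X_s]_{(X_1,\ldots,X_s)}$ and $\mathcal{D}_s$ for the block defined exactly as $\mathcal{D}$ but over $R_s$, one has $A_\mathcal{D}\simeq R\otimes_{R_s}A_{\mathcal{D}_s}$ and $P_A(\omega)\simeq R\otimes_{R_s}P_{A,s}(\omega)$ because the coefficients appearing in $\mathcal{D}$ are confined to $X_1,\ldots,X_s$. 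Since the local ring homomorphism $R_s\hookrightarrow R$ is faithfully flat, tensor commutes with $\Ext$ and faithful flatness lets us both push and descend the $i$-faithfulness property; hence $\HN_{\mathbb{V}_\mathcal{D}}\mathcal{F}(\St_A)$ over $R$ equals $\HN_{\mathbb{V}_{\mathcal{D}_s}}\mathcal{F}(\St_{A_{\mathcal{D}_s}})$ over $R_s$. From now on I assume $t=s$, so that $\dim R=s$.

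\textbf{Upper bound.} Since $\theta$ is antidominant but not dominant, the orbit $W_{\overline{\mu}}\cdot\mu$ has at least two elements, so $|\mathcal{D}|\geq 2$. By Theorem \ref{integralstruktursatz}(a),(c),(d), $(A_\mathcal{D},P_A(\omega))$ is relative gendo-symmetric and $\domdim_{(A_\mathcal{D},R)}T=1$ for a characteristic tilting $T$. Plugging this and $\dim R=s$ into Proposition \ref{domdimtoolcover}(b) yields
\[\HN_{\mathbb{V}_\mathcal{D}}\mathcal{F}(\St_A)\leq \domdim_{(A_\mathcal{D},R)}T+\dim R-2=s-1.\]

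\textbf{Lower bound.} I argue by induction on $s$. For $s=1$, Theorem \ref{integralstruktursatz}(b) gives a $0$-faithful cover, matching $s-1=0$. For $s\geq 2$, I apply Theorem \ref{improvingcoverwithspectrum} with $i=s-2$, whose hypotheses are the following. Over the quotient field $K_R=K(X_1,\ldots,X_s)$, the scalars $X_j$ are algebraically independent over $K$, so distinct weights $w\cdot\mu+X_1\alpha_1+\cdots+X_s\alpha_s$ with $w\in W_{\overline{\mu}}$ cannot differ by a $\mathbb{Z}$-combination of roots. Hence $K_R\otimes_R A_\mathcal{D}$ is semi-simple and the cover is trivially $\infty$-faithful. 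For a height-one prime $\mathfrak{p}$ of $R$, the residue field $R_\mathfrak{p}/\mathfrak{p}R_\mathfrak{p}$ is obtained from $K$ by adjoining at most $s-1$ algebraically independent parameters: if $\mathfrak{p}$ imposes no new $\mathbb{Z}$-integrality relation, the fiber stays semi-simple and $(s-2)$-faithfulness is immediate; otherwise, after a Weyl-group relabelling, $\mathfrak{p}$ amounts to a specialisation killing one deformation direction, identifying the fiber as an algebra $A_{\mathcal{D}'}$ of the theorem with parameter $s-1$, so the inductive hypothesis delivers $(s-2)$-faithfulness. Theorem \ref{improvingcoverwithspectrum} then promotes the $0$-faithful cover to $(s-1)$-faithful over $R$.

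\textbf{Main obstacle.} The technical core is the height-one analysis: one must classify the height-one primes of $R$ according to which $\mathbb{Z}$-integrality relations they impose on the deformed weights, and identify each non-semisimple fiber with an algebra $A_{\mathcal{D}'}$ from the family of the theorem carrying exactly one fewer deformation parameter. This uses the description of blocks of $\mathcal{O}_{[\lambda],S}$ under change of local Noetherian $\mathbb{Q}$-algebra $S$ from \cite{zbMATH03747378} transported to $A_\mathcal{D}$ via the machinery of \cite[Subsubsection 7.3]{p2paper}.
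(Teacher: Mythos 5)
First, note that the paper does not actually prove this statement: its ``proof'' is the citation to \citep[Theorem 7.3.45, Remark 7.3.47]{p2paper}. Your architecture --- upper bound from Proposition~\ref{domdimtoolcover}(b) using $\domdim_{(A_{\mathcal{D}},R)}T=1$ and $\dim R=s$, lower bound from generic semisimplicity plus an induction through Theorem~\ref{improvingcoverwithspectrum} over height-one primes --- is exactly the mechanism the paper itself later deploys (see the proof of Theorem~\ref{RingelselfdualityofO}, which quotes the key observation from the cited proof that $Q(R/\pri)\otimes_R A_{\mathcal{D}}$ is semisimple for $\pri$ of height at most one). So the strategy is the right one. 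However, as written your argument has two genuine gaps.

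First, the entry point of the induction. Theorem~\ref{improvingcoverwithspectrum} takes as \emph{hypothesis} that $(A_{\mathcal{D}},P_A(\omega))$ is already a $0$-$\mathcal{F}(\Stsim)$ cover, and your base case appeals to Theorem~\ref{integralstruktursatz}(b), which only asserts that it is a cover, i.e.\ a double centralizer property; in the paper's hierarchy that is the $(-1)$-level, and indeed Proposition~\ref{domdimtoolcover}(b) with $\domdim_{(A_{\mathcal{D}},R)}T=1$ gives exactly $(-1)$-faithfulness and no more. Passing from $-1$ to $0$ is itself a deformation argument (injectivity of the unit on a characteristic tilting module plus vanishing of a torsion cokernel using generic semisimplicity and height-one localizations, in the style of Corollary~\ref{deformationringeldualfunctor} but for the cover $(A,P)$ rather than the Ringel-dual cover). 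You need to supply this step, both for $s=1$ and as the input to every application of Theorem~\ref{improvingcoverwithspectrum}.

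Second, the identification of the height-one fibers. The height-one primes $\pri$ producing non-semisimple fibers are those containing an integral linear form $\ell=\sum_i\langle\alpha_i,\alpha^{\vee}\rangle X_i$ for some coroot $\alpha^{\vee}$, so $\pri=(\ell)$ and $R/\pri$ is again a regular local ring of dimension $s-1$; that part is fine. But after specialization the block of $[\l]$ over $R/\pri$ has deformation vector the image of $\sum_i X_i\alpha_i$, which is a general integral combination of simple roots and \emph{not} of the form $\sum_j Y_j\alpha_{i_j}$ with distinct simple roots (already for $s=2$ and $\pri=(X_1-X_2)$ one gets the direction $X(\alpha_1+\alpha_2)$). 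Hence the fiber is not literally an instance of the theorem with parameter $s-1$, and your inductive hypothesis does not apply to it. You must either prove the statement for a more flexible family of deformation directions (e.g.\ $s$ linearly independent directions in $\mi\mathfrak{h}_R^*$ whose reductions control the linkage classes) or argue the $(s-2)$-faithfulness of the fibers directly. Finally, the reduction from $t$ variables to $s$ variables by faithfully flat descent is plausible but is doing real work for the upper bound (otherwise Proposition~\ref{domdimtoolcover}(b) only yields $t-1$); it should be justified, e.g.\ by checking the conditions of Proposition~\ref{zeroAcover} on standard modules, which are base-changed from $R_s$.
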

\begin{proof}
	See \citep[Theorem 7.3.45, Remark 7.3.47]{p2paper}.
\end{proof}

\paragraph{Ringel self-duality of BGG category $\mathcal{O}$}
It is well known that the blocks of classical BGG category $\mathcal{O}$ are Ringel self-dual. This goes back to the work of Soergel \citep[Corollary 2.3]{zbMATH01267662}. This fact was then reproved independently in \citep[Proposition 4]{MR1785327} using the Enright completion functor. Using the methodology introduced here we can establish a new proof of this fact without using the so-called semi-regular bimodule and without using Enright's completions.

\begin{Theorem}\label{RingelselfdualityofO}
	Let $R$ be a local regular commutative Noetherian ring which is a $\mathbb{Q}$-algebra with maximal ideal $\mi$.  Let $\mathcal{D}$ be a block of $[\l]$ for some $\l\in \mathfrak{h}_R^*$. The split quasi-hereditary $R$-algebra $A_{\mathcal{D}}$ is Ringel self-dual.
\end{Theorem}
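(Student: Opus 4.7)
The plan is to reduce the problem to the case where $R=K$ is the residue field of $R$ at $\mi$ (via Lemma \ref{ringeldualintermsofresiduefield}), and then to lift $\mathcal{D}$ to a sufficiently generic polynomial deformation $R'$ of $K$ so that two split quasi-hereditary covers of the same algebra arise: the Struktursatz cover $(A_{\mathcal{D}'},P_A(\omega))$ supplied by Theorem \ref{integralstruktursatz}, and the cover $(R(A_{\mathcal{D}'}),\Hom_{A_{\mathcal{D}'}}(T,P_A(\omega)))$ supplied by Ringel duality (Theorem \ref{ringeldualitycovers} or its deformation refinement Corollary \ref{deformationringeldualfunctor}). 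Showing that these two covers are equivalent via the uniqueness result Corollary \ref{equivalenceofoneuniqueness} will force $A_{\mathcal{D}'}$ to be Morita equivalent to its Ringel dual as split quasi-hereditary algebra, and specialising at the maximal ideal of $R'$ through Proposition \ref{standardscotiltingsreductiontofields} then yields the desired Ringel self-duality over $K$, and hence over $R$.

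Assuming $|\mathcal{D}|>1$ (otherwise $A_{\mathcal{D}}$ is semi-simple and the statement is trivial), choose the deformation $R'=K[X_1,\dots,X_t]_{(X_1,\dots,X_t)}$ of Theorem \ref{HNforcategoryO} with $s=\rank_{R'}\mathfrak{h}_{R'}^*$, which we assume to be at least $2$ (the $\mathfrak{sl}_2$ case is handled by direct inspection of the unique non-simple block). By Theorem \ref{HNforcategoryO} the Struktursatz cover $(A_{\mathcal{D}'},P_A(\omega))$ of $C':=\End_{A_{\mathcal{D}'}}(P_A(\omega))^{op}$ is $(s-1)$-faithful, hence at least $1$-faithful. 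Over the quotient field $K'$ of $R'$ the generic deformation separates the relevant dot-orbit into distinct weights, so $K'\otimes_{R'}A_{\mathcal{D}'}$ is semi-simple and $K'\otimes_{R'}P_A(\omega)-\codomdim K'\otimes_{R'}T$ is infinite. Combining this with $P_A(\omega)-\codomdim_{(A_{\mathcal{D}'},R')}T\geq 1$ (which follows from Theorem \ref{integralstruktursatz}(a),(d) together with Proposition \ref{dominantandcodominantoftiltingcomparison} applied to the BGG duality on $\mathcal{O}_{[\l],R'}$), Corollary \ref{deformationringeldualfunctor} produces the Ringel-dual cover as a $0$-faithful split quasi-hereditary cover of $C'$. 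An induction on the number of variables of $R'$, using Theorem \ref{improvingcoverwithspectrum} at each height-one prime (where the quotient of $R'$ carries a polynomial deformation of one smaller type, over which the analogous Ringel-dual cover is $0$-faithful by the inductive hypothesis), upgrades the Ringel-dual cover over $R'$ to being $1$-faithful.

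With both covers $1$-faithful over $C'$, Proposition \ref{onefaithfulcovers} together with Remark \ref{exactequivalences} supplies exact equivalences
\begin{equation*}
\mathcal{F}(\Stsim_{A_{\mathcal{D}'}})\simeq \mathcal{F}(\mathbb{V}_{\mathcal{D}'}\Stsim_{A_{\mathcal{D}'}}) \quad\text{and}\quad \mathcal{F}(\Stsim_{R(A_{\mathcal{D}'})})\simeq \mathcal{F}(\mathbb{V}_{\mathcal{D}'}\Cssim_{A_{\mathcal{D}'}})
\end{equation*}
inside $C'\m$. To invoke Corollary \ref{equivalenceofoneuniqueness}, it remains to construct an exact equivalence $\mathcal{F}(\mathbb{V}_{\mathcal{D}'}\Stsim_{A_{\mathcal{D}'}})\to \mathcal{F}(\mathbb{V}_{\mathcal{D}'}\Cssim_{A_{\mathcal{D}'}})$ in $C'\m$; this is the main obstacle. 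The natural candidate is the BGG duality of $\mathcal{O}_{[\l],R'}$, an exact contravariant involution fixing the projective-injective $P_A(\omega)$ and interchanging $\St_A(\mu)$ with $\Cs_A(\mu)$, but since it is contravariant one must combine it with the $R'$-linear dual on the commutative algebra $C'$ and verify carefully that the resulting covariant functor genuinely lands in $C'\m$ rather than $(C')^{op}\m$, making essential use of the commutativity of $C'$ and of the order-reversing identification $\mathcal{D}\simeq\mathcal{D}^{op}$ supplied by the longest element of $W_{\overline{\mu}}$. Once this equivalence is in place, Corollary \ref{equivalenceofoneuniqueness} forces the two covers to be equivalent, so $A_{\mathcal{D}'}$ is Morita equivalent to $R(A_{\mathcal{D}'})$ as split quasi-hereditary $R'$-algebras; reducing modulo the maximal ideal of $R'$ and invoking Lemma \ref{ringeldualintermsofresiduefield} closes the argument.
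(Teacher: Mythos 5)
Your global architecture matches the paper's: reduce to the residue field via Lemma \ref{ringeldualintermsofresiduefield}, pass to a polynomial deformation where Theorem \ref{HNforcategoryO} makes the Struktursatz cover $1$-faithful, upgrade the Ringel-dual cover to $1$-faithful via Corollary \ref{deformationringeldualfunctor} and Theorem \ref{improvingcoverwithspectrum}, and then invoke Corollary \ref{equivalenceofoneuniqueness}. However, the step you yourself flag as ``the main obstacle'' --- producing an exact equivalence $\mathcal{F}(\mathbb{V}_{\mathcal{D}'}\Stsim)\to\mathcal{F}(\mathbb{V}_{\mathcal{D}'}\Cssim)$ inside $C'\m$ --- is left as a plan rather than a proof, and it is precisely the pivot of the whole argument. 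Your candidate (BGG duality composed with the $R'$-linear dual on $C'$) faces a real difficulty you do not resolve: the BGG duality is a contravariant functor on $A_{\mathcal{D}'}\m$, not on $C'\m$, and there is no a priori reason it descends through $\mathbb{V}_{\mathcal{D}'}$ to a functor on the image category; ``verifying carefully that the resulting covariant functor lands in $C'\m$'' is exactly what needs to be done and is not done. As written, the proof is incomplete at its crucial point.

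The paper avoids this functor-descent problem entirely by proving something stronger and more elementary at the level of objects: for every $\mu\in\mathcal{D}$ there is an honest isomorphism $\mathbb{V}_{\mathcal{D}}\St(\mu)\simeq \mathbb{V}_{\mathcal{D}}\Cs(\mu)$ of $C$-modules. This is obtained by lifting the nonzero composite $\St(\mu)(\mi)\twoheadrightarrow \top\St(\mu)(\mi)\hookrightarrow\Cs(\mu)(\mi)$ to a map $\overline{f}\colon\St(\mu)\to\Cs(\mu)$ (using that $\St(\mu)$ is projective in a suitable quotient category $A_{\mathcal{D}}/J\m$), observing that $\mathbb{V}_{\mathcal{D}}f$ is an isomorphism between two copies of $R(\mi)$, and concluding by Nakayama that $\mathbb{V}_{\mathcal{D}}\overline{f}$ is an isomorphism. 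With these object-level isomorphisms in hand, the identity functor on $C\m$ already restricts to the required exact equivalence between the two filtration subcategories, so Corollary \ref{equivalenceofoneuniqueness} applies with $L=\id$ and no duality functor on $C\m$ needs to be constructed. If you want to salvage your write-up, replace your duality construction by this direct lifting argument; the rest of your outline (including the minor overshoot of taking $s$ maximal where $s=2$ suffices, and the unnecessary induction on the number of variables, since at height-one primes the generic fibre is already semi-simple) then goes through essentially as in the paper.
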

\begin{proof}
	Let $\mu\in \mathcal{D}$. Both $\St(\mu)$ and $\Cs(\mu)$ belong to $A_{\mathcal{D}}/J\m$ where $J$ is an ideal admitting a filtration by split heredity ideals and such that $\St(\mu)$ is a projective $A_{\mathcal{D}}/J$-module. Further, since $\Cs(\mu)(\mi)$ is the dual of $\St(\mu)(\mi)$ its socle coincides with the top of $\St(\mu)(\mi)$. Denote by $f$ the non-zero $A_{\mathcal{D}}/J(\mi)$-homomorphism $\St(\mu)(\mi)\twoheadrightarrow \top \St(\mu)(\mi)\hookrightarrow \Cs(\mu)(\mi)$. As $\St(\mu)$ is a projective object in $A_{\mathcal{D}}/J\m$ there exists an $A_{\mathcal{D}}$-homomorphism $\overline{f}$ making the following diagram commutative:
	\begin{equation}
		\begin{tikzcd}
			\St(\mu)\arrow[r, twoheadrightarrow] \arrow[d, "\overline{f}"]& \St(\mu)(\mi)\arrow[d, "f"] \\
			\Cs(\mu)\arrow[r, twoheadrightarrow] & \Cs(\mu)(\mi)
		\end{tikzcd}.
	\end{equation}
	Consider the Schur functor $F=\Hom_{A_{\mathcal{D}}}(P_A(\omega), -)=\mathbb{V}_{\mathcal{D}}$, where $\omega$ is the antidominant weight. Applying $F$, we obtain the commutative diagram
	\begin{equation}
		\begin{tikzcd}
			F\St(\mu)\arrow[r, twoheadrightarrow] \arrow[d, "F\overline{f}"]& F\St(\mu)(\mi)\arrow[d, "Ff"] \\
			F\Cs(\mu)\arrow[r, twoheadrightarrow] & F\Cs(\mu)(\mi)
		\end{tikzcd}.\label{eqexparttwo46}
	\end{equation}
	Observe that for any $X\in A_{\mathcal{D}}\m$,  \begin{align}
		F(X(\mi))=\Hom_{A_{\mathcal{D}}}(P_A(\omega), X(\mi))\simeq \Hom_{A_{\mathcal{D}}(\mi)}(P_{A_{\mathcal{D}}(\mi)}(\overline{\omega}), X(\mi))
	\end{align}(see for example \citep[Lemma 2.1.1]{p2paper}) and $Ff$ is isomorphic to the map $\Hom_{A_{\mathcal{D}}(\mi)}(P_{A(\mi)}(\overline{\omega}), f)$ which is non-zero since $\top P_{A(\mi)} (\overline{\omega})$ is the image of $f$. Moreover, $F\St(\mu)(\mi)\simeq R(\mi)$ and $F\Cs(\mu)(\mi)\simeq R(\mi)$. Hence, $Ff$ is an isomorphism. Applying $R(\mi)\otimes_R -$ to the diagram (\ref{eqexparttwo46}) we obtain that $F\overline{f}(\mi)$ is an isomorphism. Since both $F\St(\mu), F\Cs(\mu)\in R\proj$, Nakayama's Lemma yields that $F\overline{f}$ is an isomorphism. This shows that \begin{align}
		F\St(\mu)\simeq F\Cs(\mu), \quad \forall \mu\in \mathcal{D}.\label{eqexemplosparttwo48}
	\end{align}
	The results in Theorem \ref{HNforcategoryO} hold if we replace the complex numbers by any field of characteristic zero. Fix, for a moment $R=K[X_1, X_2]_{(X_1, X_2)}$ and $\mathcal{D}$ to be the block $W_{\overline{\mu}}\mu +\frac{X_1}{1}\alpha_1+\frac{X_2}{1}\alpha_2$, where $\alpha_1, \alpha_2$ are distinct simple roots (so we are excluding the case $\mathfrak{g}=\mathfrak{sl}_2)$, where $\mu\in \mathfrak{h}_R^*$ is a preimage of an antidominant weight in $ \mathfrak{h}_{R(\mi)}^*$ which is not dominant without coefficients in $\mi$ in its unique linear combination of simple roots. Hence, we are excluding the simple blocks which are trivially Ringel self-dual. By Theorem \ref{HNforcategoryO}, $(A_{\mathcal{D}}, P_A(\omega))$ is a $1$-$\mathcal{F}(\Stsim)$ cover of $C$. 
	
	Let $T$ be a characteristic tilting module of $A_{\mathcal{D}}$.
	We claim that $(R(A_{\mathcal{D}}), \Hom_{A_{\mathcal{D}}} (T, P(\omega))$ is a $1$-$\mathcal{F}(\Stsim_{R})$ cover of $C$, where $R(A_{\mathcal{D}})$ denotes the Ringel dual of $A_{\mathcal{D}}$. In the proof of Theorem 7.3.45 of \cite{p2paper} (replacing $\mathbb{C}$ by $K$), we observe that, for any prime ideal $\pri$ of $R$ with height at most one, $Q(R/\pri)\otimes_R A_{\mathcal{D}}$ is semi-simple because the weights in $Q(R/\pri)\otimes_R \mathcal{D}$ are both dominant and antidominant. Here, $Q(R/\pri)$ denotes the quotient field of $R/\pri$. 
	Therefore, the Ringel dual of $Q(R/\pri)\otimes_R A_{\mathcal{D}}$, $Q(R/\pri)\otimes_R R(A_{\mathcal{D}})$, according to Propositions \ref{qhproperties} and \ref{standardscotiltingsreductiontofields}, is semi-simple for any prime ideal $\pri$ of $R$ with height at most one. Therefore,
	\begin{align}
		Q(R/\pri)\otimes_R P(\omega)\rcodomdim_{Q(R/\pri)\otimes_R A_{\mathcal{D}}} Q(R/\pri)\otimes_R T=+ \infty
	\end{align}
	and $(Q(R/\pri)\otimes_R R(A_{\mathcal{D}}), Q(R/\pri)\otimes_R\Hom_{A_{\mathcal{D}}} (T, P(\omega))$ is a $+ \infty-$faithful split quasi-hereditary cover of \linebreak$Q(R/\pri)\otimes_R C$ for every prime ideal $\pri$ of $R$ with height at most one. 
	By Theorem \ref{integralstruktursatz} and Proposition \ref{dominantandcodominantoftiltingcomparison}, $P(\omega)\rcodomdim_{(A_{\mathcal{D}}, R )} T=1$. By Corollary \ref{deformationringeldualfunctor}, $(R(A_{\mathcal{D}}), \Hom_{A_{\mathcal{D}}} (T, P(\omega))$ is a $0$-$\mathcal{F}(\Stsim_{R})$ cover of $C$. So, we can apply Theorem \ref{improvingcoverwithspectrum} to obtain 
	that $(R(A_{\mathcal{D}}), \Hom_{A_{\mathcal{D}}} (T, P(\omega)))$ is a $1$-$\mathcal{F}(\Stsim_{R})$ cover of $C$. 
	Now, rewriting (\ref{eqexemplosparttwo48}) we obtain
	\begin{align}
		F\St(\theta)\simeq F\Cs(\theta)=\Hom_{A_{\mathcal{D}}}(P(\omega), \Cs(\theta))\simeq \Hom_{R(A_\mathcal{D})}(\Hom_{A_{\mathcal{D}}}(T, P(\omega)), \Hom_{A_{\mathcal{D}}}(T, \Cs(\theta)) ).
	\end{align}
	By Corollary \ref{equivalenceofoneuniqueness}, $A_{\mathcal{D}}$ is Ringel self-dual. That is, there exists an equivalence of categories \linebreak $H\colon A_{\mathcal{D}}\m\rightarrow R(A_{\mathcal{D}})\m$ preserving the highest weight structure. Applying $R(\mi)\otimes_R -$ to $H$ we obtain that $A_{\mathcal{D}}(\mi)$ is Ringel self-dual. That is, the blocks of category $\mathcal{O}$ over a field of characteristic zero are Ringel self-dual. We excluded the case $\mathfrak{g}=\mathfrak{sl}_2$. But the non-simple blocks of the category $\mathcal{O}$ associated with $\mathfrak{sl}_2$ are Morita equivalent to the Auslander algebra of $K[x]/(x^2)$ which is Ringel self-dual. 
	
	Return to the general case of $R$ being an arbitrary local regular commutative Noetherian ring which is a $\mathbb{Q}$-algebra and $\mathcal{D}$ an arbitrary block. Since $R(\mi)$ is a field of characteristic zero, $A_{\mathcal{D}}(\mi)$ is Ringel self-dual. By \citep[Lemma 7.4]{appendix}, $A_{\mathcal{D}}$ is Ringel self-dual.
\end{proof}

\subsubsection{Schur algebras} \label{RdSchur algebras}

By \citep[Proposition 3.7]{zbMATH00549737}, the Schur algebras $S_{\mathbb{Z}}(d, d)$ are Ringel self-dual, and therefore the Schur algebra $S_R(n, d)$ is also Ringel self-dual for an arbitrary commutative Noetherian ring $R$.

The approach presented in Theorem \ref{RingelselfdualityofO} can also be applied to Schur algebras. However, we have to exclude the case of characteristic two for similar reasons why we excluded the case $\mathfrak{sl}_2$ for the general cases in the category $\mathcal{O}$.

\begin{Theorem}Assume that $n\geq d$ are natural numbers.
	The Schur algebra $S_{\mathbb{Z}[\frac{1}{2}]}(n, d)$ is Ringel self-dual.
\end{Theorem}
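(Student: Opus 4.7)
The plan is to mirror the strategy used in the proof of Theorem \ref{RingelselfdualityofO}. Set $R = \mathbb{Z}[\tfrac{1}{2}]$, $S = S_R(n,d)$, and let $T$ be a characteristic tilting module of $S$. Since $n \geq d$, quantum Schur--Weyl duality identifies $B := \End_S(V^{\otimes d})^{op}$ with $RS_d$, so the two covers to compare are $(S, V^{\otimes d})$ and $(R(S), \Hom_S(T, V^{\otimes d}))$, both of $B$. The goal is to apply Corollary \ref{equivalenceofoneuniqueness} to these two covers, whereupon Corollary \ref{standardstocostandardsringel} will produce an exact equivalence $\mathcal{F}(\Stsim_S) \to \mathcal{F}(\Cssim_S)$, i.e. the desired Ringel self-duality; by Lemma \ref{ringeldualintermsofresiduefield} one may reduce, if convenient, to the local case.

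First I would establish that both covers are $1$-$\mathcal{F}(\Stsim)$-faithful. By Theorem \ref{dominantdimensionquantumschur}, $\domdim_{(S, R)} T = \inf\{s < d : s + 1 \notin R^\times\}$, and because $R^\times = \{\pm 2^k\}_{k \in \mathbb{Z}}$ the smallest non-unit $s+1$ is $3$, giving $\domdim_{(S,R)} T \geq 2$ whenever $d \geq 3$; the cases $d \leq 2$ are immediate, the algebras being semi-simple or already of the classical form. By Proposition \ref{domdimtoolcover}(b) the cover $(S, V^{\otimes d})$ is $0$-$\mathcal{F}(\Stsim)$-faithful, and by Theorem \ref{maintheoremforRingeldualSchur} so is $(R(S), \Hom_S(T, V^{\otimes d}))$. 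To boost both to $1$-faithful via Theorem \ref{improvingcoverwithspectrum}, one verifies the hypotheses at the quotient field $\mathbb{Q}$, where $S_{\mathbb{Q}}(n, d)$ is semi-simple and both covers are trivially $+\infty$-faithful, and at each height-one prime $(p) \subset R$, i.e. each odd prime $p$. For $p > d$ the algebra $S_{\mathbb{F}_p}(n,d)$ is semi-simple; for $p \leq d$ the same formula yields $\domdim_{S_{\mathbb{F}_p}(n,d)} T \geq p-1 \geq 2$, so by Proposition \ref{domdimtoolcover}(b) and Theorem \ref{maintheoremforRingeldualSchur} both covers descend to $0$-$\mathcal{F}(\Stsim)$-faithful covers at every $\mathbb{F}_p$; Proposition \ref{faithfulcoverresiduefield} then turns the hypotheses of Theorem \ref{improvingcoverwithspectrum} into the desired $1$-faithfulness over $R$. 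Inverting $2$ is precisely what makes this step go through: at $p = 2$ one would only get $\domdim = 1$ and a $(-1)$-faithful cover, falling outside the scope of Theorem \ref{improvingcoverwithspectrum}.

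The remaining task---and the main obstacle---is to exhibit an exact autoequivalence $L$ of $RS_d\m$ that restricts to an exact equivalence $\mathcal{F}(F_{V^{\otimes d}}\Stsim_S) \to \mathcal{F}(F_{V^{\otimes d}}\Cssim_S)$. The natural candidate is the sign twist $L = \sgn \otimes_R (-)$: writing $S^\lambda = F_{V^{\otimes d}}\St(\lambda)$ for the integral Specht module and $(S^\lambda)^* \cong F_{V^{\otimes d}}\Cs(\lambda)$ for its dual, one invokes the classical identity $\sgn \otimes_R S^\lambda \cong (S^{\lambda'})^*$ of $RS_d$-modules (where $\lambda'$ is the conjugate partition) to conclude that $L$ sends $\{S^\lambda\}_\lambda$ bijectively onto $\{(S^\mu)^*\}_\mu$ up to the involution $\lambda \mapsto \lambda'$. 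Extending by exactness then gives the required exchange of filtration categories. Feeding this $L$ into Corollary \ref{equivalenceofoneuniqueness} yields equivalence of the two covers and, together with Corollary \ref{standardstocostandardsringel}, Ringel self-duality of $S_{\mathbb{Z}[\frac{1}{2}]}(n, d)$. The subtle point is to verify the compatibility of the sign twist with the integral Specht-module filtrations on $\mathcal{F}(F_{V^{\otimes d}}\Stsim_S)$---essentially a bookkeeping check against the integral cellular structure of $RS_d$---which is where the bulk of the technical work resides.
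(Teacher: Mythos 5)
Your proposal is correct and follows essentially the same route as the paper: the paper simply packages your cover-comparison argument into Corollary \ref{Ringelselfdualcodomdim} (verifying semi-simplicity over $\mathbb{Q}$ for conditions (i)--(ii), $\domdim_{(S,R)}T=\codomdim_{(S,R)}T=2$ from inverting $2$ for (iii), and the sign-twist identity $F\Cs(\l)\simeq \sgn\otimes_R F\St(\l')$ via Donkin's Lemma 1.6.12 for (iv)), whereas you re-derive that corollary inline via Theorem \ref{improvingcoverwithspectrum} and Corollary \ref{equivalenceofoneuniqueness}. The only remark worth adding is that the "subtle point" you flag at the end is not where the work lies: since $\sgn\otimes_R -$ is an exact autoequivalence of $RS_d\m$, compatibility with the filtration categories is automatic once the isomorphism is known on the standard objects themselves.
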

\begin{proof}
	The quotient field of $\mathbb{Z}[\frac{1}{2}]$ is $\mathbb{Q}$. So, for $S_{\mathbb{Z}[\frac{1}{2}]}(n, d)$ conditions (i) and (ii) of Corollary \ref{Ringelselfdualcodomdim} hold.  Condition (iii) follows by Proposition \ref{dominantandcodominantoftiltingcomparison} and Theorem \ref{dominantdimensionquantumschur} with $q=1$. Now we focus on the image of the dual of Weyl modules under the Schur functor $F=F_{V^{\otimes d}}$. Fix $R=\mathbb{Z}[\frac{1}{2}]$. We can see that, for any $\l\in \L^+(n, d)$
	\begin{align}
		F\Cs(\l)&\simeq \xi_{(1, \ldots, d), (1, \ldots, d)}\Cs(\l)\simeq \xi_{(1, \ldots, d), (1, \ldots, d)} D\St(\l)^\iota \simeq D(\xi_{(1, \ldots, d), (1, \ldots, d)}\St(\l))^\iota \\&\simeq D\theta(\l)^\iota\simeq \sgn\otimes_R \theta(\l').
	\end{align}The last isomorphism is \citep[Lemma 1.6.12]{zbMATH00971625} and $\l'$ is the conjugate partition of $\l$. Here $\sgn$ is the free $R$-module with rank one with the action the sign of the permutation $\sigma\cdot 1_R=\sgn(\sigma)$, $\sigma\in S_d$. Moreover, $RS_d$ acts on $\sgn\otimes_R M$ through the diagonal action. Hence, $\sgn\otimes_R \otimes_R M\simeq M$ for any $M\in RS_d\m$. Hence, $\sgn\otimes_R -\colon RS_d\m\rightarrow RS_d\m$ is an isomorphism of categories. Therefore, \begin{align}
		F\Cs(\l)\simeq \sgn\otimes_R \theta(\l')\simeq \sgn\otimes_R F\St(\l'), \ \forall \l\in \L^+(n, d),
	\end{align}and $\mathcal{F}(F\Cs)\simeq \mathcal{F}(F\St)$. By Corollary \ref{Ringelselfdualcodomdim}, the result follows.
\end{proof}

\section*{Acknowledgments}
Most of the results of this paper are contained in the author's PhD thesis \citep{thesis}, financially supported by \textit{Studienstiftung des Deutschen Volkes}. The author would like to thank Steffen Koenig for all the conversations on these topics, his comments and suggestions towards improving this manuscript.
The author thanks Volodymyr Mazorchuk for bringing the work \cite{MR1785327} to the author's attention. The author thanks Karin Erdmann for interesting discussions about this work.

\begingroup
\linespread{0.5}
{\footnotesize
\bibliographystyle{alphaurl}
\bibliography{bibarticle} }

\newcommand{\noop}[1]{}
\begin{thebibliography}{{Hum}08}

\bibitem[AB59]{zbMATH03151673}
M.~{Auslander} and D.~A. {Buchsbaum}.
\newblock {On ramification theory in Noetherian rings.}
\newblock {\em {Am. J. Math.}}, 81:749–765, 1959.
\newblock \href {https://doi.org/10.2307/2372926} {\path{doi:10.2307/2372926}}.

\bibitem[AR91]{zbMATH00010169}
M.~Auslander and I.~Reiten.
\newblock Applications of contravariantly finite subcategories.
\newblock {\em Adv. Math.}, 86(1):111--152, 1991.
\newblock \href {https://doi.org/10.1016/0001-8708(91)90037-8}
  {\path{doi:10.1016/0001-8708(91)90037-8}}.

\bibitem[AS93]{zbMATH00423524}
M.~{Auslander} and Ø. {Solberg}.
\newblock {Relative homology and representation theory. III: Cotilting modules
  and Wedderburn correspondence}.
\newblock {\em {Commun. Algebra}}, 21(9):3081–3097, 1993.
\newblock \href {https://doi.org/10.1080/00927879308824719}
  {\path{doi:10.1080/00927879308824719}}.

\bibitem[AT21]{zbMATH07398279}
T.~Adachi and M.~Tsukamoto.
\newblock Tilting modules and dominant dimension with respect to injective
  modules.
\newblock {\em Q. J. Math.}, 72(3):855--884, 2021.
\newblock \href {https://doi.org/10.1093/qmath/haaa050}
  {\path{doi:10.1093/qmath/haaa050}}.

\bibitem[BGG76]{zbMATH03549206}
I.~N. {Bernshte\u{\i}n}, I.~M. {Gel'fand}, and S.~I. {Gel'fand}.
\newblock {Category of \(\mathfrak g\) modules}.
\newblock {\em {Funct. Anal. Appl.}}, 10:87--92, 1976.
\newblock \href {https://doi.org/10.1007/BF01077933}
  {\path{doi:10.1007/BF01077933}}.

\bibitem[BS98]{zbMATH01218841}
A.~B. {Buan} and Ø. {Solberg}.
\newblock {Relative cotilting theory and almost complete cotilting modules}.
\newblock In {\em {Algebras and modules II. Eighth international conference on
  representations of algebras, Geiranger, Norway, August 4–10, 1996}}, page
  77–92. Providence, RI: American Mathematical Society, 1998.

\bibitem[CPS88]{MR961165}
E.~Cline, B.~Parshall, and L.~Scott.
\newblock {Finite-dimensional algebras and highest weight categories}.
\newblock {\em J. Reine Angew. Math.}, 391:85–99, 1988.
\newblock URL:
  \url{https://www.digizeitschriften.de/dms/resolveppn/?PPN=GDZPPN002205971}.

\bibitem[CPS90]{CLINE1990126}
E.~Cline, B.~Parshall, and L.~Scott.
\newblock {Integral and graded quasi-hereditary algebras, {I}}.
\newblock {\em Journal of Algebra}, 131(1):126–160, 1990.
\newblock \href {https://doi.org/10.1016/0021-8693(90)90169-O}
  {\path{doi:10.1016/0021-8693(90)90169-O}}.

\bibitem[CPS96]{zbMATH00971625}
E.~Cline, B.~Parshall, and L.~Scott.
\newblock {\em {Stratifying endomorphism algebras.}}, volume 591.
\newblock Providence, RI: American Mathematical Society (AMS), 1996.
\newblock \href {https://doi.org/10.1090/memo/0591}
  {\path{doi:10.1090/memo/0591}}.

\bibitem[Cru]{appendix}
T.~Cruz.
\newblock Characteristic tilting modules and {Ringel} duality in the
  {Noetherian} world.
\newblock draft.
\newblock URL:
  \url{https://pnp.mathematik.uni-stuttgart.de/iaz/iaz1/Cruz/Documents/appendix.pdf}.

\bibitem[Cru21]{thesis}
T.~Cruz.
\newblock {\em Algebraic analogues of resolution of singularities,
  quasi-hereditary covers and {Schur} algebras}.
\newblock PhD thesis, University of Stuttgart, 2021.
\newblock URL: \url{http://dx.doi.org/10.18419/opus-11835}.

\bibitem[Cru22a]{cruz2021characterisation}
T.~Cruz.
\newblock A characterisation of {Morita} algebras in terms of covers.
\newblock {\em Algebr. Represent. Theory}, 25(5):1197--1206, 2022.
\newblock \href {https://doi.org/10.1007/s10468-021-10064-8}
  {\path{doi:10.1007/s10468-021-10064-8}}.

\bibitem[Cru22b]{CRUZ2022410}
T.~Cruz.
\newblock An integral theory of dominant dimension of {Noetherian} algebras.
\newblock {\em Journal of Algebra}, 591:410--499, 2022.
\newblock \href
  {https://doi.org/https://doi.org/10.1016/j.jalgebra.2021.09.029}
  {\path{doi:https://doi.org/10.1016/j.jalgebra.2021.09.029}}.

\bibitem[{Cru}22c]{p2paper}
T.~{Cruz}.
\newblock On {Noetherian algebras, Schur functors and Hemmer-Nakano}
  dimensions, 2022.
\newblock \href {http://arxiv.org/abs/2208.00291} {\path{arXiv:2208.00291}}.

\bibitem[Cru23]{cruz2021cellular}
T.~Cruz.
\newblock Cellular {Noetherian} algebras with finite global dimension are split
  quasi-hereditary.
\newblock {\em Journal of Algebra and its Applications}, 2023.
\newblock \href {https://doi.org/10.1142/S0219498824501627}
  {\path{doi:10.1142/S0219498824501627}}.

\bibitem[DJ89]{zbMATH04168928}
R.~Dipper and G.~James.
\newblock {The q-Schur algebra.}
\newblock {\em {Proc. Lond. Math. Soc. (3)}}, 59(1):23–50, 1989.
\newblock \href {https://doi.org/10.1112/plms/s3-59.1.23}
  {\path{doi:10.1112/plms/s3-59.1.23}}.

\bibitem[DJ91]{zbMATH00014844}
R.~Dipper and G.~James.
\newblock {\(q\)-tensor space and \(q\)-Weyl modules.}
\newblock {\em {Trans. Am. Math. Soc.}}, 327(1):251–282, 1991.
\newblock \href {https://doi.org/10.2307/2001842} {\path{doi:10.2307/2001842}}.

\bibitem[dlPX06]{zbMATH05124581}
J.~A. de~{la}~{Pe\~na} and C.~{Xi}.
\newblock {Hochschild cohomology of algebras with homological ideals}.
\newblock {\em {Tsukuba J. Math.}}, 30(1):61–79, 2006.
\newblock \href {https://doi.org/10.21099/tkbjm/1496165029}
  {\path{doi:10.21099/tkbjm/1496165029}}.

\bibitem[Don86]{MR866778}
S.~Donkin.
\newblock {On {S}chur algebras and related algebras. {I}}.
\newblock {\em J. Algebra}, 104(2):310–328, 1986.
\newblock \href {https://doi.org/10.1016/0021-8693(86)90218-8}
  {\path{doi:10.1016/0021-8693(86)90218-8}}.

\bibitem[Don93]{zbMATH00549737}
S.~Donkin.
\newblock {On tilting modules for algebraic groups.}
\newblock {\em {Math. Z.}}, 212(1):39–60, 1993.
\newblock \href {https://doi.org/10.1007/BF02571640}
  {\path{doi:10.1007/BF02571640}}.

\bibitem[Don98]{MR1707336}
S.~Donkin.
\newblock {\em {The {$q$}-{S}chur algebra}}, volume 253 of {\em {London
  Mathematical Society Lecture Note Series}}.
\newblock Cambridge University Press, Cambridge, 1998.
\newblock \href {https://doi.org/10.1017/CBO9780511600708}
  {\path{doi:10.1017/CBO9780511600708}}.

\bibitem[DPS98]{zbMATH01294605}
J.~{Du}, B.~{Parshall}, and L.~{Scott}.
\newblock {Quantum Weyl reciprocity and tilting modules}.
\newblock {\em {Commun. Math. Phys.}}, 195(2):321--352, 1998.
\newblock \href {https://doi.org/10.1007/s002200050392}
  {\path{doi:10.1007/s002200050392}}.

\bibitem[DR89]{Dlab1989}
V.~{Dlab} and C.~M. {Ringel}.
\newblock {Every semiprimary ring is the endomorphism ring of a projective
  module over a quasi-hereditary ring}.
\newblock {\em {Proc. Am. Math. Soc.}}, 107(1):1--5, 1989.
\newblock \href {https://doi.org/10.2307/2048026} {\path{doi:10.2307/2048026}}.

\bibitem[Erd94]{zbMATH00681964}
K.~Erdmann.
\newblock {Symmetric groups and quasi-hereditary algebras.}
\newblock In {\em {Finite dimensional algebras and related topics. Proceedings
  of the NATO Advanced Research Workshop on Representations of algebras and
  related topics. Ottawa, Canada, August 10-18, 1992}}, page 123–161.
  Dordrecht: Kluwer Academic Publishers, 1994.
\newblock \href {https://doi.org/10.1007/978-94-017-1556-0_7}
  {\path{doi:10.1007/978-94-017-1556-0_7}}.

\bibitem[{Fan}08]{zbMATH05278765}
M.~{Fang}.
\newblock {Schur functors on QF-3 standardly stratified algebras.}
\newblock {\em {Acta Math. Sin., Engl. Ser.}}, 24(2):311–318, 2008.
\newblock \href {https://doi.org/10.1007/s10114-007-0984-y}
  {\path{doi:10.1007/s10114-007-0984-y}}.

\bibitem[FK11]{zbMATH05871076}
M.~{Fang} and S.~{Koenig}.
\newblock {Schur functors and dominant dimension.}
\newblock {\em {Trans. Am. Math. Soc.}}, 363(3):1555–1576, 2011.
\newblock \href {https://doi.org/10.1090/s0002-9947-2010-05177-3}
  {\path{doi:10.1090/s0002-9947-2010-05177-3}}.

\bibitem[FKM00]{MR1785327}
V.~Futorny, S.~Koenig, and V.~Mazorchuk.
\newblock {\({\mathcal S}\)-subcategories in \({\mathcal O}\)}.
\newblock {\em Manuscripta Math.}, 102(4):487--503, 2000.
\newblock \href {https://doi.org/10.1007/s002290070038}
  {\path{doi:10.1007/s002290070038}}.

\bibitem[FM19]{zbMATH07050778}
M.~Fang and H.~Miyachi.
\newblock {Hochschild cohomology and dominant dimension.}
\newblock {\em {Trans. Am. Math. Soc.}}, 371(8):5267–5292, 2019.
\newblock \href {https://doi.org/10.1090/tran/7704}
  {\path{doi:10.1090/tran/7704}}.

\bibitem[GJ81]{zbMATH03747378}
O.~{Gabber} and A.~{Joseph}.
\newblock {Towards the Kazhdan-Lusztig conjecture}.
\newblock {\em {Ann. Sci. Éc. Norm. Supér. (4)}}, 14:261–302, 1981.
\newblock \href {https://doi.org/10.24033/asens.1406}
  {\path{doi:10.24033/asens.1406}}.

\bibitem[GK15]{zbMATH06409569}
N.~{Gao} and S.~{Koenig}.
\newblock {Grade, dominant dimension and Gorenstein algebras}.
\newblock {\em {J. Algebra}}, 427:118–141, 2015.
\newblock \href {https://doi.org/10.1016/j.jalgebra.2014.11.028}
  {\path{doi:10.1016/j.jalgebra.2014.11.028}}.

\bibitem[{Gre}81]{zbMATH03708660}
J.~A. {Green}.
\newblock {Polynomial representations of \(GL_ n\).}
\newblock {Algebra, Proc. Conf., Carbondale 1980, Lect. Notes Math. 848,
  124-140}, 1981.

\bibitem[Gre07]{zbMATH05080041}
J.~A. Green.
\newblock {\em {Polynomial representations of \(\text{GL}_n\). With an appendix
  on Schensted correspondence and Littelmann paths by K. Erdmann, J. A. Green
  and M. Schocker. 2nd corrected and augmented edition.}}, volume 830.
\newblock Berlin: Springer, 2nd corrected and augmented edition edition, 2007.
\newblock \href {https://doi.org/10.1007/3-540-46944-3}
  {\path{doi:10.1007/3-540-46944-3}}.

\bibitem[{Has}00]{zbMATH01527053}
M.~{Hashimoto}.
\newblock {\em {Auslander-Buchweitz approximations of equivariant modules.}},
  volume 282.
\newblock Cambridge: Cambridge University Press, 2000.
\newblock \href {https://doi.org/10.1017/cbo9780511565762}
  {\path{doi:10.1017/cbo9780511565762}}.

\bibitem[HN04]{hemmer_nakano_2004}
D.~J. Hemmer and D.~K. Nakano.
\newblock {Specht filtrations for Hecke algebras of type \(A\)}.
\newblock {\em {J. Lond. Math. Soc., II. Ser.}}, 69(3):623–638, 2004.
\newblock \href {https://doi.org/10.1112/s0024610704005186}
  {\path{doi:10.1112/s0024610704005186}}.

\bibitem[Hos90]{MR1076061}
M.~Hoshino.
\newblock {Syzygies and {G}orenstein rings}.
\newblock {\em Arch. Math. (Basel)}, 55(4):355–360, 1990.
\newblock \href {https://doi.org/10.1007/BF01198473}
  {\path{doi:10.1007/BF01198473}}.

\bibitem[{Hum}08]{zbMATH05309234}
J.~E. {Humphreys}.
\newblock {\em {Representations of semisimple Lie algebras in the BGG category
  \(\mathcal O\)}}, volume~94.
\newblock Providence, RI: American Mathematical Society (AMS), 2008.

\bibitem[Iwa64]{zbMATH03218961}
N.~Iwahori.
\newblock On the structure of a {Hecke} ring of a {Chevalley} group over a
  finite field.
\newblock {\em J. Fac. Sci., Univ. Tokyo, Sect. I}, 10:215--236, 1964.
\newblock \href {https://doi.org/10.15083/00039900}
  {\path{doi:10.15083/00039900}}.

\bibitem[Iya04]{zbMATH02070262}
O.~Iyama.
\newblock {Representation dimension and Solomon zeta function.}
\newblock In {\em {Representations of finite dimensional algebras and related
  topics in Lie theory and geometry. Proceedings from the 10th international
  conference on algebras and related topics, ICRA X, Toronto, Canada, July
  15–August 10, 2002}}, page 45–64. Providence, RI: American Mathematical
  Society (AMS), 2004.
\newblock \href {https://doi.org/10.1090/fic/040/03}
  {\path{doi:10.1090/fic/040/03}}.

\bibitem[Jan79]{zbMATH03663362}
J.~C. Jantzen.
\newblock {\em Moduln mit einem h{\"o}chsten {Gewicht}}, volume 750 of {\em
  Lect. Notes Math.}
\newblock Springer, Cham, 1979.
\newblock \href {https://doi.org/10.1007/BFb0069521}
  {\path{doi:10.1007/BFb0069521}}.

\bibitem[KSX01]{Koenig2001}
S.~König, I.~H. {Slungård}, and C.~Xi.
\newblock {Double centralizer properties, dominant dimension, and tilting
  modules}.
\newblock {\em Journal of Algebra}, 240(1):393–412, 2001.
\newblock \href {https://doi.org/10.1006/jabr.2000.8726}
  {\path{doi:10.1006/jabr.2000.8726}}.

\bibitem[LZ21]{zbMATH07241704}
S.~{Li} and S.~{Zhang}.
\newblock {Algebras with finite relative dominant dimension and almost
  \(n\)-precluster tilting modules}.
\newblock {\em {J. Pure Appl. Algebra}}, 225(2):18, 2021.
\newblock Id/No 106498.
\newblock \href {https://doi.org/10.1016/j.jpaa.2020.106498}
  {\path{doi:10.1016/j.jpaa.2020.106498}}.

\bibitem[{Mor}70]{zbMATH03282730}
K.~{Morita}.
\newblock {Localizations in categories of modules. I}.
\newblock {\em {Math. Z.}}, 114:121–144, 1970.
\newblock \href {https://doi.org/10.1007/BF01110321}
  {\path{doi:10.1007/BF01110321}}.

\bibitem[MS08]{zbMATH05344019}
V.~Mazorchuk and C.~Stroppel.
\newblock {Projective-injective modules, Serre functors and symmetric
  algebras.}
\newblock {\em {J. Reine Angew. Math.}}, 616:131–165, 2008.
\newblock \href {https://doi.org/10.1515/CRELLE.2008.020}
  {\path{doi:10.1515/CRELLE.2008.020}}.

\bibitem[{Mue}68]{zbMATH03248955}
B.~J. {Mueller}.
\newblock {The classification of algebras by dominant dimension.}
\newblock {\em {Can. J. Math.}}, 20:398–409, 1968.
\newblock \href {https://doi.org/10.4153/CJM-1968-037-9}
  {\path{doi:10.4153/CJM-1968-037-9}}.

\bibitem[PS05]{zbMATH02182639}
B.~J. {Parshall} and L.~L. {Scott}.
\newblock {Quantum Weyl reciprocity for cohomology}.
\newblock {\em {Proc. Lond. Math. Soc. (3)}}, 90(3):655--688, 2005.
\newblock \href {https://doi.org/10.1112/S0024611504015047}
  {\path{doi:10.1112/S0024611504015047}}.

\bibitem[Psa14]{MR3123754}
C.~Psaroudakis.
\newblock {Homological theory of recollements of abelian categories}.
\newblock {\em J. Algebra}, 398:63–110, 2014.
\newblock \href {https://doi.org/10.1016/j.jalgebra.2013.09.020}
  {\path{doi:10.1016/j.jalgebra.2013.09.020}}.

\bibitem[PW91]{zbMATH04193959}
B.~Parshall and J.~Wang.
\newblock {\em {Quantum linear groups.}}, volume 439.
\newblock Providence, RI: American Mathematical Society (AMS), 1991.
\newblock \href {https://doi.org/10.1090/memo/0439}
  {\path{doi:10.1090/memo/0439}}.

\bibitem[Rin91]{MR1128706}
C.~M. Ringel.
\newblock {The category of modules with good filtrations over a
  quasi-hereditary algebra has almost split sequences}.
\newblock {\em Math. Z.}, 208(2):209–223, 1991.
\newblock \href {https://doi.org/10.1007/BF02571521}
  {\path{doi:10.1007/BF02571521}}.

\bibitem[{Rot}09]{Rotman2009a}
J.~J. {Rotman}.
\newblock {\em {An introduction to homological algebra. 2nd ed.}}
\newblock Berlin: Springer, 2nd ed. edition, 2009.
\newblock \href {https://doi.org/10.1007/b98977} {\path{doi:10.1007/b98977}}.

\bibitem[Rou08]{Rouquier2008}
R.~Rouquier.
\newblock {{$q$}-{S}chur algebras and complex reflection groups}.
\newblock {\em Moscow Mathematical Journal}, 8(1):119–158, 184, 2008.
\newblock \href {https://doi.org/10.17323/1609-4514-2008-8-1-119-158}
  {\path{doi:10.17323/1609-4514-2008-8-1-119-158}}.

\bibitem[{Soe}90]{zbMATH00005018}
W.~{Soergel}.
\newblock {Kategorie \({\mathcal O}\), perverse Garben und Moduln über den
  Koinvarianten zur Weylgruppe}.
\newblock {\em {J. Am. Math. Soc.}}, 3(2):421–445, 1990.
\newblock \href {https://doi.org/10.2307/1990960} {\path{doi:10.2307/1990960}}.

\bibitem[{Soe}98]{zbMATH01267662}
W.~{Soergel}.
\newblock {Character formulas for tilting modules over Kac-Moody algebras}.
\newblock {\em {Represent. Theory}}, 2:432–448, 1998.
\newblock \href {https://doi.org/10.1090/S1088-4165-98-00057-0}
  {\path{doi:10.1090/S1088-4165-98-00057-0}}.

\bibitem[{Wak}88]{zbMATH04053850}
T.~{Wakamatsu}.
\newblock {On modules with trivial self-extensions}.
\newblock {\em {J. Algebra}}, 114(1):106–114, 1988.
\newblock \href {https://doi.org/10.1016/0021-8693(88)90215-3}
  {\path{doi:10.1016/0021-8693(88)90215-3}}.

\bibitem[Wei03]{Weibel2003}
C.~A. Weibel.
\newblock {\em {An introduction to homological algebra}}.
\newblock Number~38 in {Cambridge studies in advanced mathematics}. Cambridge
  Univ. Press, Cambridge, reprint. 1997, transf. to digital print. edition,
  2003.
\newblock \href {https://doi.org/10.1017/CBO9781139644136}
  {\path{doi:10.1017/CBO9781139644136}}.

\end{thebibliography}
\endgroup

\Address
\end{document}